\NeedsTeXFormat{LaTeX2e}
\documentclass{amsart}
\usepackage{amssymb}
\usepackage{amscd}
\usepackage[latin1]{inputenc}
\usepackage{amsmath}
\usepackage{amsfonts}
\usepackage{latexsym}
\usepackage{verbatim}
\usepackage{graphicx}
\usepackage{epsfig}
\usepackage{faktor}
\usepackage{xcolor}
\usepackage{soul}
\usepackage{pgf, tikz}
\usepackage{caption}
\usetikzlibrary{arrows, automata}

\setstcolor{red}

\newtheorem{theorem}{Theorem}[section]
\newtheorem{corollary}[theorem]{Corollary}
\newtheorem{lemma}[theorem]{Lemma}

\newtheorem{proposition}[theorem]{Proposition}

\theoremstyle{definition}
\newtheorem{definition}[theorem]{Definition}
\newtheorem{example}[theorem]{Example}
\newtheorem{remark}[theorem]{Remark}

\newcommand{\zz}{\mathbb{Z}}

\newcommand{\aut}{\textrm{Aut}}
\newcommand{\A}{\mathcal{A}}

\newcommand{\rp}[1]{\textcolor{blue}{#1}}
\newcommand{\infin}{\textnormal{Inf}(\mathcal{G}_{T})}
\newcommand{\coinv}[1]{\mathcal{G}_{#1}}

\newcommand{\Scal}{\mathbf{S}}
\newcommand{\Scalprime}{\mathbf{S}^{\prime}}
\newcommand{\Tcal}{\mathbf{T}}
\newcommand{\Tcalprime}{\mathbf{T}^{\prime}}
\newcommand{\Tcalbar}{\overline{\mathbf{T}}}
\newcommand{\Tcalprimebar}{\overline{\mathbf{T}^{\prime}}}
\newcommand{\AtoZ}{\mathcal{A}^{\mathbb{Z}}}
\newcommand{\TTcal}{\mathbf{T}\mathbf{T}}
\newcommand{\TTcalprime}{\mathbf{T}\mathbf{T}^{\prime}}

\newcommand{\TTcalprimebar}{\overline{\mathbf{T}\mathbf{T}^{\prime}}}
\newcommand{\nmc}{\mathbf{NMC}}
\newcommand{\omc}{\mathbf{OMC}}

\begin{document}









\title[On the structure of generic subshifts]{On the structure of generic subshifts}

\author{Ronnie Pavlov}
\address{Ronnie Pavlov\\
Department of Mathematics\\
University of Denver\\
2390 S. York St.\\
Denver, CO 80208}
\email{rpavlov@du.edu}
\urladdr{http://www.math.du.edu/$\sim$rpavlov/}

\author{Scott Schmieding}
\address{Scott Schmieding\\
Department of Mathematics\\
University of Denver\\
2390 S. York St.\\
Denver, CO 80208}
\email{scott.schmieding@du.edu}
\urladdr{https://s-schmieding.github.io/}

\thanks{The first author gratefully acknowledges the support of a Simons Foundation Collaboration Grant.}
\keywords{subshifts, generic, Toeplitz subshift, dimension group}
\renewcommand{\subjclassname}{MSC 2020}
\subjclass[2020]{Primary: 37B10; Secondary: 37B05}

\begin{abstract}
We investigate generic properties (i.e. properties corresponding to residual sets) in the space of subshifts with the Hausdorff metric. Our results deal with four spaces: the space $\Scal$ of all subshifts, the space $\Scalprime$ of non-isolated subshifts, the closure $\Tcalprimebar$ of the infinite transitive subshifts, and the closure $\TTcalprimebar$ of the infinite totally transitive subshifts.

In the first two settings, we prove that generic subshifts are fairly degenerate; for instance, all points in a generic subshift are biasymptotic to periodic orbits. In contrast, generic subshifts in the latter two spaces possess more interesting dynamical behavior. Notably, generic subshifts in both $\Tcalprimebar$ and $\TTcalprimebar$ are zero entropy, minimal, uniquely ergodic, and have word complexity which realizes any possible subexponential growth rate along a subsequence. In addition, a generic subshift in $\Tcalprimebar$ is a regular Toeplitz subshift which is strongly orbit equivalent to the universal odometer.
\end{abstract}

\maketitle

\section{Introduction}\label{intro}

One of the most well-studied classes of topological dynamical systems are the symbolically defined systems called \textit{subshifts}; a subshift is a closed subset of $\mathcal{A}^{\mathbb{Z}}$ for some finite alphabet $\mathcal{A}$ which is invariant under the shift map $\sigma$, and we denote the set of subshifts on $\mathcal{A}$ by $\Scal[\mathcal{A}]$.

In this work, we investigate the following question: what is the structure of a generic (or `typical') subshift? 
In order to treat all possible (finite) alphabet sizes, we define $\mathbf{S} = \bigcup_{\A \subset \mathbb{Z}, |\A| < \infty} \Scal[\A]$
(there is no loss of generality in assuming the alphabet is a subset of $\mathbb{Z}$, since we could always achieve this by renaming.) We endow this universal set of subshifts $\mathbf{S}$ with the Hausdorff metric (see Section~\ref{defs} for more details)
and, as is often done, say that a property $P$ is \textit{generic} (equivalently, that a generic subshift has property $P$) if the set of subshifts with that property is residual (contains a dense $G_{\delta}$ set) in the topological space $\mathbf{S}$. 
Other works that have used this topology include \cite{BLR}, \cite{CyrKraLanguageStable}, \cite{FrischTamuzGenericEntropy},
\cite{HochmanGeneric}, and \cite{Sigmund1971}.


The notion of genericity in dynamical systems has been long studied. Early work in this direction includes that of Oxtoby and Ulam~\cite{OxtobyUlam}, who showed that for a certain class of measures on a compact manifold of dimension two or greater, a generic volume-preserving homeomorphism is ergodic. Not long after, foundational work of Halmos~\cite{Halmos1944,Halmos1960} showed that a generic measure-preserving dynamical system (in the weak topology) is weakly mixing, followed by Rohlin~\cite{Rohlin1948} showing that generically such systems are not strongly mixing.  
In the topological setting, it was shown by Kechris and Rosendal in~\cite{KechrisRosendal} that in the space
$\textnormal{Homeo}(K)$ of homeomorphisms of the Cantor set $K$, the topological conjugacy class of a specific
transformation $T$ is residual. Although 
an explicit description of $T$ was not given in~\cite{KechrisRosendal}, Akin, Glasner, and Weiss in~\cite{AGW2008} later 
gave such a description, which showed that $T$ was somewhat degenerate dynamically. 


Later, in \cite{HochmanGeneric}, Hochman proved many results about more interesting generic properties within 
some distinguished subspaces of $\textnormal{Homeo}(K)$, namely the spaces of transitive and totally transitive systems. 
Interestingly, his proofs largely depend on transferring genericity results from the space $\Scal(Q)$ of `subshifts' with alphabet the Hilbert cube $Q$, i.e. closed shift-invariant subsets of $Q^{\mathbb{Z}}$. This is part of a more general phenomenon first established in the measure-theoretic setting, where often a property turns out to be generic in the space of transformations preserving a prescribed underlying measure if and only if it is generic in the space of Borel probability measures preserved by a prescribed continuous map (see \cite{GlasnerKing1998}, \cite{HochmanGeneric}, \cite{Rudolph1998}). 

A key distinction between our setting $\mathbf{S}$ and the space $\mathbf{S}(Q)$ treated in~\cite{HochmanGeneric} is that all subshifts on finite alphabets are \textit{expansive}.
A result of Sears~\cite{Sears1972} shows that expansive systems are 
meager in $\textnormal{Homeo}(K)$. We also note that $\mathbf{S}$ can be viewed as a subset of $\mathbf{S}(Q)$ (by embedding $\mathbb{Z}$ into $Q$), but it is clearly meager there as well. There's then no immediate reason for a generic system in $\mathbf{S}$ to behave similarly to those in $\textnormal{Homeo}(K)$ and $S(Q)$. 

However, under no restrictions, our results show that again generic systems in $\mathbf{S}$ have quite degenerate properties; see Theorem~\ref{mainall}. Upon restricting to subspaces of infinite transitive/totally transitive systems, we show that there are also some similarities; in both $\mathbf{S}$ and $\textnormal{Homeo}(K)$, generic transitive/totally transitive systems are minimal, uniquely ergodic, and have zero (topological) entropy. There are, however, substantial differences. For example, in the subspace of transitive homeomorphisms in $\textnormal{Homeo}(K)$, Hochman shows~\cite[Thm. 1.1]{HochmanGeneric} that the topological conjugacy class of the universal odometer is residual. 
In addition to the obvious fact that subshifts with finite alphabet cannot be conjugate to odometers, there is a deeper difference; topological conjugacy classes within $\mathbf{S}$ are necessarily countable, and we prove that the space of infinite transitive subshifts is perfect, and so cannot contain a countable residual set. 

Before summarizing our main results, we give a bit more detail about the space $\mathbf{S}$ and the associated Hausdorff metric. We can endow $\mathbb{Z}^\mathbb{Z}$ with the product topology (viewing $\mathbb{Z}$ as a discrete space), and the subshifts in $\Scal$ are then precisely the nonempty shift-invariant compact subsets of $\mathbb{Z}^\mathbb{Z}$. We consider then the space $\Scal$ with the Hausdorff metric, inherited from the space of all nonempty compact subsets of $\mathbb{Z}^{\mathbb{Z}}$. A useful informal description of this metric is that two subshifts $X, Y$ are close if, for some large value of $n$, the sets of $n$-letter words appearing in points of $X$ and those appearing in points of $Y$ coincide.


Our main results all describe various properties of a generic subshift in $\Scal$ and some distinguished subspaces.
A common theme is that generic behavior often turns out to be the `simplest possible' subject to unavoidable restrictions. We begin with the full space $\Scal$, where we give a complete description of the dynamics of a generic subshift.

\begin{theorem}\label{mainall}
A generic subshift $X$ in the space $\Scal$ of all subshifts has the following properties:
\begin{enumerate}
\item $X$ is a countable shift of finite type which is a union of finitely many orbits 
which are bi-asymptotic to periodic orbits (Theorem~\ref{isodense}).
\item The word complexity $c_{X}(n)$ of $X$ grows linearly (Theorem~\ref{lincplx}).
\item The automorphism group of $X$ is virtually free abelian of finite rank (Theorem~\ref{thm:autosinS}).
\end{enumerate}
\end{theorem}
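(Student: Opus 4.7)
The statement combines three distinct generic properties, each proved in a dedicated theorem referenced within (Theorems~\ref{isodense}, \ref{lincplx}, \ref{thm:autosinS}). Since the countable intersection of residual sets is residual, once each property is established separately the conjunction is automatic; the plan below outlines a unified strategy that captures all three at once.

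The central observation guiding the plan is that the \emph{isolated} subshifts of $\mathbf{S}$ (those $X$ for which $\{X\}$ is open) form a dense open, hence residual, subset, and moreover every isolated subshift already satisfies (1), (2), (3). Density I would establish by a de Bruijn-type construction: for any $X$ and any large $N$, a closed walk covering the graph whose vertices are the length-$N$ factors of $X$ yields a finite union of periodic orbits with the same length-$N$ language as $X$, hence within Hausdorff distance $2^{-N}$ of it. For the isolation of such approximants (and more generally of the ``nice'' SFTs appearing in (1)), I would use a sliding-window argument: if $Y$ shares the length-$N$ language of such an $X$ with $N$ exceeding all relevant periods and bridge lengths, then the sequence of consecutive length-$N$ windows along any $y \in Y$ is forced to trace a path in the (higher-block) transition graph of $X$, so $Y \subseteq X$ and in fact $Y = X$.

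I would next characterize the isolated subshifts as precisely the SFTs whose higher-block transition graph has every strongly connected component equal to a single simple cycle, the components being joined by an acyclic system of bridges. One direction is direct: an isolated $X$ equals the maximal SFT $X_N$ with its length-$N$ language, so $X$ is an SFT; if the graph contained an SCC with two distinct cycles, one could remove a well-chosen periodic orbit from $X$ without erasing any length-$N$ factor, producing a proper language-equivalent $Y \subsetneq X$ and contradicting isolation. The converse is the sliding-window argument above. Given this characterization, (1) is immediate: such an $X$ is a countable SFT whose orbits are exactly the periodic orbit on each cycle together with one bi-asymptotic orbit running along each bridge.

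Properties (2) and (3) follow from the same structural description. For (2), each cycle contributes $O(1)$ length-$n$ factors, and each bridge contributes at most $n + O(1)$ (one per centering position of the window on the bridge), yielding $c_X(n) = O(n)$. For (3), $\aut(X)$ permutes the finite set of orbits, giving a homomorphism to a finite symmetric group; the kernel consists of automorphisms fixing each orbit, which by $\shift$-equivariance act by shifts on each bi-asymptotic orbit (a $\mathbb{Z}$-factor) and by cyclic shifts on each periodic orbit (a finite factor), so the kernel is $\mathbb{Z}^r\times F$ with $F$ finite and $r$ equal to the number of bi-asymptotic orbits. Thus $\aut(X)$ is virtually free abelian of finite rank.

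The hardest step I anticipate is the structural characterization of isolation, namely showing that a branching SCC always admits a ``redundant'' periodic orbit whose removal preserves every length-$N$ language. This requires a path-covering argument inside the transition graph to locate a cycle whose vertex and edge sets are already covered by other cycles in the same component, and is the one ingredient not reducible to a direct Baire-category computation or a simple sliding-window check.
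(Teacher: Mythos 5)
The overall strategy you outline is the same as the paper's: show that the isolated subshifts form a dense open set, characterize them structurally via the Rauzy/higher-block graph, and read off all three properties from that structure. However, your structural characterization of the isolated subshifts is incorrect, and this gap propagates into the density argument as well.

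You propose that $X$ is isolated exactly when the higher-block transition graph has every strongly connected component equal to a single simple cycle, the components joined by an acyclic system of bridges. (Note that the bridge system joining SCCs is \emph{always} acyclic, so that clause adds no constraint.) This condition is necessary but not sufficient. The paper's characterization is the stricter ``no middle cycles'' (NMC) property: no cycle may have \emph{both} an incoming edge from outside and an outgoing edge to outside. Concretely, consider $X$ = the subshift of nondecreasing sequences on $\{0,1,2\}$. Its Rauzy graphs have three SCCs, each a single self-loop, joined by an acyclic chain $0 \to 1 \to 2$ of bridges — your criterion is satisfied — yet the paper shows this $X$ lies in $\Scalprime$ (Lemma~\ref{NMCstructure}), which is perfect (Theorem~\ref{perfect}), hence $X$ is \emph{not} isolated. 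The obstruction is precisely the middle cycle at $1$: paths may traverse it an arbitrary number of times, so the subshift has infinitely many distinct orbits $0^\infty 1^k 2^\infty$, and one can delete or permit these selectively without disturbing any fixed-length language. Your sliding-window argument ``$Y \subseteq X$ and $Y = X$'' implicitly assumes every bi-asymptotic orbit is determined by a bounded-length transition word, which fails exactly when a middle cycle is present. For the same reason, your claim that such an $X$ is a \emph{finite} union of orbits would be false under your characterization — a middle cycle produces infinitely many orbits — and the complexity bound degrades from linear to quadratic (cf. Lemma~\ref{OMCquad}).

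The density step also has a gap independent of the above. You propose to approximate $X$ by the finite union of periodic orbits arising from a closed walk covering the Rauzy graph $G_{X,N}$. When $G_{X,N}$ is not strongly connected (the generic situation), it has one-way bridge edges that belong to no closed walk, so no finite union of periodic orbits can reproduce the full length-$N$ language. The paper's construction instead attaches, to each bridge edge, an ``empty barbell'' — a bi-asymptotic (not periodic) orbit running from a source cycle to a sink cycle — and it is precisely the union of isolated cycles together with these barbells, chosen so that each barbell's endpoints sit in a source and a sink respectively, that realizes a dense family of NMC (hence isolated) subshifts. Repairing your argument essentially amounts to rediscovering the NMC property and the barbell decomposition of Lemma~\ref{NMC}.
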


Theorem~\ref{mainall} is a consequence of a technical result (Theorem~\ref{isodense}) which shows that a particular class called NMC subshifts are residual in $\Scal$.

Theorem~\ref{mainall} immediately 
implies all (to the authors' knowledge) existing results in the literature about the structure of generic subshifts: for example, that a generic subshift has zero entropy (proved by~\cite{FrischTamuzGenericEntropy,Sigmund1971}) and is language stable (proved by~\cite{CyrKraLanguageStable}).

It is natural to wonder what happens if instead of the universal space $\Scal$ of subshifts, one works with the space of subshifts contained in some prescribed subshift (as is often done, for example in~\cite{CyrKraLanguageStable,FrischTamuzGenericEntropy,BLR}). For any nonempty shift of finite type $X$, the subshifts contained in $X$ form a clopen subset of $\Scal$ (see Lemma~\ref{SFTclopen}), so our genericity results for $\Scal$ immediately imply analogous results for the space of subshifts contained in $X$.

Our proof in fact shows that the space $\Scal$ has a dense countably infinite set of isolated points whose complement is a Cantor set. 
Such a space is sometimes referred to as Pe\l{}czy\'{n}ski Space, due to a result of Pe\l{}czy\'{n}ski~\cite{Pelczynski1965} showing that a compact zero-dimensional metric space having such structure is unique up to homeomorphism. Then, by the previous paragraph, the topological structure of the space of subshifts contained in any mixing shift of finite type is independent of the shift of finite type; that is, any two such spaces of subshifts are automatically homeomorphic (Corollary~\ref{cor:pelczynski}).

Genericity in $\Scal$ is then completely determined by the dense set of isolated points, in the sense that a property is generic if and only if it holds for every subshift in this set. Since these subshifts have highly degenerate dynamics, a natural step is to consider the derived set of $\Scal$ obtained by removing its isolated points; we denote this space by
$\Scal^{\prime}$. 
We show that $\Scal^{\prime}$ is compact, totally disconnected, and perfect\footnote{This implies that the Cantor-Bendixson rank of $\Scal$ is one.}, and hence homeomorphic to the Cantor set; in particular, the question of genericity in $\Scal^{\prime}$ is not nearly as trivial as in $\Scal$. We also give a complete description of the dynamics of a generic subshift contained in $\Scal^{\prime}$.

\begin{theorem}\label{mainall1}
A generic subshift $X$ in $\Scalprime$, the derived space of $\Scal$ (consisting of all non-isolated points of $\Scal$), has the following properties:
\begin{enumerate}
\item $X$ is a countably infinite subshift in which every point is 
bi-asymptotic to one of finitely many periodic orbits (Corollary~\ref{OMCgen}).
\item $X$ has word complexity $c_{X}(n)$ whose growth is properly superlinear and subquadratic, but there are subsequences along which $c_{X}(n)$ exhibits arbitrarily slow proper superlinear growth (Corollaries~\ref{gencplx} and~\ref{quadres}).
\end{enumerate}
\end{theorem}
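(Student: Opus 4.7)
The plan is to introduce the $\omc$ class generalizing $\nmc$: an $\omc$ subshift is a countably infinite subshift in which every point is bi-asymptotic to one of finitely many periodic orbits. Item (1) will be obtained by showing that $\omc$ is residual in $\Scalprime$; item (2) will follow from further dense $G_{\delta}$ arguments controlling the word complexity.

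For item (1), the first step is to use Theorem~\ref{isodense} to identify $\nmc$ subshifts with exactly the isolated points of $\Scal$, so that $\Scalprime = \Scal \setminus \nmc$. Membership in $\omc$ is then $G_{\delta}$ in $\Scalprime$ because for each $k$ the condition ``every point of the subshift lies, within a mismatch window of length at most $k$, in the union of at most $k$ periodic orbits of period at most $k$'' is open in $\Scal$, and $\omc$ is the intersection over $k$. For density, I would mimic the $\nmc$ approximation used in the proof of Theorem~\ref{isodense} but, rather than stopping at finitely many orbits biasymptotic to the periodic backbone, append a countably infinite family of such orbits realizing the length-$N$ words of the target $X$ (with $N$ determined by the desired Hausdorff precision). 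The resulting subshift is then countably infinite, hence not $\nmc$ and so lies in $\Scalprime$, while being $\omc$ by construction.

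For item (2), proper superlinearity of $c_{X}(n)$ is generic because for each $C > 0$ the set $\{X : c_{X}(n) > Cn \text{ for all large } n\}$ is open and dense in $\Scalprime$: one can inject arbitrarily many extra length-$n$ words into any $\omc$ approximation while preserving the biasymptotic structure. Subquadratic growth is generic by a symmetric argument, showing that for each $k$ the set $\{X : c_{X}(n) < n^{2}/k \text{ for some large } n\}$ is open and dense, using $\omc$-style pruning at carefully chosen scales. The subsequence statement — that for any prescribed $g$ with $g(n)/n \to \infty$ there are infinitely many $n$ with $c_{X}(n) \leq g(n)$ — will follow from a diagonal combination of these density statements, as the perturbations used to reduce $c_{X}(n)$ near a single scale $n_{k}$ can be carried out independently at scales growing sufficiently fast.

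The main obstacle I expect is the density step for $\omc$ in $\Scalprime$: arranging an approximation that simultaneously (i) reproduces the full word set of $X$ at a prescribed length, (ii) is countably infinite so as to avoid $\nmc$, and (iii) has every orbit biasymptotic to a fixed finite periodic core. Once such approximations are available, the complexity statements in item (2) should reduce to quantitative refinements of this same construction, with the delicate part being the realization of both upper and lower complexity bounds at the chosen scales without disturbing the biasymptotic structure needed for item (1).
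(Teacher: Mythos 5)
Your proposal deviates from the paper in several places, and two of the deviations are genuine gaps.

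First, and most seriously, your class $\omc$ is defined structurally (countably infinite, every point biasymptotic to one of finitely many periodic orbits), whereas the paper's $\omc$ is a finitary graph-theoretic condition: some Rauzy graph $G_{X,n}$ has exactly one simple cycle with both incoming and outgoing edges. The distinction matters because the paper's $\omc$ is automatically \emph{open} in $\Scal$ (it is a union of cylinders $[X,n]$), so the paper only needs to prove density (Theorem~\ref{badcylinders}), and does so by inserting a single empty double barbell into an NMC approximation. Your proposed $G_\delta$ argument for your version of $\omc$, on the other hand, does not work: the condition ``every point of $X$ agrees with one of at most $k$ periodic orbits of period at most $k$ outside a mismatch window of length at most $k$'' is not open. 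Concretely, a subshift $X$ where every point is $0^\infty 1 0^a 1 0^\infty$ for $a \le 5$ (plus shifts and $0^\infty$, $0^\infty 1 0^\infty$) has this property, but a subshift $Y$ with $L_N(Y)=L_N(X)$ can contain a point $0^\infty 1 0^5 1 0^b 1 0^5 1 0^\infty$ for arbitrarily large $b>N$; every $N$-letter subword of such a point lies in $L_N(X)$, yet the ``mismatch window'' is of length $\approx b$. So containment in the cylinder $[X,N]$ does not force the bounded-window property, and your condition is not detected at any finite word length.

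Second, the step ``the resulting subshift is then countably infinite, hence not $\nmc$'' is false. By Corollary~\ref{SFT}, NMC subshifts are countable shifts of finite type, and many of them are countably infinite (e.g.\ the orbit closure of $0^\infty 1 0^\infty$, or any nontrivial empty barbell). Being countably infinite does not put a subshift in $\Scalprime$. The paper gets around this by explicitly producing a \emph{middle} cycle (the central cycle of a double barbell), and citing Lemma~\ref{NMCstructure}: because there are points $p^\infty q m^r s^\infty$ with central words of unbounded length, the subshift cannot be NMC. That argument has to be made; cardinality alone does not do it.

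Third, for item (2), your argument for the subquadratic bound only produces a subsequence statement (``$c_X(n) < n^2/k$ for \emph{some} large $n$''), while Corollary~\ref{quadres} asserts a bound $c_X(n) \le Cn^2 + Dn + E$ that holds for \emph{all} $n$. That all-$n$ bound cannot be obtained by perturbing at isolated scales; the paper proves it by showing (Lemma~\ref{OMCquad}) that every OMC subshift globally has quadratic complexity, by counting paths in the empty double barbell. Similarly, the realization of slow superlinear growth along subsequences (Corollary~\ref{gencplx}) is not a ``diagonal combination'' of one-scale perturbations: the paper constructs explicit OMC subshifts $Y(R)$ in any cylinder, keyed to a sparse set $R\subset\mathbb N$ of allowed central-cycle traversal counts, and the sparsity of $R$ gives the growth control (Lemma~\ref{OMCslow}). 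Your sketch gestures in this direction but lacks the structure needed to make the upper bounds quantitative. The upshot is that the OMC graph picture is not optional; it is exactly what supplies (i) openness of the target class, (ii) the NMC-vs-OMC distinction, (iii) the all-$n$ quadratic bound, and (iv) a concrete family of subshifts whose complexity can be tuned.
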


Theorem~\ref{mainall1} is a consequence of a technical result (Corollary~\ref{OMCgen}) which shows that a particular class called OMC subshifts are residual in $\Scal$.

Theorem~\ref{mainall1} implies that, similarly to $\Scal$, a generic subshift in $\Scal^{\prime}$ possesses
fairly degenerate dynamics, although less so than in $\Scal$. In particular, even in $\Scal^{\prime}$ a generic subshift is `uninteresting' dynamically (e.g., it is nontransitive, countable, and all points are bi-asymptotic to periodic orbits). 
Just as before, we can restrict further to rule out such degenerate behavior, and this time restrict to the transitive subshifts in
$\Scal$. 
To avoid pathological issues with the space, we again remove isolated points of $\Scal$, and thus we define $\Tcal^{\prime}$ to be the set of infinite transitive subshifts in $\Scal^{\prime}$. (We note that for transitive subshifts, being isolated is equivalent to being finite.)

It turns out that $\Tcal^{\prime}$ is not closed, and hence not complete as a metric space, so we consider its closure
$\overline{\Tcal^{\prime}}$. Within $\Tcalprimebar$, generic subshifts possess much more interesting dynamics.


\begin{theorem}\label{maintrans}
A generic subshift $X$ in the closure $\Tcalprimebar$ of the infinite transitive subshifts has the following properties:
\begin{enumerate}
\item $X$ is a regular Toeplitz subshift (and hence is minimal, uniquely ergodic, and has zero entropy) which factors onto the universal odometer (Theorem~\ref{univ}).
\item $X$ has topological rank two (Corollary~\ref{rk2}).
\item $X$ is strong orbit equivalent to the universal odometer, and in particular, the dimension group of $X$ has rank one and hence no nontrivial infinitesimals (Corollary~\ref{cor:onesoeclass}). 
\item There exist subsequences along which the word complexity function $c_{X}(n)$ has any desired linear/polynomial/stretched exponential growth which is consistent with $X$ being infinite, minimal, and of zero entropy (Proposition~\ref{bigcplx}).
\item The automorphism group of $X$ is generated by the shift map (Corollary~\ref{autgrp}), and the mapping class group is trivial (Theorem~\ref{thm:mcgfortransitivesubshifts}).
\end{enumerate}
In addition, for any increasing unbounded $h: \mathbb{N} \rightarrow \mathbb{R}^+$, a generic subshift in $\Tcalprimebar$ has word complexity satisfying $c_X(n) < n + h(n)$ along a subsequence (Corollary~\ref{sosmall}).
\end{theorem}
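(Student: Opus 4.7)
My plan is to apply the Baire category theorem in $\Tcalprimebar$. For each $N\ge 1$, set
$$A_N \;=\; \bigcup_{n\ge N}\bigl\{X\in\Tcalprimebar : c_X(n) < n + h(n)\bigr\},$$
so that the set of $X\in\Tcalprimebar$ satisfying $c_X(n)<n+h(n)$ infinitely often is exactly $\bigcap_{N\ge 1} A_N$. It suffices to show each $A_N$ is open and dense.

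Openness is immediate, because for any fixed $n$, the $n$-letter language $\mathcal{L}_n(X)$ is Hausdorff-locally constant in $X$, so $c_X(n)$ is as well; hence $\{X:c_X(n)<n+h(n)\}$ is open and $A_N$ is a union of open sets.

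For density, fix a nonempty open $U\subseteq\Tcalprimebar$, pick $X_0\in U$, and choose $L$ large enough that every $Y\in\Tcalprimebar$ with $\mathcal{L}_L(Y)=\mathcal{L}_L(X_0)$ lies in $U$. By Theorem~\ref{univ}, I may assume $X_0$ is a regular Toeplitz subshift with period tower $p_1\mid p_2\mid\cdots$. My intention is to produce $Y\in U\cap A_N$ as a new regular Toeplitz subshift, obtained by keeping the first several levels of $X_0$'s tower (so as to preserve $\mathcal{L}_L(Y)=\mathcal{L}_L(X_0)$) and then continuing with a much sparser new stage of period $p\ge N$. One designs this new stage so that its skeleton has only a minimal number of hole positions with very constrained completions at later stages, in the spirit of the Sturmian-type equality $c(p)=p+1$ available for a Toeplitz sequence with a fixed background on filled positions and a single binary hole per period. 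This gives $c_Y(p)\le p+C$ for a constant $C$ determined by the preserved initial stages; since $h$ is unbounded, choosing $p\ge N$ with $h(p)>C$ yields $c_Y(p)<p+h(p)$. Because $Y$ is a regular Toeplitz subshift it is minimal and infinite, hence lies in $\Tcalprime\subseteq\Tcalprimebar$, as required, and Baire then gives $\bigcap_N A_N$ residual.

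The crux of the argument is the combinatorial construction of the sparse new Toeplitz stage: one must simultaneously (i) preserve the prescribed $L$-language inherited from $X_0$ and (ii) force the $p$ cyclic shifts of the new-stage skeleton, together with their deep-stage completions, to collapse into only $p+O(1)$ distinct $p$-patterns. This decoupling of the small scale $L$ from the large scale $p$ is exactly the flexibility supplied by the Toeplitz-tower constructions underlying Theorem~\ref{univ} and Proposition~\ref{bigcplx}, and I expect the proof here to be a targeted specialization of that machinery to the slowest-growth end of the complexity spectrum, complementing the maximal-growth content of Proposition~\ref{bigcplx}.
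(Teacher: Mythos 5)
Your proposal addresses only the final ``In addition'' clause of Theorem~\ref{maintrans} (the claim that $c_X(n)<n+h(n)$ along a subsequence for generic $X$, i.e.\ Corollary~\ref{sosmall}); items (1)--(5), which constitute the bulk of the theorem, are not engaged at all. Focusing on the clause you do treat: the $G_\delta$ framing and the openness of each $A_N$ are correct, but the density argument has a genuine gap, and it departs from the paper's route in a way that is not clearly repairable as written.

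The paper proves density of the complexity-window set not by building Toeplitz examples directly, but by applying the letter-to-word substitution machinery (Theorem~\ref{letword} together with the complexity transfer estimate Lemma~\ref{cplxbd}, packaged in Theorem~\ref{mainfg}) to Sturmian subshifts, where $c_X(n)=n+1$ exactly; a small trick with $f=0.5n$ circumvents the fact that $n\not\prec n+1$, and Morse--Hedlund upgrades the window to $[n,n+h(n)]$. The witnesses $\tau^*(X)$ produced this way are almost $1$--$1$ extensions of irrational rotations, hence \emph{not} Toeplitz. Your proposal instead tries to exhibit Toeplitz witnesses directly inside an arbitrary cylinder. That is a substantially stronger claim, and the key step --- ``designing a sparse stage so that $c_Y(p)\le p+C$'' --- is asserted rather than proved. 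The ``Sturmian-type equality $c(p)=p+1$'' you invoke for a one-hole-per-period skeleton is false: even the toy skeleton $10\,?\,0$ already gives $c(4)=6=p+2$, and the amount of overcount saved by coincidences between cyclic shifts depends delicately on which hole-fillings create lower-period words. With several tower levels in play, the dependence of that error term $C$ on the preserved levels of $X_0$ (and on all later completions) needs a careful argument, which is missing. In short, the existence of a regular Toeplitz in every cylinder of $\Tcalprimebar$ with $c(n)\le n+C$ along a subsequence is true (it follows \emph{post hoc} from the conjunction of Theorem~\ref{toegen} and Corollary~\ref{sosmall}), but it is not something your sketch establishes directly, and one cannot use that consequence to prove Corollary~\ref{sosmall} without circularity. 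You would do better to follow the paper's route: apply Theorem~\ref{letword} to a Sturmian, estimate complexity via Lemma~\ref{cplxbd}, and handle the $n\prec n+1$ issue as in Corollary~\ref{sosmall}.
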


In~\cite{HochmanGeneric}, Hochman proves that in the space of transitive systems contained in $\textnormal{Homeo}(K)$, the topological conjugacy class of the universal odometer\footnote{Here, by the universal odometer we mean the unique (up to topological conjugacy) odometer which factors onto every finite transitive subshift.} is residual. There is no hope for any conjugacy class to be residual in
$\Tcalprimebar$ (since every conjugacy class is countable). However, Theorem~\ref{maintrans} gives two natural versions of this fact for our setting:
the first is that the set of subshifts strong orbit equivalent to the universal odometer is in fact residual in $\Tcalprimebar$, and the second is that a generic subshift in $\Tcalprimebar$ is an almost 1-1 extension of the universal odometer (due to being Toeplitz). 



Topological rank is a conjugacy invariant defined for minimal homeomorphisms of the Cantor set (which we can consider for generic subshifts in $\Tcalprimebar$ since they are minimal); see \cite{DDMP2021}.
A minimal Cantor system has topological rank one if and only if it is an odometer, and hence this is not achievable for subshifts. Thus, within the expansive setting, topological rank two is the least rank possible, and Theorem~\ref{maintrans} shows that again this `simplest possible' situation is generic in $\overline{\Tcal^{\prime}}$.




It is a well-known and difficult problem to characterize possible word complexity functions $c_{X}(n)$ (see \cite{FerCplx}), and the question of which growth rates can occur is very much open. It is therefore somewhat interesting that a generic subshift in $\Tcalprimebar$ must achieve all possible linear/quadratic/stretched exponential growth rates along subsequences; to our knowledge, no explicit examples of subshifts with this property are known.



Finally, we consider the subspace of infinite totally transitive subshifts $\TTcalprime$ in $\Scal$. Again this space is not closed, so we consider the closure $\TTcalprimebar$. We prove that a generic system in $\TTcalprimebar$ again exhibits, in many ways, the 'simplest' possible behavior (though some properties, like being Toeplitz, are precluded by definition).

\begin{theorem}\label{maintottrans}
A generic subshift $X$ in the closure $\TTcalprimebar$ of the infinite totally transitive subshifts has the following properties:
\begin{enumerate}
\item $X$ is zero entropy, minimal, topologically mixing, and uniquely ergodic (Theorems~\ref{ttgeneric} and~\ref{ttmixgen}).
\item $X$ has unique invariant measure which is weakly mixing and which has a rigidity sequence (Theorems~\ref{fullWM} and~\ref{rigid}).
\item $X$ has topological rank two (Theorem~\ref{ttgeneric}).
\item The dimension group of $X$ has rank two and no nontrivial infinitesimals (Proposition~\ref{dimrk2}).
\item The automorphism group of $X$ is generated by the shift map (Corollary~\ref{cor:ttbartrivauto}), and the mapping class group of $X$ is isomorphic to a subgroup of the affine group of $\mathbb{Q}$ (Theorem~\ref{thm:genericmcg}).
\end{enumerate}
In addition, for any increasing unbounded $h: \mathbb{N} \rightarrow \mathbb{R}^+$, a generic subshift in $\TTcalprimebar$ has word complexity satisfying $c_X(n) < n + h(n)$ along a subsequence (Corollary~\ref{sosmall2}).
\end{theorem}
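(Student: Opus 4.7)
The plan is to verify that each of the six enumerated properties holds on a residual subset of $\TTcalprimebar$ and then apply the Baire Category Theorem. Since $\TTcalprimebar$ is by definition a closed subset of the complete metric space $\Scal$ (in the Hausdorff metric), it is itself a Baire space, and the countable intersection of residual sets is residual. Thus, once each property is established on a dense $G_{\delta}$ via the cited results (Theorems~\ref{ttgeneric}, \ref{ttmixgen}, \ref{fullWM}, \ref{rigid}, Proposition~\ref{dimrk2}, Corollary~\ref{cor:ttbartrivauto}, Theorem~\ref{thm:genericmcg}, and Corollary~\ref{sosmall2}), the conjunction holds residually and the theorem follows.

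For each referenced statement, the general recipe I would follow is: first, express the property in question as a $G_{\delta}$ condition on the language of the subshift (for dynamical properties like minimality, topological mixing, unique ergodicity, or zero entropy) or on some approximation data such as Kakutani--Rokhlin towers (for weak mixing and rigidity of the unique measure); second, produce a dense collection of totally transitive subshifts enjoying the property. The most natural candidates for step two are explicit S-adic or Bratteli--Vershik constructions of topological rank two, since these permit simultaneous control of unique ergodicity, low complexity, the dimension group (rank two, no infinitesimals), and the automorphism/mapping class group. Topological mixing, which is what distinguishes $\TTcalprimebar$ from $\Tcalprimebar$, should hold for suitably chosen return-time data, and weak mixing and rigidity of the unique invariant measure should follow from arranging eigenvalue-avoidance and approximate identity return-time behavior, respectively, in the spirit of the classical Halmos--Rokhlin genericity arguments transported to the symbolic setting.

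For the word complexity statement, I would proceed exactly as in Corollary~\ref{sosmall} for $\Tcalprimebar$: fix an increasing unbounded $h$ and, for each $N$, note that
\[
\{X \in \TTcalprimebar : c_X(n) < n+h(n) \text{ for some } n \geq N\}
\]
is open in $\TTcalprimebar$. Density then follows from the existence of dense collections of totally transitive subshifts containing arbitrarily long near-periodic stretches, which force complexities very close to $n$ at large $n$. Intersecting over $N$ yields a dense $G_{\delta}$, and a further countable intersection over a dense sequence of candidate functions $h$ gives the full statement.

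The main obstacle I anticipate is step two above: producing, in every Hausdorff-neighborhood of an arbitrary element of $\TTcalprimebar$, totally transitive subshifts which are simultaneously topologically mixing, uniquely ergodic with weakly mixing and rigid invariant measure, of topological rank two with rank-two dimension group having no infinitesimals, and with trivial extended symmetries. Any one of these is delicate to arrange, and insisting on genuine total transitivity (rather than just transitivity, as in $\Tcalprimebar$) rules out the standard Toeplitz/odometer approximations used for Theorem~\ref{maintrans}. I therefore expect the technical heart of each cited result to be a carefully engineered S-adic or substitutive approximation scheme of rank two, capable of witnessing all of these properties simultaneously in a single flexible construction.
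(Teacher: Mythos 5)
Your outer structure is correct: Theorem~\ref{maintottrans} is indeed an aggregation, and since $\TTcalprimebar$ is a closed subset of the complete metric space $\Scal$ it is a Baire space, so it suffices to show each listed property is residual and intersect. Your instinct to split each property into a $G_{\delta}$ verification (via language conditions) plus a density argument is also the right shape.

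However, you anticipate the wrong technical bottleneck, and in doing so you miss the single idea that makes the whole section go. You write that density would require ``producing, in every Hausdorff-neighborhood of an arbitrary element of $\TTcalprimebar$, totally transitive subshifts which are simultaneously'' topologically mixing, uniquely ergodic with weakly mixing and rigid measure, of topological rank two, etc., via ``a carefully engineered S-adic or substitutive approximation scheme.'' The paper never engineers explicit subshifts in each cylinder. Instead it proves Theorem~\ref{conjdense}: for \emph{any} zero-entropy aperiodic subshift $X \in \TTcalprimebar$, the entire topological conjugacy class of $X$ is dense in $\TTcalprimebar$. The proof is short: any cylinder $C$ in $\TTcalprimebar$ is of the form $[Y,n]$ with $Y = S(C)$ a mixing shift of finite type (Lemma~\ref{ttmix}); one then finds a mixing SFT subsystem $Z' \subset Y$ all of whose points contain every $n$-letter word of $Y$, and applies Krieger's embedding theorem to embed $X$ into $Z'$, landing a conjugate copy of $X$ inside $C$. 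Since all the properties on your list (minimality, unique ergodicity, weak mixing and rigidity of the unique measure, topological rank two, rank-two dimension group, trivial automorphism and mapping class group restrictions) are conjugacy-invariant, Theorem~\ref{conjdense} reduces density to exhibiting a \emph{single} zero-entropy aperiodic totally transitive subshift with each property (e.g., Sturmian for most of them, a Chacon-type example for weak mixing, an example from \cite{HK} for topological mixing). The hard simultaneous-engineering problem you identify simply does not arise.

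Two further gaps worth flagging. First, for the dimension group (Proposition~\ref{dimrk2}) the paper does not merely import conjugacy-invariance: the rank-two claim rests on showing (Theorem~\ref{killonenum}, Corollary~\ref{killQ}) that the set of uniquely ergodic subshifts having any clopen set of a prescribed rational measure in $(0,1)$ is nowhere dense; this requires a separate argument in Rauzy-graph language and is not a conjugacy invariant of an individual system, so it cannot be obtained from Theorem~\ref{conjdense} alone. Second, for the complexity corollary, the paper's density input is again Theorem~\ref{conjdense} applied to Sturmian subshifts combined with Lemma~\ref{cplxbd2} (conjugacy distorts complexity by a bounded argument shift); your appeal to ``near-periodic stretches'' is too vague to substitute, since arbitrary cylinders in $\TTcalprimebar$ need not admit subshifts with near-periodic stretches of the required type without the embedding step.
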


We can see that many properties of generic subshifts in $\TTcalprimebar$ are similar to those in $\Tcalprimebar$. One difference is that a generic system in $\TTcalprimebar$ has dimension group with rank two rather than one; however, this is the minimal possible given the restriction (Corollary~\ref{killQ} in the main text) that nontrivial clopen sets for generic systems in $\TTcalprimebar$ must have irrational measure.
Another difference is that no orbit equivalence class is residual in $\TTcalprimebar$ (Corollary~\ref{cor:nogenoeclass} in the main text), whereas the strong orbit equivalence class of the universal odometer is residual in $\Tcalprimebar$. 

The restrictions on the mapping class group reflect the simplicity of a generic subshift in the totally transitive setting, given that~\cite{BoyleChuysurichayMCG} shows that the mapping class groups of mixing shifts of finite type are rather large groups.\\


Our results here concern only the case of subshifts over the group $\mathbb{Z}$, and a natural question is the extent to which analogous results might hold for subshifts over more general groups (see~\cite{FrischTamuzGenericEntropy} for example). Our proofs rely heavily on the use of Rauzy graphs (see Section 2 for definitions), and it is not (at least immediately) obvious how to adapt the techniques used here beyond $\mathbb{Z}$. Second, it would be interesting to expand beyond the zero-dimensional realm. For example, shifts of finite type are precisely the zero-dimensional Smale spaces, and as a starting point one might consider the following question: for a fixed Smale space $(X,f)$, what is the structure of a generic subsystem within the space of compact subsystems of $(X,f)$ with the Hausdorff metric? We note that for a Smale space $(X,f)$, a Markov partition yields a shift of finite type cover $\pi \colon (Y,\sigma) \to (X,f)$ which in turn induces a continuous surjection from the space of subshifts of $(Y,\sigma)$ to the space of subsystems of $(X,f)$.

Finally, we briefly describe the structure of the paper. Section~\ref{defs} presents some useful definitions and preliminary results. In Sections~\ref{main} and \ref{main2}, we prove genericity results about the universal subshift space $\Scal$ and the derived set $\Scalprime$, which lead to Theorems~\ref{mainall} and \ref{mainall1}. The proofs in those sections rely on fairly technical arguments about so-called Rauzy graphs, and the fact that generic properties in those settings are essentially controlled by simple properties of the Rauzy graphs. In Sections~\ref{trans} and \ref{tottrans}, we prove results about the closures $\Tcalprimebar$ and $\TTcalprimebar$ of the spaces of infinite transitive and totally transitive subshifts respectively; these lead to Theorems~\ref{maintrans} and \ref{maintottrans}. The proofs in these sections rely on more dynamical arguments; a key technique in both is showing that for any subshift $X$ satisfying weak hypotheses, there is a dense class of subshifts `similar' to $X$ (see Theorems~\ref{letword} and \ref{conjdense}).


\section{Definitions and preliminaries}\label{defs}

\subsection{General symbolic dynamics definitions}
Let $\mathcal{A}$ be a finite subset of $\mathbb{Z}$. We consider points in $\mathcal{A}^{\mathbb{Z}}$ as biinfinite sequences $x=(x_{i})_{i \in \mathbb{Z}}$ where each $x_{i} \in \mathcal{A}$. Using the metric on $\mathcal{A}^{\mathbb{Z}}$ defined by
\begin{equation}\label{eqn:shiftmetric}
d(x,y) 
= 2^{-\inf\{|k| \colon x_{k} \ne y_{k}\}}
\end{equation}
the space $\mathcal{A}^{\mathbb{Z}}$ becomes a compact metric space. (The set inside the infimum is empty if and only if $x = y$, and in this case we declare the infimum to be $\infty$ and correspondingly $d(x,y) = 0$.) We define the shift map $\sigma \colon \AtoZ \to \AtoZ$ by $\left(\sigma(x)\right)_{i} = x_{i+1}$ and note that $\sigma$ is a self-homeomorphism of $\AtoZ$. By a \emph{subshift}
(on $\mathcal{A}$) we mean a compact $\sigma$-invariant subset of $\AtoZ$. Throughout, the term subshift will always refer to a subshift on a finite alphabet $\mathcal{A} \subset \mathbb{Z}$.

Typically we only refer to a subshift via its corresponding domain $X$; when necessary, we'll write $(X,\sigma_{X})$ when we want to also indicate the respective shift map.


The \textit{language} of a subshift $X$ on $\mathcal{A}$, denoted $L(X)$, is the set of all finite words appearing as subwords of points in $X$. For any $n \in \mathbb{N}$, we denote $L_n(X) = L(X) \cap \mathcal{A}^n$, the set of $n$-letter words in $L(X)$.


For a subshift $X$, the \textit{word complexity function} of $X$ is defined by $c_{X}(n) := |L_{n}(X)|$. 

For a subshift $X$ and word $w \in L(X)$ we denote by $[w]$ the clopen subset in $X$ consisting of all $x \in X$ such that $x_{0}\ldots x_{|w|-1} = w$. By an \textit{$X$-cylinder set} we mean any set of the form $[w]$ for some $w \in L(X)$. Note that we use the term $X$-cylinder set instead of the more commonly used 'cylinder set' to refer to such sets, since we use the term cylinder set to refer to a different class of objects (see Definition~\ref{def:cylinderset}).

A subshift $(X,\sigma_{X})$ is \textit{transitive} if there exists $x \in X$ such that the set $\{\sigma_{X}^{i}(x)\}_{i \in \mathbb{Z}}$ is dense in $X$, and $(X,\sigma_{X})$ is \textit{totally transitive} if for all $n \ge 1$ there exists $x^{(n)} \in X$ such that the set $\{\sigma_{X}^{in}(x^{(n)})\}_{i \in \mathbb{Z}}$ is dense in $X$. A subshift $(X, \sigma_X)$ is \textit{topologically mixing} if, for every $v, w \in L(X)$, there exists $N$ so that for all $n > N$, $[v] \cap \sigma_X^{n} [w] \neq \varnothing$.

A subshift $X$ is \textit{minimal} if for all $w \in L(X)$ and $x \in X$, $w$ appears as a subword of $x$. A subshift $(X,\sigma_{X})$ is \textit{uniquely ergodic} if there exists precisely one $\sigma_{X}$-invariant Borel probability measure on $X$. The \textit{(topological) entropy} of a subshift $(X,\sigma_{X})$ is the quantity $h_{top}(\sigma_{X}) = \lim_{n \to \infty} \frac{1}{n} \log |L_{n}(X)|$.

A subshift $X \subset \mathcal{A}^{\mathbb{Z}}$ is a \textit{shift of finite type} if there exists a finite set of words $F$ over the alphabet $\mathcal{A}$ such that $X$ consists of all points in $\mathcal{A}^{\mathbb{Z}}$ not containing any word from $F$. If $\Gamma$ is a finite labeled directed graph with edge set $\mathcal{E}(\Gamma)$ then the set of points obtained from biinfinite walks on $\Gamma$ is a shift of finite type on the alphabet $\mathcal{E}(\Gamma)$.

A \textit{(topological) conjugacy} between subshifts $(X,\sigma_{X})$ and $(Y,\sigma_{Y})$ is a homeomorphism $\phi \colon X \to Y$ such that $\phi \sigma_{X} = \sigma_{Y} \phi$. An \textit{automorphism} of a subshift $(X,\sigma_{X})$ is a conjugacy from
$(X, \sigma_X)$ to itself. The \textit{automorphism group} of a subshift $(X,\sigma_{X})$ (denoted by $\aut(X,\sigma_{X})$) is the set of all automorphisms of $(X, \sigma_X)$ with the operation of composition.

For a subshift $(X,\sigma_{X})$ we may consider the \textit{suspension space} $\Sigma_{\sigma_{X}}X = (X \times [0,1]) / \sim$ where $(x,t) \sim (\sigma_{X} x, t-1)$ which carries a flow defined by $\phi_{s}(x,t) = (x,t+s)$. The \textit{mapping class group} $\mathcal{M}(\sigma_{X})$ of $(X,\sigma_{X})$ is the group of isotopy classes of orientation-preserving self-homeomorphisms of $\Sigma_{\sigma_{X}}X$ (for background on mapping class groups of subshifts, see~\cite{BoyleChuysurichayMCG,SchmiedingYang2021}).

\subsection{Minimality and notions of rank}
For a subshift $(X,\sigma_{X})$ we let $C(X,\mathbb{Z})$ denote the group of continuous integer-valued functions on $X$. The coboundary map $\partial \colon C(X,\mathbb{Z}) \to C(X,\mathbb{Z})$ is defined by $\partial \colon f \mapsto f - f \circ \sigma_X$, and we define the group of coinvariants associated to $(X,\sigma_{X})$ by
\begin{equation*}
\coinv{\sigma_{X}} = C(X,\mathbb{Z}) / \textnormal{Image}(\partial).
\end{equation*}
Contained in $\coinv{\sigma_{X}}$ is a positive cone $\coinv{\sigma_{X}}^{+} = \{ [f] \mid f \textnormal{ is nonnegative}\}$, and taking the class $[1] \in \coinv{\sigma_{X}}$ of the constant function $1 \colon x \mapsto 1$ as a distinguished order unit, the triple $(\coinv{\sigma_{X}},\coinv{\sigma_{X}}^{+},[1])$ is a unital preordered group (see~\cite{BoyleHandelman1996}). If $(X,\sigma_{X})$ is minimal then $(\coinv{\sigma_{X}},\coinv{\sigma_{X}}^{+},[1])$ is a unital ordered group and $(\coinv{\sigma_{X}},\coinv{\sigma_{X}}^{+},[1])$ is a simple dimension group~\cite{GPS1995}.
Since we will only consider $\coinv{\sigma_{X}}$ when $(X,\sigma_{X})$ is minimal, through an abuse of language we will refer to
$(\coinv{\sigma_{X}},\coinv{\sigma_{X}}^{+},[1])$ as the \emph{dimension group} associated to $(X,\sigma_{X})$ (we will also often abuse language and simply write $\coinv{\sigma_{X}}$ to refer to the triple).

By \cite{GPS1995}, for any minimal Cantor system $(X,T)$ there exists a properly ordered Bratteli diagram such that $(X,T)$ is topologically conjugate to the Vershik map on this Bratteli diagram. We say a minimal system $(X,T)$ has \textit{topological rank $d$} if it has a Bratteli-Vershik presentation for which the number of vertices per level is uniformly bounded by $d$, and there is no Bratteli-Vershik presentation of $(X,T)$ whose number of vertices per level is uniformly bounded by a number less than $d$ (for more details on these definitions, see~\cite{DDMP2021}). If two finite rank minimal subshifts are topologically conjugate, then they have the same rank; see~\cite[Cor. 4.7]{Espinoza2021} for a proof of this.

\subsection{The space of subshifts and some preliminary results}
For a metric space $X$, we let $\mathcal{K}(X)$ denote the space of nonempty compact subsets of $X$ with the Hausdorff metric. When $X$ is compact, $\mathcal{K}(X)$ is compact, and when $X$ is complete, $\mathcal{K}(X)$ is complete. Recall $\mathbb{Z}^{\mathbb{Z}}$ is a complete metric space with the metric~\eqref{eqn:shiftmetric}, and hence $\mathcal{K}(\mathbb{Z}^{\mathbb{Z}})$ is complete. We may consider the space of subshifts as the subspace of $\mathcal{K}(\mathbb{Z}^{\mathbb{Z}})$ consisting of nonempty compact sets $K \subset \mathbb{Z}^{\mathbb{Z}}$ such that $\sigma(K)=K$ (we assume from now on that all subshifts are nonempty). It turns out that in this setting, the Hausdorff metric may be equivalently defined in terms of languages, which we will instead use throughout.
\begin{definition}
We define the space of subshifts $\Scal$ to be the set of nonempty compact subsets $K$ of $\mathbb{Z}^{\mathbb{Z}}$ such that $\sigma(K)=K$, together with the metric defined as follows: for nonempty subshifts $X, Y$,
\begin{equation}\label{eqn:Hausdorff}
d(X,Y) := 2^{-\inf\{n \ \mid \ L_n(X) \neq L_n(Y)\}}.
\end{equation}
\end{definition}

The metric defined on $\Scal$ above is equivalent to the Hausdorff metric inherited from considering $\Scal$ as a subspace of $\mathcal{K}(\mathbb{Z}^{\mathbb{Z}})$. We note that $\Scal$ is closed in $\mathcal{K}(\mathbb{Z}^{\mathbb{Z}})$, and hence complete; indeed, $\Scal$ is precisely the set of fixed points of the homeomorphism $\tilde{\sigma} \colon \mathcal{K}(\mathbb{Z}^{\mathbb{Z}}) \to \mathcal{K}(\mathbb{Z}^{\mathbb{Z}})$ induced by the shift map $\sigma \colon \mathbb{Z}^{\mathbb{Z}} \to \mathbb{Z}^{\mathbb{Z}}$. The space $\mathcal{K}(\mathbb{Z}^{\mathbb{Z}})$ is totally disconnected, separable, and complete, since $\mathbb{Z}^{\mathbb{Z}}$ is complete; thus $\Scal$ is a Polish space.

\begin{definition}\label{def:cylinderset}
For a subshift $X \in \Scal$ and $n \in \mathbb{N}$, the \textit{$n$-cylinder set of $X$} in $\Scal$ is
\[
[X,n] := \{Y \ \mid \ L_n(Y) = L_n(X)\}.
\]
A \textit{cylinder set} is simply any such set $[X,n]$. For any subspace $U$ in $\Scal$, by a cylinder set in $U$ we will always mean the intersection of a cylinder set $[X,n]$ in $\Scal$ with $U$.
\end{definition}

It's clear from the definition of the metric (\ref{eqn:Hausdorff}) that for every $X$ and $n$,
$[X, n] = \{Y \ \mid \ d(X, Y) < 2^{-n}\} = \{Y \ \mid \ d(X, Y) \leq 2^{-(n+1)}\}$, and so every cylinder set is clopen by definition. Also,
we note that any nonempty cylinder $C = [X,n]$ in a subspace $U$ can, without loss of generality, always be assumed to be based at a subshift $Y$ in that subspace $U$.

For a subshift $Y \in \Scal$, we let $\Scal(Y)$ denote the space of subshifts contained in $Y$.

\begin{lemma}\label{SFTclopen}
For any subshift $Y \in \Scal$, the space $\Scal(Y)$ is compact, and when $Y$ is a shift of finite type, $\Scal(Y)$ is also open in $\Scal$.
\end{lemma}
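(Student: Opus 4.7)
The plan is to prove the two assertions separately, working throughout with the language-based formula~\eqref{eqn:Hausdorff} for the metric.

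For compactness of $\Scal(Y)$, I would first observe that since $Y$ is itself compact, the hyperspace $\mathcal{K}(Y)$ of nonempty compact subsets of $Y$ is compact in the Hausdorff metric. The set $\Scal(Y)$ can be identified with the set of elements of $\mathcal{K}(Y)$ that are invariant under the restriction of $\sigma$ to $Y$; equivalently, it is the fixed-point set of the induced self-homeomorphism $\tilde{\sigma}$ restricted to $\mathcal{K}(Y)$, and hence is closed in $\mathcal{K}(Y)$. It is therefore a closed subset of a compact space, so compact. A direct check using \eqref{eqn:Hausdorff} also works: if $X_n \to X$ in $\Scal$ with every $X_n \subseteq Y$, then for each $k$ one has $L_k(X_n) = L_k(X)$ for all sufficiently large $n$, whence $L_k(X) \subseteq L_k(Y)$; since this holds for all $k$, it follows that $L(X) \subseteq L(Y)$ and so $X \subseteq Y$.

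For openness when $Y$ is a shift of finite type, I would fix a finite set $F$ of forbidden words defining $Y$ and let $N$ be the maximum length of a word in $F$. Then for any subshift $Z$, the inclusion $Z \subseteq Y$ is equivalent to the condition $L_N(Z) \cap F = \varnothing$, since for SFTs with forbidden words of length at most $N$ this already rules out every longer forbidden pattern. Because the condition depends only on the $N$-language of $Z$, any $X \in \Scal(Y)$ has the property that every subshift $Z$ with $L_N(Z) = L_N(X)$ also lies in $\Scal(Y)$; that is, the cylinder $[X, N]$ is contained in $\Scal(Y)$. Writing $\Scal(Y) = \bigcup_{X \in \Scal(Y)} [X, N]$ exhibits $\Scal(Y)$ as a union of cylinder sets, and since cylinder sets are clopen in $\Scal$, this gives openness.

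Neither claim presents a substantive obstacle, since both follow essentially by unpacking the definitions. The only point requiring a small amount of care is the reduction of the SFT condition to a finite-window condition on $N$-languages, ensuring that membership in $\Scal(Y)$ is constant on cylinder sets of the appropriate radius.
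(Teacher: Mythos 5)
Your compactness argument is sound and, in its second form, essentially the paper's own: the paper embeds $\Scal(Y)$ in the compact space $\Scal[\mathcal{A}]$ for the alphabet $\mathcal{A}$ of $Y$ and then verifies closedness by the same language-limit check. Your hyperspace/fixed-point alternative is a clean reformulation of the same fact.

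For openness, your overall plan matches the paper's, but the intermediate equivalence you state is not correct. You claim that $Z \subseteq Y$ is equivalent to $L_N(Z) \cap F = \varnothing$, where $F$ is the forbidden set and $N$ its maximum word length. This fails in the reverse direction for two reasons. First, $F$ may contain words of length strictly less than $N$; such a word is never an element of $L_N(Z)$ (wrong length), so the intersection condition cannot detect it even when it does occur in points of $Z$. Second, and more substantively in this setting where $\Scal$ ranges over all finite alphabets in $\mathbb{Z}$, the condition $L_N(Z) \cap F = \varnothing$ says nothing about the alphabet of $Z$: a subshift on letters disjoint from the alphabet $\mathcal{A}$ of $Y$ vacuously satisfies $L_N(Z) \cap F = \varnothing$ while being nowhere near a subset of $Y$. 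The correct finite-window characterization, and the one the paper uses, is the \emph{positive} one: with $\mathcal{L} = L_N(Y) \subseteq \mathcal{A}^N$, one has $Z \subseteq Y$ if and only if $L_N(Z) \subseteq \mathcal{L}$, because every $y$ whose $N$-windows all lie in $\mathcal{L}$ is automatically over $\mathcal{A}$ and avoids every word of $F$ (each occurrence of a forbidden word sits inside some $N$-window). With that substitution, your conclusion that $\Scal(Y) = \bigcup_{X \in \Scal(Y)} [X,N]$ is correct and the openness claim follows as you say. So the gap is a local mis-statement rather than a wrong approach; replace the forbidden-word condition by the allowed-word condition and the argument goes through.
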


\begin{proof}
Fix any $Y \in \Scal$. Then there is a finite $\mathcal{A} \subset \mathbb{Z}$ for which $\Scal(Y)$ is a subset of the compact set $\Scal[\mathcal{A}]$, meaning that it suffices to prove that $\Scal(Y)$ is closed. But this is immediate; if $Z$ is a limit of subshifts $Z_n \in \Scal(Y)$, then we can assume without loss of generality that $L_n(Z_n) = L_n(Z)$ for all $n$. Since $Z_n \subset Y$, $L_n(Z_n) \subset L_n(Y)$, so $L_n(Z) \subset L_n(Y)$ for all $n$, implying that $Z \subset Y$ so $Z \in \Scal(Y)$.

If $Y$ is a shift of finite type, then there exists $n$ and a finite set $\mathcal{L}$ of words so that $Y$ consists of all sequences in which every $n$-letter subword is in $\mathcal{L}$. Then $\Scal(Y)$ can be written as a union of cylinder sets $[X, n]$ over all $X$ where $L_n(X) \subset \mathcal{L}$, so it is open.
\end{proof}
A useful consequence of Lemma~\ref{SFTclopen} is the following: for any shift of finite type $Y$, any class of subshifts which is generic in $\Scal$ is also generic within the space $\Scal(Y)$.

There is another quite useful way to view cylinders in $\Scal$, which requires the notion of Rauzy graphs.


\begin{definition}
For a subshift $X$ and $n \in \mathbb{N}$, the \textit{$n$th Rauzy graph of $X$} is the directed graph $G_{X, n}$ with vertex
set $L_{n-1}(X)$, and directed edges from $w_1 \ldots w_{n-1}$ to $w_2 \ldots w_{n}$ for all $w_1 \ldots w_{n} \in L_{n}(X)$.
\end{definition}

\begin{example}
If $X$ is the golden mean subshift consisting of biinfinite sequences on $\{0,1\}$ without consecutive $1$s, and $n = 4$, then $G_{X, 4}$ is the following directed graph:\\

\begin{center}
\tikzset{every loop/.style={min distance=10mm,in=150,out=210,looseness=10}}
\begin{tikzpicture}[
            > = stealth, 
            shorten > = 1pt, 
            auto,
            node distance = 3cm, 
            thick 
        ]

        \tikzstyle{every state}=[
            draw = black,
            thick,
            fill = white,
            minimum size = 4mm
        ]

        \node[state] (000) {$000$};
        \node[state] (100) [above right of=000] {$100$};
        \node[state] (010) [right of=100] {$010$};
        \node[state] (001) [below right of=000] {$001$};
        \node[state] (101) [right of=001] {$101$};

        \path[->] (000) edge [loop left] node {0000} (000);
        \path[->] (000) edge [bend right=10] node[swap] {0001} (001);
        \path[->] (100) edge [bend right=10] node[swap] {1001} (001);
        \path[->] (100) edge [bend right=30] node[swap] {1000} (000);
        \path[->] (010) edge [bend right=5] node[swap] {0100} (100);
        \path[->] (001) edge [bend left=5] node {0010} (010);
        \path[->] (101) edge [bend left=15] node {1010} (010);
        \path[->] (010) edge [bend left=20] node {0101} (101);
    \end{tikzpicture}
    \end{center}
\end{example}

It is immediate from the definition that every Rauzy graph $G_{X,n}$ is \textit{essential}, i.e. that every vertex has at least one incoming edge and at least one outgoing edge. In particular, this means that any finite path can be extended on the left and right to a biinfinite path. All directed graphs we treat are assumed to be essential.

For each $n \in \mathbb{N}$, there is a map from points of $X$ to the Rauzy graph $G_{X,n}$: to any $x \in X$, associate the biinfinite path whose $k$th vertex is $x_k \ldots x_{k+n-2}$ for all $k \in \mathbb{Z}$. Every biinfinite path in $G_{X,n}$ is associated to some biinfinite sequence, which may or may not be in $X$.


The following lemma shows the fundamental connection between Rauzy graphs and cylinders in the space $\Scal$. It is immediate from the definitions, but is so fundamental to our arguments that we state it explicitly.

\begin{lemma}\label{rauzy}
For any subshift $X$ and $n \in \mathbb{N}$, $[X,n]$ is the set of all subshifts $Y$ for which $G_{X,n} = G_{Y,n}$. In particular, for any such $Y$, every $y \in Y$ corresponds to a biinfinite path on $G_{X,n}$ and every edge in $G_{X,n}$ is part of at least one such path.
\end{lemma}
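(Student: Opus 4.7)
The plan is to unfold the definitions and argue that the Rauzy graph $G_{X,n}$ and the language data $L_n(X)$ determine each other, so that the condition $G_{X,n}=G_{Y,n}$ coincides with $L_n(X)=L_n(Y)$, which is the defining condition for $[X,n]$.

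First I would observe the easy direction: given $L_n(X)$, the graph $G_{X,n}$ is constructed purely formally, with vertices $L_{n-1}(X)$ and with one directed edge per element of $L_n(X)$. Since every $(n-1)$-letter word in $L(X)$ arises as a prefix of some $n$-letter word in $L(X)$ (because any word in the language of a subshift extends one letter to the right in the language), the vertex set $L_{n-1}(X)$ is in fact determined by $L_n(X)$. So $L_n(X)=L_n(Y)$ directly forces $G_{X,n}=G_{Y,n}$. Conversely, $L_n(X)$ can be read off $G_{X,n}$ as the set of concatenations $v_1 w$ where $v_1 \to w$ is an edge (equivalently, the edge labels in the construction), so equality of the graphs forces equality of the $n$-letter languages. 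This gives the first assertion.

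For the "in particular" clause, once we know $G_{Y,n}=G_{X,n}$, any $y \in Y$ determines a biinfinite path $k \mapsto y_k y_{k+1}\cdots y_{k+n-2}$ on $G_{Y,n}$, which is the same graph as $G_{X,n}$. For the statement that every edge of $G_{X,n}$ lies on such a path, I would use the fact that every edge corresponds to a word $w \in L_n(X)=L_n(Y)$, hence $w$ appears as a subword of some $y \in Y$, and the associated biinfinite path traverses the given edge at the corresponding position.

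There is really no obstacle here; the lemma is a definitional bookkeeping statement. The only step that requires a second's thought is the remark that the vertex set $L_{n-1}(X)$ is redundant given $L_n(X)$, which is where the hypothesis that we are working with a subshift (rather than an arbitrary set of words) is silently used.
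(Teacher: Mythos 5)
Your proof is correct, and since the paper declares this lemma ``immediate from the definitions'' without giving a proof, yours is just the natural filling-in of those definitions. The one non-automatic observation you correctly flag --- that $L_{n-1}(X)$ is recoverable from $L_n(X)$ because every word in the language of a subshift extends to the right --- is exactly the point that makes the equivalence work, and the rest is bookkeeping as you say.
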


We can now use Rauzy graphs to see that each cylinder set contains a maximal element with respect to inclusion.


\begin{definition}
Suppose $X$ is a subshift and $G_{X,n}$ is a Rauzy graph of $X$ for some $n$. For a subgraph $G$ of $G_{X,n}$, we define $S(G)$ to be the subshift consisting of all biinfinite sequences corresponding to paths on $G$. For a cylinder set $C = [X,n]$, define $S(C)$ to be $S(G_{X,n})$.
\end{definition}
It is clear from the definition that for any cylinder set $C=[X,n]$, the subshift $S(C)$ is always a shift of finite type, since it is defined in terms of allowed $n$-letter words. We note that since $G_{X,n}$ is essential, every edge in $G_{X,n}$ is part of some biinfinite path, so by Lemma~\ref{rauzy}, $S(C) \in C$.

The following lemma is left to the reader.
\begin{lemma}
For every cylinder set $C$, 
every $Y \in C$ is a subset of $S(C)$.
\end{lemma}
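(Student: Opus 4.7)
The plan is to unwind the definitions and apply Lemma~\ref{rauzy} directly. Fix a cylinder set $C = [X,n]$, and let $Y \in C$ be arbitrary. I want to show $Y \subseteq S(C) = S(G_{X,n})$, i.e., every $y \in Y$ corresponds to a biinfinite path on $G_{X,n}$.

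The first step is to observe that by Lemma~\ref{rauzy}, the assumption $Y \in [X,n]$ is equivalent to the equality $G_{Y,n} = G_{X,n}$ of Rauzy graphs. The second step is to note that, by construction, every point $y \in Y$ gives rise to a biinfinite path in its own Rauzy graph $G_{Y,n}$, namely the path whose $k$th vertex is $y_k \ldots y_{k+n-2}$. Combining these two observations, each $y \in Y$ corresponds to a biinfinite path on $G_{X,n}$, and hence lies in $S(G_{X,n}) = S(C)$ by definition of the latter. Therefore $Y \subseteq S(C)$.

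There is essentially no obstacle here: the content of the lemma is a direct translation between the cylinder-set description (same $n$-letter language) and the graph description (same $n$th Rauzy graph), and $S(C)$ is defined precisely to contain all biinfinite walks on that graph. The only mild point worth flagging in the write-up is the inclusion $y \in S(C)$ rather than $y$ being merely a valid path: since $S(G_{X,n})$ was defined to be the subshift of biinfinite sequences associated to biinfinite paths of $G_{X,n}$, this follows tautologically once the correspondence between $y$ and its walk is established.
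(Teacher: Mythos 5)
Your proof is correct, and it is exactly the argument the authors intend — the paper leaves this lemma to the reader precisely because it is the direct unwinding of Lemma~\ref{rauzy} and the definition of $S(C)$ that you give. No gaps.
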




We now summarize the decomposition of finite directed graphs into \textit{irreducible components} from \cite[Sec. 4.4]{LM}.

Every directed graph $G$ has an equivalence relation $\sim$ defined as follows: define $v \sim v$ for all vertices $v$, and for any vertices $v \neq w$, set $v \sim w$ if and only if there are (directed) paths from $v$ to $w$ and from $w$ to $v$. Then the vertex set of $G$ can be partitioned into equivalence classes, and the induced subgraphs corresponding to these equivalence classes are the \textit{irreducible components} of $G$, which we denote by $C_1, \ldots, C_k$. We can then define the \textit{tree structure} of $G$ as a directed graph with vertex set $\{C_i\}_{i=1}^{k}$, where there is an edge from $C_i$ to $C_j$ for $i \neq j$ if and only if there is a path in $G$ from some vertex in $C_i$ to some vertex in $C_j$. This directed graph $I(G)$ must have no cycles, since the existence of a cycle would imply equivalence of vertices in distinct irreducible components, a contradiction to their definition. The structural implication is that $G$ must be a union of the components $C_k$, along with some one-way edges between components corresponding to edges of $I(G)$ (a single edge in $I(G)$ can of course correspond to many transitions between the associated pair of components.)

We will change this decomposition very slightly: we allow finite one-way paths between components and will then always assume that all $C_i$ are nontrivial, i.e. contain at least one edge. This can be done since components with no edges must be a single vertex with no self-loop, and so can just be subsumed within a finite simple transition path.

Any irreducible component with an outgoing edge but no incoming edge is called a \textit{source}, and any irreducible component with an incoming edge but no outgoing edges is called a \textit{sink}. A directed graph has a sink and source if and only if it is not irreducible.

Finally, we prove two brief results about word complexity. The first requires a definition; for a subshift $X$, we say that a word $w \in L(X)$ is \textit{right-special} if there exist letters $a \neq b$ so that $wa, wb \in L(X)$. Equivalently, right-special words are those with multiple preimages under the map $f: L_{n+1}(X) \rightarrow L_n(X)$ which removes the final letter. That description leads to the following immediate corollary.

\begin{corollary}\label{RScor}
For any subshift $X$ on $\mathcal{A}$ and any $n$,
\[
c_X(n+1) - c_X(n) = \sum_{\substack{w \in L_n(X)\\w \textrm{ right-special}}} (|\{a \in \mathcal{A} \ | \ wa \in L_{n+1}(X)\}|-1).
\]
\end{corollary}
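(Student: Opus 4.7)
The plan is to count $c_X(n+1)$ by fibering $L_{n+1}(X)$ over $L_n(X)$ using the "drop the last letter" map $f\colon L_{n+1}(X)\to L_n(X)$ mentioned just before the corollary. The first step is to observe that $f$ is surjective: any $w\in L_n(X)$ appears in some point $x\in X$, and then $x_0x_1\cdots x_n$ is a preimage of $w$ in $L_{n+1}(X)$. Consequently
\[
c_X(n+1)=|L_{n+1}(X)|=\sum_{w\in L_n(X)}|f^{-1}(w)|.
\]

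The second step is to subtract $c_X(n)=\sum_{w\in L_n(X)}1$ from both sides to obtain
\[
c_X(n+1)-c_X(n)=\sum_{w\in L_n(X)}\bigl(|f^{-1}(w)|-1\bigr).
\]
Since $f$ is surjective, every summand is a nonnegative integer, so only those $w$ with $|f^{-1}(w)|\geq 2$ contribute.

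The third step is to identify $f^{-1}(w)$ with $\{a\in\mathcal{A}\colon wa\in L_{n+1}(X)\}$ (this is just the definition of $f$), and to note that $|f^{-1}(w)|\geq 2$ is exactly the definition of $w$ being right-special. Substituting gives the claimed formula. There is no real obstacle here — the result is a direct bookkeeping consequence of the definition of right-special recalled immediately before the statement, which is presumably why the authors label it a corollary and leave it in this form.
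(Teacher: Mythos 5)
Your proof is correct, and it is precisely the routine bookkeeping that the paper treats as immediate from the definition of right-special words as those with multiple preimages under the drop-the-last-letter map $f$. Since the paper offers no explicit proof (it simply declares the corollary immediate from that description), your argument spells out exactly the intended reasoning and matches it.
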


Finally, we need to show that the set of subshifts satisfying a complexity restriction infinitely often is residual. For any functions $f,g \colon \mathbb{N} \rightarrow \mathbb{R}^+$, let $\mathbf{S}_{f,g}$ denote the collection of subshifts $X$ for which $f(n) \leq c_{X}(n) \leq g(n)$ for infinitely many $n$.

\begin{theorem}\label{cplxthm}
For any functions $f,g \colon \mathbb{N} \rightarrow \mathbb{R}^+$, the set $\mathbf{S}_{f,g}$ is a $G_{\delta}$ in $\Scal$.
\end{theorem}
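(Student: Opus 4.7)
The plan is to express $\mathbf{S}_{f,g}$ explicitly as a countable intersection of open sets, using the fact (immediate from the definition of cylinders in $\Scal$) that the complexity function $c_X(n)$ depends only on $L_n(X)$, and hence is constant on each cylinder $[X,n]$.

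For each $N \in \mathbb{N}$, define
\begin{equation*}
U_N := \{X \in \Scal \mid \text{there exists } n \geq N \text{ with } f(n) \leq c_X(n) \leq g(n)\}.
\end{equation*}
Since $X \in \mathbf{S}_{f,g}$ iff the inequality $f(n) \leq c_X(n) \leq g(n)$ has infinitely many solutions $n$, we have $\mathbf{S}_{f,g} = \bigcap_{N \in \mathbb{N}} U_N$. So it suffices to show each $U_N$ is open.

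First I would fix $X \in U_N$ and pick a witness $n \geq N$ with $f(n) \leq c_X(n) \leq g(n)$. I claim the cylinder $[X,n]$ is entirely contained in $U_N$. Indeed, if $Y \in [X,n]$, then by definition $L_n(Y) = L_n(X)$, so $c_Y(n) = |L_n(Y)| = |L_n(X)| = c_X(n)$, and $n \geq N$ is a witness showing $Y \in U_N$. Since cylinders form a basis of open sets in $\Scal$, this exhibits an open neighborhood of $X$ inside $U_N$, so $U_N$ is open, and $\mathbf{S}_{f,g}$ is $G_\delta$.

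There is no real obstacle here; the argument is essentially a direct verification once one observes that cylinders refine the level sets of the complexity function $c_{(\cdot)}(n)$. I would include this short proof verbatim rather than hiding it behind a reference, because the same open-set structure on complexity conditions is used (implicitly) in later genericity arguments such as those producing Corollaries~\ref{sosmall} and \ref{sosmall2}.
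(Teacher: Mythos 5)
Your proof is correct and essentially identical to the paper's: both express $\mathbf{S}_{f,g}$ as $\bigcap_N U_N$ and show each $U_N$ is open by observing it is a union of cylinders $[X,n]$, using that $c_X(n)$ is determined by $L_n(X)$. The only cosmetic difference is that the paper writes the union of cylinders explicitly while you exhibit a cylinder neighborhood of an arbitrary point of $U_N$.
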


\begin{proof}

Consider any such functions $f,g$. It suffices to simply note that $\mathbf{S}_{f,g}$ can be written as
\[
\bigcap_{N \in \mathbb{N}} \bigcup_{n > N} \{X \in \Scal \ \mid \ f(n) \leq c_{X}(n) \leq g(n)\} =
\bigcap_{N \in \mathbb{N}} \bigcup_{n > N} \bigcup_{X \in \Scal, f(n) \leq c_{X}(n) \leq g(n)} [X,n].
\]
(Though the final union is technically over uncountably many $X$, $[X,n]$ depends only on $L_n(X)$, for which there are only countably many possibilities. This will be the case for many similar unions for the rest of the paper, which we will not comment each time on.)
\end{proof}

\section{The space $\Scal$ of all subshifts}\label{main}

We begin by working in the space $\Scal$ of all subshifts. Our main results here determine the structure of a generic subshift in this space, which is quite degenerate. In particular, we will show that there is a countable subspace of subshifts in $\Scal$ (which are each, as dynamical systems, degenerate) which completely controls whether subsets of $\Scal$ are residual or not. As a side consequence, the results give drastically simpler proofs of some results in the literature.

\begin{definition}
A directed graph has the \textit{no middle cycles property} (or NMC) if it contains no cycle $K$ which has both an incoming edge (i.e. an edge whose terminal vertex is in $K$ and whose initial vertex is not in $K$) and an outgoing edge (i.e. an edge whose initial vertex is in $K$ and whose terminal vertex is not in $K$). 
\end{definition}


The following alternate representation of graphs having NMC will be helpful, for which need a definition.

\begin{definition}
An \textit{empty barbell} is a directed graph consisting of two vertex-disjoint simple cycles with a single simple directed transition path between them.
\end{definition}


\begin{lemma}\label{NMC}
If a directed graph has NMC, then it can be written as a finite (not necessarily disjoint) union of isolated simple cycles
(i.e. simple cycles with no incoming or outgoing edge) and empty barbells, where all isolated cycles, initial cycles of empty barbells, and
terminal cycles of empty barbells are vertex-disjoint.

If a directed graph can be written as such a union where no cycle is both initial within one empty barbell and terminal within another, then it has NMC.
\end{lemma}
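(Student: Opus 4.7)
The plan is to prove both directions using the decomposition of finite directed graphs into irreducible components described immediately before the lemma.

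For the forward direction, suppose $G$ satisfies NMC, and decompose $G$ into nontrivial irreducible components $C_1, \ldots, C_k$ linked by simple transition paths. I would first argue that under NMC each $C_i$ must be a single simple cycle: any nontrivial strongly connected graph containing two distinct simple cycles exhibits a simple cycle with both an incoming and an outgoing edge (if the two cycles share a vertex, each inherits both types of external edges from the other; if they are vertex-disjoint, strong connectivity forces edges in both directions between them). Second, each $C_i$ has at most one of an external incoming edge or an external outgoing edge, since otherwise $C_i$ itself is a simple cycle violating NMC; so each $C_i$ is an isolated cycle, a source cycle, or a sink cycle. Finally, I would present $G$ as the union of the isolated $C_i$ together with one empty barbell $(A, P, B)$ for each source cycle $A$, sink cycle $B$, and simple directed transition path $P$ from $A$ to $B$; every edge is covered since $G$ is essential (so every transition edge lies on such a path), and the vertex-disjointness of distinct cycles is inherited from the disjointness of distinct irreducible components.

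For the reverse direction, suppose $G$ admits such a decomposition with both the vertex-disjointness and the ``no cycle is both initial and terminal'' conditions. I would show that any cycle $K$ of $G$ must coincide with one of the simple cycles appearing in the decomposition, and hence cannot have both an incoming and an outgoing edge. The vertices of $K$ lie in a single strongly connected component of $G$, and the hypotheses force the strongly connected components of $G$ containing any edge to be exactly the isolated, initial, and terminal cycles of the decomposition. For instance, an initial cycle $A$ cannot be reached from outside itself: it is vertex-disjoint from every other cycle of the decomposition, and the only remaining route for an edge into $A$ would be another barbell's transition path ending at $A$, which would make $A$ terminal as well---forbidden by hypothesis. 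A symmetric argument handles terminal cycles, and isolated cycles are closed off by definition. So $K$ equals an isolated, initial, or terminal cycle, each of which is immediately seen to lack either an incoming or an outgoing external edge.

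The main obstacle is the bookkeeping in the reverse direction: one must check that the ``not necessarily disjoint'' union cannot introduce unwanted external edges to initial or terminal cycles through overlapping pieces from other barbells. The vertex-disjointness hypothesis (interpreted to also keep cycle vertices off the interiors of other barbells' transition paths), together with the explicit prohibition against a single cycle being both initial and terminal, is exactly what rules out every such channel.
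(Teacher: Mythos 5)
The paper states Lemma~\ref{NMC} without proof, so there is no proof of record to compare against; what follows is an assessment of your argument on its own terms.

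Your forward direction is essentially sound but contains a false sub-claim. You assert that if two distinct simple cycles share a vertex in a strongly connected graph, then ``each inherits both types of external edges from the other.'' This is not true: take vertices $\{a,b,c\}$ with edges $a\to b$, $b\to c$, $c\to a$, $a\to c$. The cycles $K_1=(a\to b\to c\to a)$ and $K_2=(a\to c\to a)$ share $a$ and $c$, yet $K_1$ uses every vertex and so has no external edges at all. The correct observation is that whenever a strongly connected graph with no parallel edges is not itself a simple cycle, \emph{some} simple cycle misses a vertex, and strong connectivity then hands that cycle both an incoming and an outgoing edge. (Parallel edges genuinely break the claim --- two vertices with a doubled edge in one direction give a strongly connected NMC graph that is not a simple cycle --- but Rauzy graphs have no parallel edges, so this costs nothing for the paper's applications; it is still worth saying.) The rest of the forward argument, identifying sources/sinks/isolated components and packaging transition edges into barbells via essentiality, is correct.

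The reverse direction has a genuine gap. The key step is your assertion that the hypotheses force every nontrivial strongly connected component of $G$ to be one of the decomposition cycles, and the only justification you give is that decomposition cycles cannot acquire forbidden external edges. But you never rule out a new cycle arising \emph{entirely inside the transition paths}. The lemma as stated permits transition paths of distinct barbells to share vertices and even to run in opposite directions through them. Concretely: take barbell $1$ with (self-loop) cycles at $a_1,b_1$ and transition path $a_1\to v\to w\to b_1$, and barbell $2$ with cycles at $a_2,b_2$ and transition path $a_2\to w\to v\to b_2$. All four cycles are vertex-disjoint, none is both initial and terminal, and no cycle vertex lies on another barbell's transition path, so your proposed reinterpretation is satisfied --- yet the union contains the cycle $v\to w\to v$, which has incoming edges from $a_1,a_2$ and outgoing edges to $b_1,b_2$ and is therefore a middle cycle. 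So the claim that ``$K$ equals an isolated, initial, or terminal cycle'' does not follow, and in fact the converse of the lemma as literally written is false. To close the gap you need the further hypothesis (implicit in both of the paper's uses of the lemma) that the transition paths are pairwise vertex-disjoint away from their endpoints, so that no cycle can be assembled from them; with that hypothesis, your ``for instance'' argument extends to a complete proof that every cycle of $G$ coincides with a cycle of the decomposition.
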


\begin{center}
\tikzset{every loop/.style={min distance=20mm,in=310,out=240,looseness=10}}
\tikzset{->-/.style={decoration={
  markings,
  mark=at position .5 with {\arrow{>}}},postaction={decorate}}}
\begin{minipage}{.5\textwidth}
\begin{tikzpicture}[scale=6,transform shape,
            > = stealth, 
            shorten > = 1pt, 
            auto,
            node distance = 3cm, 
            thick, 
            scale=.1
        ]
        \node[draw,fill,circle,inner sep=0pt,minimum size=4pt] (a) at (-2,3) {};
        \node[draw,fill,circle,inner sep=0pt,minimum size=4pt] (b) at (-0.5,2) {};
        \node[draw,fill,circle,inner sep=0pt,minimum size=4pt] (c) at (1,3) {};
        \node[draw,fill,circle,inner sep=0pt,minimum size=4pt] (d) at (-0.5,4) {};
        \node[draw,fill,circle,inner sep=0pt,minimum size=4pt] (e) at (3.5,3.8) {};
        \node[draw,fill,circle,inner sep=0pt,minimum size=4pt] (f) at (5.2,3.8) {};
        \node[draw,fill,circle,inner sep=0pt,minimum size=4pt] (g) at (4.4,2.6) {};
        \node[draw,fill,circle,inner sep=0pt,minimum size=4pt] (h) at (-1.5,-2.6) {};
        \node[draw,fill,circle,inner sep=0pt,minimum size=4pt] (i) at (-0.5,-1.7) {};
        \node[draw,fill,circle,inner sep=0pt,minimum size=4pt] (j) at (0.5,-2.6) {};
        \node[draw,fill,circle,inner sep=0pt,minimum size=4pt] (k) at (2.5,-1.4) {};
        \node[draw,fill,circle,inner sep=0pt,minimum size=4pt] (l) at (2.5,-3) {};
        \node[draw,fill,circle,inner sep=0pt,minimum size=4pt] (m) at (4.7,-1.8) {};
        \path[->] (a) edge [bend right=30] node {} (b);
        \path[->] (b) edge [bend right=30] node {} (c);
        \path[->] (c) edge [bend right=30] node {} (d);
        \path[->] (d) edge [bend right=30] node {} (a);
        \path[->] (e) edge [bend left=30] node {} (f);
        \path[->] (f) edge [bend left=30] node {} (g);
        \path[->] (g) edge [bend left=30] node {} (e);
        \path[->] (h) edge [bend left=40] node {} (i);
        \path[->] (i) edge [bend left=40] node {} (j);
        \path[->] (j) edge [bend left=40] node {} (h);
        \path[->] (k) edge [bend right=60] node {} (l);
        \path[->] (l) edge [bend right=60] node {} (k);
        \path[->] (m) edge [loop right] node {} (m);
        \path[->] (b) edge [bend right=35] node {} (i);
        \path[->] (b) edge [bend right=30] node {} (k);
        \path[->] (b) edge [bend left=25] node {} (m);
        \path[->] (g) edge [bend right=10] node {} (i);
        \path[->] (g) edge [bend left=20] node {} (k);
        \path[->] (g) edge [bend left=15] node {} (m);
    \end{tikzpicture}
   
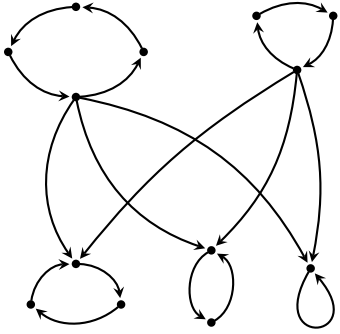
\captionof{figure}{This graph has NMC.}
   \label{fig:fig1}
    \end{minipage}%
\begin{minipage}{0.5\textwidth}
\begin{tikzpicture}[scale=6,transform shape,
            > = stealth, 
            shorten > = 1pt, 
            auto,
            node distance = 3cm, 
            thick, 
            scale=.1
        ]
        \node[draw,fill,circle,inner sep=0pt,minimum size=4pt] (a) at (-2,3) {};
        \node[draw,fill,circle,inner sep=0pt,minimum size=4pt] (b) at (-0.5,2) {};
        \node[draw,fill,circle,inner sep=0pt,minimum size=4pt] (c) at (1,3) {};
        \node[draw,fill,circle,inner sep=0pt,minimum size=4pt] (d) at (-0.5,4) {};
        \node[draw,fill,circle,inner sep=0pt,minimum size=4pt] (e) at (3.5,3.8) {};
        \node[draw,fill,circle,inner sep=0pt,minimum size=4pt] (f) at (5.2,3.8) {};
        \node[draw,fill,circle,inner sep=0pt,minimum size=4pt] (g) at (4.4,2.6) {};
        \node[draw,fill,circle,inner sep=0pt,minimum size=4pt] (h) at (-1.5,-2.6) {};
        \node[draw,fill,circle,inner sep=0pt,minimum size=4pt] (i) at (-0.5,-1.7) {};
        \node[draw,fill,circle,inner sep=0pt,minimum size=4pt] (j) at (0.5,-2.6) {};
        \node[draw,fill,circle,inner sep=0pt,minimum size=4pt] (k) at (2.5,-1.4) {};
        \node[draw,fill,circle,inner sep=0pt,minimum size=4pt] (l) at (2.5,-3) {};
        \node[draw,fill,circle,inner sep=0pt,minimum size=4pt] (m) at (4.7,-1.8) {};
        \path[->] (a) edge [bend right=30] node {} (b);
        \path[->] (b) edge [bend right=30] node {} (c);
        \path[->] (c) edge [bend right=30] node {} (d);
        \path[->] (d) edge [bend right=30] node {} (a);
        \path[->] (e) edge [bend left=30] node {} (f);
        \path[->] (f) edge [bend left=30] node {} (g);
        \path[->] (g) edge [bend left=30] node {} (e);
        \path[->] (h) edge [bend left=40] node {} (i);
        \path[->] (i) edge [bend left=40] node {} (j);
        \path[->] (j) edge [bend left=40] node {} (h);
        \path[->] (k) edge [bend right=60] node {} (l);
        \path[->] (l) edge [bend right=60] node {} (k);
        \path[->] (m) edge [loop right] node {} (m);
        \path[->] (b) edge [bend right=35] node {} (i);
        \path[->] (b) edge [bend right=30] node {} (k);
        \path[->] (b) edge [bend left=25] node {} (m);
        \path[->] (g) edge [bend right=10] node {} (i);
        \path[->] (g) edge [bend left=20] node {} (k);
        \path[->] (g) edge [bend left=5] node {} (m);
        \path[->] (c) edge [bend left=15] node {} (e);
    \end{tikzpicture}
       
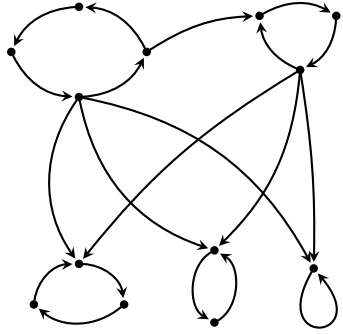
\captionof{figure}{This graph does not have NMC.}
   \label{fig:fig2}
    \end{minipage}
    \end{center}

For instance, the graph shown in Figure 1 has NMC, while the graph shown in Figure 2 does not have NMC, because of the upper-right cycle.

\begin{definition}
A subshift $X$ has the \textit{no middle cycles property} (or NMC) if there exists $n$ for which $G_{X,n}$ has NMC.
\end{definition}

We denote by $\mathbf{NMC}$ the set of NMC subshifts in $\Scal$. The following characterization of NMC subshifts will also be useful.

\begin{lemma}\label{NMCstructure}
A subshift $X$ has NMC if and only if there exist finite sets $P, M, S$ of words so that every sequence in $X$ is either of the form $p^{\infty}$, $s^{\infty}$, or $p^{\infty} m s^{\infty}$ for some $p \in P$, $m \in M$, and $s \in S$, and so that no sequence can be written both as $p^{\infty}$ and $s^{\infty}$ for some $p \in P$ and $s \in S$.
\end{lemma}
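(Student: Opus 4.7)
The plan is to prove both directions of the biconditional separately, using Lemma~\ref{NMC} on the structure of NMC graphs together with Lemma~\ref{rauzy} to relate sequences in $X$ to biinfinite paths in $G_{X,n}$.

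For the forward direction, suppose $G_{X,n}$ has NMC and decompose it via Lemma~\ref{NMC} into a vertex-disjoint union of isolated cycles, initial cycles (with only outgoing external edges), and terminal cycles (with only incoming external edges), joined by finite transition paths. A biinfinite path in $G_{X,n}$ moves monotonically through the DAG of strongly connected components and so stabilizes in a single cycle in each direction; NMC then forces any biinfinite path to be of exactly one of three types: eternally inside an isolated or initial cycle (yielding a periodic sequence $p^\infty$), eternally inside a terminal cycle (yielding $s^\infty$), or traversing a single empty barbell from initial cycle through transition path to terminal cycle (yielding $p^\infty m s^\infty$). Let $P$ (resp.\ $S$) be the finite set of all cycle labels $p$ (resp.\ $s$) obtained from isolated or initial (resp.\ terminal) cycles read from every possible starting vertex, and let $M$ be the finite set of transition labels. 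By Lemma~\ref{rauzy}, every sequence in $X$ has one of the three forms. For the no-overlap condition, if $p^\infty = s^\infty$ for some $p \in P, s \in S$, then the cycle of $G_{X,n}$ tracing $p^\infty$ coincides with the cycle tracing $s^\infty$, so a single cycle is simultaneously isolated/initial and terminal, contradicting the vertex-disjointness of those three classes.

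For the backward direction, assume finite $P, M, S$ with the stated properties. First, any $p^\infty m s^\infty$ is non-periodic: a period would force the $p^\infty$-past and $s^\infty$-future patterns to agree on infinitely many shifts of the transition region, ultimately forcing $p^\infty = s^\infty$ in violation of no-overlap. So periodic sequences in $X$ are exactly those of form $p^\infty$ or $s^\infty$. The key observation is that whenever a transition $y = p^\infty m s^\infty$ lies in $X$, each $\sigma^k y \in X$ is non-periodic and so must be of the form $p'^\infty m' s'^\infty$; matching pasts gives $\sigma^k(p^\infty) = p'^\infty$, so the entire orbit of $p^\infty$ is contained in $\{p''^\infty : p'' \in P\}$, and by no-overlap is disjoint from $\{s''^\infty : s'' \in S\}$. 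The symmetric statement holds for the orbit of $s^\infty$. In particular no orbit can appear simultaneously as the past of one transition and the future of another, since otherwise it would be both contained in and disjoint from $\{p''^\infty : p'' \in P\}$.

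Now choose $n$ sufficiently large that (i) any two distinct orbits among the finitely many $P$- and $S$-orbits have disjoint sets of $(n-1)$-subwords, which holds by the Fine--Wilf theorem once $n-1$ exceeds the sum of their primitive periods, and (ii) $n > 2 \max\{|p|, |s|, |m| : p \in P, s \in S, m \in M\}$. Then the cycles of $G_{X,n}$ correspond bijectively to the orbits of periodic sequences in $X$ and are pairwise vertex-disjoint by (i); each cycle has external outgoing edges only if its orbit is the past of some transition in $X$, and external incoming edges only if its orbit is the future of some transition (any contribution from a purely periodic sequence or from the purely-past or purely-future region of a transition reduces to an internal cycle edge via (i)). Combined with the previous paragraph, no cycle has both external incoming and outgoing edges, so $G_{X,n}$ has NMC.

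The main obstacle will be justifying the classification of external edges above: the worry is that a straddling $(n-1)$-subword of a transition $q^\infty m' t^\infty$ whose letters of $m'$ coincidentally fit the $p^\infty$-periodicity could produce a cycle vertex with an ``unexpected'' external predecessor. I plan to handle this by exploiting (i): for large $n$, any window in the cycle of $p^\infty$ must agree with $p^\infty$ on a subword far longer than $|m'|$, so by Fine--Wilf it must already agree with $p^\infty$ on a long pure-past or pure-future segment of the transition, which forces $\mathcal{O}_p$ to equal $\mathcal{O}_q$ or $\mathcal{O}_t$, and from there the predecessor vertex is itself determined to lie in the same cycle.
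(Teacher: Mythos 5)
Your forward direction is essentially the paper's: decompose $G_{X,n}$ via Lemma~\ref{NMC}, read off $P$, $M$, $S$ from cycles and transition paths, and use vertex-disjointness of the cycle classes to get the no-overlap condition. That part is fine.

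The reverse direction is where the gap is, and you have named it yourself. You write ``The main obstacle will be justifying the classification of external edges above'' and then ``I plan to handle this by exploiting (i)'' — but a plan is not a proof, and the step you defer is precisely the crux of the whole direction. Until one rules out that a straddling $(n-1)$-window of some $q^\infty m' t^\infty$ could coincide with a vertex of a $p$-cycle and thereby inject an external predecessor (or create a spurious cycle entirely out of transition vertices), one has not established NMC. Your sketched Fine--Wilf plan is plausible but not obviously carried through by your constraint (ii). Taking $n > 2\max\{|p|,|s|,|m|\}$ only makes $n-1$ larger than $2|m'|-1$, which does not force either the pure-$q$ or the pure-$t$ segment of a straddling window to exceed the Fine--Wilf bound $|p_{\textrm{prim}}|+|q_{\textrm{prim}}|$ — that requires $n$ to be large relative to the \emph{periods} as well. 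The paper handles this by choosing $N$ to be a common period of all the $p^\infty, s^\infty$ with $N > \max|m|$, taking the window length $2N$, and then (for the delicate part, that the graph induced by a single transition is an honest empty barbell) running a different argument than Fine--Wilf: it compares the sets $\{i : u(i) \neq u(i+N)\}$ for $5N$-letter subwords $u$ of $p^\infty m s^\infty$, using that the full ``disagreement set'' $\{i : y(i) \neq y(i+N)\}$ is confined to an interval of length under $2N$. That difference-set comparison is what you would need to reproduce or replace, and your proposal currently does neither.

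Two smaller remarks. First, your ``key observation'' paragraph (tracking pasts of $\sigma^k y$) is nice but is essentially a consequence of the no-overlap hypothesis read at the orbit level, which is how both you and the paper implicitly use it; you can streamline. Second, your non-periodicity argument for $p^\infty m s^\infty$ concludes with ``ultimately forcing $p^\infty = s^\infty$'' — strictly, a period $T$ of $y$ forces $p^\infty$ and $s^\infty$ to lie in the \emph{same orbit}, which contradicts the no-overlap condition only under the orbit-level reading; you should say that explicitly rather than assert equality of specific sequences.
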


\begin{proof}
To see that each NMC subshift has the claimed structure, choose such an $X$ and $n$ for which $G_{X,n}$ has NMC. By Lemma~\ref{NMC}, $G_{X,n}$ can be written as a finite union of isolated simple cycles and empty barbells. Then all sequences in $X$ correspond to biinfinite paths from one of these cycles or empty barbells. Then we can define $P$ to be words corresponding to initial cycles in empty barbells, $S$ to be words corresponding to isolated cycles or terminal cycles in empty barbells, and $M$ to be words corresponding to transition paths in empty barbells. The claimed decomposition is immediate, and no $p^{\infty}$ and $s^{\infty}$ may be equal since no cycle can be initial and terminal in two empty barbells (by Lemma~\ref{NMC}).

Now, assume that a subshift $X$ has the claimed decomposition for sets $P, M, S$. By passing to subsets, we can assume without loss of generality that $p^{\infty}, s^{\infty} \in X$ for all $p \in P$, $s \in S$. Now, choose $N$ which is a common period for all sequences $p^{\infty}$ and $s^{\infty}$ and which is greater than the lengths of all words in $M$. Note that the sets of $N$-letter subwords of sequences of the form $p^{\infty}$ and $s^{\infty}$ must be disjoint, since such a word determines its entire containing sequence.

Then $G_{X,2N}$ is just the finite union of the graphs induced by all sequences $p^{\infty}$, $s^{\infty}$ for $p \in P$ and $s \in S$, and all sequences of the form $p^{\infty} m s^{\infty}$ which are in $X$. It is clear that the first two classes of graphs are simple cycles. We claim that each graph induced by $p^{\infty} m s^{\infty}$ is an empty barbell consisting of a simple cycle induced by $p^{\infty}$ with a single simple directed path to a simple cycle induced by $s^{\infty}$. Then the union must be NMC by definition (the only cycles with incoming edges have vertices corresponding to $(2N-1)$-letter subwords of some $s^{\infty}$, the only cycles with outgoing edges have vertices corresponding to $(2N-1)$-letter subwords of some $p^{\infty}$, and these sets are disjoint by definition of $N$), which will complete the proof.

To justify the claim, we only need to show that no two $5N$-letter subwords of $y := p^{\infty} m s^{\infty}$ can be equal unless they are both part of $p^{\infty}$ or $s^{\infty}$. First, assume without loss of generality that $m$ is minimal, which implies that $p^{\infty} m_1$ and $m_{|m|} s^{\infty}$ are not $N$-periodic. Define the set $S = \{i \ \mid \ y(i) \neq y(i + N)\}$. Clearly $S$ is finite and contained in an interval of length $2N-1$ (for any such $i$, either $y(i)$ or $y(i+N)$ must be part of $m$). To any two $5N$-letter subwords $u = y(j) \ldots y(j+5N-1)$ and $v = y(k) \ldots y(k+5N-1)$ of $y$, we can associate the sets $S(u) := \{0 \leq i < 4N \ \mid \ u(i) \neq u(i+N)\}$ and $S(v) := \{0 \leq i < 4N \ \mid \ v(i) \neq v(i+N)\}$. These are just intersections of $S$ with an interval of length $4N$, and the reader may check that the only way for two such intersections to be equal is if they are empty, which happens only if $u,v$ were subwords of $p^\infty$ or $s^\infty$.

We have verified that each sequence $p^{\infty} m s^{\infty}$ induces an empty barbell consisting of a simple cycle induced by subwords of
$p^{\infty}$ with a single simple directed path to a simple cycle induced by subwords of $s^{\infty}$, which, as described above, shows that $G_{X,2N}$ has NMC, completing the proof.

\end{proof}

We can now prove our main result about $\Scal$.

\begin{theorem}\label{isodense}
Every NMC subshift $X$ is isolated in $\Scal$, and the set $\mathbf{NMC}$ of NMC subshifts is dense in $\Scal$.
Thus, $\mathbf{NMC}$ is residual in $\Scal$.
\end{theorem}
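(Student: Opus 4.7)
Residuality follows immediately from the first two assertions: if every NMC subshift is isolated, then $\mathbf{NMC} = \bigcup_{X \in \mathbf{NMC}} \{X\}$ is open, and any dense open set is residual. So the plan reduces to isolatedness and density.

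For isolatedness, I would fix $X \in \mathbf{NMC}$ and apply Lemma~\ref{NMCstructure} to write $X$ as a union of finitely many periodic orbits $\sigma^{\mathbb{Z}}(p^{\infty})$, $\sigma^{\mathbb{Z}}(s^{\infty})$ together with finitely many barbell orbits $\sigma^{\mathbb{Z}}(p^{\infty} m s^{\infty})$, with $\{p^{\infty}\}_{p \in P} \cap \{s^{\infty}\}_{s \in S} = \varnothing$. Following the proof of that lemma, I choose $N$ larger than a common period of all the periodic sequences and larger than each $|m|$, so that $G_{X,2N}$ is a vertex-disjoint union of simple cycles (one per periodic orbit) connected by empty barbells. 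I then claim $[X,2N] = \{X\}$. For any $Y \in [X,2N]$, each $y \in Y$ traces a biinfinite path in $G_{X,2N}$ which, by the NMC structure, must have one of the three forms $p^{\infty}$, $s^{\infty}$, $p^{\infty} m s^{\infty}$, so $Y \subseteq X$. Conversely, $L_{2N}(Y) = L_{2N}(X)$ requires every transition $2N$-letter word of each barbell orbit to appear in some element of $Y$; for $N$ large, such a word identifies its barbell orbit uniquely, so the entire orbit lies in $Y$, and by closedness so do its periodic endpoints $p^{\infty}, s^{\infty}$. A parallel uniqueness argument places any remaining periodic orbit (one not attached to any barbell) into $Y$, so $Y \supseteq X$ and thus $Y = X$.

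For density, given $X \in \Scal$ and $n \in \mathbb{N}$, I would build an NMC subshift $Y$ with $L_n(Y) = L_n(X)$. Decompose $G_{X,n}$ into irreducible components $C_1, \ldots, C_k$ with acyclic component DAG $I(G_{X,n})$, and pick a simple cycle $\gamma_C$ in each component $C$. For each edge $e = v \to w$ of $G_{X,n}$, I define a biinfinite path $y_e$ on $G_{X,n}$ as follows: if $e$ lies in a component $C$ that is isolated in $I(G_{X,n})$ (both source and sink), take $y_e = \alpha_e^{\infty}$ for some simple cycle $\alpha_e$ in $C$ through $e$; otherwise, pick a source component $C_-$ of $I(G_{X,n})$ reaching $v$ and a sink component $C_+$ reachable from $w$ (both exist in any finite DAG), and take $y_e = \gamma_{C_-}^{\infty}\,\mu_e\,\gamma_{C_+}^{\infty}$, where $\mu_e$ is a finite path in $G_{X,n}$ from $\gamma_{C_-}$ through $e$ to $\gamma_{C_+}$. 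Let $Y$ be the shift-orbit closure of $\{y_e\}_e$; then $L_n(Y) = L_n(X)$ since every edge of $G_{X,n}$ is used by some $y_e$ while every $y_e$ stays on $G_{X,n}$. That $Y$ is NMC then follows from the converse direction of Lemma~\ref{NMCstructure} applied with $P = \{\gamma_C : C \text{ a non-isolated source of } I\} \cup \{\alpha_e : e \text{ lies in an isolated component}\}$, $S = \{\gamma_C : C \text{ a non-isolated sink of } I\}$, and $M = \{\mu_e\}$; the required disjointness $\{p^{\infty}\}_{p \in P} \cap \{s^{\infty}\}_{s \in S} = \varnothing$ holds because $P$-cycles visit only vertices of source-or-isolated components of $I(G_{X,n})$ whereas $S$-cycles visit only vertices of non-isolated sink components, and these two classes of components are vertex-disjoint. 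The main technical point is precisely this bookkeeping: routing every isolated-component cycle into $P$ (and never $S$) is the only place the argument requires genuine care, but it prevents any single cycle from ending up as both a left and right endpoint of barbells.
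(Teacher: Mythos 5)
Your proof is correct and proceeds in essentially the same framework as the paper's---Rauzy graphs, the irreducible-component decomposition, and Lemma~\ref{NMCstructure}---but the two density constructions differ in a way worth noting. The paper handles an edge $e$ lying inside an irreducible component (whether source, middle, or sink) by simply taking the periodic orbit of a simple cycle $K_e$ through $e$ inside that component; barbells are only created for edges lying on transition paths between components, so the observation that a cycle cannot be both initial in one $B_e$ and terminal in another $B_{e'}$ falls out immediately from barbell endpoints living in sources and sinks respectively. Your construction instead routes every edge of every non-isolated component through a full barbell from a source to a sink (with $C_-$ or $C_+$ possibly equal to $e$'s own component if that component is already a source or sink), producing fewer periodic orbits but longer transition words $\mu_e$; the extra bookkeeping you flag---verifying $C_- \ne C_+$ and that nothing in $P$ collides with $S$---is exactly the price of that choice, and your appeal to the acyclicity of $I(G_{X,n})$ handles it correctly. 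For isolatedness your route differs only superficially: you use the explicit $G_{X,2N}$ produced by the proof of Lemma~\ref{NMCstructure} rather than an arbitrary NMC Rauzy graph $G_{X,n}$ with a buffer $N$ larger than all transition-path lengths (the paper shows $[X, n+N] = \{X\}$), and you split into $Y \subseteq X$ and $Y \supseteq X$ rather than arguing, as the paper does, that every point of $S([X,n])$ lies in both $X$ and $Y$ or in neither. One spot to make explicit: the containment $Y \subseteq X$ silently uses $X = S([X,2N])$, i.e., that every biinfinite path on $G_{X,2N}$ yields a point of $X$. This is true---isolated cycles and barbell transitions admit only their unique traversals, Lemma~\ref{rauzy} forces those into $X$, and closedness of $X$ picks up the periodic tails of barbells---and it is essentially the content of Corollary~\ref{SFT}, but it deserves a sentence rather than being elided.
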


\begin{proof}
We first prove density. Consider any cylinder $C = [X,n]$ in $\Scal$ with associated Rauzy graph $G_{X,n}$. We will find a subshift $X^{\prime}$ contained in $C$ which has the structure described in Lemma~\ref{NMCstructure}. 

Define by $C_1, \ldots, C_k$ the irreducible components of $G_{X,n}$. Recall that we can assume without loss of generality that each $C_i$ is nontrivial, i.e. contains at least one edge, and that components can be connected by finite directed paths.


Now, to each edge $e$ in $G_{X,n}$, we associate a cycle or empty barbell as follows.

\begin{itemize}
\item If $e$ is part of a (nontrivial) irreducible component $C_i$, then $e$ is part of a simple cycle $K_e$.
\item If $e$ is not part of such a component, then it must be on a simple transition path from some $C_i$ to some $C_j$, and by extending maximally forwards and backwards, it is part of a simple directed path $P$ from a source to a sink. By the preceding argument, the initial vertex of $P$ is part of a simple cycle within a source, and the terminal vertex is part of a simple cycle within a sink. Define $B_e$ to be the empty barbell consisting of these two cycles, along with the simple transition path between them, which contains $e$.
\end{itemize}

Since each initial cycle of any $B_e$ is contained in a source and each terminal cycle of any $B_e$ is contained in a sink, there is no cycle which is the initial in some $B_e$ and terminal in some $B_{e'}$. Now we define a subshift $X' \subset X$ to be all sequences corresponding to a path contained entirely within some $K_e$ or $B_e$. It is clear that $X'$ has the structure from Lemma~\ref{NMCstructure}, so
$X'$ is an NMC subshift.
Finally, by Lemma~\ref{rauzy}, $X' \in C = [X,n]$. Since $C$ was arbitrary, we've shown that NMC subshifts are dense.

It remains to show that each NMC subshift $X$ is isolated in $\Scal$. Choose such a subshift $X$, with associated NMC Rauzy graph $G_{X,n}$.

By Lemma~\ref{NMC}, $G_{X,n}$ can be written as a finite union of isolated simple cycles and empty barbells; take $N$ to be larger than the lengths of all transition paths for empty barbells in $G_{X,n}$. We claim that any $Y \in [X, n+N]$ must be equal to $X$, which will clearly imply that $X$ is isolated.

Choose any $Y \in [X, n+N]$. Both $X$ and $Y$ must be subsets of $S([X,n])$, (recall $S([X,n])$ is the set of all sequences corresponding to paths in $G_{X,n}$). By the structure of $G_{X,n}$, all sequences in $S([X,n])$ are either induced by a cycle (and therefore periodic) or induced by an empty barbell (and therefore eventually periodic on both ends with a finite transition word of length less than $N$ between). By Lemma~\ref{rauzy}, every such periodic sequence coming from an isolated simple cycle in $G_{X,n}$ must be in both $X,Y$, since the only biinfinite path containing any edge of such a cycle is a biinfinite traversal of the cycle. If $C$ is a cycle in $G_{X,n}$ which is either initial or terminal within some barbell, then by Lemma~\ref{rauzy}, $X$ and $Y$ each contain some sequence corresponding to a biinfinite path containing an edge of $C$. All such paths contain a one-sided infinite traversal of $C$, and so by compactness, both $X$ and $Y$ contain all periodic sequences induced by $C$.

Since $Y \in [X, n+N]$, for all $i \leq N$, $X$ and $Y$ share the same sets of legal length-$i$ paths within biinfinite paths of $G_{X,n}$ corresponding to sequences in the subshifts $X,Y$. This immediately implies that every sequence in $S([X,n])$ induced by a barbell is either in both $X,Y$ or neither of $X,Y$, since it is the only sequence containing its transitional path, which has length less than or equal to $N$.

This means that every sequence in $S([X,n])$ is in neither/both of $X, Y$, and $S([X,n])$ contains $X$ and $Y$, so $X = Y$. Since $Y \in [X, n+N]$ was arbitrary, $X$ is isolated, and since it was an arbitrary NMC subshift, the proof is complete.

\end{proof}

\begin{corollary}\label{SFT}
Every NMC subshift is a countable shift of finite type.
\end{corollary}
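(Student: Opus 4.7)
The plan is to pick $n$ such that $G_{X,n}$ has NMC (which exists by definition), and then to show that $X$ coincides with the shift of finite type $S([X,n])$. Countability will be immediate from Lemma~\ref{NMCstructure}, which decomposes $X$ into a finite union of periodic orbits $\{p^\infty\}$, $\{s^\infty\}$ (each finite) and bi-asymptotic orbits of the form $\{\sigma^k(p^\infty m s^\infty) : k \in \mathbb{Z}\}$ (each countable).

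For the identification $X = S([X,n])$, the inclusion $X \subseteq S([X,n])$ is immediate from Lemma~\ref{rauzy}, and $S([X,n])$ is already noted to be an SFT in the remark following its definition. For the reverse inclusion, I would take $y \in S([X,n])$, observe that $y$ corresponds to a biinfinite path $\pi$ in $G_{X,n}$, and use Lemma~\ref{NMC} to confine $\pi$ to a single isolated simple cycle or a single empty barbell. In the isolated cycle case, $y$ is periodic; since the cycle has no incoming or outgoing edges, any sequence in $X$ using one of its edges (such a sequence is guaranteed by Lemma~\ref{rauzy}) must be a shift of $y$, so $y \in X$. In the empty barbell case with end cycles $C_1, C_2$ and simple transition path $T$, the path $\pi$ either traverses $T$ exactly once, giving $y = p^\infty m s^\infty$, or it stays within one of $C_1, C_2$, giving $y = p^\infty$ or $y = s^\infty$. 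In the traversal case, I would apply Lemma~\ref{rauzy} to an edge of $T$ to obtain a sequence of $X$ through that edge, and then note that the only biinfinite path in the barbell through such an edge is $p^\infty m s^\infty$, forcing $y \in X$. In the remaining two cases, $p^\infty$ and $s^\infty$ arise as limits under $\sigma$ of $p^\infty m s^\infty \in X$, so they lie in $X$ by closure.

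The main subtlety I anticipate is the argument that the biinfinite path through an edge of $T$ must be exactly $p^\infty m s^\infty$, but this follows quickly from the fact that $T$ is a \emph{simple} directed path between the two cycles of the barbell and that a biinfinite path in an empty barbell cannot revisit the initial cycle after leaving via $T$. This is essentially the same calculation already carried out in the proof of Theorem~\ref{isodense}, so I expect no real obstacle beyond unpacking the barbell structure.
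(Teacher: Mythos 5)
Your key claim---that $X = S([X,n])$ for any $n$ with $G_{X,n}$ NMC---is false, and the flaw lies exactly at the ``subtlety'' you flagged. When Lemma~\ref{rauzy} gives you a sequence $x \in X$ through an edge $e$ of the transition path $T$, the path of $x$ through $G_{X,n}$ need not stay inside the barbell you chose: distinct barbells in the Lemma~\ref{NMC} decomposition can share transition vertices and edges (see Figure 1), so $x$ may leave that barbell at a branch point. Hence you cannot conclude that $x$ traces $p^\infty m s^\infty$, nor that $y \in X$.

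Concretely, take the alphabet $\{0, 0', a, 3, 4\}$ and let $X$ be the set of all shifts of the six sequences $0^\infty$, $(0')^\infty$, $3^\infty$, $4^\infty$, $0^\infty a 3^\infty$, $(0')^\infty a 4^\infty$. Then $G_{X,2}$ has self-loops at $0, 0', 3, 4$ and edges $0\to a$, $0'\to a$, $a\to 3$, $a\to 4$, which is NMC. The barbell from $\{0\}$ through $a$ to $\{4\}$ gives a path $0^\infty a 4^\infty \in S([X,2])$ that is not in $X$; the sequence of $X$ through the edge $a\to 4$ is instead $(0')^\infty a 4^\infty$, which traverses a different barbell. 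So $S([X,2]) \supsetneq X$ even though $G_{X,2}$ is NMC. (Of course $X$ here is still an SFT---one finds $X = S([X,3])$---so the corollary is true, but your choice of $n$ was too small and the reverse-inclusion argument cannot detect this.)

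The paper sidesteps all of this by invoking isolatedness: Theorem~\ref{isodense} shows any NMC subshift $X$ is isolated, so some cylinder $C$ satisfies $C = \{X\}$, and then $X = S(C)$ because $S(C)$ always lies in $C$ and is always an SFT. In effect, the isolatedness argument selects a sufficiently large $n$ for you; your proposal does not. Your countability argument via Lemma~\ref{NMCstructure} is fine.
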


\begin{proof}
Since any NMC subshift $X$ is isolated in $\Scal$, there exists a cylinder $C$ such that $C = \{X\}$. But $S(C)$ is always in $C$, so $X = S(C)$, which is a shift of finite type by definition. By definition, there exists $n$ so that $G_{X,n}$ has NMC. It is clear by Lemma~\ref{NMC} that such a graph has only countably many biinfinite paths, so $X$ is countable.
\end{proof}

By Theorem~\ref{isodense}, the question of whether sets are residual in $\Scal$ is somewhat degenerate; the countable set $\mathbf{NMC}$ of NMC subshifts is open and dense, so residual, and any dense set must contain $\nmc$, since $\nmc$ is precisely the set of isolated points in $\Scal$. So genericity of properties/sets in $\Scal$ is completely controlled by the set $\nmc$ of NMC subshifts.

The following result is somewhat obvious, but we state it for completeness.

\begin{corollary}\label{cor:ctblesftgeneric}
The set of countable shifts of finite type is residual in $\Scal$.
\end{corollary}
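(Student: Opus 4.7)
The plan is to simply combine the two preceding results. Theorem~\ref{isodense} establishes that the set $\mathbf{NMC}$ of NMC subshifts is residual in $\Scal$ (in fact, it is open and dense, being a set of isolated points whose union is dense). Corollary~\ref{SFT} then shows that every member of $\mathbf{NMC}$ is a countable shift of finite type. Consequently, the collection of countable shifts of finite type contains $\mathbf{NMC}$ as a subset.

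The only remaining observation is that any superset of a residual set in a topological space is residual: if $R \subseteq T$ and $R$ contains a dense $G_\delta$ set $D$, then $T$ also contains $D$ and is therefore residual. Applying this with $R = \mathbf{NMC}$ and $T$ the set of countable shifts of finite type yields the conclusion. No further argument, obstacle, or construction is required.
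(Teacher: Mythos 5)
Your proof is correct and is exactly the argument the paper gives: combine Theorem~\ref{isodense} (the set $\mathbf{NMC}$ is residual) with Corollary~\ref{SFT} (every NMC subshift is a countable shift of finite type), and conclude since a superset of a residual set is residual.
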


\begin{proof}
This is an immediate consequence of Theorem~\ref{isodense} and Corollary~\ref{SFT}.
\end{proof}

We obtain as corollaries two results from the literature. The first, which is immediate from Corollary~\ref{cor:ctblesftgeneric}, regards the genericity of zero entropy subshifts.

\begin{corollary}[\cite{Sigmund1971,FrischTamuzGenericEntropy}]\label{zeroS}
The set $\mathbf{Z}$ of zero entropy subshifts is residual in $\Scal$.
\end{corollary}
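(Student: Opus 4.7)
The plan is to combine Corollary~\ref{cor:ctblesftgeneric} with the observation that every countable subshift has zero topological entropy. Since Corollary~\ref{cor:ctblesftgeneric} supplies a residual subset of $\Scal$ consisting of countable shifts of finite type, it suffices to show that this subset is contained in $\mathbf{Z}$; then $\mathbf{Z}$ will contain a dense $G_{\delta}$ and hence be residual, since any superset of a residual set is residual.

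To verify that a countable subshift $X$ has zero entropy, I would invoke the variational principle. Any ergodic $\sigma$-invariant Borel probability measure $\mu$ on $X$ must have an atom, since $X$ is countable. By $\sigma$-invariance $\mu$ assigns equal mass to every point of the orbit through that atom, and for $\mu$ to have total mass one the orbit must be finite; thus $\mu$ is the uniform measure on a periodic orbit and has measure-theoretic entropy zero. By the ergodic decomposition every invariant measure on $X$ has entropy zero, and then the variational principle $h_{\mathrm{top}}(\sigma_X) = \sup_{\mu} h_{\mu}(\sigma_X)$ forces $h_{\mathrm{top}}(\sigma_X) = 0$.

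Alternatively, one can bypass the variational principle entirely: Theorem~\ref{isodense} and Lemma~\ref{NMCstructure} together show that a residual set of subshifts consists of finite unions of periodic orbits and orbits bi-asymptotic to periodic orbits, and for such subshifts the complexity function $c_X(n)$ grows at most linearly. Then $h_{\mathrm{top}}(\sigma_X) = \lim_{n \to \infty} \tfrac{1}{n} \log c_X(n) = 0$ immediately. There is no genuine obstacle; the real content lies in Corollary~\ref{cor:ctblesftgeneric}, and this step is a brief consistency check that the structural result about generic subshifts implies the classical entropy statement of \cite{Sigmund1971,FrischTamuzGenericEntropy}.
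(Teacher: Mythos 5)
Your proof is correct and follows the paper's own route: the paper derives this as "immediate from Corollary~\ref{cor:ctblesftgeneric}," and you simply spell out the (easy) verification that countable subshifts — equivalently, the NMC subshifts with their linear complexity — have zero entropy. Both your variational-principle argument and your complexity-bound alternative are valid ways to fill that small gap.
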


For the second, recall in~\cite{CyrKraLanguageStable} the class of language stable subshifts was introduced, motivated by the study of characteristic measures for subshifts. Since countable shifts of finite type have the language stable property, we also obtain as a corollary the genericity of language stable subshifts proved in~\cite{CyrKraLanguageStable}.

\begin{corollary}[\cite{CyrKraLanguageStable}]\label{LS}
The set of language stable subshifts is residual in $\Scal$.
\end{corollary}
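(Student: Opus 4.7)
The plan is to obtain Corollary~\ref{LS} as an immediate consequence of Corollary~\ref{cor:ctblesftgeneric}, which establishes that the set of countable shifts of finite type is residual in $\Scal$. The key observation is that residuality is inherited upward: any subset of $\Scal$ containing a residual set is itself residual, since residuality just means containing a dense $G_\delta$ set, and that $G_\delta$ set remains dense (hence witnesses residuality) in any larger set. Hence it suffices to exhibit a set-theoretic inclusion of countable SFTs into language stable subshifts.

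For the inclusion, I would invoke the fact noted in the paragraph preceding the statement, namely that countable shifts of finite type have the language stable property introduced in \cite{CyrKraLanguageStable} in the context of characteristic measures. This can also be seen directly from the structural description of countable SFTs developed earlier in this section: by Corollary~\ref{SFT} and the argument underlying Theorem~\ref{isodense}, every countable SFT $X$ admits a Rauzy graph $G_{X,n}$ (for some $n$) which decomposes as in Lemma~\ref{NMC} into isolated simple cycles and empty barbells. This rigid combinatorial skeleton, together with the isolation argument in the proof of Theorem~\ref{isodense}, gives the kind of finite-window rigidity that is the content of language stability.

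Combining these two ingredients, the set of language stable subshifts contains the residual set of countable SFTs, and is therefore itself residual in $\Scal$. No substantial obstacle is expected here: the entire work lies in Corollary~\ref{cor:ctblesftgeneric} (already proved) and in a direct invocation of the cited property from \cite{CyrKraLanguageStable}. If anything, the only point requiring care is matching the precise definition of language stability in \cite{CyrKraLanguageStable} against the NMC presentation, but this match has already been established in the literature and needs no additional argument.
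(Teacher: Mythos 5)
Your main argument --- residuality is inherited by supersets, countable SFTs are residual in $\Scal$ by Corollary~\ref{cor:ctblesftgeneric}, and countable SFTs are language stable by the cited result of~\cite{CyrKraLanguageStable} --- is exactly the paper's reasoning and is correct.

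However, the ``this can also be seen directly'' remark contains a false step. You assert that every countable shift of finite type admits a Rauzy graph decomposing as in Lemma~\ref{NMC} into isolated simple cycles and empty barbells, i.e.\ that every countable SFT is NMC. Corollary~\ref{SFT} gives only the one-way implication (NMC subshift $\Rightarrow$ countable SFT); the converse fails. A concrete counterexample, mentioned in Section~\ref{main2}, is the SFT of all nondecreasing sequences on $\{0,1,2\}$: it is a countable SFT, but its Rauzy graph has the self-loop at $1$ as a cycle with both an incoming and an outgoing edge, so it is OMC rather than NMC. The error is harmless to your proof, since you only need the citation to~\cite{CyrKraLanguageStable}; but if you wanted a self-contained structural argument, you should instead show that \emph{NMC} subshifts are language stable (they are residual by Theorem~\ref{isodense} and countable SFTs by Corollary~\ref{SFT}), rather than invoking the structure of all countable SFTs.
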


We can also characterize word complexity for generic subshifts in $\Scal$.

\begin{corollary}\label{lincplx}
The set of subshifts with linear complexity is residual in $\Scal$.
\end{corollary}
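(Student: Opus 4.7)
The plan is to show that the set of linear-complexity subshifts contains $\nmc$, which by Theorem~\ref{isodense} is residual in $\Scal$; since any superset of a residual set is residual, the corollary follows immediately. Concretely, I will argue that every NMC subshift $X$ satisfies $c_X(n) \leq Cn$ for some constant $C = C(X)$.

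Fix an NMC subshift $X$. By Lemma~\ref{NMCstructure}, there exist finite sets of words $P$, $M$, $S$ such that every point of $X$ is of the form $p^{\infty}$, $s^{\infty}$, or $p^{\infty} m s^{\infty}$ with $p \in P$, $m \in M$, $s \in S$. I would then split the $n$-letter subwords of $X$ into two bounded contributions. First, any $n$-letter subword lying entirely inside a pure periodic sequence $p^{\infty}$ or $s^{\infty}$ is one of at most $|p|$ or $|s|$ possibilities, so the total contribution from all such words is bounded by $\sum_{p \in P} |p| + \sum_{s \in S} |s|$, independent of $n$. Second, for each transition sequence $p^{\infty} m s^{\infty}$ present in $X$, any $n$-letter subword not already counted above must overlap the finite block $m$, and there are at most $n + |m| - 1$ possible positions for a window of length $n$ meeting $m$; summing over the finitely many allowed triples $(p, m, s)$ gives a total bound of the form $|P|\,|M|\,|S|\cdot(n + \max_{m \in M}|m|)$. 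Combining the two bounds yields $c_X(n) \leq Cn$ for a constant $C$ depending only on the cardinalities and maximal word lengths in $P, M, S$.

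There is no substantive obstacle here; the estimate is essentially direct bookkeeping once Lemma~\ref{NMCstructure} is invoked. The one minor point of care is that a given $n$-letter word may occur as a subword of several of the enumerated sequences simultaneously, but since we only require an upper bound, this overcounting is harmless.
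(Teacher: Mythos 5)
Your proposal is correct and follows essentially the same path as the paper: both invoke Lemma~\ref{NMCstructure} and Theorem~\ref{isodense} and then bound $c_X(n)$ by observing that every $n$-letter word is either a subword of a periodic sequence (finitely many) or overlaps a central transition block $m$ (linearly many positions). Your write-up just makes the bookkeeping from the paper's ``phase shift'' remark more explicit.
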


\begin{proof}
By Lemma~\ref{NMCstructure}, every subshift in $\nmc$ is a finite union of periodic orbits (which trivially have bounded word complexity) and orbits of sequences of the form $p^{\infty} m s^{\infty}$. It's easily checked that the subshift of such sequences has linear complexity, since any subword of length $n$ is determined by the `phase shift' of the periodic portions at its beginning and end and location, if any, of the central $m$. So, each NMC subshift has linear complexity, and Theorem~\ref{isodense} completes the proof.
\end{proof}

\subsection{Automorphism groups of generic subshifts in $\Scal$}
In this section, we show that the structure of the automorphism group of a generic subshift in $\Scal$ is virtually free abelian of finite rank.

\begin{theorem}\label{thm:autosinS}
The automorphism group of an NMC subshift $X$ is an extension of a finite group by a finite rank free abelian group whose rank is the number of orbits of $X$. Consequently, the automorphism group of a generic subshift in $S$ is an extension of a finite group by a finite rank free abelian group, and hence is virtually free abelian of finite rank.
\end{theorem}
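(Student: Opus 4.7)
The plan is to exploit the finite-orbit structure of NMC subshifts to describe $\aut(X, \sigma_X)$ concretely. By Lemma~\ref{NMCstructure}, any NMC subshift $X$ decomposes as a finite union of periodic orbits together with finitely many orbits of biasymptotic sequences of the form $p^\infty m s^\infty$. Let $\mathcal{O}$ denote this finite set of $\sigma$-orbits, and $\mathcal{I} \subseteq \mathcal{O}$ the subset of (biasymptotic) infinite orbits. Since any $\phi \in \aut(X,\sigma_X)$ permutes $\mathcal{O}$, one obtains a homomorphism $\rho \colon \aut(X,\sigma_X) \to \mathrm{Sym}(\mathcal{O})$ with finite image $Q$, and the main work is analyzing the kernel $K := \ker \rho$ of automorphisms fixing each orbit setwise.

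For any $\phi \in K$ and each orbit $O$, $\sigma$-equivariance forces $\phi|_O = \sigma^{k_O}|_O$ for some integer $k_O$ (read modulo $|O|$ when $O$ is periodic), so $\phi$ is determined by the tuple $(k_O)_{O \in \mathcal{O}}$. This gives an embedding
\[
K \hookrightarrow \mathbb{Z}^{|\mathcal{I}|} \times \prod_{O \in \mathcal{O} \setminus \mathcal{I}} \mathbb{Z}/|O|\mathbb{Z},
\]
making $K$ a finitely generated abelian group. The image is precisely the set of tuples satisfying the compatibility constraints $k_O \equiv k_P \pmod{|P|}$ whenever $O \in \mathcal{I}$ has $P \in \mathcal{O} \setminus \mathcal{I}$ as an asymptotic periodic orbit. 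Necessity of these constraints comes from applying $\phi$ to a sequence in $O$ converging to a point of $P$ and comparing limits; sufficiency follows from essentially the same continuity analysis, since any compatible tuple defines a continuous shift-equivariant self-map of $X$ which is invertible by the same construction applied to $(-k_O)$.

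By the structure theorem for finitely generated abelian groups, $K \cong \mathbb{Z}^r \oplus T$ with $T$ finite and $r$ equal to the claimed number of orbits, computed directly from the congruence description. To extract the extension structure in the theorem, observe that the torsion subgroup $T = K_{\mathrm{tor}}$ is characteristic in $K$; since $K$ is normal in $\aut(X,\sigma_X)$, so is $T$, and $N := nK$ for $n = \exp(T)$ is a characteristic, finite-index, free abelian subgroup of $K$ of rank $r$. The sequence $1 \to N \to \aut(X,\sigma_X) \to \aut(X,\sigma_X)/N \to 1$ then presents $\aut(X,\sigma_X)$ as an extension of a finite group by the rank-$r$ free abelian group $N$, and consequently $\aut(X,\sigma_X)$ is virtually free abelian of finite rank. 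The generic consequence follows immediately from Theorem~\ref{isodense} since NMC subshifts are residual in $\Scal$.

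The main difficulty should be cleanly verifying the congruence description of $K$: one needs a careful continuity analysis near asymptotic limit points to establish both necessity and sufficiency of the constraints, together with a precise rank computation in the resulting subgroup of $\mathbb{Z}^{|\mathcal{I}|} \times \prod \mathbb{Z}/|O|\mathbb{Z}$ that matches the stated orbit count.
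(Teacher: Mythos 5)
Your high-level strategy --- analyze the permutation action $\rho\colon\aut(X,\sigma_X)\to\textnormal{Sym}(\mathcal{O})$ on the finite set of orbits and study its kernel $K$ --- is the same one the paper uses, but you are more careful on a point where the paper is sloppy: for a periodic orbit $O$ the integer $k_O$ is determined only modulo $|O|$, so $K$ naturally embeds in $\mathbb{Z}^{|\mathcal{I}|}\times\prod_{O\notin\mathcal{I}}\mathbb{Z}/|O|\mathbb{Z}$ rather than into $\mathbb{Z}^{|\mathcal{O}|}$ as the paper's $\iota$ asserts (that map is not a well-defined homomorphism once periodic orbits are present). Your passage through the structure theorem and the characteristic subgroup $N=nK$ with $n=\exp(T)$ is a sound way to recover the extension.

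There is, however, a genuine gap in the rank claim. You assert without argument that the rank $r$ coming out of the congruence description equals the number of orbits of $X$, but if you actually carry out the computation you get $r=|\mathcal{I}|$, the number of \emph{infinite} orbits: the $\mathbb{Z}/|O|\mathbb{Z}$ factors for periodic orbits contribute only torsion, and the congruences carve out a finite-index (hence full-rank) subgroup of $\mathbb{Z}^{|\mathcal{I}|}$. This does not match the theorem as stated. Concretely, take $X$ to be the orbit closure of $0^\infty 1^\infty$; this is an NMC subshift with three orbits (two fixed points and the orbit of $0^\infty 1^\infty$), yet every automorphism must agree with a power of $\sigma$ on the infinite orbit and hence on all of $X$, so $\aut(X,\sigma_X)\cong\mathbb{Z}$ has rank one, not three. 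The paper's own rank argument exhibits the same problem: for a periodic orbit $j$ of period $q$ it takes $p_j=q^2$, so $\beta_j=\sigma^{p_j}$ is the identity on that orbit and $\iota(\beta_j)=0$, not a standard basis vector. You should redo the rank computation from your congruence description and reconcile it with the stated theorem; the correct invariant is the number of infinite biasymptotic orbits, not the total number of orbits.
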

\begin{proof}
By Lemma~\ref{NMCstructure}, the set $\mathcal{O}(X)$ is finite. Since automorphisms of a system must permute orbits, there is a homomorphism $$\pi_{\mathcal{O}} \colon \aut(X,\sigma_{X}) \to \textnormal{Sym}(\mathcal{O}(X))$$
where $\textnormal{Sym}(\mathcal{O}(X))$ denotes the group of permutations on the set $\mathcal{O}(X)$. If an automorphism $\alpha$ is in the kernel of $\pi_{\mathcal{O}(X)}$ then it maps each orbit to itself, so upon choosing representative points $\{x_{i}\}_{i \in \mathcal{O}(X)}$ for each orbit we have $\alpha(x_{i}) = \sigma^{k_{i}(\alpha)}(x_{i})$ for each $i \in \mathcal{O}(X)$. The automorphism $\alpha$ is determined by this set of integers $\{k_{i}(\alpha)\}_{i \in \mathcal{O}(X)}$, and it follows there is an injective homomorphism $\iota \colon \ker \pi_{\mathcal{O}(X)} \to \mathbb{Z}^{|\mathcal{O}(X)|}$ given by $\iota \colon \alpha \mapsto (k_{1}(\alpha),\ldots,k_{|\mathcal{O}(X)|}(\alpha))$. Thus $\ker \pi_{\mathcal{O}(X)}$ is free abelian.

To prove the claim regarding the rank, fix an orbit $j \in \mathcal{O}(X)$. By Lemma~\ref{NMCstructure}, this orbit is forward asymptotic to some periodic point and backward asymptotic to some periodic point, and we let $p_{j}$ denote the product of the periods of these two periodic points. We may define an automorphism $\beta_{j}$ of $X$ by acting as the identity on all orbits besides $j$, and acting by the $\sigma^{p_{j}}$ on the orbit $j$. Then $\iota(\beta_{j})$ is the vector consisting of all $0$s except for a $1$ in the $j$th component. Since $j \in \mathcal{O}(X)$ was arbitrary, it follows that the image of the injective homomorphism $\ker \pi_{\mathcal{O}(X)} \to \mathbb{Z}^{|\mathcal{O}(X)|}$ is rank $|\mathcal{O}(X)|$.
\end{proof}

\begin{remark}
The mapping class group of a generic subshift in $\Scal$ is also virtually free abelian of finite rank. This follows from genericity of NMC subshifts in $\Scal$ together with the fact (which we don't prove here) that the mapping class group of an NMC subshift is virtually free abelian of finite rank.
\end{remark}

\section{The space $\Scalprime$ of all non-isolated subshifts}\label{main2}


The previous section shows that NMC subshifts, which have quite degenerate dynamics, completely determine which properties are generic in $\Scal$. It is then natural to ask what happens if one removes them from $\Scal$, and so we make the following definition.


\begin{definition}
Denote by $\Scalprime$ the complement of the set of NMC subshifts in $\Scal$.
\end{definition}

Note that clearly the derived set of $\Scal$ is contained in $\Scal^{\prime}$. The following shows that $\Scal^{\prime}$ is in fact precisely the derived set of $\Scal$.

\begin{theorem}\label{perfect}
$\Scalprime$ is a perfect subset of $\Scal$.
\end{theorem}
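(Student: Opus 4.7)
The plan is to verify the two defining properties of a perfect subset: closedness of $\Scalprime$ in $\Scal$, and the absence of isolated points in $\Scalprime$.

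Closedness follows immediately from Theorem~\ref{isodense}: that theorem shows $\mathbf{NMC}$ is precisely the set of isolated points of $\Scal$, which is always an open set, so $\Scalprime = \Scal \setminus \mathbf{NMC}$ is closed.

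For the no-isolated-points property, I fix $X \in \Scalprime$ and an arbitrary $n$, and aim to exhibit $Y \in [X,n] \cap \Scalprime$ with $Y \neq X$, proving that $X$ is not isolated in $\Scalprime$. I split on whether $X$ equals $S([X,n])$, the SFT of all biinfinite paths on $G_{X,n}$. If $X \subsetneq S([X,n])$, I take $Y = S([X,n]) \in [X,n]$, so $Y \neq X$, and $Y$ is non-NMC because $X \subset Y$ is non-NMC and the structural condition of Lemma~\ref{NMCstructure} passes to subshifts (any finite data $(P,M,S)$ witnessing $Y$ as NMC would also witness $X$). Otherwise $X = S([X,n])$, so $X$ is itself a non-NMC SFT and any valid $Y$ must be a proper subshift of $X$. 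Since $X$ is non-NMC, the graph $G_{X,n}$ contains a simple cycle $K$ with an incoming edge $e$ from some $v' \notin K$ and an outgoing edge $f$ to some $w' \notin K$. Let $u$ denote the label of one traversal of $K$, choose an integer $i > n/|u|$, and let $w$ be the label of the finite path ``$e$, then $i$ traversals of $K$, then $f$''. I set $Y = \{x \in X : w \text{ is not a subword of } x\}$. This $Y$ is closed and shift-invariant; it is a proper subshift since the $w$-path extends to a biinfinite path in $X$; and one verifies $L_n(Y) = L_n(X)$ by checking that every length-$n$ subword of $X$ is realized by some path avoiding the full $w$-pattern (paths avoiding $K$, paths staying in $K$, transit paths with a different number of cycles, or orbits asymptotic to $K$; the choice $i > n/|u|$ ensures no length-$n$ word is forced into the $w$-pattern).

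To see $Y \in \Scalprime$, observe that $G_{X,n}$ being essential gives rise to biinfinite paths of the form ``left-infinite path ending at $v'$, then $e$, then $K^{\infty}$'' and ``$K^{\infty}$, then $f$, then right-infinite path beginning at $w'$'', whose respective orbits $O_1, O_2$ lie in $X$. Both orbits avoid $w$ (neither completes the finite transit $e \to K^i \to f$), so $O_1, O_2 \subset Y$. In any hypothetical NMC decomposition of $Y$ given by Lemma~\ref{NMCstructure} with finite data $(P,M,S)$, the orbit $O_1$ (with right tail $u^\infty$) would force some $s \in S$ with $s^\infty = u^\infty$, while $O_2$ (with left tail $u^\infty$) would force some $p \in P$ with $p^\infty = u^\infty$; this gives $p^\infty = s^\infty$ and contradicts the non-overlap condition of Lemma~\ref{NMCstructure}. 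Thus $Y$ is non-NMC. The principal difficulty lies in this second case: one must simultaneously verify that $Y$ is a proper subshift, lies in $[X,n]$, and is non-NMC, with the last claim depending crucially on the asymptotic orbits $O_1, O_2$ as witnesses against the finiteness of $M$ in any NMC decomposition.
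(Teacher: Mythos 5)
Your proof is correct, but it takes a genuinely different route from the paper's. The paper handles both cases uniformly: it forbids the \emph{shortest} transit $fPg$ through the problematic cycle $K$ to produce $Y \subsetneq S([X,n])$, and then shows both $Y$ and $S([X,n])$ lie in $[X,n] \cap \Scalprime$ at once, so no case split on whether $X = S([X,n])$ is needed. Your split is valid (and Case 1, via the fact that a subshift of an NMC subshift is NMC, is fine), but it is more work. The more substantive difference is in how you conclude that $Y$ is non-NMC. The paper observes that $Y$ contains the sequences $y_k$ with arbitrarily long transit blocks $fPK^kg$ that cannot be absorbed into a one-sided periodic tail, so they would force unbounded words in the finite set $M$ of Lemma~\ref{NMCstructure}; this exploits the finiteness of $M$ and works no matter how the lemma's final disjointness clause is read. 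You instead exploit that clause directly, using the two asymptotic orbits $O_1, O_2$ to argue that both $P$ and $S$ would have to represent the periodic orbit of $K$. This works, but it silently interprets ``no sequence can be written both as $p^\infty$ and $s^\infty$'' at the \emph{orbit} level (you only establish that some $p^\infty$ and some $s^\infty$ lie in the same orbit as the periodic point of $K$, not that they are literally the same biinfinite sequence); this is the intended reading of the lemma, but the paper's $M$-based argument sidesteps the interpretive subtlety entirely, and is in that sense sturdier. Two further details you gloss over but which do go through: the left/right-infinite tails of $O_1, O_2$ supplied by essentiality of $G_{X,n}$ may themselves contain the forbidden $w$-pattern, so one must modify them (by inflating any occurrence of $e K^i f$ to $e K^{i+1} f$); and the verification that $L_n(Y) = L_n(X)$ requires the same inflation trick for arbitrary edges, not merely the observation $|w| > n$.
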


\begin{proof}
Note that $\Scalprime$ is clearly closed since $\nmc$ was open in $\Scal$, so we need only show that $\Scalprime$ contains no isolated points.

Consider any subshift $X \in \Scalprime$ and any $n \in \mathbb{N}$. We will show that $|[X,n] \cap \Scalprime | \geq 2$, and hence $[X,n] \cap \Scalprime \neq \{X\}$. Consider the Rauzy graph $G_{X,n}$, which since $X \notin \nmc$, must contain a cycle $K$ with an incoming edge $f$ (with initial vertex not in $K$) and an outgoing edge $g$ (with terminal vertex not in $K$); without loss of generality we may assume that $K$ is simple. Denote by $P$ the simple subpath of $K$ from the terminal vertex of $f$ to the initial vertex of $g$. Clearly the subshift $S([X,n])$ consisting of all sequences corresponding to biinfinite paths in $G_{X,n}$ is in $[X,n]$. Define a subshift $Y$ as the subshift of all sequences corresponding to biinfinite paths in $G_{X,n}$ not containing $fPg$ as a subpath. It's clear that $Y \subsetneq S([X,n])$; since $G_{X,n}$ is essential, there is a biinfinite path in $G_{X,n}$ containing $fPg$, and the corresponding sequence is in $S([X,n])$ but not $Y$. We now show that $Y \in [X,n]$.

For every edge $e$ in $G_{X,n}$, since $G_{X,n}$ is essential, there is a biinfinite path $P_e$ in $G_{X,n}$ containing $e$. Since
$f,g$ are not part of $P$, any occurrences of $fPg$ in $P_e$ are nonoverlapping. Given this, we define $P_{e}^{\prime}$ to be the biinfinite path obtained by replacing every occurence of $fPg$ in $P_{e}$ (if any) by $fPKg$. Clearly $P'_e$ still contains $e$, and
does not contain $fPg$, so the corresponding sequence is in $Y$. Since $e$ was arbitrary, by Lemma~\ref{rauzy} we have $Y \in [X,n]$.

It remains only to show that $S([X,n])$ and $Y$ are both in $\Scalprime$, i.e. not NMC subshifts. Note that since $G_{X,n}$ is essential, for every $k$ there exists a biinfinite path $Q_k$ containing $fPK^{k} g$. By the same argument above, we may assume that $Q_k$ does not contain $fPg$, so that each $Q_k$ corresponds to a sequence $y_{k} \in Y \subset S([X,n])$. 
Now, Lemma~\ref{NMCstructure} implies that neither $Y$ nor $S([X,n])$ are NMC subshifts; they each contain $y_k$, which has a subword $fPK^{k} g$ which is not part of a one-sided periodic portion, and for large enough $k$, the length of $fPK^{k}g$ would exceed the maximum length of words in $M$ from Lemma~\ref{NMCstructure}, a contradiction.

We've shown that $|[X,n] \cap \Scalprime| \geq 2$, and since $X,n$ were arbitrary, the proof is complete.

\end{proof}

This leads to an interesting universality result for non-NMC shifts of finite type. In~\cite{Pelczynski1965}, Pe\l{}czy\'{n}ski proved that in the class of compact zero-dimensional metric spaces, there is a unique (up to homeomorphism) space which has a dense set of isolated points whose complement is homeomorphic to the Cantor set.
In fact, we now show that for any non-NMC shift of finite type $Y$, the space of subshifts of $Y$ is homeomorphic to the Pe\l{}czy\'{n}ski Space.

\begin{corollary}\label{cor:pelczynski}
If $Y$ is a shift of finite type which is not NMC, then the space $\Scal(Y)$ of subshifts of $Y$ is homeomorphic to the Pe\l{}czy\'{n}ski Space. In particular, the spaces of subshifts of non-NMC shifts of finite type are all homeomorphic to each other.
\end{corollary}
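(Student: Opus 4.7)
The plan is to verify directly that $\Scal(Y)$ satisfies the hypotheses of Pe\l{}czy\'{n}ski's characterization: it is a compact zero-dimensional metric space having a dense set of isolated points whose complement is homeomorphic to the Cantor set. Uniqueness (up to homeomorphism) of such a space will identify $\Scal(Y)$ with the Pe\l{}czy\'{n}ski Space, and the ``in particular'' clause then follows from uniqueness.

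First I would record the easy structural properties: $\Scal(Y)$ is compact by Lemma~\ref{SFTclopen}, metric as a subspace of $\Scal$, and zero-dimensional because the relative cylinders $[X,n]\cap\Scal(Y)$ form a clopen basis. The bulk of the work is to identify the isolated points of $\Scal(Y)$ as precisely the NMC subshifts contained in $Y$, and to show that the complementary derived set $\Scal(Y)\cap\Scalprime$ is a Cantor set (i.e. nonempty, compact, metric, zero-dimensional, and perfect).

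The key observation is that both the density construction in Theorem~\ref{isodense} and the perfectness construction in Theorem~\ref{perfect} start from a cylinder $[X,n]$ and produce subshifts of $S([X,n])$, so what I need is that $S([X,n]) \subset Y$ once $n$ is large enough. Since $Y$ is a shift of finite type, fix $m$ and a finite set $F$ of forbidden $m$-letter words defining $Y$. For any $n \geq m$ and any $X \subset Y$, each $n$-letter word in $L_n(X) \subset L_n(Y)$ avoids $F$, so every sequence in $S([X,n])$ avoids $F$ and lies in $Y$. With this in hand, for any $X \in \Scal(Y)$ and any cylinder $[X,n]$ with $n \geq m$: the NMC subshift built in Theorem~\ref{isodense} lies in $\Scal(Y)$, giving density of $\nmc \cap \Scal(Y)$ in $\Scal(Y)$; and if in addition $X \in \Scalprime$, the distinct subshift built in Theorem~\ref{perfect} also lies in $\Scal(Y) \cap \Scalprime$, giving perfectness of the derived set. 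Non-emptiness of $\Scal(Y)\cap\Scalprime$ is clear since by hypothesis $Y$ itself lies in it. Finally, NMC subshifts are isolated in the larger space $\Scal$ by Theorem~\ref{isodense}, hence remain isolated in $\Scal(Y)$, so combined with the perfectness argument above the isolated points of $\Scal(Y)$ are exactly the NMC subshifts contained in $Y$.

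Putting these together, $\Scal(Y)\cap\Scalprime$ is a nonempty compact zero-dimensional perfect metric space and therefore homeomorphic to the Cantor set, and Pe\l{}czy\'{n}ski's theorem applies. The only real subtlety (as opposed to technical difficulty) is the SFT-threshold observation $S([X,n]) \subset Y$ for $n \geq m$, which is what lets the ambient constructions from Sections~\ref{main} and~\ref{main2} be carried over to the relative space $\Scal(Y)$ without modification.
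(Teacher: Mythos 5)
Your proof is correct, and the hypotheses you need to verify are the right ones. Where it differs from the paper's argument is in how it transfers the ambient results to the relative space $\Scal(Y)$. The paper notes (via Lemma~\ref{SFTclopen}) that $\Scal(Y)$ is \emph{clopen} in $\Scal$, and then appeals only to elementary point-set topology: a dense subset of $\Scal$ (namely $\nmc$) intersects every open set densely, so $\nmc\cap\Scal(Y)$ is dense in $\Scal(Y)$; and the intersection of the perfect set $\Scalprime$ with the open set $\Scal(Y)$ has no relatively isolated points, so $\Scalprime\cap\Scal(Y)$ is perfect. You instead re-open the constructions in Theorems~\ref{isodense} and~\ref{perfect}, observe that each one builds its witnesses inside $S([X,n])$, and separately prove the SFT-threshold fact $S([X,n])\subset Y$ for $n\ge m$; but that threshold fact is precisely the content of the ``open'' half of Lemma~\ref{SFTclopen}, so you are effectively re-deriving openness of $\Scal(Y)$ in an inline, hands-on way rather than citing it. Both routes work; the paper's is shorter because it lets the clopen property do the transfer automatically, while yours makes the mechanism concrete but at the cost of re-inspecting proofs you could otherwise treat as black boxes.
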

\begin{proof}
Let $Y \in \Scal$ be a shift of finite type which is not NMC. By Lemma~\ref{SFTclopen}, $\Scal(Y)$ is clopen, and it is obviously a zero-dimensional metric space as a subset of $\Scal$. It follows then from Theorem~\ref{isodense} that $\nmc \cap \Scal(Y)$ is dense in $\Scal(Y)$, and that $\nmc \cap \Scal(Y)$ consists of isolated points in $\Scal(Y)$. Thus by Pe\l{}czy\'{n}ski's uniqueness result from~\cite{Pelczynski1965}, it remains to show that in $\Scal(Y)$, the set
$E = \Scal(Y) \setminus \nmc = \Scalprime \cap \Scal(Y)$ is a Cantor set. It's clear that $E$ is a metric space, and it's nonempty since it contains $Y$. Moreover, $E$ is totally disconnected, and compact, since $\nmc \cap \Scal(Y)$ is open in $\Scal(Y)$. Finally, $E$ is perfect since it's the intersection of the open set $\Scal(Y)$ with $\Scalprime$, which is perfect by Theorem~\ref{perfect}. 
Altogether $E$ is a nonempty compact, totally disconnected, perfect metric space, and hence homeomorphic to the Cantor set.
\end{proof}

While genericity in $\Scalprime$ is not trivially controlled by a countable set as was the case in $\Scal$, there are still cylinders in $\Scalprime$ for which every subshift contained in the cylinder has somewhat degenerate dynamics. Consider, for instance, the subshift $X$ given by all nondecreasing sequences on the alphabet $\{0,1,2\}$ (e.g. $\ldots 000111222\ldots$). The subshift $X$ is not NMC (by Lemma~\ref{NMCstructure}), so $X \in \Scalprime$, but $X$ is countable, and all subshifts in $[X,2]$ share the set of $2$-letter subwords $\{00, 01, 02, 11, 12, 22\}$ with $X$ and so are contained in $X$. All such subshifts are countable, and not topologically transitive; topologically, they are each a countable union of compactifications of $\mathbb{Z}$.

In fact, we will show now that the union of a collection of similar `degenerate' cylinders is actually dense in $\Scalprime$. We first need some definitions.

\begin{definition}
A directed graph has the \textit{one middle cycle} property (or OMC) if there exists a unique simple cycle $K$ with both incoming and outgoing edges.
\end{definition}

We can again give an alternate definition of OMC graphs as unions of basic objects, for which we first need a definition.

\begin{definition}
An \textit{empty double barbell} is a graph consisting of three vertex-disjoint simple cycles and simple directed paths from the first cycle to the second and from the second to the third.
\end{definition}

We leave the proof of the following simple lemma to the reader.

\begin{lemma}\label{OMC}
If a graph has OMC, then it can be written as a finite (not necessarily disjoint) union of isolated simple cycles, empty barbells, and a single empty double barbell, where all isolated cycles and cycles within empty/double barbells are vertex-disjoint.

If a graph can be written as such a union where no cycle is both initial within any barbell and terminal within another, and where the central cycle in the empty double barbell is not part of any other empty barbell, then it has OMC.
\end{lemma}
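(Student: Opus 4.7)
The plan is to parallel the approach of the NMC structural lemma (Lemma~\ref{NMC}), whose forward direction proceeded by an edge-by-edge decomposition guided by the irreducible component structure of the graph, and whose converse checked which cycles in the decomposition can pick up incoming or outgoing edges. The new ingredient for OMC is the unique middle simple cycle $K$ carrying both an incoming and an outgoing edge, and this $K$ becomes the central cycle of the empty double barbell in the decomposition.

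For the forward implication, suppose $G$ has OMC with unique middle cycle $K$. I would first examine the irreducible component structure of $G$. The key claim is that there is at most one irreducible component of $G$ that is neither a source nor a sink in the tree structure, and any such component must coincide with $K$ itself. Indeed, any irreducible component with both an incoming edge from outside and an outgoing edge to outside contains a simple cycle possessing both properties, obtained by piecing together a simple cycle through entry and exit vertices via an internal path; uniqueness of $K$ then forces all such cycles to equal $K$. Once the middle component is identified, I would proceed exactly as in the NMC proof, assigning each edge $e$ a structure: edges in source/sink/isolated components belong to simple cycles of those components (yielding isolated simple cycles or endpoints of barbells); edges on transition paths between components that do not pass through $K$ are absorbed into empty barbells; and edges on $K$ itself or on transition paths running from a source component through $K$ into a sink component are absorbed into a single empty double barbell with middle cycle $K$.

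For the converse, assume $G$ is written as such a union with the stated conditions, and let $K$ denote the middle simple cycle of the unique empty double barbell. Then $K$ has an incoming edge (the terminal edge of the transition path coming from the source cycle) and an outgoing edge (the initial edge of the transition path going to the sink cycle). To show $K$ is the only such cycle, note that any simple cycle of $G$ must lie within one of the listed cycles in the decomposition, since transition paths are acyclic and all listed cycles are pairwise vertex-disjoint. Isolated simple cycles by definition have neither incoming nor outgoing edges. Source cycles of empty barbells or of the double barbell have only outgoing edges: the hypothesis that no cycle is both initial in one barbell and terminal in another rules out any incoming edge, and symmetrically for sink cycles. Finally, $K$ is unique among middle cycles because the double barbell is unique and $K$ is not part of any other empty barbell by hypothesis.

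The main technical obstacle I anticipate is in the forward direction: proving that the irreducible component containing $K$ equals $K$ rather than properly containing it. If this component $C$ were strictly larger than $K$, one would need to produce a second distinct simple cycle in $C$ still carrying both an incoming and an outgoing edge of $G$, contradicting uniqueness. The subtlety is that $K$'s external edges may originate at or terminate in vertices lying inside $C$ rather than strictly outside $C$, so one must leverage strong connectivity of $C$ together with the tree structure to produce an alternative simple cycle that inherits a genuine external edge coming from outside $C$. Once this step is handled, the remainder of the argument follows the NMC template routinely.
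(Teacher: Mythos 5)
The forward implication of the lemma is false as stated, and the difficulty you flag at the end --- showing that the irreducible component $C$ containing $K$ equals $K$ --- cannot in fact be overcome; it is precisely where the statement breaks. Take $G$ with vertices $\{v_1,v_2,v_3\}$ and edges $v_1\to v_2$, $v_2\to v_3$, $v_3\to v_1$, $v_1\to v_3$. The only simple cycles are the Hamiltonian $3$-cycle through all of $\{v_1,v_2,v_3\}$ and the $2$-cycle $K$ on $\{v_1,v_3\}$. The $3$-cycle uses every vertex, so it has no incoming or outgoing edge; the cycle $K$ has incoming edge $v_2\to v_3$ and outgoing edge $v_1\to v_2$. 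Hence $K$ is the unique middle cycle and $G$ has OMC, yet $G$ is strongly connected, so $C=G\supsetneq K$, and no second middle cycle appears to contradict uniqueness because the only other simple cycle is Hamiltonian and therefore escapes being a middle cycle for free. Moreover, $G$ manifestly admits no decomposition of the claimed form: its two simple cycles share vertices, so at most one of them can appear among the pairwise vertex-disjoint cycles of a decomposition, which then leaves at least one edge of $G$ uncovered; and $G$ has no self-loops, so no barbell or double barbell (which require two, respectively three, vertex-disjoint cycles among only three vertices) even exists as a subgraph of $G$.

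The fact your NMC template implicitly relies on, and which fails here, is that in a simple digraph with NMC every irreducible component must itself be a single simple cycle: a strongly connected component that is not a simple cycle always contains a non-Hamiltonian simple cycle, and any such cycle automatically has both an incoming and an outgoing edge, violating NMC. OMC weakens this just enough to break it, since the one permitted middle cycle can be hidden inside a strongly connected component whose only other simple cycle is Hamiltonian. Your argument inspects only incoming and outgoing edges arriving from distinct irreducible components, so it cannot reach the case, realized above, where $C$ supplies $K$'s external edges internally and $C$ itself has no external edges at all. For what it is worth, the paper only ever applies this decomposition to the OMC Rauzy graphs produced in Theorem~\ref{badcylinders}, which are constructed as an NMC graph disjointly augmented by one empty double barbell and so carry the claimed structure by fiat; a correct general lemma would require an extra hypothesis (for example, that the irreducible component containing the unique middle cycle is itself a simple cycle), and under that hypothesis your proof strategy does go through.
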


An example of an OMC graph is given in Figure 3.
\begin{center}
\begin{figure}
\tikzset{every loop/.style={min distance=20mm,in=310,out=240,looseness=10}}
\tikzset{->-/.style={decoration={
  markings,
  mark=at position .5 with {\arrow{>}}},postaction={decorate}}}
\begin{tikzpicture}[scale=6,transform shape,
            > = stealth, 
            shorten > = 1pt, 
            auto,
            node distance = 3cm, 
            thick, 
            scale=.1
        ]
        \node[draw,fill,circle,inner sep=0pt,minimum size=4pt] (a) at (-2,3) {};
        \node[draw,fill,circle,inner sep=0pt,minimum size=4pt] (b) at (-0.5,2) {};
        \node[draw,fill,circle,inner sep=0pt,minimum size=4pt] (c) at (1,3) {};
        \node[draw,fill,circle,inner sep=0pt,minimum size=4pt] (d) at (-0.5,4) {};
        \node[draw,fill,circle,inner sep=0pt,minimum size=4pt] (e) at (3.5,3.8) {};
        \node[draw,fill,circle,inner sep=0pt,minimum size=4pt] (f) at (5.2,3.8) {};
        \node[draw,fill,circle,inner sep=0pt,minimum size=4pt] (g) at (4.4,2.6) {};
        \node[draw,fill,circle,inner sep=0pt,minimum size=4pt] (h) at (-1.5,-2.6) {};
        \node[draw,fill,circle,inner sep=0pt,minimum size=4pt] (i) at (-0.5,-1.7) {};
        \node[draw,fill,circle,inner sep=0pt,minimum size=4pt] (j) at (0.5,-2.6) {};
        \node[draw,fill,circle,inner sep=0pt,minimum size=4pt] (k) at (2.5,-1.4) {};
        \node[draw,fill,circle,inner sep=0pt,minimum size=4pt] (l) at (2.5,-3) {};
        \node[draw,fill,circle,inner sep=0pt,minimum size=4pt] (m) at (4.7,-1.8) {};
        \node[draw,fill,circle,inner sep=0pt,minimum size=4pt] (n) at (6.7,0.8) {};
        \node[draw,fill,circle,inner sep=0pt,minimum size=4pt] (o) at (6.7,-0.8) {};
        \path[->] (a) edge [bend right=30] node {} (b);
        \path[->] (b) edge [bend right=30] node {} (c);
        \path[->] (c) edge [bend right=30] node {} (d);
        \path[->] (d) edge [bend right=30] node {} (a);
        \path[->] (e) edge [bend left=30] node {} (f);
        \path[->] (f) edge [bend left=30] node {} (g);
        \path[->] (g) edge [bend left=30] node {} (e);
        \path[->] (h) edge [bend left=40] node {} (i);
        \path[->] (i) edge [bend left=40] node {} (j);
        \path[->] (j) edge [bend left=40] node {} (h);
        \path[->] (k) edge [bend right=60] node {} (l);
        \path[->] (l) edge [bend right=60] node {} (k);
        \path[->] (m) edge [loop right] node {} (m);
        \path[->] (b) edge [bend right=35] node {} (i);
        \path[->] (b) edge [bend right=30] node {} (k);
        \path[->] (b) edge [bend left=25] node {} (m);
        \path[->] (g) edge [bend right=10] node {} (i);
        \path[->] (g) edge [bend left=20] node {} (k);
        \path[->] (g) edge [bend left=15] node {} (n);
        \path[->] (n) edge [bend left=45] node {} (o);
        \path[->] (o) edge [bend left=45] node {} (n);
        \path[->] (o) edge [bend left=15] node {} (m);
    \end{tikzpicture}
    \caption{A graph having OMC (see the far right cycle).}
    \end{figure}
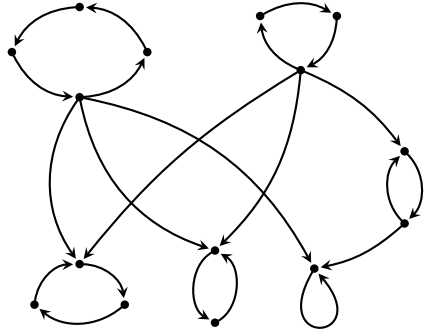
    \end{center}
\begin{definition}
A subshift $X$ has the \textit{one middle cycle} property (or OMC) if there exists $n$ for which $G_{X,n}$ has OMC.
\end{definition}

In some sense, the OMC subshifts are the simplest subshifts in $\Scalprime$ (i.e. the closest to the NMC subshifts removed in the definition of $\Scalprime$). As will often be the case, these simplest cases will in fact turn out to be generic (here in $\Scalprime$).

Let $\omc$ denote the set of OMC subshifts. 

\begin{theorem}\label{badcylinders}
If $C$ is a cylinder set in $\Scalprime$, then $C$ contains a nonempty cylinder set $D$ in $\Scalprime$ such that $D$ is contained in
$\omc$.
\end{theorem}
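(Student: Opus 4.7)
The plan is, given a cylinder $C = [X,n] \cap \Scalprime$ with $X \notin \nmc$, to construct a subshift $Z \in C$ and an integer $m \geq n$ so that $G_{Z,m}$ has OMC. Once this is done, $D := [Z,m]$ will be the desired subcylinder: any $Y \in D$ will satisfy $G_{Y,m} = G_{Z,m}$ by Lemma~\ref{rauzy}, hence $Y$ will be OMC. Moreover, any OMC subshift contains sequences with arbitrarily many traversals of the middle cycle and hence infinitely many orbits, so by Lemma~\ref{NMCstructure} it cannot be NMC; thus $D \subset \omc \subset \Scalprime$, and $D \subset [X,n]$ because $m \geq n$ and $Z \in [X,n]$, giving $D \subset C$.

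To construct $Z$, I would adapt the density argument from the proof of Theorem~\ref{isodense}. Since $X \notin \nmc$, $G_{X,n}$ contains a simple cycle $K$ with an incoming edge $f$ (initial vertex outside $K$) and an outgoing edge $g$ (terminal vertex outside $K$). Extending $f$ maximally backwards and $g$ maximally forwards along simple directed paths yields a source cycle $K_1$ and a sink cycle $K_2$ together with simple transition paths $\alpha, \beta$, giving an empty double barbell $D_0 \subset G_{X,n}$ with first, middle, and last cycles $K_1, K, K_2$. For every other edge $e$ of $G_{X,n}$ I would associate, exactly as in Theorem~\ref{isodense}, either an isolated simple cycle $K_e$ (if $e$ lies in an irreducible component) or an empty barbell $B_e$ from a source cycle to a sink cycle (if $e$ lies on a transition path). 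Then $Z$ would be defined as the union of the periodic orbits of the $K_e$, the doubly-eventually-periodic orbits through the $B_e$, and the orbits of sequences $K_1^\infty \alpha K^i \beta K_2^\infty$ for $i \geq 0$ coming from $D_0$. Every edge of $G_{X,n}$ would then lie in at least one sequence of $Z$, so $G_{Z,n} = G_{X,n}$ and $Z \in [X,n] \subseteq C$.

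Next, I would choose $m$ to exceed twice the sum of all the periods and transition-path lengths in the construction, and argue as in the backward direction of Lemma~\ref{NMCstructure}: distinct periodic orbits in $Z$ would give rise to pairwise vertex-disjoint simple cycles in $G_{Z,m}$, and each transition path would induce a simple directed path in $G_{Z,m}$ joining its endpoints. Consequently $G_{Z,m}$ would decompose as a union of isolated simple cycles, empty barbells, and the single empty double barbell $D_0$, with every initial cycle of any barbell lying in a source component of $G_{X,n}$, every terminal cycle lying in a sink component, and the middle cycle $K$ of $D_0$ not appearing in any other barbell. Lemma~\ref{OMC} would then give that $G_{Z,m}$ has OMC.

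The hard part will be verifying that $G_{Z,m}$ genuinely has OMC structure at the chosen level $m$. The delicate point is that the transition paths from certain $B_e$'s may pass through vertices of the irreducible component containing $K$, and one must check that they do not introduce additional incoming or outgoing edges to the cycle $K$ in $G_{Z,m}$. This will be ensured by taking $m$ large enough that long subwords of $K^\infty$ are disjoint from the subwords appearing in these transition paths, so that in $G_{Z,m}$ the cycle $K$ is entered and exited only along the transitions $\alpha$ and $\beta$ of $D_0$.
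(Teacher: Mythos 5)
Your proposal is correct and follows essentially the same strategy as the paper's proof: locate a cycle $K$ in $G_{X,n}$ with both incoming and outgoing edges, extend it to an empty double barbell $D_0$, build a subshift containing all paths on $D_0$ together with an NMC-type skeleton covering the remaining edges, and then pass to a large enough level where the Rauzy graph exhibits the OMC decomposition of Lemma~\ref{OMC}. The paper, however, streamlines the bookkeeping that you flag as the ``hard part'': rather than reconstructing the NMC skeleton and re-verifying its Rauzy-graph structure at level $m$ from scratch, it invokes Theorem~\ref{isodense} to produce an NMC subshift $Y \in [X,n]$, uses the proof of Lemma~\ref{NMCstructure} to get a level $N>n$ at which $G_{Y,N}$ is already known to be NMC, and then forms $Y' = Y \cup S(D_0)$; the graph $G_{Y',N}$ is then $G_{Y,N}$ together with the contribution of $S(D_0)$, and the structural conditions of Lemma~\ref{OMC} (sources stay initial, sinks stay terminal, $K$'s cycle appears in no other barbell) fall out of the already-established structure of $G_{Y,N}$. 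This layering delegates the delicate verification you acknowledge --- that the various transition paths stay simple, stay pairwise disjoint, and do not accidentally re-enter the middle cycle --- to results already proved, which is why the paper's version is shorter. Your direct construction does go through, but to finish it you would need to carry out, at your level $m$, an argument parallel to the $5N$-letter-subword analysis in the proof of Lemma~\ref{NMCstructure}; you should also account explicitly for the family of short-cut empty barbells from $C_1$ to $C_2$ that arise from sequences $K_1^\infty \alpha K^i \beta K_2^\infty$ with $i|K| < m$, and observe (as you do implicitly) that these are still consistent with the hypotheses of Lemma~\ref{OMC} since their initial and terminal cycles are $C_1$ and $C_2$, never $C_K$.
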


\begin{proof}
This proof is very similar to the proof of density in Theorem~\ref{isodense}. The only differences are that we know that every cylinder in $\Scalprime$ contains a non-NMC subshift (which therefore has a Rauzy graph with a cycle with both incoming/outgoing edges), and that we need to ensure that we create a subshift whose Rauzy graph has a single simple cycle with incoming/outgoing edges (rather than none).

Consider any cylinder $C = [X,n]$ in $\Scalprime$ with associated Rauzy graph $G_{X,n}$. We may assume without loss of generality that $X \in \Scalprime$, and so $G_{X,n}$ is not NMC. However, by Theorem~\ref{isodense}, there exists an NMC subshift $Y \in [X,n]$. By the proof of Lemma~\ref{NMCstructure}, there exist infinitely many $N$ for which $G_{Y,N}$ is NMC; choose such $N > n$.

The Rauzy graph $G_{X,n}$ contains a simple cycle $K$ with an incoming edge $f$ and outgoing edge $g$. 
Since $G_{X,n}$ is essential, we can extend $K$ backwards and forwards to obtain an empty double barbell subgraph $D$ of $G_{X,n}$ which has $K$ as its central cycle. Define $S = S(D)$; then $S$ consists of shifts of sequences of the form $p^{\infty}$, $m^{\infty}$, $s^{\infty}$, $p^{\infty} q m^{\infty}$, $m^{\infty} r s^{\infty}$, or
$p^{\infty} q m^n r s^{\infty}$ for some words $p, q, m, r, s$, where no two of $p^{\infty}, m^{\infty}$, and $s^{\infty}$ are in the same orbit.

Define $Y' = Y \cup S$. Consider $G_{Y^{\prime},N}$, which is just the union of $G_{Y,N}$ with any new vertices/edges corresponding to
$(N-1)$- and $N$-letter subwords of sequences in $S$. By the described structure of $S$, this is just the union of $G_{Y,N}$ with an empty double barbell $D'$ (which is not equal to $D$ since $N > n$.)

We note that the initial cycle in $D'$ cannot be the terminal cycle of any empty barbell in $G_{Y,N}$; empty barbells used in the construction of $Y$ always ended at sinks of $G_{X,n}$, which had no outgoing edges, and the same is true in $G_{Y,N}$ since $N > n$. Similarly, the terminal cycle of $D'$ cannot be the beginning of any empty barbell in $G_{Y,N}$ and the central cycle of $D'$ can be neither the initial nor terminal cycle of any empty barbell in $G_{Y,N}$. Therefore, $G_{Y^{\prime},N}$ is OMC by Lemma~\ref{OMC}.

But every subshift $Z \in [Y',N] \subset [X,n] = C$ has $G_{Z,N} = G_{Y^{\prime},N}$, so every such $Z$ is OMC. Since $C$ was arbitrary, the proof is complete.

\end{proof}

Since OMC subshifts are in a sense degenerate (they are never transitive, always zero entropy), again most meaningful dynamical properties are not generic in $\Scalprime$.

\begin{corollary}
The sets $\mathbf{M}$ of minimal subshifts and $\mathbf{UE}$ of uniquely ergodic subshifts are nowhere dense in $\Scalprime$.
\end{corollary}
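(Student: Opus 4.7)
The plan is to combine Theorem~\ref{badcylinders} with an analysis of the periodic points forced into any OMC subshift. A set is nowhere dense iff every nonempty open set contains a nonempty open subset disjoint from it; since cylinders form a basis for the topology of $\Scalprime$, it suffices to show that every nonempty cylinder in $\Scalprime$ contains a nonempty subcylinder disjoint from $\mathbf{M} \cup \mathbf{UE}$. By Theorem~\ref{badcylinders}, every such cylinder contains a nonempty subcylinder $D$ consisting entirely of OMC subshifts. Moreover, inspecting the construction used there, one may take $D = [Y', N]$ so that the common Rauzy graph $G_{Y', N}$ of every $Z \in D$ contains an empty double barbell subgraph with vertex-disjoint cycles $K_1, K_2, K_3$ (with $K_1$ the source cycle, $K_2$ the middle cycle, and $K_3$ the sink cycle).

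The heart of the argument is to show that every $Z \in D$ contains at least two distinct periodic orbits, which would make $Z$ both non-minimal and non-uniquely-ergodic. Since $G_{Y', N}$ is OMC with the unique cycle having both incoming and outgoing edges being $K_2$, the source cycle $K_1$ has no incoming edges and the sink cycle $K_3$ has no outgoing edges in $G_{Z, N} = G_{Y', N}$. By Lemma~\ref{rauzy}, every edge of $K_1$ is part of some biinfinite path corresponding to a point $z \in Z$; since $K_1$ has no incoming edges, this path must traverse $K_1$ through its entire infinite past. Shifting $z$ arbitrarily far to the left and passing to a limit, using compactness of $Z$ and closedness under the shift, recovers the purely periodic sequence $z^{(1)}$ which traverses $K_1$, so that $z^{(1)} \in Z$. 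The symmetric argument applied to the sink $K_3$ gives a purely $K_3$-periodic sequence $z^{(3)} \in Z$, and the vertex-disjointness of $K_1$ and $K_3$ forces $z^{(1)}$ and $z^{(3)}$ to have disjoint sets of $(N-1)$-letter subwords, hence to lie in distinct shift orbits.

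With two distinct periodic orbits inside $Z$, non-minimality is immediate (the orbit of $z^{(1)}$ is a proper, closed, shift-invariant subset of $Z$ since $z^{(3)}$ lies outside it), and non-unique-ergodicity follows at once since the uniform Borel probability measures supported on the two disjoint finite orbits are distinct shift-invariant measures on $Z$. Thus $D$ is disjoint from $\mathbf{M} \cup \mathbf{UE}$, and both sets are nowhere dense in $\Scalprime$. I expect the main subtlety to be the compactness-plus-shift argument extracting $z^{(1)}$ and $z^{(3)}$ from arbitrary points in $Z$: it is crucial that the source/sink property of $K_1, K_3$ within the OMC structure prevents a biinfinite path in $G_{Z,N}$ meeting $K_1$ (resp. $K_3$) from escaping backward (resp. forward), so that repeated shifts really do yield the purely periodic sequence in the limit. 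Once this point is secured, everything else reduces cleanly to invoking Theorem~\ref{badcylinders}, Lemma~\ref{OMC}, and Lemma~\ref{rauzy}.
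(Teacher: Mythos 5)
Your proof is correct and follows the same route as the paper: invoke Theorem~\ref{badcylinders} to pass to a subcylinder of OMC subshifts, then observe that any OMC subshift contains two distinct periodic orbits (from the source and sink cycles of its double barbell), whence it is neither minimal nor uniquely ergodic. The paper simply asserts the two-periodic-orbit fact without the compactness argument you spell out, but the underlying idea is identical.
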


\begin{proof}
We simply note that any subshift in $\omc$ contains two different periodic orbits as proper subsets, and so is neither minimal nor uniquely ergodic. Theorem~\ref{badcylinders} then completes the proof.
\end{proof}

\begin{corollary}\label{OMCgen}
$\omc$ is residual in $\Scalprime$.
\end{corollary}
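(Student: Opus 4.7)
The plan is to derive this corollary as an almost immediate consequence of Theorem~\ref{badcylinders}, by observing that the OMC property is itself cylinder-detectable and hence defines an open subset of $\Scal$. First I would note that if $X \in \omc$ with witness $n$, i.e. $G_{X,n}$ has OMC, then by Lemma~\ref{rauzy} every $Y \in [X,n]$ has $G_{Y,n} = G_{X,n}$ and so is also OMC; thus $[X,n] \subseteq \omc$, and $\omc$ is a union of cylinder sets, hence open in $\Scal$. I would also point out that $\omc \subseteq \Scalprime$: an OMC graph by definition contains a cycle with both an incoming and an outgoing edge, which is precisely the obstruction to NMC, so every OMC subshift is non-NMC and therefore lies in $\Scalprime$.

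Next I would invoke Theorem~\ref{badcylinders}: every cylinder $C$ in $\Scalprime$ contains a nonempty cylinder $D$ with $D \subseteq \omc$. Since the cylinder sets form a basis for the topology of $\Scalprime$, this immediately says that every nonempty open subset of $\Scalprime$ meets $\omc$, i.e. $\omc$ is dense in $\Scalprime$.

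Combining these two observations, $\omc$ is an open dense subset of $\Scalprime$, and any open set is trivially a $G_{\delta}$, so $\omc$ contains (in fact equals) a dense $G_{\delta}$ in $\Scalprime$ and is therefore residual. There is no real obstacle to overcome here; the content of the corollary is already bundled into Theorem~\ref{badcylinders}, and the only additional ingredient is the routine verification that OMC is a cylinder-invariant property, which follows directly from the definitions and Lemma~\ref{rauzy}.
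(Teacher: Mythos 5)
Your overall approach is essentially the paper's: the paper likewise derives the corollary directly from Theorem~\ref{badcylinders}, taking the union of the subcylinders $D$ produced there, which is open and dense in $\Scalprime$ by construction and is contained in $\omc$. Your added observation that $\omc$ is itself open in $\Scal$ (via Lemma~\ref{rauzy}: OMC-ness of $G_{X,n}$ is shared by all of $[X,n]$) is correct and is a mild strengthening; the paper does not need it because it works directly with the union of the subcylinders.

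However, your step asserting $\omc \subseteq \Scalprime$ contains a gap. You argue that an OMC graph is not NMC, hence an OMC subshift is not NMC. The first half is true, but the second does not follow: ``$X$ is OMC'' asserts that \emph{some} Rauzy graph $G_{X,n}$ is OMC (hence that particular graph is not NMC), whereas ``$X$ is NMC'' asserts that \emph{some possibly different} Rauzy graph $G_{X,m}$ is NMC. These are conditions on different graphs and do not exclude each other. Indeed the containment appears to fail: take $X$ to be the orbit closure of $0^{\infty}1012^{\infty}$. Then $L_2(X)=\{00,01,10,12,22\}$ and in $G_{X,2}$ the self-loop at vertex $0$ is the unique simple cycle with both an incoming edge ($10$) and an outgoing edge ($01$), so $G_{X,2}$ is OMC and $X\in\omc$; but $X$ satisfies the structure of Lemma~\ref{NMCstructure} with $P=\{0\}$, $S=\{2\}$, $M=\{101\}$, and one checks directly that $G_{X,4}$ has no middle cycle, so $X\in\nmc$ and hence $X\notin\Scalprime$. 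Fortunately this does not damage your conclusion: since $\omc$ is open in $\Scal$, its trace $\omc\cap\Scalprime$ is open in $\Scalprime$, and Theorem~\ref{badcylinders} gives density of $\omc\cap\Scalprime$ in $\Scalprime$ (the subcylinders $D$ lie in $\Scalprime$). So $\omc\cap\Scalprime$ is open and dense in $\Scalprime$, which is exactly what residuality of $\omc$ in $\Scalprime$ requires. You should simply drop the claim $\omc\subseteq\Scalprime$ and the phrase ``in fact equals,'' and phrase the conclusion in terms of $\omc\cap\Scalprime$.
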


\begin{proof}
Simply note that $\omc$ contains the union over all cylinders in $\Scalprime$ of the subcylinders guaranteed by Theorem~\ref{badcylinders}; this set is open and dense by definition.
\end{proof}



\begin{theorem}
The uncountable subshifts are nowhere dense in $\Scalprime$, and therefore the countably infinite subshifts are residual in $\Scalprime$.
\end{theorem}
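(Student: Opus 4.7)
The plan is to leverage Corollary~\ref{OMCgen}, which already tells us that $\omc$ is residual in $\Scalprime$, by establishing a single structural fact: every OMC subshift is countably infinite. Once this is in hand, both halves of the theorem fall out immediately. The uncountable subshifts will be disjoint from the dense open set $\omc$, hence contained in the closed set $\Scalprime \setminus \omc$, which has empty interior; and the countably infinite subshifts will contain $\omc$ and therefore be residual in $\Scalprime$.

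To check that any $X \in \omc$ is countable, I would fix $n$ with $G_{X,n}$ having OMC and use the ``lemma left to the reader'' from Section~\ref{defs} which says $X \subseteq S([X,n])$. So it suffices to bound $S([X,n])$ itself. By Lemma~\ref{OMC}, $G_{X,n}$ decomposes as a finite union of isolated simple cycles, empty barbells, and a single empty double barbell, with vertex-disjoint cycles. Every biinfinite path in $G_{X,n}$ lies entirely in one of these pieces (since once a path enters a sink it cannot escape, and similarly for backward extensions into sources). An isolated cycle contributes one periodic orbit; an empty barbell with cycle words $p,s$ and transition $m$ contributes the three orbits of $p^\infty$, $s^\infty$, and $p^\infty m s^\infty$; and the empty double barbell with central cycle word $q$, outer cycle words $p,s$, and transition words $m_1, m_2$ contributes the orbits of $p^\infty$, $q^\infty$, $s^\infty$, $p^\infty m_1 q^\infty$, $q^\infty m_2 s^\infty$, and $p^\infty m_1 q^k m_2 s^\infty$ for each $k \geq 0$. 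This is only countably many orbits in total, so $S([X,n])$, and hence $X$, is countable.

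To upgrade ``countable'' to ``countably infinite'' for $X \in \omc$, I would observe that $X \in \Scalprime$ by assumption, so $X$ is not NMC. On the other hand, any finite subshift is a union of finitely many periodic orbits, and such a subshift trivially fits the decomposition of Lemma~\ref{NMCstructure} with $M = \varnothing$, so it is NMC. Therefore $X$ cannot be finite, and combined with the previous paragraph $X$ is countably infinite.

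Assembling these pieces, $\omc \subseteq \{X \in \Scalprime : X \text{ is countably infinite}\}$, and since $\omc$ is residual in $\Scalprime$ by Corollary~\ref{OMCgen}, so is the set of countably infinite subshifts. Dually, the uncountable subshifts are disjoint from $\omc$, hence contained in the closed set $\Scalprime \setminus \omc$; because $\omc$ is open and dense in $\Scalprime$, this complement has empty interior, which is precisely nowhere density. The only substantive step is the structural claim about OMC subshifts, and that follows entirely from the graph-theoretic decomposition in Lemma~\ref{OMC} together with the containment $X \subseteq S([X,n])$, so no new technical machinery is needed.
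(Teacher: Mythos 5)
Your proof is correct and takes essentially the same route as the paper, which deduces both claims from Theorem~\ref{badcylinders} (equivalently, your use of Corollary~\ref{OMCgen} and the openness of $\omc$) together with the countability of OMC subshifts. The paper also handles the ``infinite'' qualifier more directly, simply noting that all subshifts in $\Scalprime$ are infinite, whereas you re-derive it via the observation that finite subshifts are NMC; both work.

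One small imprecision worth flagging: you assert that ``every biinfinite path in $G_{X,n}$ lies entirely in one of these pieces'' from the decomposition of Lemma~\ref{OMC}. That lemma's decomposition contains only a \emph{single} empty double barbell, but since the central cycle can have several incoming and several outgoing transition paths, there will generally be \emph{several} distinct empty double barbell subgraphs, and a biinfinite path passing through the middle cycle need not lie in the one double barbell the lemma happens to pick. The paper's own proof of Lemma~\ref{OMCquad} does the correct bookkeeping by summing over ``finitely many empty double barbells $D$ which are subgraphs of $G_{X,n}$.'' This does not affect your conclusion --- there are still only finitely many such double barbells, each contributing countably many orbits --- so countability of $S([X,n])$, and hence of $X$, holds as you claim.
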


\begin{proof}

The first statement follows immediately from Theorem~\ref{badcylinders} and the fact that all OMC subshifts are countable; the second statement then follows since all subshifts in $\Scalprime$ are infinite.

\end{proof}

\subsection{Complexity for subshifts in $\Scalprime$}

Generic subshifts in $\Scalprime$ no longer have linear complexity, but their complexities still grow rather slowly. By Corollary~\ref{OMCgen}, we can prove that generic subshifts in $\Scalprime$ satisfy various complexity bounds by verifying them for subshifts in $\omc$.

It turns out that every nonempty cylinder in $\Scalprime$ contains subshifts with complexity arbitrarily close to linear.

\begin{lemma}\label{OMCslow}
For any unbounded increasing $f: \mathbb{N} \rightarrow \mathbb{R}$ and any nonempty cylinder $C$ in $\Scalprime$, there exists
$Y \in \Scalprime \cap C$ with $\frac{c_{Y}(m)}{mf(m)} \rightarrow 0$.
\end{lemma}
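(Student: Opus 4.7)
The strategy is to apply Theorem~\ref{badcylinders} to reduce to a cylinder where every element is OMC, then construct a specific ``sparse'' subshift inside this cylinder by retaining only a carefully chosen subsequence of traversals of the central cycle of the double barbell.

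First I would use Theorem~\ref{badcylinders} to obtain a nonempty subcylinder $D = [Y', N] \subset C$ lying entirely in $\omc$, based at some OMC subshift $Y'$. By Lemma~\ref{OMC}, $G_{Y', N}$ decomposes as a union of isolated simple cycles, empty barbells, and exactly one empty double barbell. Let $K$ be the central cycle of the double barbell, labelled by a word $m$, with the initial and terminal cycles labelled by $p_1$ and $p_2$ and transition paths labelled by $q_1$ and $q_2$. Then $Y'$ contains all sequences of the form $p_1^{\infty} q_1 m^{k} q_2 p_2^{\infty}$ for every $k \geq 0$, together with the periodic sequences and empty barbell sequences inherited from the non-central parts of $G_{Y', N}$.

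Next I would choose an infinite strictly increasing sequence $k_1 < k_2 < \cdots$ whose growth is dictated by $f$. Since $f$ is unbounded and increasing, setting $k_i := \min\{n \in \mathbb{N} : f(n) \geq i^2\}$ gives a well-defined strictly increasing sequence with $|\{k_j \leq n\}| \leq \sqrt{f(n)}$ for every sufficiently large $n$. I then define $Y$ to be the subshift consisting of: (a) all sequences in $Y'$ arising from isolated cycles and empty barbells (the non-double-barbell portion); (b) the sequences $p_1^{\infty} q_1 m^{k_i} q_2 p_2^{\infty}$ for every $i$; and (c) the limit sequences $p_1^{\infty} q_1 m^{\infty}$, $m^{\infty} q_2 p_2^{\infty}$, $m^{\infty}$, and their shifts (so that $Y$ is closed). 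By construction $Y \subset Y'$, and every edge of $G_{Y', N}$ is covered by some sequence of $Y$ (the transition edges of $q_1, q_2$ appear in any $p_1^\infty q_1 m^{k_i} q_2 p_2^\infty$, and the edges in the central cycle appear in $m^\infty$), so $G_{Y, N} = G_{Y', N}$ and therefore $Y \in [Y', N] \subset C$. Moreover, $Y$ contains sequences whose ``middle portion'' $q_1 m^{k_i} q_2$ has unbounded length, so by Lemma~\ref{NMCstructure}, $Y \notin \nmc$, hence $Y \in \Scalprime \cap C$.

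Finally I would estimate $c_Y(n)$. The sequences from (a) together with $p_1^{\infty}, m^{\infty}, p_2^{\infty}, p_1^{\infty} q_1 m^{\infty}$, and $m^{\infty} q_2 p_2^{\infty}$ form an NMC-type collection whose contribution to $c_Y(n)$ is $O(n)$ (essentially by the same argument as Corollary~\ref{lincplx}). Each retained double-barbell sequence $p_1^{\infty} q_1 m^{k_i} q_2 p_2^{\infty}$ contributes at most $n$ new $n$-letter subwords (those spanning the entire middle block), and only those indices with $|q_1| + k_i|m| + |q_2| \leq n$ contribute at all; since different $k_i$ produce distinguishable spanning subwords, the total extra contribution is at most $n \cdot |\{k_j : k_j \leq n/|m|\}| \leq n\sqrt{f(n)}$ for $n$ large. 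Hence $c_Y(n) = O(n\sqrt{f(n)})$, giving $c_Y(n)/(n f(n)) \leq C/\sqrt{f(n)} \to 0$. The main technical step is verifying that the sparse selection still produces $G_{Y,N} = G_{Y',N}$ while yielding the desired subquadratic bound, and choosing the growth of $k_i$ precisely to beat every unbounded $f$.
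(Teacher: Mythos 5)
Your proof is correct and follows essentially the same route as the paper's: after reducing to an OMC cylinder via Theorem~\ref{badcylinders}, you restrict the number of traversals of the central cycle to a sparse set chosen relative to $f$, verify that the resulting subshift is still non-NMC and in the original cylinder, and then bound the extra complexity by (length)$\times$(number of admissible traversal counts). The paper's version is phrased via an arbitrary infinite set $R$ with $|R\cap\{1,\dots,n\}|<\sqrt{f(n)}$ and counts paths in the Rauzy graph category-by-category, but the counting and the choice of sparse sequence are the same as yours.
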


\begin{proof}
Fix any such function $f$. We assume without loss of generality that $C = [X,n]$ for some OMC subshift $X$. By increasing $n$ if necessary, we may assume that $G_{X,n}$ is OMC. By Lemma~\ref{OMC}, we can write $G_{X,n}$ as a union of isolated simple cycles $K_i$, empty barbells $B_j$, and an empty double barbell $D$, where all isolated cycles and cycles within empty/double barbells are vertex-disjoint, where
no cycle is both initial within any barbell and terminal within another, and where the central cycle in the empty double barbell is not part of any other empty barbell.

For any infinite $R \subseteq \mathbb{N}$, we define a subshift $Y(R)$ consisting of all sequences corresponding to a biinfinite path contained within some $K_i$, $B_j$, or $D$, with the additional restriction that the biinfinite path in $D$ is not allowed to contain a subpath of the form $fPK^n g$ for $P$ a proper subpath in $K$ and $n \notin R$. Note that for all $r \in R$, $Y$ contains a sequence
 $y_r$ corresponding to the unique (up to shifts) biinfinite path in $G_{X,n}$ containing $fPK^r g$. If $Y(R)$ were an NMC subshift, this would contradict Lemma~\ref{NMCstructure}; $y_r \in Y(R)$ contains a subword corresponding to $fPK^{r} g$ which is not part of a one-sided periodic portion, and for large enough $r$, the length of this subword would exceed the maximum length of words in $M$, a contradiction. Therefore, $Y(R) \in \Scalprime$.


We now wish to bound from above the word complexity function of $Y(R)$. Firstly, just as in the proof of Corollary~\ref{lincplx}), the subshift of sequences corresponding to biinfinite paths contained in any $K_i$ or $B_j$ has linear word complexity. Therefore, it suffices to treat only sequences corresponding to biinfinite paths in $D$ which do not contain $fPK^n g$ for $n \notin R$.
By definition of the Rauzy graphs, the number of $m$-letter words in
$Y(R)$ is equal to the number of such paths of length $m-n+1$ in $D$, so it suffices to estimate the number of paths of various lengths in $D$ which do not contain $fPK^n g$ for $n \notin R$.

First we denote the initial/terminal cycles in $D$ by $B$ and $E$ (for beginning/end), the transition paths by $I$ (from $B$ to $K$) and $J$ (from $K$ to $E$), the subpath of $K$ from the end of $I$ to the beginning of $J$ by $P$, and assume without loss of generality that $B$ and $E$ are oriented so that $B$ begins and ends at the initial vertex of $I$ and $E$ begins and ends at the terminal vertex of $J$. Then the paths we wish to consider are of one of the following forms:

\begin{enumerate}
\item subpath of $B^{\infty}$
\item subpath of $E^{\infty}$
\item subpath of $M^{\infty}$
\item subpath of $B^{\infty} I P$ containing at least one edge of $I$
\item subpath of $J E^{\infty}$ containing at least one edge of $J$
\item subpath of $B^{\infty} I P K^{\infty}$ containing all of $I$
\item subpath of $K^{\infty} J E^{\infty}$ containing all of $J$
\item subpath of $B^{\infty} I PK^r J E^{\infty}$ for some $r \in R$ containing all of $P K^r$
\end{enumerate}

It's immediate that the number of paths of length $n$ in any of the first seven categories is either bounded in $n$ or linear in $n$. So, it suffices to treat only category (8). For any length $n$, there is a path in category (8) of length $n$ for $r \in R$ if and only if $r|K| \leq n$, and the number of such paths is $n + 1 - r|K|$. Therefore, the number of total paths in category (8) is
\begin{equation}\label{DBcount}
p_n := \sum_{r \in R, r \leq n|K|^{-1}} (n+1 - r|K|) \leq n \cdot |R \cap \{1, \ldots, n\}|.
\end{equation}
Therefore, as long as we choose our infinite $R$ in such a way that $|R \cap \{1,\ldots,n\}| < \sqrt{f(n)}$ for all $n$, then
$\frac{p_n}{nf(n)} \rightarrow 0$, and as argued earlier, this implies that $\frac{c_{Y(R)}(m)}{mf(m)} \rightarrow 0$.

\end{proof}

\begin{corollary}\label{gencplx}
For any unbounded increasing $f: \mathbb{N} \rightarrow \mathbb{R}$, the set
\[
\mathbf{S}_f := \{X \ \mid \ c_{X}(n) < nf(n) \textrm{ for infinitely many } n\}
\]
is residual in $\Scalprime$.
\end{corollary}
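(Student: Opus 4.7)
The plan is to combine two ingredients already established in the excerpt: the ``$G_\delta$'' machinery from Theorem~\ref{cplxthm} and the quantitative density statement in Lemma~\ref{OMCslow}. Residuality then follows once we verify both that $\mathbf{S}_f$ is $G_\delta$ in $\Scalprime$ and that it is dense in $\Scalprime$, since $\Scalprime$ is closed in the Polish space $\Scal$ and therefore itself a Polish (in particular Baire) space.

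First I would observe that
\[
\mathbf{S}_f = \bigcap_{N \in \mathbb{N}} \bigcup_{n > N} \{X \in \Scalprime \ \mid \ c_X(n) < n f(n)\}.
\]
For each fixed $n$, the condition $c_X(n) < nf(n)$ depends only on $L_n(X)$, and the set of subshifts with a prescribed $L_n$ is precisely a cylinder $[X,n]$, which is clopen. Thus each inner union is open in $\Scalprime$, making $\mathbf{S}_f$ a $G_\delta$ set in $\Scalprime$ (equivalently, one may cite Theorem~\ref{cplxthm} directly with the growth functions $0$ and $nf(n)$).

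For density, I would take an arbitrary nonempty cylinder $C$ in $\Scalprime$ and apply Lemma~\ref{OMCslow} to the increasing unbounded function $f$ and the cylinder $C$. This produces a subshift $Y \in \Scalprime \cap C$ with $c_Y(m)/(m f(m)) \to 0$ as $m \to \infty$. In particular, $c_Y(m) < m f(m)$ for all sufficiently large $m$, so certainly for infinitely many $m$, and hence $Y \in \mathbf{S}_f$. Since $C$ was arbitrary, $\mathbf{S}_f$ meets every nonempty cylinder in $\Scalprime$ and is therefore dense.

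Combining these two facts and the Baire category theorem applied to the Polish space $\Scalprime$, $\mathbf{S}_f$ is a dense $G_\delta$, hence residual. I do not anticipate any real obstacle: the hard work (constructing the slow-complexity OMC-like subshifts inside an arbitrary cylinder) is already encapsulated in Lemma~\ref{OMCslow}, and the $G_\delta$ property is a direct cylinder computation. The only point requiring a sentence of care is noting that $\Scalprime$ inherits the Polish property from $\Scal$ because it is closed in $\Scal$ (being the complement of the open set $\mathbf{NMC}$, as established in the proof of Theorem~\ref{perfect}).
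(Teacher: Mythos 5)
Your proof is correct and follows essentially the same route as the paper's: the $G_\delta$ property via Theorem~\ref{cplxthm} (equivalently the cylinder computation you write out) and density via Lemma~\ref{OMCslow}. The paper's proof additionally cites Corollary~\ref{OMCgen}, but that ingredient is already absorbed into the statement of Lemma~\ref{OMCslow}, so your streamlined citation is fine; the extra sentence verifying that $\Scalprime$ is Baire (being closed in the Polish space $\Scal$) is a reasonable piece of care.
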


\begin{proof}
That $\mathbf{S}_f$ is a $G_{\delta}$ follows from Lemma~\ref{cplxthm}, and its density follows from Corollary~\ref{OMCgen} and Lemma~\ref{OMCslow}.
\end{proof}

\begin{remark}
We note that the choice of the much weaker complexity condition in Corollary~\ref{gencplx} is due to the fact that
it is not clear whether the set of subshifts satisfying $\frac{c_{X}(n)}{nf(n)} \rightarrow 0$ is a $G_{\delta}$.
\end{remark}

The strongest upper bound that we can give which holds over all lengths is quadratic.

\begin{lemma}\label{OMCquad}
For every OMC subshift $X$, there exist constants $C,D,E$ so that $c_{X}(n) \leq Cn^2 + Dn + E$ for all $n$.
\end{lemma}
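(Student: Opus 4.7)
The strategy is to follow the counting argument from the proof of Lemma~\ref{OMCslow}, observing that the quadratic bound arises precisely when one removes the restriction on the set $R$ considered there. First, choose $n_0$ such that the Rauzy graph $G_{X,n_0}$ has OMC. Since every point of $X$ corresponds to a biinfinite path in $G_{X,n_0}$, we have $X \subseteq S([X,n_0])$, hence $c_X(m) \leq c_{S([X,n_0])}(m)$. For $m \geq n_0$, this upper bound equals the number of directed paths of length $m - n_0 + 1$ in $G_{X,n_0}$; for $m < n_0$ the complexity is bounded by a constant that can be absorbed into $E$. So the task reduces to bounding the number of length-$\ell$ paths in $G_{X,n_0}$ by a quadratic in $\ell$.

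Next, apply Lemma~\ref{OMC} to decompose $G_{X,n_0}$ as a finite union of isolated simple cycles, empty barbells, and a single empty double barbell $D$. The defining feature of OMC is that the central cycle $K$ of $D$ is the \emph{unique} simple cycle of $G_{X,n_0}$ with both incoming and outgoing edges. Consequently, in the DAG of strongly connected components, $K$ is the only vertex that is neither a source nor a sink, and every finite directed path in $G_{X,n_0}$ visits at most three SCCs, the only length-three possibility being source$\,\to K \to\,$sink within $D$.

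Partitioning paths by the sequence of SCCs visited yields three cases. (i) Paths entirely within a single SCC: since each SCC is a simple cycle, such a path is determined by its starting vertex, giving only $O(1)$ paths per length. (ii) Paths crossing exactly one transition between two SCCs: these are parameterized by the starting vertex in the source cycle, the number of loops completed before reaching the decision vertex, and the choice of outgoing transition; the ending position in the destination cycle is then determined by $\ell$. This gives $O(\ell)$ paths per pair of SCCs, hence $O(\ell)$ total. (iii) Paths traversing source$\,\to K \to\,$sink inside $D$: these have \emph{two} independent loop-counts (one in the initial cycle of $D$, one in $K$), each ranging over $O(\ell)$ values. This is precisely the situation encountered in the estimate~\eqref{DBcount} of the proof of Lemma~\ref{OMCslow}, now with $R = \mathbb{N}$, so the number of such paths is bounded by $\sum_{r \leq \ell/|K|}(\ell + 1 - r|K|) = O(\ell^2)$.

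Summing over the finitely many SCCs, barbells, and transitions produces the desired bound $c_X(m) \leq Cm^2 + Dm + E$. No step is genuinely hard; the only small subtlety is that the decomposition in Lemma~\ref{OMC} is not disjoint (a single source cycle may appear in several empty barbells), so one must classify paths by the sequence of SCCs they visit rather than by the barbell they happen to lie in, to avoid both overcounting and the illusion of a higher-degree contribution.
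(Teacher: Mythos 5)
Your proof is correct and takes essentially the same approach as the paper: both reduce to counting paths in an OMC Rauzy graph, both identify the quadratic contribution as arising from the two independent loop counts (source cycle and middle cycle $K$) in a double barbell, and both invoke the estimate \eqref{DBcount} from Lemma~\ref{OMCslow}. Your reformulation via the DAG of strongly connected components (with $K$ the unique non-source, non-sink SCC) is a slightly more explicit way of saying what the paper phrases as ``every biinfinite path is periodic, a barbell path, or contained in a double barbell,'' and your worry about overcounting due to non-disjointness is harmless in any case since the paper simply takes a union bound over the finitely many cycles, barbells, and double barbells, which only affects constants, not the degree.
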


\begin{proof}
Choose any OMC subshift $X$ and $n$ for which $G_{X,n}$ is OMC. It's easily checked that every biinfinite path on $G(X,n)$ is either periodic (i.e. repeated traversal of a cycle), the unique path (up to shifts) on an empty barbell, or contained within an empty double barbell. As before, the complexity of the subshift of points corresponding to paths of the first two types is easily checked to be linear. So it suffices to bound the complexity of the subshift of points corresponding to paths on an empty double barbell $D$.

Fix any double barbell $D$, and let $P(D)$ denote the set of sequences corresponding to all paths contained in $D$ (whether or not these sequences are in $X$). Then the complexity of $P(D)$ is clearly an upper bound for that of $P(D) \cap X$, so it suffices to bound the complexity function for $P(D)$.

For this, the same estimates from the proof of Lemma~\ref{OMCslow} can be used. In particular, $P(D)$ is equal to $Y(\mathbb{N})$ defined in that proof, and so for all $m$ (here we use the fact that paths of length $m$ are in bijective correspondence to words of length $m + n$ in the language of $P(D)$),
\[
c_{P(D)}(m + n - 1) = p_{m} \leq m |\mathbb{Z} \cap \{1, \ldots, m\}| = m^2.
\]

Since there are only finitely many empty double barbells $D$ which are subgraphs of $G_{X,n}$, the proof is complete.

\end{proof}

\begin{corollary}\label{quadres}
For a generic subshift in $\Scalprime$, there exist constants $C,D,E$ so that $c_{X}(n) \leq Cn^2 + Dn + E$ for all $n$.
\end{corollary}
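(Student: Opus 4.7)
The plan is to observe that Corollary~\ref{quadres} follows immediately by packaging together the two preceding results, with essentially no additional technical work. Let $\mathbf{Q}$ denote the set of all subshifts $X \in \Scalprime$ for which there exist constants $C, D, E$ such that $c_X(n) \leq Cn^2 + Dn + E$ for every $n$. The goal is to show that $\mathbf{Q}$ is residual in $\Scalprime$, and the strategy is to sandwich $\mathbf{Q}$ between $\omc$ and $\Scalprime$ and then invoke Corollary~\ref{OMCgen}.

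First I would verify the inclusion $\omc \subseteq \mathbf{Q}$, which is the exact content of Lemma~\ref{OMCquad}: every OMC subshift admits (subshift-dependent) constants $C, D, E$ for which the quadratic bound holds for all $n$. Next, I would recall from Corollary~\ref{OMCgen} that $\omc$ is residual in $\Scalprime$. Finally, I would use the elementary fact that any superset of a residual set is itself residual, since the complement of $\mathbf{Q}$ sits inside the complement of $\omc$, which is meager. Therefore $\mathbf{Q}$ is residual in $\Scalprime$, which is the conclusion sought.

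There is no real obstacle in the proof of the corollary itself; the only mild subtlety worth flagging is that $\mathbf{Q}$ need not be written as a $G_\delta$ in any obvious way (it is naturally a countable union over $(C,D,E)$ of closed sets $\{X : c_X(n) \leq Cn^2 + Dn + E \text{ for all } n\}$, hence an $F_\sigma$). This is harmless here, precisely because we do not need $\mathbf{Q}$ itself to be a dense $G_\delta$ — we only need it to contain one, and $\omc$ provides such a set by Corollary~\ref{OMCgen}. The substantive content has already been absorbed into Lemma~\ref{OMCquad}, whose proof uses the structural description of OMC Rauzy graphs as unions of isolated cycles, empty barbells, and a single empty double barbell, and bounds the number of length-$m$ paths in the double barbell by $m^2$ via the argument recalled from Lemma~\ref{OMCslow}.
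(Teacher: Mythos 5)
Your proof is correct and coincides with the paper's: the paper likewise derives Corollary~\ref{quadres} immediately from Corollary~\ref{OMCgen} (residuality of $\omc$ in $\Scalprime$) and Lemma~\ref{OMCquad} (the quadratic bound for OMC subshifts), using the fact that a superset of a residual set is residual. Your extra remark that the set $\mathbf{Q}$ itself is naturally $F_\sigma$ rather than $G_\delta$ is a sensible observation, correctly noted as harmless.
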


\begin{proof}
This follows immediately from Corollary~\ref{OMCgen} and Lemma~\ref{OMCquad}.
\end{proof}

The following corollary is now immediate.

\begin{corollary}
The set $\mathbf{Z}$ of zero entropy subshifts is residual in $\Scalprime$.
\end{corollary}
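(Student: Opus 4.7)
The plan is to deduce this directly from Corollary~\ref{quadres}, which is the real work. By that corollary, there is a residual subset $R \subseteq \Scalprime$ with the property that each $X \in R$ admits constants $C_X, D_X, E_X$ (depending on $X$) such that $c_X(n) \leq C_X n^2 + D_X n + E_X$ for every $n$. I would simply observe that for any such $X$,
\[
h_{top}(\sigma_X) \;=\; \lim_{n \to \infty} \frac{1}{n} \log c_X(n) \;\leq\; \lim_{n \to \infty} \frac{1}{n} \log (C_X n^2 + D_X n + E_X) \;=\; 0,
\]
so $R \subseteq \mathbf{Z} \cap \Scalprime$.

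Since a residual set by definition contains a dense $G_\delta$, and $R$ is a subset of $\mathbf{Z}$, the set $\mathbf{Z} \cap \Scalprime$ contains the same dense $G_\delta$ and is therefore itself residual in $\Scalprime$.

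There is really no obstacle here; the substantive content has already been absorbed into Lemma~\ref{OMCquad} (polynomial complexity of OMC subshifts) and Corollary~\ref{OMCgen} (genericity of $\omc$ in $\Scalprime$). The only point worth spelling out is the trivial observation that the polynomial bound supplied by Corollary~\ref{quadres} is more than enough to force vanishing of the exponential growth rate, and that containment of a residual set in $\mathbf{Z}$ is sufficient to conclude residuality of $\mathbf{Z}$ itself.
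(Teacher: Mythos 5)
Your proof is correct and matches the paper's approach exactly: the paper states this corollary "is now immediate" from Corollary~\ref{quadres}, which is precisely the deduction you spell out (generic quadratic complexity bound forces the exponential growth rate to vanish).
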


\begin{remark}
We can theoretically find the automorphism group for a generic subshift in $\Scalprime$ by finding the structure of such groups for OMC subshifts. This turns out to be quite technical, and so we omit a formal proof here. However, roughly speaking, such an automorphism group will come from the extension of a finite group by some $\mathbb{Z}^N$ (just as in the NMC case), along with additional components coming from semidirect products of sums of countably many copies of $\mathbb{Z}$ with permutations of $\mathbb{N}$ with finite support. These both come from how an automorphism can act on orbits corresponding to paths on a double barbell; first note that any such orbit is determined by a number of traversals of the central cycle. Any automorphism is a sliding block code with fixed window size, and so can only freely permute/shift orbits corresponding to small enough numbers of cycle traversals; these permutations/shifts give rise to the direct sums of countably many copies of $\mathbb{Z}$ (coming from shifts) and permutations of $\mathbb{N}$ with finite support
(coming from permutations) mentioned above. We leave an exact computation of these automorphism groups to the interested reader.

\end{remark}

\section{The space $\Tcalprime$ of infinite transitive subshifts}\label{trans}
The last sections showed that in both $\Scal$ and $\Scalprime$, while the dynamics of a generic subshift are rather simple, they are also rather degenerate. In particular, in both $\Scal$ and $\Scalprime$ a generic subshift consists of sequences biasymptotic to periodic sequences. It is natural then to restrict ourselves to classes of subshifts possessing properties often of dynamical interest. Thus, in the remaining two sections we will focus on the space of transitive subshifts, and the space of totally transitive subshifts, respectively.

In these settings, there will no longer be degenerate subshifts which control genericity, and in fact a generic subshift in these settings is dynamically much more interesting. We will still show however that, in some sense, generic subshifts are as `simple as possible.'

We note that our consideration of the transitive and totally transitive classes is analogous to the path taken by Hochman in~\cite{HochmanGeneric}, where the subspaces of transitive and totally transitive systems within the space of homeomorphisms of the Cantor set are studied.

We begin now with $\Tcal$, the set of transitive subshifts in $\Scal$. It turns out (though we don't supply proofs here) that periodic orbits in $\Tcal$ are dense and isolated just as the set $\nmc$ was in $\Scal$, so in order to get any meaningful results, we need to remove these isolated points as before. Thus the main object of our study in this section will be the space $\Tcalprime = \Tcal \cap \Scalprime$. 

\begin{definition}
Denote by $\Tcalprime$ the intersection of $\Tcal$ with the complement of $\nmc$ in $\Scal$.
\end{definition}

As noted earlier, for transitive subshifts, being isolated is equivalent to being finite, and hence $\Tcalprime$ is precisely the set of infinite transitive subshifts in $\Scal$.

We note that $\Tcalprime$ is not closed in $\Scal$. To see this, for every $n$, define $X_n$ to be the subshift
of all sequences created from biinfinite concatenations of the words $0^n 1^n$ and $0^{n+1} 1^n$. Then each $X_n$ is in
$\Tcalprime$, but it is not hard to check that their limit is the union of the orbits of $0^{\infty}, 1^{\infty}, 0^{\infty} 1^{\infty}$, and $1^{\infty} 0^{\infty}$, which is not transitive. Since we make use of the Baire Category Theorem (to guarantee that the intersection of residual sets is residual), from now on we will work in the closure $\Tcalprimebar$. However, our results regarding genericity in $\Tcalprimebar$ apply equally well to $\Tcalprime$, due to the following.

\begin{lemma}\label{TGdelta}
$\Tcalprime$ is a dense $G_{\delta}$ in $\Tcalprimebar$.
\end{lemma}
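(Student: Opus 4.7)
The plan is to obtain density immediately from the definition of closure, and then to exhibit $\Tcalprime$ as a $G_{\delta}$ subset of $\Scal$ (whence it is also a $G_{\delta}$ in the closed subspace $\Tcalprimebar$). By Theorem~\ref{isodense}, $\nmc$ is open in $\Scal$, so its complement $\Scalprime$ is closed. Since the intersection of a $G_{\delta}$ set with a closed set is again a $G_{\delta}$, it suffices to show that $\Tcal$ itself is $G_{\delta}$ in $\Scal$; then intersecting with $\Scalprime$ and restricting to the closed set $\Tcalprimebar$ will exhibit $\Tcalprime = \Tcal \cap \Scalprime \cap \Tcalprimebar$ as a $G_{\delta}$ in $\Tcalprimebar$.

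To verify that $\Tcal$ is $G_{\delta}$ in $\Scal$, I will use the standard language-theoretic characterization of transitivity: a subshift $X$ is transitive if and only if for every pair of words $u, v \in L(X)$ there exists a word $w$ with $uwv \in L(X)$. Accordingly,
\[
\Tcal \;=\; \bigcap_{u, v} \Bigl( \{X : u \notin L(X)\} \,\cup\, \{X : v \notin L(X)\} \,\cup\, \bigcup_{w} \{X : uwv \in L(X)\} \Bigr),
\]
where $u, v$ and $w$ range over finite words with letters in $\mathbb{Z}$; since $\mathbb{Z}$ is countable, all the indexing sets here are countable. For any fixed finite word $z$, whether or not $z$ lies in $L(X)$ is determined by $L_{|z|}(X)$, so the set $\{X \in \Scal : z \in L(X)\}$ is a union of cylinder sets and is in particular clopen. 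Hence each bracketed set in the outer intersection is a union of open (in fact clopen) sets, and so is open, presenting $\Tcal$ as a countable intersection of open sets.

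Density of $\Tcalprime$ in $\Tcalprimebar$ is immediate from the definition of $\Tcalprimebar$ as the closure of $\Tcalprime$.

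I do not expect any serious obstacle here. The only point worth flagging is the countability of the index sets for $u, v, w$, which is exactly what converts the quantifier-based description of transitivity into a genuine $G_{\delta}$ presentation; this in turn relies on the convention that alphabets are taken inside the countable set $\mathbb{Z}$.
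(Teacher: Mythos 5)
Your proof is correct and follows essentially the same strategy as the paper's: reduce to showing $\Tcal$ is a $G_{\delta}$ in $\Scal$ via a language-theoretic characterization of transitivity, and observe that removing the set of NMC subshifts preserves the $G_{\delta}$ property. Two minor differences: you use the simpler pointwise characterization ``for all $u,v\in L(X)$ there exists $w$ with $uwv\in L(X)$'' and take the countable intersection directly over pairs $(u,v)$, whereas the paper uses a uniformized version (for each $k$, a single $n$ works for all $k$-letter $u,v$) so it can express the open sets as unions of cylinders $[X,n]$; and you deduce that $(\nmc)^c=\Scalprime$ is $G_{\delta}$ from closedness in a metric space, whereas the paper observes that $\nmc$ is countable and writes $(\nmc)^c$ as a countable intersection of complements of singletons. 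Both variants are correct, and your presentation of $\Tcal$ as a $G_\delta$ is arguably a bit cleaner.
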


\begin{proof}

We first note that $\Tcalprime$ is dense in $\Tcalprimebar$ by definition. Also, $\Tcalprime = \Tcal \cap (\nmc)^c = \Tcal \cap \bigcap_{X \in \nmc} \{X\}^c$. Since each singleton $\{X\}$ is closed in the metric space $\mathbf{S}$, it then suffices to show that $\Tcal$ is a $G_{\delta}$ in $\Scal$.

We note that $X$ is transitive if and only if for all $k$, there exists $n$ so that for every pair $u,v$ of $k$-letter words occurring within words in $L_n(X)$, there exists $w \in L_n(X)$ containing an occurrence of $u$ to the left of an occurrence of $v$.
For any $k, n, \A$, denote by $T_{k,n,\A}$ the set of subsets of $\A^n$ with this property. Then, the set $\Tcal$ of transitive subshifts can be written as
\[
\bigcap_{k \in \mathbb{N}} \bigcup_{\substack{n \in \mathbb{N}\\ \A \subset \mathbb{Z}, |\A| < \infty}} \{X \in \Scal \mid L_n(X) \in T_{k,n,\A}\} =
\bigcap_{k \in \mathbb{N}} \bigcup_{\substack{n \in \mathbb{N}\\ \A \subset \mathbb{Z}, |\A| < \infty}} \bigcup_{\substack{X \in \Scal\\ L_n(X) \in T_{k,n,\A}}} [X,n],
\]
which is clearly a $G_{\delta}$.

\end{proof}


Thus, a property $P$ is generic in $\Tcalprimebar$ if and only if it is generic in $\Tcalprime$, and henceforth we will simply state our results for $\Tcalprimebar$. We first show that $\Tcalprimebar$ has no isolated points.


\begin{lemma}\label{perfect2}
$\Tcalprimebar$ is a perfect subset of $\Scal$.
\end{lemma}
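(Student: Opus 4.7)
The plan is to verify that $\Tcalprimebar$ has no isolated points; since $\Tcalprimebar$ is closed in $\Scal$ by definition, this will establish perfectness. Fix $X \in \Tcalprimebar$ and a cylinder $[X,n]$; I will produce some $Y \in [X,n] \cap \Tcalprimebar$ with $Y \neq X$. If $X \notin \Tcalprime$ then density of $\Tcalprime$ in $\Tcalprimebar$ immediately yields such a $Y$ inside $[X,n] \cap \Tcalprime$, so I may assume $X$ is infinite and transitive.

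In this case I would compare $X$ to the shift of finite type $S([X,n])$, which always lies in $[X,n]$ and contains $X$. Since $X$ is transitive, the Rauzy graph $G_{X,n}$ is strongly connected, so $S([X,n])$ is itself transitive, and it is infinite because it contains the infinite subshift $X$. If $X \subsetneq S([X,n])$ then I take $Y := S([X,n])$, which is an infinite transitive SFT in $\Tcalprime \cap [X,n]$ distinct from $X$. Otherwise $X = S([X,n])$, so $X$ is itself an SFT of order at most $n$. In that subcase I would pass to the Rauzy graph $G_{X,m}$ for some $m \gg n$; because $X$ is infinite and transitive (hence aperiodic), the Morse-Hedlund theorem forces $c_X(m) > c_X(m-1)$, so $G_{X,m}$ has strictly more edges than vertices and therefore admits multiple simple cycles. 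I would then carefully remove an edge $e$ from $G_{X,m}$ to obtain a subgraph $H$ that is still strongly connected and essential, and for which every word of $L_n(X)$ still appears as an $n$-letter subword of some remaining edge. The resulting $Y := S(H)$ is then an infinite transitive SFT strictly contained in $X$ with $L_n(Y) = L_n(X)$, placing $Y$ in $\Tcalprime \cap [X,n] \setminus \{X\}$.

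The main obstacle I anticipate is the edge-removal step in the SFT subcase, which must simultaneously preserve strong connectivity of $G_{X,m}$, keep $H$ essential, and maintain the cylinder condition $L_n(Y) = L_n(X)$. Strong connectivity is preserved because any strongly connected graph with strictly more edges than vertices admits a removable edge, for instance an edge lying on one simple cycle that shares a vertex with another simple cycle. The $n$-letter language condition is handled by taking $m$ large enough that every word of $L_n(X)$ appears as an $n$-letter subword of many distinct $m$-letter words in $L_m(X)$; then the removal of a single edge still leaves every word of $L_n(X)$ represented by some remaining edge. Balancing these constraints on $e$ for $m$ sufficiently large relative to $n$ is the key technical ingredient.
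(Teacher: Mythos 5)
Your reduction to $X\in\Tcalprime$ and the subcase $X\subsetneq S([X,n])$ are fine, but the SFT subcase contains a genuine gap. The claim you lean on to produce the subgraph $H$ — that any strongly connected digraph with strictly more edges than vertices admits an edge whose deletion preserves strong connectivity — is false. Take two vertex-disjoint simple directed cycles and identify one vertex from each: the resulting graph is strongly connected and essential with exactly one more edge than vertex, yet deleting any edge leaves some vertex with in-degree or out-degree zero. Your fallback criterion ("an edge lying on one simple cycle that shares a vertex with another simple cycle") does not repair this, since in that example every edge satisfies it. The problem persists for actual Rauzy graphs: for the SFT on $\{a,b,c\}$ forbidding $aa,bb,cc,bc,cb$, the graph $G_{X,3}$ has vertices $\{ab,ba,ac,ca\}$ and edges $\{aba,bab,bac,aca,cab,cac\}$; the edge $aba$ lies on the $2$-cycle $ab\to ba\to ab$, which shares both vertices with the $4$-cycle $ab\to ba\to ac\to ca\to ab$, yet deleting $aba$ strands $ab$ with out-degree zero. (Here $bab$ happens to be removable, but nothing in your argument distinguishes it from $aba$.) So the step you yourself flag as ``the key technical ingredient'' is not established, and no short general graph-theoretic principle supplies it; one would need an argument specific to Rauzy graphs of infinite transitive SFTs.

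The paper's proof avoids edge deletion entirely and is therefore immune to this problem. It reuses the construction from the proof that $\Scalprime$ is perfect: since $X\notin\nmc$, the graph $G_{X,n}$ has a simple cycle $K$ with an incoming edge $f$ and an outgoing edge $g$, and one forms $Y$ by forbidding only the path $fPg$ ($P$ the arc of $K$ from $f$ to $g$) while still permitting the longer detour $fPKg$. Since $fPg$ and $fPKg$ have identical endpoints, every vertex and every edge of $G_{X,n}$ is still realized by an allowed biinfinite path, so $L_n(Y)=L_n(X)$ comes for free and there is no essentiality constraint to balance. Transitivity of $Y$ is then obtained by a rerouting trick: take any biinfinite path of $G_{X,n}$ exhibiting $u$ before $v$ and replace every occurrence of $fPg$ by $fPKg$. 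This ``insert a detour'' construction is strictly more robust than your ``delete an edge'' construction precisely because it can never disconnect or strand a vertex — the very point where your argument fails.
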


\begin{proof}

The proof of this is nearly identical to that of Theorem~\ref{perfect}. We simply need to show that if the shift $X$ in that proof is assumed transitive, then the subshift $Y$ constructed to belong to a cylinder $[X,n]$ for an arbitrary $n \ge 1$ is also transitive. Recall that $Y$ was constructed by forbidding a single path $fPg$ on a Rauzy graph, but there existed a cycle $K$ so that the path $fPKg$, with the same initial/terminal vertices as $fPg$, was still allowed.

Consider any two words $u, v \in L(Y)$; they correspond to paths $Q, R$ in $G_{X,n}$ which do not contain $fPg$. By irreducibility of $G_{X,n}$, there exists a biinfinite path $P'$ in $G_{X,n}$ containing an occurrence of $Q$ before an occurrence of $R$. By replacing $fPg$ by $fPKg$ in the path $P'$, we arrive at a path $P''$ in $G_{X,n}$ containing no $fPg$. Also, $P''$ still contains an occurrence of $Q$ before an occurrence of $R$ (the introduction of $K$ into $fPg$ does not change $Q$ or $R$). Finally, $P''$ corresponds to a point of $Y$ containing an occurrence of $u$ before an occurrence of $v$; since $u,v$ were arbitrary, $Y$ is transitive.

The remainder of the proof of Theorem~\ref{perfect} demonstrates that all nonempty cylinders in $\Tcalprimebar$ have at least two subshifts, so no subshift in $\Tcalprimebar$ is isolated.
\end{proof}

We will now prove a theorem that serves as our main tool for showing that various sets are dense in $\Tcalprimebar$.
Before the theorem, we set some notation. Suppose $\mathcal{A}$ is an alphabet and $\tau \colon \{0,1\} \to \mathcal{A}^{\ell}$ is an injective map. For any point $y = \ldots y_{-1}.y_{0}y_{1} \ldots \in \{0,1\}^{\mathbb{Z}}$ we define $\tau(y) \in \mathcal{A}^{\mathbb{Z}}$ by $\tau(y) = \ldots \tau(y_{-1}).\tau(y_{0})\tau(y_{1})\ldots$. For any subshift $Y \subset \{0,1\}^{\mathbb{Z}}$, we define $\tau^{*}(Y)$ to be the subshift $\{\sigma^{i}\tau(y) \mid y \in Y, 0 \le i < \ell \}$ (note that, as sets, $\tau^{*}(Y)$ is generally larger than $\tau(Y)$, since $\tau(Y)$ is generally not shift-invariant). In this way, $\tau$ induces a map from subshifts $Y \subset \{0,1\}^{\mathbb{Z}}$ to subshifts $\tau(Y) \subset \mathcal{A}^{\mathbb{Z}}$.

\begin{theorem}\label{letword}
For any nonempty cylinder $C = [X,n]$ in $\Tcalprimebar$ (with associated alphabet $\A_C$), there exists $\ell \geq n$ and an injective map $\tau: \{0,1\} \rightarrow \A_C^\ell$ so that for any subshift $Y \in \Tcalprime$ contained in $\{0,1\}^{\mathbb{Z}}$, 
we have $\tau^{*}(Y) \in C$. In addition, $\tau^{*}(Y)$ is uniquely decipherable in the sense that every word in $L(\tau^{*}(Y))$ of length at least $3\ell-1$ can be written as $p \tau(v) s$ in exactly one way for some $v \in L(Y)$, $p$ a proper prefix of some $\tau(a)$, and $s$ a proper suffix of some $\tau(a)$.
\end{theorem}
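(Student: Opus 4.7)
The plan is to reduce to the case where $X \in \Tcalprime$ itself (using that $\Tcalprime$ is dense in $\Tcalprimebar$ by Lemma~\ref{TGdelta}, together with the observation after Definition~\ref{def:cylinderset} that a cylinder can always be taken to be based at a shift in the ambient subspace). Then $X$ is an infinite transitive subshift, so the Rauzy graph $G_{X,n}$ is strongly connected; infiniteness forces some vertex of $G_{X,n}$ to have out-degree at least two (otherwise the graph would be a single cycle and $X$ finite).

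To build $\tau$, the idea is to make each $\tau(a)$ a walk on $G_{X,n}$ from a fixed source vertex $v_2$ to a fixed sink vertex $v_1$, with the boundary between any two consecutive blocks occupied by a preselected valid word in $L(X)$. Concretely, pick any $u \in L_{2(n-1)}(X)$ and split it as $u = v_1 v_2$ with $|v_1| = |v_2| = n-1$. By strong connectivity of $G_{X,n}$, we find a walk $W$ from $v_2$ to $v_1$ that traverses every edge of $G_{X,n}$ and passes through some vertex $w$ of out-degree at least two; let $\gamma_0 \neq \gamma_1$ be short cycles at $w$ starting with two distinct outgoing edges. Splitting $W = W_1 W_2$ at $w$, set
\[
\tau(0) = W_1 \gamma_0^{|\gamma_1|} W_2, \qquad \tau(1) = W_1 \gamma_1^{|\gamma_0|} W_2,
\]
both walks from $v_2$ to $v_1$ of a common length $\ell$ as words (for $\ell$ sufficiently large; passing to $W$ with more repetitions if needed ensures $\ell \geq n$).

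Verifying $\tau^*(Y) \in C$ is then routine. Each $\tau(a) \in L_\ell(X)$ and each concatenation $\tau(a)\tau(b)$ lies in $L(X)$, because the boundary region of $\tau(a)\tau(b)$ realizes exactly the valid word $v_1 v_2 = u \in L_{2(n-1)}(X)$, whose $n$-letter subwords all lie in $L_n(X)$; this yields $L_n(\tau^*(Y)) \subseteq L_n(X)$. The reverse containment holds because $W$ is a subwalk of each $\tau(a)$ and traverses every edge of $G_{X,n}$, so every word of $L_n(X)$ appears as a subword of every $\tau(a)$.

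The hard part is unique decipherability, which is essentially a recognizability statement for the substitution $\tau$. The strategy is to exploit that each block contains a unique long purely periodic portion: $\tau(0)$ has a run of exact period $|\gamma_0|$ of length $|\gamma_0||\gamma_1|$ starting at position $|W_1|$, while $\tau(1)$ has a run of period $|\gamma_1|$ at the same position. By choosing $|\gamma_0|, |\gamma_1|$ distinct and sufficiently large (e.g., coprime and exceeding the period of any cycle in $G_{X,n}$ that occurs inside $W_1 W_2$), a Fine--Wilf-type argument confines each such periodic run to exactly one block of any subword of length $\geq 3\ell - 1$; reading off the length of the period (which is $|\gamma_a|$) identifies the symbol $a$, and reading off the position of the run identifies the phase. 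The main obstacle is ruling out spurious periodic extensions of these runs into the surrounding $W_1, W_2$ segments, which requires arguing that the covering walk $W$ is sufficiently aperiodic at the relevant scales; this is achievable by, if necessary, inflating $W$ by inserting additional non-trivial detours so that no substring of $W$ outside the $\gamma_i^{K_i}$ portion is periodic of period $|\gamma_i|$ over a stretch long enough to merge with a neighboring run.
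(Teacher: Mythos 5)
Your construction of the map $\tau$ and the verification that $\tau^*(Y) \in C$ are fine (with one small slip: the concatenation $\tau(a)\tau(b)$ need not lie in $L(X)$; what your boundary argument actually gives is that every $n$-letter subword of $\tau(a)\tau(b)$ lies in $L_n(X)$, which is all that is needed for the containment $L_n(\tau^*(Y)) \subseteq L_n(X)$). But your approach to unique decipherability diverges from the paper's, and there it has a genuine hole.

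The paper does not rely on periodicity detection at all. Its choice of blocks (words $t, t'$ coming from the cycles $L = KKK'K'K'$ and $L' = KK'KK'K'$ in $G_{X,n+1}$) is engineered so that a specific vertex $v$ — appearing exactly once in $K$ and nowhere in $K'$ — occurs exactly twice in each block, at a distance that is $|K|$ in $t$, is $|K|+|K'|$ in $t'$, and is at least $|K|+2|K'|$ between any other two consecutive occurrences of $v$ in a concatenation. Thus a single pair of adjacent $v$-occurrences in any window of length $3\ell-1$ identifies both the symbol and the phase instantly, with no appeal to Fine--Wilf or aperiodicity of $W$. This is a self-certifying marker argument, fully elementary.

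Your strategy instead identifies blocks by finding a long run of period exactly $|\gamma_a|$ inside the window. The step you flag as "the main obstacle" — ruling out spurious period-$|\gamma_i|$ runs in $W_1, W_2$ and across the block boundary $W_2W_1$, and proposing to fix this "if necessary" by inflating $W$ — is exactly where the proof is incomplete. It is not clear that such an inflation is always possible while preserving that $W$ is a walk from $v_2$ to $v_1$ traversing every edge: $W$ must in particular traverse every edge of $\gamma_0$ and of $\gamma_1$, so period-$|\gamma_i|$ runs of some positive length in $W_1W_2$ are generally unavoidable, and you would need a quantitative comparison against the designated run length $|\gamma_0||\gamma_1|$ together with a boundary-merging analysis. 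None of this is carried out, and the Fine--Wilf invocation only settles how two long runs of coprime periods interact, not whether the runs you want to read off are in fact the longest ones visible. As written, this step is a sketch of a plan rather than a proof, and the crux of the recognizability statement is left open. Replacing the periodicity marker with a rare-vertex marker, as the paper does, is precisely what makes this issue disappear.
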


\begin{proof}
Choose any cylinder $C = [X,n] \cap \Tcalprimebar$ as in the theorem. Since $C$ is nonempty and has nontrivial intersection with $\Tcalprime$, we can without loss of generality assume that $X$ is transitive and infinite. Then the graph $G_{X,n+1}$ is irreducible and not a cycle. First, by irreducibility of $G_{X,n+1}$, there exists a cycle $K$ which visits all vertices of $G_{X,n+1}$. Without loss of generality, we assume that $K$ is minimal with respect to containment. This means that some vertex appears only once in $K$. To see this, note that if all vertices appeared twice in $K$, then removal of any minimal subcycle would yield a new $K$ still containing all vertices (since a minimal cycle cannot contain any vertex twice), contradicting minimality of $K$. Now, we break into two cases. We let $|K|$ denote the number of edges in $K$ and let $V$ denote the vertex set of $G_{X,n+1}$.

\begin{itemize}
\item If $|K| > |V|$, then denote by $v$ any vertex appearing exactly once in $K$, and reorder $K$ so that it begins and ends with $v$
(and does not pass through $v$ at any other time). Then, choose any proper subcycle $K'$ of $K$ (which must exist by the Pigeonhole Principle). It cannot contain $v$ (since $v$ appeared only at the beginning and end of $K$). Finally, reorder $K$ so that it begins and ends with whichever vertex $w$ is at the start/end of $K'$.

\item If instead $|K| = |V|$, then $K$ contains every vertex exactly once. As noted earlier, $G_{X,n+1}$ is not a cycle, so
$G_{X,n}$ contains some edge $e$ not in $K$, with initial/terminal vertices $v', v''$. By the definition of $G_{X, n+1}$, there cannot be two edges from $v'$ to $v''$, so the subpath $P$ of $K$ from $v'$ to $v''$ has length at least $2$. So, we can replace $P$ in $K$ by $e$, yielding a cycle $K'$ in $G_{X,n+1}$ which does not contain all vertices. Denote by $v$ a vertex missing from $K'$, and reorder $K$ so that it begins and ends with whichever vertex $w$ is at the start/end of $K'$.
\end{itemize}

In each case, we know that there are vertices $v \neq w$ so that $K$ and $K'$ both start/end with $w$, $v$ appears only once in $K$, and $v$ does not appear in $K'$. Now, we define two cycles $L = KKK'K'K'$, $L' = KK'KK'K'$, and note that $|L| = |L'|$; denote their common value by $\ell$. We note that each contains $v$ exactly twice, separated by distance $|K|$ in $L$ and by $|K| + |K'|$ in $L'$.

By the usual Rauzy graph correspondence, $L$ and $L'$ correspond to words $t,t'$ of length $\ell + n$ in $L(X)$ which each contain every $n$-letter word in $L(X)$ (since $K, K'$ contained all vertices in $G_{X, n+1}$) and which begin and end with the same $n$-letter word $w$. Denote by $u,u'$ the truncations of
$t,t'$ obtained by removing their terminal $w$s. Then define $\tau: 0 \mapsto u, 1 \mapsto u'$. The reader may check that for any $x \in \{0,1\}^{\mathbb{Z}}$, $\tau(x)$ is a sequence corresponding to a biinfinite concatenation of $L, L'$, which is a biinfinite path in $G_{X,n+1}$, and so $\tau(x) \in S(C)$. Moreover, since $\tau(x)$ contains some concatenation in $\{u,u'\}^2$, it contains either $u$ or $u'$, and so contains all $n$-letter words in $L(X)$.


Then, for any subshift $Y$ on $\{0,1\}$, $\tau^{*}(Y)$ is in $[X,n]$ by Lemma~\ref{rauzy}. Finally, it is routine to check that since $Y \in \Tcalprime$, $\tau^{*}(Y) \in \Tcalprime$ as well, so $\tau^{*}(Y) \in C$. It remains only to prove the claimed unique decipherability. For this, we consider distances between occurrences of $v$ (viewed as a word in $L_n(X)$) in points of $\tau^{*}(Y)$. Such occurrences appear exactly once in each of $t$ and in $t'$, at distances $|K|$ and $|K| + |K'|$ respectively. These distances appear nowhere else in points of $\tau^{*}(Y)$; all other separations between nearest $v$s are at least $|K| + 2|K'|$.

Suppose that $x$ is a word in $L(\tau^{*}(Y))$ of length at least $3\ell-1$. It must contain either $u$ or $u'$, and therefore contains a pair of $v$s separated by distance either $|K|$ or $|K| + |K'|$. This pair uniquely determines its containing $t$ or $t'$ and the location, and so the representation $x = p \tau(v) s$ is unique as claimed.

\end{proof}

We now have a strategy for showing that properties are generic in $\Tcalprimebar$. We can show that a property corresponds to a $G_{\delta}$ set directly, and as long as the property holds for a single $\{0,1\}$ subshift and is preserved under any $\tau$ as in Theorem~\ref{letword}, it is automatically dense in $\Tcalprimebar$. 

In fact the unique decipherability from Theorem~\ref{letword} is quite powerful; in particular, it implies via the following that any cylinder in $\Tcalprimebar$ contains an entire subcylinder each of whose subshifts are of the form $\tau^{*}(Y)$ (for $\tau, Y$ as in Theorem~\ref{letword}).

\begin{theorem}\label{letwordcyl}
For any nonempty cylinder $C = [X,n]$ in $\Tcalprimebar$ (with associated alphabet $\A_C$) and $\tau: \{0,1\} \rightarrow \A_C^\ell$ defined as in Theorem~\ref{letword}, the collection $\{\tau^{*}(Y) \mid Y \subset \{0,1\}^{\mathbb{Z}}\}$ contains a nonempty subcylinder of $C$.
\end{theorem}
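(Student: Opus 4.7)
The plan is to take as subcylinder the one based at the maximal candidate $Z := \tau^*(\{0,1\}^{\mathbb{Z}})$. The full $2$-shift lies in $\Tcalprime$, and by Theorem~\ref{letword} (together with the transitivity-preservation argument already supplied in its proof) we have $Z \in \Tcalprime \subseteq \Tcalprimebar$. Consequently, for any $m \geq 3\ell$, the set $D := [Z,m] \cap \Tcalprimebar$ is a nonempty cylinder in $\Tcalprimebar$, and is contained in $C$ because $\ell \geq n$ forces $m \geq n$.

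The heart of the proof is to show that every $W \in D$ equals $\tau^*(Y)$ for some subshift $Y \subseteq \{0,1\}^{\mathbb{Z}}$. Given $W \in D$ and $w \in W$, each length-$m$ window of $w$ lies in $L_m(W) = L_m(Z)$, so by the unique decipherability clause of Theorem~\ref{letword} applied with $Y = \{0,1\}^{\mathbb{Z}}$, that window admits exactly one decomposition $p\tau(v)s$. This assigns a unique phase $i_k \in \{0,\ldots,\ell-1\}$ to the starting position $k$ and a unique $\{0,1\}$-letter to each full $\tau$-block sitting inside the window. Because $m \geq 3\ell$, consecutive windows overlap in a word of length $m - 1 \geq 3\ell - 1$, which is itself uniquely decipherable; this forces consecutive local decodings to agree on the overlap. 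Gluing these local decodings across all $k \in \mathbb{Z}$ produces a single pair $(y, i) \in \{0,1\}^{\mathbb{Z}} \times \{0,\ldots,\ell-1\}$ with $w = \sigma^i \tau(y)$.

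Finally, I would set $Y := \{y \in \{0,1\}^{\mathbb{Z}} : \sigma^i \tau(y) \in W \text{ for some } 0 \leq i < \ell\}$ and verify that $Y$ is a subshift with $\tau^*(Y) = W$. Shift-invariance of $Y$ follows from the identity $\tau(\sigma y) = \sigma^\ell \tau(y)$ together with shift-invariance of $W$, and closedness follows by compactness since the decoding $w \mapsto (y, i)$ is continuous (being determined by any fixed length-$m$ window, and hence locally constant on small enough cylinders). The inclusion $\tau^*(Y) \subseteq W$ holds because the witness $\sigma^i \tau(y) \in W$ together with shift-invariance of $W$ gives $\sigma^j \tau(y) \in W$ for every $j$, and the reverse inclusion is built into the decoding.

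The main obstacle is the gluing step: local unique decipherability of each length-$m$ window is handed to us by Theorem~\ref{letword}, but to promote the per-window phase assignments $i_k$ into a single global decoding one needs the overlap of consecutive windows to also be uniquely decipherable, which is exactly the role played by the threshold $m \geq 3\ell$.
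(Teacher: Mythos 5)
Your proof is correct, and it reaches the same conclusion ($[\tau^*(\{0,1\}^{\mathbb{Z}}),m] \subset \{\tau^*(Y)\}$ for $m \geq 3\ell$) via a genuinely different mechanism than the paper. The paper proceeds by a two-step length induction: it shows that every subword of length $M$ has a unique $\tau$-decomposition for $M = 3\ell-1, 3\ell$ (the base case from Theorem~\ref{letword}), then shows that a word $azb$ of length $M+2$ inherits a decomposition from the unique decompositions of $az$ and $zb$, handling four cases depending on whether the outer fragments $p$ and $s'$ are empty. Your argument instead fixes the window length at $m$ and glues all windows $w[k,k+m)$ along $\mathbb{Z}$ by matching their decodings on the length-$(m-1)$ overlaps, which is a more standard ``local-to-global'' sliding-window argument and avoids the induction entirely. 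Both approaches are sound, and your compactness/continuity argument for showing the decoded $Y$ is a subshift is a clean alternative to the paper's ``by taking limits.''

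The one place you compress a nontrivial step is the sentence ``this forces consecutive local decodings to agree on the overlap.'' To make this rigorous one must observe that the $\tau$-decomposition $p\tau(v)s$ of $w[k,k+m)$ \emph{restricts to a valid $\tau$-decomposition} of the overlap $w[k+1,k+m)$ when you strip the first letter, which splits into cases: if $|p|\geq 1$ the stripped prefix is still a proper suffix of some $\tau(a)$, but if $|p| = 0$ the stripped word begins with a length-$(\ell-1)$ suffix of $\tau(v_1)$ and the remaining $v$-word shortens; a symmetric check is needed when stripping the last letter of window $k+1$'s decomposition. Only once both induced decompositions of the overlap are verified to be valid can one invoke uniqueness to conclude they coincide and hence that the block boundaries line up modulo $\ell$. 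This is exactly the case analysis the paper spells out in its induction; your argument needs it just as much, it is merely folded into the word ``forces.'' With that detail filled in, your proof is complete.
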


\begin{proof}
Choose $X, n, C$, and $\tau$ as in Theorem~\ref{letword}. We claim that
\begin{equation}\label{mainletwordcyl}
[\tau^{*}(\{0,1\}^{\mathbb{Z}}), 3\ell] \subset \{\tau^{*}(Y) \mid  Y \subset \{0,1\}^{\mathbb{Z}}\},
\end{equation}
which will complete the proof, since the right-most set is a subset of $C$ by Theorem~\ref{letword}.

To see this, choose any $Y \in [\tau^{*}(\{0,1\})^{\mathbb{Z}}, 3\ell]$, and any $y \in Y$. By definition, every $(3\ell-1)-$ and $(3\ell)$-letter subword of $y$ is a subword of some $\tau(x)$, and so can be written in a unique way as $p \tau(w) s$ for some $w \in \{0,1\}^*$, $p$ proper suffix of some $\tau(a)$, and $s$ proper prefix of some $\tau(b)$. For the rest of this proof, we refer to such a representation of a word as a $\tau$-decomposition.

We will prove by induction that all subwords of $y$ of any length $M \geq 3\ell - 1$ have a unique $\tau$-decomposition. Our base case will be $M = 3\ell - 1$ and $M = 3\ell$, which has already been shown by the above. Now, assume that the claim is true for subwords of length $M$ and $M+1$ for $M \geq 3\ell-1$.

Now, choose any subword $v$ of $y$ of length $M+2$, and write $v = azb$. By the inductive hypothesis for $M+1$, $az$ and $zb$ have unique $\tau$-decompositions $az = p \tau(w) s$ and $zb = p' \tau(w') s'$. Clearly $v = azb$ cannot have two $\tau$-decompositions, since this would contradict uniqueness of the $\tau$-decompositions of $az, zb$. So it suffices to show that $v$ has at least one $\tau$-decomposition.

Each of $az = p \tau(w) s$ and $zb = p' \tau(w') s'$ yields a decomposition of $z$ (by removing the first letter of $p \tau(w) s$ and the last of $p' \tau(w') s'$, and these must be equal by the inductive hypothesis for $M$. 

If $p$ and $s'$ are nonempty, then denote by $\overline{p}$ and $\overline{s'}$ the words obtained by deleting their first/last letters respectively. Then the decomposition of $z$ must be $\overline{p} \tau(w) s = p' \tau(w') \overline{s'}$, meaning that $p' = \overline{p}$, $w = w'$, $s = \overline{s'}$. But this means that $azb = p \tau(w) s'$, yielding the desired $\tau$-decomposition.
If $p$ is empty but $s$ is not, then by similar reasoning, $p'$ must be the word obtained by removing the first letter of $\tau(w_1)$, $w'$ is the word obtained by removing the first letter $w_1$ of $w$, and $azb = \tau(w) s'$. Similarly, if $s'$ is empty but $p$ is not, then $s$ must be the word obtained by removing the last letter of $\tau(w'_{|w'|})$, $w$ is the word obtained by removing the last letter $w'_{|w'|}$ of $w'$, and $azb = \tau(w') s'$. Finally, if both $p, s'$ are empty, then $p'$ must be the word obtained by removing the first letter of $\tau(w_1)$, $s$ must be the word obtained by removing the last letter of $\tau(w_{|w|})$,
$w_2 \ldots w_{|w|} = w'_1 \ldots w'_{|w'| - 1}$, and $azb = \tau(\overline{w})$, where $\overline{w} = w'_1 w = w' w_{|w|}$. In every case, $azb$ has a $\tau$-decomposition, completing the proof of the claim by induction.

Now, by taking limits, it's clear that $y$ itself can be written as $\sigma^i \tau(x)$ for some $0 \leq i < \ell$ and $x \in \{0,1\}^{\mathbb{Z}}$. Since $y$ was arbitrary and $Y$ is a subshift, $Y = \tau^{*}(X)$ for some subshift $X \subseteq \{0,1\}^{\mathbb{Z}}$.

\end{proof}

Though the next two results will be subsumed later by the fact that a generic subshift in $\Tcalprimebar$ is regular Toeplitz
(Theorem~\ref{toegen}), we provide short proofs to illustrate this technique. 

\begin{theorem}\label{zeroT}
The set $\mathbf{Z}$ of zero entropy subshifts is residual in $\Tcalprimebar$.
\end{theorem}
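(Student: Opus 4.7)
The plan is the standard Baire-category recipe: verify that $\mathbf{Z}$ is a $G_{\delta}$ in $\Scal$ (and hence in $\Tcalprimebar$, which is closed in $\Scal$), and separately verify that $\mathbf{Z}$ is dense in $\Tcalprimebar$. Since $\Tcalprimebar$ is a complete metric space and therefore Baire, this will yield residuality.

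For the $G_{\delta}$ part, I would exploit subadditivity: $\log c_{X}(m+n) \leq \log c_{X}(m) + \log c_{X}(n)$, so by Fekete's lemma $h_{\mathrm{top}}(\sigma_{X}) = \inf_{n} \tfrac{1}{n}\log c_{X}(n)$. In particular, if $c_{X}(n_0) \leq e^{n_0/k}$ holds for a single $n_0$, then $c_{X}(mn_0) \leq c_{X}(n_0)^{m} \leq e^{mn_0/k}$ for all $m \geq 1$, producing infinitely many such $n$. Consequently $h_{\mathrm{top}}(\sigma_{X}) = 0$ if and only if for every $k \geq 1$ the inequality $c_{X}(n) \leq e^{n/k}$ holds for infinitely many $n$, i.e.
\[
\mathbf{Z} \;=\; \bigcap_{k \geq 1} \mathbf{S}_{1,\, g_{k}}, \qquad g_{k}(n) = e^{n/k}.
\]
Theorem~\ref{cplxthm} makes each $\mathbf{S}_{1, g_{k}}$ a $G_{\delta}$ in $\Scal$, so $\mathbf{Z}$ is too.

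For density, I plan to reduce to a single zero-entropy model on the alphabet $\{0,1\}$ by pulling back through the map supplied by Theorem~\ref{letword}. Fix an arbitrary nonempty cylinder $C = [X,n]$ in $\Tcalprimebar$ and let $\ell \geq n$ and $\tau: \{0,1\} \to \A_{C}^{\ell}$ be as in Theorem~\ref{letword}, so that $\tau^{*}(Y) \in C$ for every $Y \in \Tcalprime$ contained in $\{0,1\}^{\mathbb{Z}}$. Take $Y$ to be any Sturmian subshift on $\{0,1\}$: such a $Y$ is infinite and minimal (hence transitive and certainly not NMC, so $Y \in \Tcalprime$) and satisfies $c_{Y}(n) = n+1$, so $h_{\mathrm{top}}(\sigma_{Y}) = 0$. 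Then $\tau^{*}(Y) \in C$, and each length-$n$ subword of a point of $\tau^{*}(Y)$ is determined by one of $\ell$ possible starting phases together with a word of $Y$ of length at most $\lceil n/\ell \rceil + 1$, giving
\[
c_{\tau^{*}(Y)}(n) \;\leq\; \ell \cdot c_{Y}\!\bigl(\lceil n/\ell \rceil + 1\bigr),
\]
which forces $h_{\mathrm{top}}(\sigma_{\tau^{*}(Y)}) \leq h_{\mathrm{top}}(\sigma_{Y})/\ell = 0$. Hence $C \cap \mathbf{Z} \neq \varnothing$, and since $C$ was arbitrary $\mathbf{Z}$ is dense in $\Tcalprimebar$.

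The main obstacle is modest. The only nontrivial point is converting ``$h_{\mathrm{top}}(\sigma_{X})=0$'' into the ``infinitely often'' form demanded by Theorem~\ref{cplxthm}, which is exactly what subadditivity provides. The complexity estimate for $\tau^{*}(Y)$ is a routine consequence of $\tau$ coding $Y$ by blocks of length $\ell$; for the purpose of killing entropy we need only this crude bound, not the sharper unique decipherability statement of Theorem~\ref{letword}.
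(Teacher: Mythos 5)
Your proposal is correct and follows essentially the same path as the paper: density via Theorem~\ref{letword} and the fact that zero entropy is preserved under the coding $\tau$ (you make this concrete with a Sturmian model and a crude complexity bound), and the $G_\delta$ part via Fekete's lemma applied to $\log c_X$. The paper phrases the $G_\delta$ argument by showing each set $\{h \geq \epsilon\}$ is closed rather than routing through Theorem~\ref{cplxthm}, but the underlying subadditivity observation is the same.
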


\begin{proof}
It's clear that zero entropy is preserved under any letter-to-word substitution $\tau$, so by Theorem~\ref{letword}, it suffices to prove that the set of zero entropy subshifts is a $G_{\delta}$. For this, it clearly suffices to show that for any $\epsilon$, the set $\mathbf{S}_{\epsilon}$ of subshifts with entropy at least $\epsilon$ is closed. A proof of this is already given in~\cite{Sigmund1971}, but we rewrite it here for completeness.

Fix any $\epsilon$. By the definition of topological entropy, a subshift $X$ has $h(X) \geq \epsilon$ if $c_{X}(n) \geq e^{n\epsilon}$ for all $n$, and by Fekete's subadditivity lemma, the converse is also true. Therefore, $\mathbf{S}_{\epsilon}$ can be written as
\[
\bigcap_{n \in \mathbb{N}} \{X \in \Scal \mid c_{X}(n) \geq e^{n\epsilon}\},
\]
and each set $\{X \in \Scal \mid c_{X}(n) \geq e^{n\epsilon}\}$ is clearly closed.

\end{proof}

\begin{theorem}\label{minimal}
The set $\mathbf{M}$ of minimal subshifts is residual in $\Tcalprimebar$.
\end{theorem}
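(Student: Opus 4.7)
The plan is to follow the two-step template illustrated by Theorem~\ref{zeroT}: show that $\mathbf{M}$ is a $G_\delta$ in $\Scal$ (hence in $\Tcalprimebar$), and then show that $\mathbf{M}$ is dense in $\Tcalprimebar$ by invoking the letter-to-word substitution machinery of Theorem~\ref{letword}.

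First I would establish the characterization that $X$ is minimal if and only if for every $k$ there exists $N$ such that every word in $L_N(X)$ contains every word in $L_k(X)$ as a subword. The reverse direction is immediate: any $x \in X$ and any $w \in L_k(X) \subset L(X)$ must have $w$ appearing in the length-$N$ prefix of $x$, so every orbit is dense. The forward direction follows by compactness and minimality: for each fixed $v \in L_k(X)$ and each $x \in X$, some forward iterate of $x$ lies in the $X$-cylinder $[v]$, so by a continuity/compactness argument there is a uniform bound $N_v$ on the waiting time, and then $N := \max_{v \in L_k(X)} N_v$ works since $L_k(X)$ is finite. Given this characterization, $\mathbf{M}$ can be written as
\begin{equation*}
\mathbf{M} = \bigcap_{k \geq 1} \bigcup_{N \geq k} A_{k,N}, \qquad A_{k,N} := \{X \in \Scal \mid \text{every } w \in L_N(X) \text{ contains every } v \in L_k(X)\}.
\end{equation*}
Each $A_{k,N}$ depends only on $L_N(X)$, so it is a union of $N$-cylinders and hence clopen; thus $\mathbf{M}$ is a $G_\delta$.

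For density, fix an arbitrary nonempty cylinder $C = [X,n] \cap \Tcalprimebar$ and let $\tau \colon \{0,1\} \to \A_C^\ell$ be the substitution produced by Theorem~\ref{letword}. Fix any infinite minimal subshift $Y \subset \{0,1\}^{\mathbb{Z}}$ (for instance a Sturmian subshift); since $Y$ is infinite and minimal, it contains no periodic points, so $Y$ cannot be NMC by Lemma~\ref{NMCstructure}, giving $Y \in \Tcalprime$. By Theorem~\ref{letword} we then have $\tau^{*}(Y) \in C$, so it suffices to verify that $\tau^{*}(Y)$ is itself minimal. For any two points $z = \sigma^i \tau(y)$ and $z' = \sigma^j \tau(y')$ in $\tau^{*}(Y)$, minimality of $Y$ gives a sequence $k_n$ with $\sigma^{k_n}(y) \to y'$; applying the continuous map $\tau$ (which intertwines $\sigma_Y$ with $\sigma^{\ell}$ on $\tau(Y)$) yields $\sigma^{k_n \ell}\tau(y) \to \tau(y')$ and hence $\sigma^{j + k_n \ell - i}(z) \to z'$, so $z'$ lies in the orbit closure of $z$. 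Thus $\tau^{*}(Y) \in \mathbf{M} \cap C$, which proves density.

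The main obstacle in this argument is really just the correct phrasing of the $G_\delta$ description, since once that is set up the letter-to-word substitution from Theorem~\ref{letword} does all the work of transporting minimality from the easily produced minimal $\{0,1\}$-example into every cylinder of $\Tcalprimebar$. Combining the $G_\delta$ fact with density and the Baire Category Theorem in the complete metric space $\Tcalprimebar$ then gives that $\mathbf{M}$ is residual.
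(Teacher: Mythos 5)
Your proposal is correct and follows the same two-step route as the paper: show $\mathbf{M}$ is a $G_\delta$ via a finitary characterization of minimality depending only on $L_N(X)$, then get density by pushing a minimal $\{0,1\}$-subshift into each cylinder via the letter-to-word substitution $\tau$ from Theorem~\ref{letword}. The only difference is cosmetic: the paper declares "minimality is preserved under any letter-to-word substitution" without proof, whereas you spell out the orbit-closure argument using the fact that $\tau$ intertwines $\sigma_Y$ with $\sigma^\ell$ on $\tau(Y)$; and the paper phrases the $G_\delta$ condition as "every word in $L_n(X)$ contains all $k$-letter subwords of words in $L_n(X)$" where you phrase it as "every word in $L_N(X)$ contains every word in $L_k(X)$" — these agree once $N\geq k$, since $L_k(X)$ is exactly the set of $k$-letter subwords of $L_N(X)$.
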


\begin{proof}
It's clear that minimality is preserved under any letter-to-word substitution $\tau$, so again it suffices to prove that the set of minimal subshifts is a $G_{\delta}$. 
From the definition, we know that a subshift $X$ is minimal if and only if for every $k$,
there exists $n$ so that all words in $L_n(X)$ contain all words in $L_k(X)$ as subwords. Since languages are factorial and extendable, we can rephrase this slightly: $X$ is minimal if and only if for all $k$, there exists $n$ so that every word in $L_n(X)$ contains all $k$-letter words which appear as subwords of some word in $L_n(X)$. For every $k, n, \A$, define $M_{k,n,\A}$ to be the set of subsets of $\A^n$ with the property above.

Then, the set of minimal subshifts can be written as
\[
\bigcap_{k \in \mathbb{N}} \bigcup_{\substack{m,n \in \mathbb{N},\\ \A \subset \mathbb{Z}, |\A| < \infty}} \{X \in \Scal \mid L_n(X) \in M_{k,n,\A}\} =
\bigcap_{k \in \mathbb{N}} \bigcup_{\substack{m,n \in \mathbb{N},\\ \A \subset \mathbb{Z}, |\A| < \infty}} \bigcup_{\substack{X \in \Scal \\ L_n(X) \in M_{k,n,\A}}} [X,n],
\]
which is clearly a $G_{\delta}$.

\end{proof}

\subsection{Regular Toeplitz subshifts in $\Tcalprimebar$}
Our first main goal is to show that a generic subshift in $\Tcalprimebar$ is regular Toeplitz. While this is in contrast to the more degenerate systems that appeared generically in $\Scal$ and $\Scalprime$, it continues the theme of generic systems being in some the sense the ``simplest possible.''

\begin{definition}
A sequence $x \in \A^\zz$ is \textit{regular Toeplitz} if for every $\epsilon > 0$, there exists $n \in \zz$ and a set
$S \subset [0,n)$ of integers for which $|S| > (1-\epsilon)n$ and, for each $s \in S$, $x(s + in)$ is independent of $i \in \zz$, i.e. takes the same value for every $i$. A \textit{regular Toeplitz subshift} is the orbit closure of any regular Toeplitz sequence.
\end{definition}

The difficulty in showing that regular Toeplitz subshifts are generic is that the definition above involves an infinite amount of information (infinite arithmetic progression), and so a priori cannot be represented as a $G_{\delta}$ condition.
However, with a little effort, it turns out that the definition can be rephrased in a finite way, which we present as an auxiliary lemma. We first need some definitions.

\begin{definition}
For any word $w = avb$ of length at least $2$ ($a, b \in \mathcal{A}$), write $p(w) = av$ and $s(w) = vb$.
For $\epsilon > 0$ and $n,N,k \in \mathbb{N}$, define a set $W \subset \A^N$ to be \textit{$(\epsilon, n, N, k)$-regular Toeplitz compatible} if, for every $w \in W$ and each $u \in \{w, p(w), s(w)\}$, there exists a set $S \subset [0,n)$ with $|S| = k$ so that
$u(s + in)$ is independent of $i \in [0, \lfloor \frac{N - s}{n} \rfloor)$ if and only if $s \in S$. In other words, each $u$ which is either in $W$ or equal to $p(w)$ or $s(w)$ for some $w \in W$ is constant on exactly $k$ arithmetic progressions of step length $n$.
\end{definition}

\begin{lemma}\label{toe}
A sequence $x$ is regular Toeplitz if and only if for every $\epsilon > 0$, there exist $n, N, k$ for which the set $L_N(x)$ of $N$-letter subwords of $x$ is $(\epsilon, n, N, k)$-regular Toeplitz compatible.
\end{lemma}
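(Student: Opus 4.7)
The plan is to prove both directions by working with, for each subword $u = x(a) \cdots x(a+L-1)$ of $x$, the set $R(u) \subseteq [0,n)$ of residues $r$ such that the arithmetic progression $\{x(j) : j \in [a, a+L),\ j \equiv r \pmod{n}\}$ is constant within $u$. Under the natural reading of the definition, the $(\epsilon,n,N,k)$-compatibility condition amounts to $|R(u)| = k$ (with $k \geq (1-\epsilon)n$ playing the role of $\epsilon$) for every $u$ arising as $w$, $p(w)$, or $s(w)$ with $w \in L_N(x)$.

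For the ($\Leftarrow$) direction, I would exploit that the condition also applies to $p(w)$ and $s(w)$. Since removing a letter from the end or start only relaxes the constancy constraint, $R(w) \subseteq R(p(w))$ and $R(w) \subseteq R(s(w))$; combined with the equal cardinalities, these become equalities. The equality $R(w) = R(s(w))$ for $w$ starting at position $a$ yields a backward-extension principle at residue $a \bmod n$: if $x(a+n), x(a+2n), \ldots$ within the window is constant equal to $c$, then also $x(a) = c$. Symmetrically, $R(w) = R(p(w))$ yields forward extension. Iterating these one-step extensions starting from any constant arithmetic progression of length about $N/n$ inside some window, one extends the constancy to all of $\mathbb{Z}$, so every locally good residue of every window is in fact truly Toeplitz. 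Hence $S^* := \{r \in [0,n) : x(r+in) = x(r) \text{ for all } i \in \mathbb{Z}\}$ satisfies $|S^*| \geq k \geq (1-\epsilon)n$, giving the regular Toeplitz property.

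For the ($\Rightarrow$) direction, given $\epsilon > 0$, I would pick $n$ from the Toeplitz period hierarchy of $x$ so that the truly Toeplitz residue set $S^*$ at scale $n$ has density $|S^*|/n > 1 - \epsilon$, and take $k = |S^*|$. The inclusion $S^* \subseteq R(w)$ always holds, so $|R(w)| \geq k$ is automatic; the work is forcing equality by eliminating accidentally good residues. For a non-Toeplitz residue $r \notin S^*$, the sequence $y_r(i) := x(r+in)$ is non-constant, and by passing to a deeper scale $n'$ in the hierarchy (with $n \mid n'$) and analyzing the periodic structure of the Toeplitz-at-$n'$ positions within the residue class $r \bmod n$, I would bound the lengths of constant runs of $y_r$ by some finite $B_r$. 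Choosing $N > n \cdot \max_{r \notin S^*} B_r$ then ensures every length-$N$ and length-$(N-1)$ window catches a discrepancy for every non-Toeplitz residue, forcing $|R(u)| = k$ uniformly.

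The main obstacle I anticipate is establishing the uniform bound on constant runs $B_r$ in the forward direction. A priori, at the chosen scale $n$ a non-Toeplitz progression $y_r$ could exhibit very long constant runs of a single value $c$, broken only by rare discrepancies whose Toeplitz behavior first becomes visible much deeper in the hierarchy. The careful step is to pass to a scale $n'$ deep enough that at least two distinct values of $y_r$ appear among the Toeplitz-at-$n'$ positions of the residue class $r$ (rather than all such positions sharing the value $c$, with disagreement only on non-Toeplitz-at-$n'$ positions). Existence of such a scale follows from $y_r$ being globally non-constant (since $r \notin S^*$) together with the density-$1$ accumulation of Toeplitz positions across scales, but the book-keeping across the finitely many residues $r \in [0,n) \setminus S^*$ has to be done with care.
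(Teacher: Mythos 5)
Your reverse direction is essentially the paper's argument, reorganized. The paper proves by induction on window length $M$ that $(\epsilon,n,M,k)$-compatibility persists for all $M \geq N$ (the inductive step shows that for $w = avb$, the constancy sets of $av$ and $vb$ must coincide because both contain the constancy set of $v$ and all three have size $k$), and then passes to the limit. Your sliding-window one-step extension is the same inductive step read through a single residue. One small slip: if you index $R(\cdot)$ internally to the word rather than globally in $x$, then $R(s(w))$ is a cyclic shift of $R(w)$, not literally a superset; your global-coordinate phrasing makes the inclusions $R(w) \subseteq R(p(w))$ and $R(w) \subseteq R(s(w))$ correct, and is the clean way to set it up.

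Your forward direction, however, takes a genuinely different and harder route, and the step you flag as "the main obstacle" is exactly where the paper's approach diverges. The paper does not attempt to bound the constant runs of each $y_r$ separately. Instead it uses minimality of the orbit closure of a regular Toeplitz sequence: it chooses one window $w = x([-M,M])$ long enough that $w$ is non-constant on all $n-k$ non-Toeplitz (local) residues, i.e.\ $w$ already carries a witness pair for every such residue. By minimality there is an $N$ so that every subword of $x$ of length $N-1$ or $N$ contains $w$. Any occurrence of $w$ inside a window $u$, at whatever offset $b$, forces $u$ to be non-constant on the $n-k$ residues $(b + s) \bmod n$, $s \notin S_w$; combined with the automatic constancy on $\geq k$ residues coming from the global Toeplitz set $S$, and the fact that these two families are disjoint and add up to $n$, one gets exactly $k$. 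This sidesteps the need to bound constant runs of $y_r$, and in particular avoids the phase/bookkeeping issue you correctly worry about (long constant runs visible only at deeper scales). Your plan of forcing two distinct values of $y_r$ to appear at Toeplitz-at-$n'$ positions for a deep enough $n'$ does work if one assumes the sequence is Toeplitz in the strong sense that every coordinate is eventually periodic (then a witness pair $y_r(i_0) \neq y_r(j_0)$ can be stabilized at a common deeper scale $n'$, bounding constant runs by roughly $n'/n$), but this is appreciably more delicate than the paper's single appeal to minimality, and it leans on exactly the same piece of standard Toeplitz structure theory anyway.
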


\begin{proof}
For the forward direction, assume that $x$ is a regular Toeplitz sequence, and fix any $\epsilon > 0$. By definition, there exists $n$ and a set $S \subset [0,n)$ for which $|S|/n > 1 - \epsilon$ and where $x(s + in)$ is constant if and only if $s \in S$. (The reverse direction wasn't part of the definition, but clearly can be assumed by simply taking $S$ to be maximal.) Define $k = |S|$ and note that $k/n > 1 - \epsilon$. There then must exist $M$ so that $w = x([-M, M])$ is constant on exactly $k$ of the arithmetic progressions of step length $n$ (and contains unequal letters in each of the other $n-k$ arithmetic progressions). By minimality of $X$, there exists $N$ so that every $(N-1)$-letter subword of $x$ contains $w$, and so fails to be constant on at least $n-k$ arithmetic progressions of step length $n$. However, as subwords of $x$, all such words are constant on at least $k$ arithmetic progressions of step length $n$, and so are constant on exactly $k$ such arithmetic progressions. Exactly the same argument works for $N$-letter subwords of $x$, and so by definition, $L_N(x)$ is $(\epsilon, n, N, k)$-regular Toeplitz compatible.

For the reverse direction, assume that for all $\epsilon$, there exist $n, N, k$ so that the set $L_N(x)$ of $N$-letter subwords of $x$ is $(\epsilon, n, N, k)$-regular Toeplitz compatible. We claim that for all $M \geq N$, the set $L_M(x)$ is $(\epsilon, n, M, k)$-regular Toeplitz compatible, which implies upon taking limits that $x$ is constant on $k$ arithmetic progressions of step length $n$. This will complete the proof, since having this property for all $\epsilon$ implies by definition that $x$ is regular Toeplitz.

We verify the claim by induction on $M$. The base case $M = N$ holds by assumption. Assume that $L_M(x)$ is $(\epsilon, n, M, k)$-regular Toeplitz compatible, and consider any $w \in L_{M+1}(x)$. We can write $w = avb$, and by assumption, each of $v$, $av$, and $vb$ (as either words in $L_M(X)$ or such words with first or last letter removed) is constant on exactly $k$ of its arithmetic progressions of step length $n$. Denote by $S$ the set of residue classes $\pmod n$ on which $av$ is constant, and $T$ the corresponding set for $vb$; then $|S| = |T| = k$. Clearly $v$ is constant on all residue classes in $S \cup T$, so $|S \cup T| \leq k$. Then it must be the case that $S = T = S \cup T$. It's easily checked that $w = avb$ is constant precisely on the residue classes in $S \cap T = S = T$, and so it is constant on exactly $k$ of these. Since $w \in L_{M+1}(x)$ was arbitrary and the same conclusion holds for $u \in L_M(x)$ by the assumption that $L_M(x)$ is $(\epsilon, n, M, k)$-regular Toeplitz compatible, we've shown that $L_{M+1}(x)$ is $(\epsilon, n, M+1, k)$-regular Toeplitz compatible, completing the proof of the claim by induction. As mentioned above, this completes the proof of the lemma as well.
\end{proof}

\begin{theorem}\label{toegen}
The set of regular Toeplitz subshifts is residual in $\Tcalprimebar$.
\end{theorem}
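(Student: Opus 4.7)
The plan is to verify that the set $\mathbf{RT}$ of regular Toeplitz subshifts in $\Tcalprimebar$ is both dense and $G_\delta$, so that the Baire category theorem yields the residuality claim.

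For the $G_\delta$ half, I would leverage Lemma~\ref{toe} together with the genericity of minimality from Theorem~\ref{minimal}. For a minimal subshift $X$, every point $x \in X$ satisfies $L_N(x) = L_N(X)$, so by Lemma~\ref{toe}, such $X$ is regular Toeplitz if and only if for every $\epsilon > 0$ there exist $n, N, k$ with $k/n > 1-\epsilon$ so that $L_N(X)$ is $(\epsilon, n, N, k)$-regular Toeplitz compatible. Since the latter is a finite combinatorial condition on $L_N(X)$, the set
\[
U_{\epsilon, n, N, k} := \{Z \in \Scal : L_N(Z) \text{ is } (\epsilon, n, N, k)\text{-regular Toeplitz compatible}\}
\]
is a union of $N$-cylinders, hence open in $\Scal$. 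Writing $\mathbf{M}$ for the set of minimal subshifts, $\mathbf{RT} \cap \mathbf{M}$ is then the intersection of $\mathbf{M}$ with $\bigcap_m \bigcup_{n,N,k : k/n > 1-1/m} U_{1/m, n, N, k}$, a $G_\delta$. Since $\mathbf{M}$ is itself residual in $\Tcalprimebar$ by Theorem~\ref{minimal}, it follows that $\mathbf{RT}$ contains a dense $G_\delta$ provided we can produce density.

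For density, I would fix an arbitrary nonempty cylinder $C = [X,n]$ in $\Tcalprimebar$, apply Theorem~\ref{letword} to obtain $\ell \geq n$ and an injection $\tau \colon \{0,1\} \to \A_C^\ell$, and take $Y \subset \{0,1\}^\zz$ to be the orbit closure of any regular Toeplitz $\{0,1\}$-sequence (e.g.\ the period-doubling sequence). Such $Y$ is minimal and infinite, so $Y \in \Tcalprime$, giving $\tau^{*}(Y) \in C$ by Theorem~\ref{letword}. The remaining point is that substitutions preserve the regular Toeplitz property: if $y \in Y$ is regular Toeplitz with period $n_0$ and set $S_0 \subset [0, n_0)$ of density exceeding $1-\epsilon$, then setting $\tilde n = \ell n_0$ and $\tilde S = \{\ell s + j : s \in S_0,\ 0 \leq j < \ell\} \subset [0, \tilde n)$, a direct computation gives $\tau(y)(\ell s + j + i \tilde n) = \tau(y_s)_j$, independent of $i$, for every $\ell s + j \in \tilde S$, while $|\tilde S|/\tilde n = |S_0|/n_0 > 1-\epsilon$. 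Hence $\tau(y)$ is regular Toeplitz; since $Y$ is minimal, $\tau^{*}(Y)$ is exactly the orbit closure of $\tau(y)$, and therefore a regular Toeplitz subshift in $C$.

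The main technical point is bridging Lemma~\ref{toe}, which is a statement about a single sequence, to a condition on the subshift $X$ itself; minimality (generically available via Theorem~\ref{minimal}) provides the clean bridge by forcing $L_N(x) = L_N(X)$ for every $x \in X$. The density half is essentially mechanical once one notes that substitution by $\tau$ scales the periodic structure by $\ell$ without degrading the relative density of constant positions.
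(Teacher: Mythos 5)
Your proof is correct and follows essentially the same route as the paper's: density via Theorem~\ref{letword} together with the observation that the period structure of a regular Toeplitz sequence is scaled by $\ell$ under $\tau$ (your computation matches the paper's), and the $G_\delta$ half via the combinatorial reformulation in Lemma~\ref{toe} (the set of subshifts whose $L_N$ is $(\epsilon,n,N,k)$-regular Toeplitz compatible is a union of $N$-cylinders, hence open). The one real difference is small but worth noting: the paper asserts directly that the set of regular Toeplitz subshifts in $\Scal$ \emph{equals} the displayed $G_\delta$ set, whereas strictly speaking that set could be larger (e.g.\ a disjoint union of two alphabet-disjoint relabelings of a Toeplitz subshift satisfies the compatibility condition but has no transitive point). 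You sidestep this by intersecting with the residual set of minimal subshifts from Theorem~\ref{minimal}, for which $L_N(x)=L_N(X)$ for every $x$ and Lemma~\ref{toe} then gives a clean biconditional at the subshift level; one could equally well intersect with $\Tcalprime$. Either way the residuality conclusion in $\Tcalprimebar$ is unaffected, so this is a refinement of rigor rather than a different argument.
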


\begin{proof}
We first claim that the property of being regular Toeplitz is preserved under any letter-to-word substitution $\tau$ as in Theorem~\ref{letword}. Choose any such $\tau$ (with length $\ell$), regular Toeplitz $X = \overline{\mathcal{O}(x)}$ with alphabet $\{0,1\}$, and any $\epsilon > 0$. By definition, there exists $n$ and a set $S \subset [0,n)$ with $|S| > (1-\epsilon)n$ so that $x$ is constant on all arithmetic progressions $\{s + in\}$ with $s \in S$. But then clearly we can define $S' \subset [0,n\ell)$ by $S' = \bigcup_{s \in S} [s\ell, (s+1)\ell)$, and then $\tau(x)$ is constant on all arithmetic progressions $\{s' + in\ell\}$ with $s' \in S'$. Since $|S'| = \ell |S| > (1-\epsilon)n\ell$, we've shown that $\tau(x)$ is regular Toeplitz, so $\tau^{*}(X) = \overline{\mathcal{O}(\tau(x))}$ is a regular Toeplitz subshift.

So using Theorem~\ref{letword}, it again suffices to prove that the set of regular Toeplitz subshifts is a $G_{\delta}$.

For any finite $\A \subset \mathbb{Z}$, denote by $C(\epsilon, n, N, k, \A)$ the set of $(\epsilon, n, N, k)$-regular Toeplitz compatible subsets of $\A^N$. Then by Lemma~\ref{toe}, the set of regular Toeplitz subshifts can be written as
\begin{gather}
\bigcap_{t \in \mathbb{N}} \bigcup_{\substack{m,n,N,k \in \mathbb{N}\\ \A \subset \mathbb{Z}, |\A| < \infty}} \{X \in \Scal \mid L_N(X) \in C(t^{-1}, m, n, N, k)\} \\
= \bigcap_{t \in \mathbb{N}} \bigcup_{\substack{m,n,N,k \in \mathbb{N}\\ \A \subset \mathbb{Z}, |\A| < \infty}} \bigcup_{\substack{X \in \Scal \\L_N(X) \in C(t^{-1}, m, n, N, k)}} [X,N],
\end{gather}
which is clearly a $G_{\delta}$.
\end{proof}

The following is immediate, since regular Toeplitz subshifts are known to be zero entropy, minimal, and uniquely ergodic~\cite{DownarowiczSurvey}.

\begin{corollary}\label{cor:zemuegeneric}
The set of zero entropy, minimal, uniquely ergodic subshifts is residual in $\Tcalprimebar$.
\end{corollary}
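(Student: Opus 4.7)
The plan is to derive this directly from Theorem~\ref{toegen} together with the classical structure theory of regular Toeplitz subshifts. First I would invoke Theorem~\ref{toegen}, which tells us that the set $\mathcal{R}$ of regular Toeplitz subshifts is residual in $\Tcalprimebar$. Next I would appeal to the standard fact (see \cite{DownarowiczSurvey}) that every regular Toeplitz subshift is minimal, uniquely ergodic, and has zero topological entropy; this yields a set-theoretic inclusion $\mathcal{R} \subseteq \mathbf{ZMUE}$, where $\mathbf{ZMUE}$ denotes the set of zero entropy, minimal, uniquely ergodic subshifts. Since $\Tcalprimebar$ is a complete metric space and any superset of a residual set is residual, the conclusion follows immediately.

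As a sanity check, two of the three relevant properties were already established as individually generic in $\Tcalprimebar$ via Theorems~\ref{zeroT} and~\ref{minimal}. In view of the Baire Category Theorem, the intersection of these with any residual set of uniquely ergodic subshifts would give another route to the corollary; but verifying that unique ergodicity on its own gives a residual set would require showing both that it is preserved under the letter-to-word substitutions $\tau$ from Theorem~\ref{letword} (which is not entirely trivial, since the pushforward of the unique invariant measure on $Y$ under $\tau^{*}$ must still be verified to be the unique invariant measure of $\tau^{*}(Y)$) and that it is a $G_\delta$ condition in $\Scal$. Going through the Toeplitz class sidesteps both issues, because regularity packages zero entropy, minimality, and unique ergodicity into one coherent property that is visibly inherited by $\tau^{*}(Y)$ from $Y$.

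The main obstacle is essentially absent at this stage: all the heavy lifting was carried out in the proof of Theorem~\ref{toegen}, which had to simultaneously handle the finitary reformulation of the Toeplitz property (Lemma~\ref{toe}) and the $G_\delta$ counting needed to apply Theorem~\ref{letword}. Given that work, the only remaining verifications are formal: confirming that $\mathbf{ZMUE} \subset \Tcalprimebar$ (which holds since any infinite minimal subshift lies in $\Tcalprime$), and recording the classical result that regular Toeplitz subshifts are zero entropy, minimal, and uniquely ergodic. Putting these together gives a two-line proof.
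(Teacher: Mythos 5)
Your proposal is correct and matches the paper's own reasoning: the paper derives this corollary immediately from Theorem~\ref{toegen} together with the classical fact that regular Toeplitz subshifts are zero entropy, minimal, and uniquely ergodic. Your extra discussion of the alternative route via Theorems~\ref{zeroT} and~\ref{minimal} is accurate but unnecessary for the argument.
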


\begin{remark}
In~\cite{DownarowiczKasjan2015} it was shown that Sarnak's conjecture holds for all regular Toeplitz subshifts. It follows from Theorem~\ref{toegen} then that Sarnak's conjecture holds for a generic system in $\Tcalprimebar$.
\end{remark}

In fact we can give slightly more information about a generic regular Toeplitz subshift in $\Tcalprimebar$: it must factor onto the universal odometer. For this, we will use the following simple lemma.

\begin{lemma}\label{eigen}
For any subshift $X$ and $n \in \mathbb{N}$, if there exists a clopen set $E$ so that $X = \bigsqcup_{i=0}^{n-1} \sigma^i E$, then
all $n$th roots of unity are topological eigenvalues of $(X, \sigma)$.
\end{lemma}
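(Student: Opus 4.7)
The plan is to exhibit, for each $n$th root of unity $\omega$, an explicit continuous eigenfunction $f\colon X\to\mathbb{C}$ satisfying $f\circ\sigma = \omega\, f$. The natural candidate is a locally constant function that assigns different phases to the pieces of the clopen partition.

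First I would check that the decomposition $X=\bigsqcup_{i=0}^{n-1}\sigma^{i}E$ together with the fact that $\sigma$ is a homeomorphism of $X$ forces $\sigma^{n}E = E$. This is because $\sigma$ is a bijection on $X$ that carries $\sigma^{i}E$ to $\sigma^{i+1}E$ for $0\le i\le n-2$, so by bijectivity the image of $\sigma^{n-1}E$ must be the remaining piece $E$. Hence $\sigma$ acts as a cyclic permutation on the partition elements $\{E,\sigma E,\ldots,\sigma^{n-1}E\}$.

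Next, let $\omega = e^{2\pi i k/n}$ for an arbitrary $0\le k < n$, and define $f_{k}\colon X\to\mathbb{C}$ by $f_{k}(x)=\omega^{i}$ whenever $x\in\sigma^{i}E$, for $0\le i<n$. Since each $\sigma^{i}E$ is clopen (as the image of a clopen set under a homeomorphism) and these sets partition $X$, the function $f_{k}$ is continuous and not identically zero. A direct case check shows $f_{k}(\sigma x)=\omega\, f_{k}(x)$: for $x\in\sigma^{i}E$ with $i<n-1$ we have $\sigma x\in\sigma^{i+1}E$, so $f_{k}(\sigma x)=\omega^{i+1}=\omega\cdot\omega^{i}$; and for $x\in\sigma^{n-1}E$ we have $\sigma x\in E$ by the preliminary observation, so $f_{k}(\sigma x)=1=\omega^{n}=\omega\cdot\omega^{n-1}$. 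Thus $\omega$ is a topological eigenvalue, and since $k$ was arbitrary every $n$th root of unity is obtained.

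There is no real obstacle here; the only subtle point is verifying $\sigma^{n}E=E$, which requires using that the $\sigma^{i}E$ exhaust $X$ and are disjoint, so that $\sigma$ permutes them cyclically. Everything else is a direct computation, and the proof is essentially two short paragraphs.
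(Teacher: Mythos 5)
Your proof is correct and follows essentially the same approach as the paper: the paper defines the eigenfunction $f=\sum_{i=0}^{n-1}\zeta_n^i\chi_{\sigma^i E}$ for a primitive $n$th root of unity $\zeta_n$, which is exactly your $f_k$ (with the observation that powers of a primitive-root eigenfunction give the remaining roots, which you instead obtain by letting $k$ vary). You have simply filled in the routine verifications — that $\sigma^n E = E$ and that the eigenvalue equation holds — which the paper leaves implicit.
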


\begin{proof}
Letting $\zeta_n$ be a primitive $n$th root of unity, simply define the eigenfunction $f = \sum_{i = 0}^{n-1} \zeta_n^i \chi_{\sigma^i E}$.
\end{proof}

\begin{theorem}\label{univ}
The set of regular Toeplitz subshifts factoring onto the universal odometer is residual in $\Tcalprimebar$.
\end{theorem}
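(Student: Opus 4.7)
The plan is to combine the residual set of regular Toeplitz subshifts from Theorem~\ref{toegen} with a separate argument showing that, for each $n \geq 1$, the set
\[
P_n := \{X \in \Tcalprimebar \,\mid\, X \text{ admits a factor map onto } \mathbb{Z}/n\mathbb{Z}\}
\]
is open in $\Tcalprimebar$, and that $\bigcap_n P_n$ is dense. Any regular Toeplitz subshift (hence minimal) lying in $\bigcap_n P_n$ then factors onto the universal odometer.

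To see that $P_n$ is open, note that any factor map $X \to \mathbb{Z}/n\mathbb{Z}$ is continuous on a zero-dimensional space, hence given by a block code $g : L_N(X) \to \mathbb{Z}/n\mathbb{Z}$ for some window length $N$, and equivariance amounts to $g(au) + 1 \equiv g(ub) \pmod{n}$ for every $aub \in L_{N+1}(X)$. Since this condition depends only on $L_{N+1}(X)$, the set $P_n$ is a union of cylinder sets in $\Tcalprimebar$ and therefore open.

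Density of $\bigcap_n P_n$ is proved via Theorem~\ref{letword} together with a single reference example. Fix a regular Toeplitz sequence $y_* \in \{0,1\}^{\mathbb{Z}}$ whose orbit closure $Y_*$ is an almost one-to-one extension of the universal odometer --- such sequences are standard from the hole-filling construction with period sequence $p_k = k!$, and in particular $Y_* \in \Tcalprime$. Given any cylinder $C = [X, n_0] \cap \Tcalprimebar$, Theorem~\ref{letword} produces $\tau : \{0,1\} \to \mathcal{A}_C^{\ell}$ with $\tau^*(Y_*) \in C$, and the letter-to-word argument from the proof of Theorem~\ref{toegen} shows $\tau^*(Y_*)$ is again regular Toeplitz. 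Using the unique decipherability of $\tau$ and the factor maps $h_n : Y_* \to \mathbb{Z}/n\mathbb{Z}$, the formula
\[
H_n(\sigma^i \tau(y)) \;=\; i + \ell \cdot h_n(y) \pmod{n\ell}
\]
defines an equivariant surjection $\tau^*(Y_*) \to \mathbb{Z}/(n\ell)\mathbb{Z}$; composing with the projection $\mathbb{Z}/(n\ell) \to \mathbb{Z}/m$ for any $m$ with $m \mid n\ell$ shows $\tau^*(Y_*) \in P_m$ for every $m$.

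Thus $\bigcap_n P_n$ is a dense $G_\delta$ in $\Tcalprimebar$, and intersecting with the residual set of regular Toeplitz subshifts from Theorem~\ref{toegen} yields a residual subset each of whose elements is a regular Toeplitz (hence minimal) subshift factoring onto every $\mathbb{Z}/n\mathbb{Z}$. For such a minimal $X$, fix $x_0 \in X$ and normalize each $f_n : X \to \mathbb{Z}/n\mathbb{Z}$ so that $f_n(x_0) = 0$. Since any continuous equivariant map from a minimal system to a finite cyclic group is determined by its value at a single point, the normalized $f_n$'s are automatically compatible under the projections $\mathbb{Z}/m \to \mathbb{Z}/n$ for $n \mid m$, and assemble into a factor map $X \to \varprojlim_n \mathbb{Z}/n\mathbb{Z}$ onto the universal odometer. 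The main technical point is the density argument, combining the existence of a $\{0,1\}$-Toeplitz example factoring onto the universal odometer with the observation that $\tau^*$ lifts odometer factors by a multiplicative scale of $\ell$, which is absorbed in passing to the cofinal inverse limit.
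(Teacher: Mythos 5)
Your proposal is correct and follows the same skeleton as the paper's argument, but with one genuine simplification worth noting. Both proofs introduce the sets $P_n$ (your ``admits an equivariant map to $\mathbb{Z}/n\mathbb{Z}$'' is the same as the paper's ``admits a clopen $E$ with $X = \bigsqcup_{i<n}\sigma^i E$''), show each $P_n$ is open via cylinder/block-code considerations, establish density of $\bigcap_n P_n$ by pushing a $\{0,1\}$-example through the $\tau^*$ of Theorem~\ref{letword}, and then intersect with the residual set of regular Toeplitz subshifts from Theorem~\ref{toegen}. Where you diverge is the density step: the paper constructs, for \emph{each} $n$, a bespoke $\{0,1\}$-subshift (the closure of sequences $\ldots(0^{\epsilon_0}1)(0^{\epsilon_1}1)\ldots$ with $\epsilon_k \in \{n-1,2n-1\}$) admitting a clopen partition into $n$ pieces and proves each $P_n$ is separately dense; you instead fix a \emph{single} reference Toeplitz $Y_*$ over the universal odometer and show $\tau^*(Y_*) \in \bigcap_n P_n$ directly, which is more economical and makes the ``all $n$ at once'' structure explicit. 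Your computation that $H_n(\sigma^i\tau(y)) = i + \ell\, h_n(y) \pmod{n\ell}$ is well defined and equivariant relies on exactly the unique decipherability the paper uses to make $D = \tau(E)$ and $\sigma^i D$ disjoint, so the two verifications are the same lemma in different clothes. Finally, the paper packages the conclusion via topological eigenvalues (Lemma~\ref{eigen}) while you assemble the normalized $f_n$'s directly into an inverse-limit factor map, appealing to the fact that for minimal systems an equivariant map to a finite cyclic group is determined by its value at one point; these are equivalent formulations of the same finishing step.
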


\begin{proof}

It suffices to show that the set of Toeplitz subshifts which contain $\mathbb{Q}$ in their set of topological eigenvalues is generic in $\Tcalprimebar$, and we will do this using Lemma~\ref{eigen}. Given any $n$, we will show that the set $P_n$ of transitive subshifts $X$ with a clopen partition $\bigsqcup_{i=0}^{n-1} \sigma^i E$ is generic in $\Tcalprimebar$.

Fix $n \in \mathbb{N}$ and let $C$ be a nonempty cylinder in $\Tcalprimebar$. Let $X$ be the subshift on the alphabet $\{0,1\}$ defined as the closure of the unions of orbits of all sequences of the form
\[
\ldots (0^{\epsilon_0} 1)(0^{\epsilon_1} 1) \ldots
\]
for all sequences $(\epsilon_k) \in \{n-1,2n-1\}^\mathbb{Z}$. Then $X$ is infinite and transitive so $X \in \Tcalprime$, and using the clopen subset $E = [1] \cup \sigma^n [1]$ it is straightforward to check that $X \in P_n$ as well. Then by Lemma~\ref{letword} we may find $\ell$ and $\tau$ so that $\tau^{*}(X)$ is contained in $C$ and has unique decipherability as in Lemma~\ref{letword}. We claim that $\tau^{*}(X) \in P_{n\ell}$, which implies that $\tau^{*}(X)$ is also in $P_n$. (If $X = \bigsqcup_{i=0}^{n\ell-1} \sigma^i E$, then $X = \bigsqcup_{i=0}^{n-1} \sigma^i F$ for $F = \bigsqcup_{j = 0}^{\ell - 1} \sigma^{jn} E$.)

To see this, define the clopen set $D = \tau(E)$, i.e. biinfinite concatenations coming from sequences in $E$ with `dividing line' at the origin. This set is well-defined only because of the unique decipherability of $\tau$. Choose any $0 \leq i < n\ell$, and consider the sets $D$ and $\sigma^i D$. If $i$ is not a multiple of $\ell$, then $\sigma^i D$ and $D$ are disjoint due to unique decipherability of $\tau$. If $i = j\ell$ for some $j < n$, then $\sigma^i D = \tau(\sigma^j E)$, which is disjoint from $D$ since $\tau$ is uniquely decipherable and $E$ and $\sigma^j E$ are disjoint. Finally, by definition of $\tau^{*}(X)$, every $y \in \tau^{*}(X)$ can be written as $\sigma^k \tau(x)$ for some $0 \leq k < \ell$ and $x \in X$, which can in turn be written as $\sigma^k \tau(\sigma^m e) = \sigma^{k + m\ell} \tau(e)$ for some $0 \leq m < n$ and $e \in E$. Since $0 \leq k + m\ell < n\ell$, we've shown that $\tau^{*}(X) = \bigsqcup_{r=0}^{n\ell-1} \sigma^r D$, so $X \in P_{n\ell}$. Since $C$ was arbitrary and $\tau^{*}(X) \in C \cap P_{n\ell} \subset C \cap P_n$, we've shown that $P_n$ is dense.

It remains only to show that each $P_n$ is a $G_{\delta}$. We note that by compactness, for any finite $\A \subset \mathbb{Z}$,
subshift $X \in \Scal[\A]$, $m \in \mathbb{N}$, $S \subset \A^m$, and associated clopen set $E = \bigcup_{w \in S} [w]$,
$X = \bigsqcup_{i = 0}^{n-1} \sigma^i E$
if and only if there exists $N>m$ so that for every $w \in L_N(X)$, there exists $i$ so that every $m$-letter subword
$w(j) \ldots w(j+m-1)$ has cylinder set contained in $E$ if and only if $j \equiv i \pmod n$.
Denote by $D(m,S,N,\A)$ the set of all $S \subset \A^N$ with this property. Then, we can write $P_n$ as 
\[
\bigcup_{\substack{m,N \in \mathbb{N}, \A \subset \mathbb{Z},\\ |\A| < \infty, S \subset \A^m}} \{X \in \Tcalprimebar \mid L_N(X) \in D(m,S,N,\A)\} \]
\[ =
\bigcup_{\substack{m,N \in \mathbb{N}, \A \subset \mathbb{Z},\\ |\A| < \infty, S \subset \A^m}} \bigcup_{\substack{X \in \Tcalprimebar \\L_N(X) \in D(m,S,N,\A)}} [X,N],
\]
which is clearly open. Each $P_n$ is then open and dense, so the intersection $\bigcap_n P_n$ is a dense $G_{\delta}$, and
the set of regular Toeplitz subshifts in $\bigcap_n P_n$ is a dense $G_{\delta}$. By Lemma~\ref{eigen}, each such Toeplitz factors onto the universal odometer, completing the proof.

\end{proof}








\begin{remark}
In~\cite{HochmanGeneric}, Hochman showed that in fact a single conjugacy class (that of the universal odometer) was generic in the space of transitive homeomorphisms of the Cantor set. We remind the reader that such a result cannot hold here: by the Curtis-Hedlund-Lyndon theorem, the conjugacy class within $\Scal$ of any subshift is countable, and a countable set cannot contain a dense $G_{\delta}$ in $\Tcalprimebar$ since $\Tcalprimebar$ has no isolated points.
\end{remark}

\subsection{Complexity for a generic subshift in $\Tcalprimebar$}\label{cplxt}

For increasing $f,g: \mathbb{N} \rightarrow \mathbb{R}^+$, the set $\mathbf{S}_{f,g}$ of subshifts in $\Tcalprimebar$ whose complexity is between $f,g$ infinitely often is a $G_{\delta}$ set by Theorem~\ref{cplxthm}. Our main result in this section is that this set is also dense (and therefore generic) in $\Tcalprimebar$ if $f$ and $g$ are `far enough apart' in the set of a certain order on increasing functions.

\begin{definition}
For two increasing functions $f,g: \mathbb{N} \rightarrow \mathbb{R}^+$, we say that $f \prec g$ if for every $s,t \in \mathbb{N}$, there exists a constant $N$ so that $tf(n + s) < g(tn)$ and $f(tn) < tg(n - s)$ for all $n > N$. 
\end{definition}



\begin{lemma}\label{cplxbd}
If $X$ is a subshift with alphabet $\A_X$, $\ell \in \mathbb{N}$, and $\tau: \A_X \rightarrow \A_Y^\ell$ is an injective map with the unique decipherability property from Theorem~\ref{letword}, then
for all $n \geq 3\ell$,
\[
\ell c_{X}(\lfloor n/\ell \rfloor - 2) \leq c_{\tau^*(X)}(n) \leq \ell c_{X}(\lceil n/\ell \rceil + 2).
\]
\end{lemma}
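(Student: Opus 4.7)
The plan is to leverage the unique $\tau$-decomposition from Theorem~\ref{letword} to parametrize each word $w \in L_n(\tau^{*}(X))$ by a phase $j \in \{0,1,\ldots,\ell-1\}$ (the value of $s \bmod \ell$ for $s$ the starting position of $w$ inside some $\tau(x)$) together with the $X$-word $u$ covering the $\tau$-blocks that $w$ intersects. By the uniqueness clause, the phase is an invariant of $w$, so both inequalities reduce to counting, for each $j$, the number of words at phase $j$.

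For the upper bound, I will observe that for any such pair $(j,u)$ corresponding to $w \in L_n(\tau^{*}(X))$, $u$ has length at most $M(j) := \lceil (j+n)/\ell \rceil \le \lceil n/\ell \rceil + 1$, and that the map $(j,u) \mapsto \tau(u)[j:j+n]$ is a surjection from $\bigsqcup_{j=0}^{\ell-1} L_{M(j)}(X)$ onto $L_n(\tau^{*}(X))$; summing over $j$ and using monotonicity of $c_X$ gives $c_{\tau^{*}(X)}(n) \le \ell\, c_X(\lceil n/\ell \rceil + 2)$. For the lower bound, set $m := \lfloor n/\ell \rfloor - 2$ (nonnegative since $n \ge 3\ell$). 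For each $v \in L_m(X)$, I will extend $v$ to some $u_v \in L_{m+4}(X)$ using extendability of $L(X)$, so that $|\tau(u_v)| = \ell(m+4) \ge n+\ell$, and then for each $j \in \{0,\ldots,\ell-1\}$ define $w_j(v) := \tau(u_v)[j:j+n] \in L_n(\tau^{*}(X))$. The key claim is that $(j,v) \mapsto w_j(v)$ is injective: for fixed $j$, the internal $\tau$-blocks of $w_j(v)$ occupy the positions $\{k\ell - j : 1 \le k \le m\}$, so reading each such block of length $\ell$ and applying injectivity of $\tau$ recovers every letter of $v$; different values of $j$ yield different phases in the unique $\tau$-decomposition and hence different words. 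This produces $\ell \cdot c_X(m)$ distinct words, giving the lower bound.

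The main obstacle is the bookkeeping in the lower bound: one must choose the extension length for $u_v$ so that, uniformly over all phases $j \in \{0,\ldots,\ell-1\}$, each of the $m$ internal $\tau$-blocks of $v$ sits strictly inside $w_j(v)$ and is bounded away from both boundary fragments of $w_j(v)$. The shift $-2$ inside $\lfloor n/\ell \rfloor - 2$ is precisely what makes this work, while extendability of $L(X)$ guarantees that the required extension $u_v$ always exists. A minor subtlety is the boundary case of phase $j=0$, where the prefix $p$ in the $\tau$-decomposition is empty; this is handled uniformly by allowing empty boundary pieces in the decomposition.
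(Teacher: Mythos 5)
Your proposal is correct and follows essentially the same route as the paper's proof: both use the unique $\tau$-decomposition to associate to each $n$-letter word of $\tau^*(X)$ a phase in $\{0,\ldots,\ell-1\}$ and an $X$-word of length roughly $n/\ell$, then bound $c_{\tau^*(X)}(n)$ by summing $c_X$ over phases. The only difference is that you spell out the explicit two-sided extension of $v$ and the injectivity check for the lower bound, which the paper leaves as "clearly for fixed $|p|$, different $v$ yield different $u$."
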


\begin{proof}
We note that any word in $L_n(\tau^*(X))$ is a subword of a concatenation of the words $\{\tau(a)\} \subset \A^\ell$, and so is determined completely by a residue class modulo $\ell$ and a concatenation of less than or equal to $1 + \frac{n+1}{\ell} \leq 2 + \lceil n/\ell \rceil$ words of the form $\tau(a)$. In turn, by definition of $\tau(x)$, these are determined by some word in $L(X)$ with length $m_i \leq 2 + \lceil n/\ell \rceil$ depending only on the residue class $i$. Therefore,
\[
c_{\tau^{*}(X)}(n) \leq \sum_{i = 0}^{\ell-1} c_{X}(m_i) \leq \ell c_{X}(\lceil n/\ell \rceil + 2).
\]
For the other inequality, we note that since $n \geq 3\ell$, for every word $u \in L_n(Y)$, there is a unique decomposition of the form $p \tau(v) s$ for $v \in L(X)$ whose length $m_{|p|}$ is at least $n/\ell - 2$ and depends only on $|p|$. Clearly for fixed $|p|$, different $v \in L(X)$ yield different $u \in L(\tau^*(X))$, so
\[
c_{\tau^{*}(X)}(n) \geq \sum_{i = 0}^{\ell-1} c_{X}(m_i) \geq \ell c_{X}(\lfloor n/\ell \rfloor + 2),
\]
completing the proof.

\end{proof}



\begin{theorem}\label{mainfg}
If $f,g: \mathbb{N} \rightarrow \mathbb{R}^+$ are increasing, $f \prec g$, and there exists any infinite transitive subshift $X$ and sequence $(k_n)$ where $f(k_n) \prec c_{X}(k_n) \prec g(k_n)$, then the set $\mathbf{S}_{f,g}$ is residual in $\Tcalprimebar$.
\end{theorem}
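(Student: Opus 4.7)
The approach is to show $\mathbf{S}_{f,g}$ is a dense $G_{\delta}$ in $\Tcalprimebar$. Being a $G_{\delta}$ in $\Scal$ is immediate from Theorem~\ref{cplxthm}, and hence $\mathbf{S}_{f,g}$ is also $G_{\delta}$ in the subspace $\Tcalprimebar$. By the Baire Category Theorem, it thus suffices to establish density in $\Tcalprimebar$.

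The plan for density is to use Theorem~\ref{letword} to transplant a suitable version of $X$ into an arbitrary cylinder, then track complexity via Lemma~\ref{cplxbd}. Fix any nonempty cylinder $C$ in $\Tcalprimebar$. Apply Theorem~\ref{letword} to obtain $\ell$ and an injective, uniquely decipherable map $\tau \colon \{0,1\} \to \A_{C}^{\ell}$ with $\tau^{*}(Z) \in C$ for every $Z \in \Tcalprime$ on the alphabet $\{0,1\}$. Separately, encode the given subshift $X$ as a $\{0,1\}$-subshift $X_{0} = \rho^{*}(X)$ via a uniquely decipherable length-$k$ binary substitution $\rho$ (with $k$ of order $\log_{2}|\A_{X}|$; existence follows from standard synchronizing-marker constructions, or by a second application of Theorem~\ref{letword} to a cylinder in $\Tcalprimebar$ containing $X$). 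Set $Y := \tau^{*}(X_{0})$, which lies in $C \cap \Tcalprime$.

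Applying Lemma~\ref{cplxbd} once to $\rho$ and once to $\tau$ and composing, one obtains $L = k\ell$ and a constant $C_{0}$ depending only on $k$ and $\ell$ such that, for all sufficiently large $m$,
\[
L\,c_{X}\!\left(\lfloor m/L\rfloor - C_{0}\right) \leq c_{Y}(m) \leq L\,c_{X}\!\left(\lceil m/L\rceil + C_{0}\right).
\]
Evaluating at $m_{j} := Lk_{j}$ (so $\lfloor m_{j}/L\rfloor = \lceil m_{j}/L\rceil = k_{j}$), and chaining with the $\prec$-hypothesis $f(k_{n}) \prec c_{X}(k_{n}) \prec g(k_{n})$ applied with parameters $t = L$ and $s = C_{0}$: the left half gives $f(m_{j}) = f(Lk_{j}) < L\,c_{X}(k_{j} - C_{0}) \leq c_{Y}(m_{j})$, while the right half gives $c_{Y}(m_{j}) \leq L\,c_{X}(k_{j} + C_{0}) < g(Lk_{j}) = g(m_{j})$. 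Thus $f(m_{j}) < c_{Y}(m_{j}) < g(m_{j})$ for all sufficiently large $j$, whence $Y \in \mathbf{S}_{f,g} \cap C$.

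The main obstacle is verifying that the specific multiplicative factor $L$ and additive shift $C_{0}$ introduced by the two substitution steps can be absorbed by the $\prec$-relation in the concluding estimate. The definition of $\prec$ is designed precisely around two free parameters---the multiplicative $t$ and the shift $s$---so any fixed $L$ and $C_{0}$, depending only on the cylinder $C$ rather than on $j$, are admissible choices. A secondary technical point is ensuring that the binary encoding $\rho$ satisfies the unique-decipherability hypothesis of Lemma~\ref{cplxbd}; this is handled by a standard synchronizing construction (or by bootstrapping from Theorem~\ref{letword}), and does not affect the asymptotic shape of the estimates.
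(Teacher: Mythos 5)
Your proof is correct and takes essentially the same route as the paper's: encode $X$ over $\{0,1\}$ via a synchronizing binary substitution $\rho$, embed via the $\tau$ from Theorem~\ref{letword}, and use Lemma~\ref{cplxbd} together with the $\prec$-hypothesis to place the result in $\mathbf{S}_{f,g} \cap C$. The one small technical difference is that the paper applies Lemma~\ref{cplxbd} a single time to the composed map $\tau \circ \rho$ (asserting that unique decipherability is preserved under composition, which gives the clean constants $s = 2$, $t = M\ell$), whereas you apply the lemma separately to $\rho$ and to $\tau$ and then compose the two chains of inequalities, which slightly inflates the additive constant $C_0$ but avoids having to argue that the composition is uniquely decipherable; since $\prec$ absorbs any fixed $s,t$, either version lands in the same place.
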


\begin{proof}
That $\mathbf{S}_{f,g}$ is a $G_{\delta}$ is implied by Theorem~\ref{cplxthm}, so we need only show that it is dense.

We claim that this follows from Theorem~\ref{letword} and Lemma~\ref{cplxbd}. Indeed, choose such a subshift $X$, (we assume without loss of generality that $\A_X = \{1, \ldots, |\A_X|\}$),
sequence $(k_n)$ where $f(k_n) \prec c_{X}(k_n) \prec g(k_n)$, and any cylinder $C$ in $\Tcalprimebar$ (with associated alphabet $\A_C$). First, we need to find $M$ so that there is an injective map $\rho: \A_X \rightarrow \{0,1\}^M$ with the unique decipherability property from Theorem~\ref{letword}. For this, simply choose $K$ so that $2^K > |\A_X|$, define $M = 2K+2$, and define $\rho(i)$ to be $K$ $0$s, followed by a $1$, then the $K$-bit binary expansion of $i$, then another $1$. It's easy to see that $\rho$ has the desired decipherability, since every $\rho(i)$ begins with $0^K$, and $0^K$ appears nowhere else in concatenations of the words $\rho(a)$ for $a \in \A_X$. Then $\rho^{*}(X)$ is a subshift on $\{0,1\}$.

By Theorem~\ref{letword}, there exists $\ell$ and $\tau: \{0,1\} \rightarrow \A_C^\ell$ so that $\tau^{*}(\rho^*(X)) \in C$.
(Note that $\tau^*(\rho^*(X)) \in \Tcalprime$ since $X$ is.) We can then define an injective map $\tau \circ \rho: \A_X \rightarrow \A_C^{M\ell}$, which has the unique decipherability of Theorem~\ref{letword} since $\tau$ and $\rho$ do; it is clear that $(\tau \circ \rho)^*(X) = \tau^{*}(\rho^*(X))$. Then, Lemma~\ref{cplxbd} implies that for every
$n \geq 3M\ell$,
\begin{equation}\label{fgcplx0}
M\ell c_{X}(k_n - 2) \leq c_{(\tau \circ \rho)^*(X)}(M\ell k_n) \leq M\ell c_{X}(k_n + 2).
\end{equation}
Since $f(k_n) \prec c_{X}(kn) \prec g(k_n)$, by definition of $\prec$ (for $s = 2$ and $t = M\ell$), for large enough $n$ 
\begin{equation}\label{fgcplx1}
M\ell c_{X}(k_n + 2) < g(M\ell k_n) \textrm{ and } f(M\ell k_n) < M\ell c_{X}(k_n - 2).
\end{equation}
Combining (\ref{fgcplx0}) and (\ref{fgcplx1}) yields
\[
f(M\ell k_n) < c_{(\tau \circ \rho)^*(X)}(M \ell k_n) < g(M\ell k_n)
\]
for sufficiently large $n$, implying that $(\tau \circ \rho)^*(X) \in \mathbf{S}_{f,g} \cap C$ and completing the proof.
\end{proof}

In particular, this already implies that for any countable collection of pairs $(f_k, g_k)$ satisfying Theorem~\ref{mainfg}, a generic subshift has complexity function entering each one of these ranges infinitely often! In order to give some more explicit statements, we need to collect some examples which are (nearly) in the literature. 

\begin{proposition}\label{linearex}
For every $1 < \alpha < \beta$, there exists a transitive subshift $X$ with alphabet $\{0,1\}$ and sequence $(k_n)$ where
$\alpha k_n < c_{X}(k_n) < \beta k_n$ for all $n$. 
\end{proposition}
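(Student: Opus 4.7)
Fix $1 < \alpha < \beta$ and pick some $\gamma \in (\alpha, \beta)$. My plan is to construct a transitive subshift $X \subset \{0,1\}^{\mathbb{Z}}$ whose complexity function oscillates, with linear slope below $\alpha$ at certain scales and above $\beta$ at others; a discrete intermediate value argument based on Corollary~\ref{RScor} then forces $c_X(n)/n$ to enter the interval $(\alpha, \beta)$ at infinitely many scales, from which the sequence $(k_n)$ can be extracted.

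The explicit construction would take $X = \overline{\mathcal{O}(t)}$, where $t \in \{0,1\}^{\mathbb{Z}}$ is a single bi-infinite sequence assembled as a concatenation of finite blocks drawn from $k := \lceil \beta \rceil + 1$ Sturmian sequences $s^{(0)}, \ldots, s^{(k-1)}$ with pairwise distinct irrational rotation numbers. The block schedule alternates on rapidly increasing scales between \emph{monolithic regimes}, which are long runs extracted from a single $s^{(0)}$ and which locally keep the complexity slope near $1$, and \emph{mixed regimes}, which are stretches of short blocks cycled through all $k$ Sturmians and which locally push the slope toward $k > \beta$. By construction $X$ is transitive (being the orbit closure of a single point), and the complexity of $X$ is bounded above in terms of the subword counts of $t$ together with subwords of the limit Sturmian subshifts $Y_j = \overline{\mathcal{O}(s^{(j)})}$, each of which contributes $n+1$ subwords of length $n$.

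The key technical input is that on the binary alphabet, Corollary~\ref{RScor} reduces to $c_X(n+1) - c_X(n) = |RS_n(X)|$, and when the block structure is chosen with sufficiently rapid growth, the right-special count $|RS_n(X)|$ grows only slowly in $n$ relative to $n$ itself. Consequently $c_X(n)/n$ can change by at most $O(1/n)$ per step, which prevents it from jumping over the interval $(\alpha, \beta)$. The main obstacle is the quantitative bookkeeping to verify these right-special bounds in the presence of limit points introduced by the orbit closure of $t$, and to verify that the monolithic regimes and mixed regimes really do drive $c_X(n)/n$ below $\alpha$ and above $\beta$, respectively, at infinitely many scales. A cleaner alternative is to quote classical constructions from the complexity literature (such as $S$-adic systems or codings of minimal interval exchange transformations with carefully chosen parameters; see \cite{FerCplx}), which directly produce transitive binary subshifts with $c_X(n)/n \to \gamma$ for any prescribed $\gamma \geq 1$, immediately giving $\alpha n < c_X(n) < \beta n$ for all sufficiently large $n$ and hence the desired sequence $(k_n)$.
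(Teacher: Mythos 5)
Your high-level strategy is the same as the paper's: build a transitive $X$ whose complexity slope oscillates below $\alpha$ and above $\beta$, then invoke boundedness of $c_X(n+1) - c_X(n)$ (Cassaigne's theorem, which the paper cites as \cite{cass}) to run a discrete intermediate-value argument. That architecture is correct and is exactly the structure of the paper's argument. Where you diverge is the construction itself, and there the proposal has a real gap rather than just omitted bookkeeping.

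The multi-Sturmian concatenation does not obviously produce the oscillation you need. Once a block of length $L$ from $s^{(j)}$ has been inserted into $t$, all $n$-subwords of that block with $n \le L$ belong permanently to $L(X)$; the language of a subshift only accumulates. So after a ``mixed regime'' has injected length-$L$ pieces of all $k$ Sturmians, $c_X(n)$ is bounded below by something close to $kn$ for every $n \le L$, and a subsequent long monolithic run of $s^{(0)}$ does nothing to erase those words. To drive the slope back down toward $1$ at a larger scale $n'$, you would have to argue that the \emph{new} words of length between $L$ and $n'$ are contributed almost entirely by the single Sturmian $s^{(0)}$; this requires controlling right-special words across block boundaries and across the limit points of $\overline{\mathcal{O}(t)}$, and it is precisely the content you label ``the main obstacle'' and do not supply. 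The fallback of quoting a classical result producing binary transitive subshifts with $c_X(n)/n \to \gamma$ for arbitrary $\gamma \ge 1$ also does not work as a citation: no such off-the-shelf theorem is invoked in the paper, and indeed this proposition is itself establishing (a subsequence version of) that realization statement, so one cannot assume it.

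By contrast, the paper uses a single explicit family of examples from \cite{OP} whose complexity admits a closed formula, $c_X(n) = n + 1 + |R \cap [1,n)|$ with $R$ a controllable union of intervals. This makes the oscillation between $\liminf c_X(n)/n = 1$ and $\limsup c_X(n)/n = 2$ a direct computation, after which Cassaigne plus intermediate value handles any $(\alpha,\beta) \subset (1,2)$. The remaining cases are then reduced to this one by taking products with periodic orbits of period $2^j$, which requires an additional (and nontrivial) verification that $(X_0, \sigma^{2^j})$ stays transitive, achieved by arranging the return-time residues mod $2^j$. Your approach would avoid the product step by aiming for a wider oscillation range directly, but the cost is that you lose the explicit complexity formula that makes the paper's estimates tractable; without it, the claim that the monolithic regimes pull the slope back near $1$ is unsubstantiated and, for the reason above, doubtful as stated.
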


\begin{proof}
We will make use of a class of examples from \cite{OP}, each defined via an increasing sequence $(n_k)$ of integers. For any such $(n_k)$, define a sequence
\[
x = 0^{\infty}.1 0^{n_0} 1 0^{n_1} 1 0^{n_0} 1 0^{n_2} 1 0^{n_0} 1 0^{n_1} 1 0^{n_0} 1 0^{n_3} \ldots,
\]
where the subscripts in the exponents follow the sequence $01020103\ldots$ whose $i$th letter is the largest $j$ s.t. $2^j$ divides
$i$. Then, define $X$ to be the closure of the orbit of $x$. For each such subshift, \cite{OP} gives a formula for the word complexity in terms an auxiliary sequence $m_k$ (there written $|w(k)|$) defined by the recursion
\[
m_k = 2^k + 2n_k + \sum_{j = 0}^{k-1} 2^{k-j-1} n_j.
\]
Whenever $n_k \geq m_{k-1}$ for all $k$ (which will be true of all examples we consider here), we can define the disjoint union of intervals
$R = R(n_k) = \bigcup (n_k, m_k]$. Under that assumption, the complexity of $X$ is proved in \cite{OP} to be
\begin{equation}\label{ormesbd}
c_{X}(n) = n + 1 + |R \cap [1, n)|.
\end{equation}
We claim that $(n_k)$ can be chosen so that $\liminf_j \frac{c_{X}(j)}{j} = 1$ and $\limsup_j \frac{c_{X}(j)}{j} = 2$. Specifically, we claim that $(n_k)$ can be chosen to satisfy $\frac{c_{X}(n_{k^2+1})}{n_{k^2+1}} < 1 + k^{-1}$ and $\frac{c_{X}(n_{(k+1)^2})}{n_{(k+1)^2}} > 2 - k^{-1}$ for all $k$. Suppose that $n_1, \ldots, n_{k^2}$ have been chosen satisfying the hypothesis. We define any $n_{k^2+1} > k(1+m_{k^2})$, and note that by (\ref{ormesbd}),
\[
c_{X}(n_{k^2+1}) \leq n_{k^2+1} + 1 + m_{k^2} < n_{k^2+1}(1 + k^{-1}),
\]
verifying $\frac{c_{X}(n_{k^2+1})}{n_{k^2+1}} < 1 + k^{-1}$. Then, define $n_{k^2 + j} = m_{k^2 + j - 1}$ for $1 < j \leq 2k + 1$.
This means that $R \supseteq (n_{k^2+1}, m_{k^2 + 2k}]$. By definition, $m_i > 2n_{i-1}$ for all $i$, and since
$n_{k^2 + j} = m_{k^2 + j - 1}$ for $1 < j \leq 2k + 1$, $n_{(k+1)^2} = m_{k^2 + 2k} > 2^{2k-1} m_{k^2 + 1}$. By (\ref{ormesbd}),
\[
c_{X}(n_{(k+1)^2}) = n_{(k+1)^2} + 1 + m_{k^2 + 2k} - n_{k^2+1} > 2n_{(k+1)^2} - m_{k^2+1} > n_{(k+1)^2}(2 - k^{-1}),
\]
verifying $\frac{c_{X}(n_{(k+1)^2})}{n_{(k+1)^2}} < 2 - k^{-1}$ and completing the induction.

Denote by $X_0$ the subshift just defined satisfying $\liminf_j \frac{c_{X_{0}}(j)}{j} = 1$ and $\limsup_j \frac{c_{X_{0}}(j)}{j} = 2$. We now prove Theorem~\ref{linearex} for $1 < \alpha < \beta < 2$. By a result of Cassaigne (\cite{cass}), $c_{X_{0}}(n+1) - c_{X_{0}}(n)$ is bounded, i.e. there exists $C$ so that $c_{X_{0}}(n+1) - c_{X_{0}}(n) < C$ for all $n$. Choose any $N > C(\beta - \alpha)^{-1}$. By definition of $\liminf$ and $\limsup$, there exist $K, L > N$ so that $\frac{c_{X_{0}}(K)}{K} < \alpha$ and $\frac{c_{X_{0}}(L)}{L} > \beta$. 
Define $J \in [K, L]$ maximal so that $\frac{c_{X_{0}}(J)}{J} \leq \alpha$ (meaning that $\frac{c_{X_{0}}(J+1)}{J+1} > \alpha$). Then
\[
c_{X_{0}}(J+1) < c_{X_{0}}(J) + C \leq J\alpha + C < (J+1)\alpha + C < (J+1)\alpha + N(\beta - \alpha) < (J+1)\beta.
\]
Therefore, $\frac{c_{X_{0}}(J+1)}{J+1} < \beta$, and we already knew that $\frac{c_{X_{0}}(J+1)}{J+1} > \alpha$. Since $N$ was arbitrary and $J > N$, this completes the proof when $1 < \alpha < \beta < 2$.

For the remaining cases, we wish to take the product of $X_0$ with a periodic orbit with period of the form $2^j$ to achieve larger linear complexities. However, we need to ensure that such products retain the transitive property. Recall that $X_0$ is the closure of the orbit of the sequence
\[
x = 0^{\infty}.1 0^{n_0} 1 0^{n_1} 1 0^{n_0} 1 0^{n_2} 1 0^{n_0} 1 0^{n_1} 1 0^{n_0} 1 0^{n_3} \ldots
\]
Define a sequence of words $v(k)$ as follows: $v(0) = 1$, and for all $k$, $v(k+1) = v(k) 0^{n_k} v(k)$. By definition of $X_0$, every word in $L(X_0)$ is a subword of some $v(k)$. In addition, for every $m > k$, the word $v(k) 0^{n_m} v(k)$ is a subword of $x$, and so $|v(k)| + n_m$ is a return time of $[v(k)]$ to itself, i.e. $[v(k)] \cap \sigma^{|v(k)| + n_m} [v(k)] \neq \varnothing$.

We now wish to show that in the construction of $X_0$, one can control the sequence $(n_m)$ so that for all $j$, every residue class modulo $2^j$ is achieved by infinitely many $n_m$. This is fairly clear, since there are infinitely many $n_m$ which are defined only in terms of a lower bound (for $m = k^2 + 1$ as written above). We from now on assume without loss of generality that $X_0$ has this property. Then, fix any $j$ and any $u,v \in L(X_0)$. There exists $k$ so that $u,v$ are subwords of $v(k)$; say that $u,v$ appear at locations $i,j$ respectively (i.e. $[v(k)] \subset \sigma^i[u] \cap \sigma^j[v])$. By the above, every $|v(k)| + n_m$ for $m > k$ is a return time of $[v(k)]$ to itself, and therefore for every $m$, $[u] \cap \sigma^{|v(k)| + j-i + n_m}[v] \neq \varnothing$. Since $n_m$ with $m > k$ achieve every residue class modulo $2^j$, the same is true for the positive integers $|v(k)| + j-i + n_m$. Since $u,v$ were arbitrary, this implies that $(X_0, \sigma^{2^j})$ is transitive, and therefore that $X_j := X_0 \times P_j$ is transitive (with respect to $\sigma$), where $P_j$ is the orbit of the periodic sequence $(10^{2^j-1})^{\infty}$. It is clear that $c_{P_{j}}(n) = 2^j c_{X_{0}}(N)$ for $n \geq 2^j$, and so $\liminf \frac{c_{X_{j}}(n)}{n} = 2^j$ and $\limsup \frac{c_{X_{j}}(n)}{n} = 2^{j+1}$.

Now, the proof of Theorem~\ref{linearex} proceeds exactly as above for the case $2^j < \alpha < \beta < 2^{j+1}$ for any $j \geq 0$. The only remaining case is where $\alpha \leq 2^j \leq \beta$ for some $j$, and this case is obvious, since we can just define $\beta' = \frac{\alpha + 2^j}{2}$. Then the theorem holds for $\alpha, \beta'$, which implies that it trivially holds for $\alpha, \beta$ since $\beta' < \beta$.

\end{proof}

\begin{proposition}\label{polyex}
For every $1 < \alpha < \beta$, there exists a transitive subshift $X$ with alphabet $\{0,1\}$ and sequence $(k_n)$ where
$k_n^{\alpha} < c_{X}(k_n) < k_n^{\beta}$ for all $n$.
\end{proposition}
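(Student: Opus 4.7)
My approach mirrors that of Proposition~\ref{linearex}, with the Ormes-Pavlov-style linear-complexity construction replaced by a transitive construction achieving polynomial complexity growth. Since the interval $(\alpha,\beta)$ is open, it suffices to exhibit, for any chosen $\gamma \in (\alpha,\beta)$, a transitive subshift $X$ on $\{0,1\}$ with $c_X(n) \asymp n^{\gamma}$: for all sufficiently large $n$ we then have $n^\alpha < c_X(n) < n^\beta$, and taking $(k_n)$ to be any sparse enough increasing sequence gives the desired conclusion.

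The plan is to adapt the inductive construction used in Proposition~\ref{linearex}, where the words $v(k+1) = v(k)\, 0^{n_k}\, v(k)$ are formed by a simple doubling pattern yielding the complexity formula $c_X(n) = n + 1 + |R \cap [1,n)|$ with $R = \bigcup_k (n_k, m_k]$. To reach polynomial growth, I would replace the single gap exponent $n_k$ at each stage by a family $\{n_k^{(1)}, \ldots, n_k^{(r_k)}\}$ and form $v(k+1)$ by concatenating multiple copies of $v(k)$ with gaps drawn from this family, producing many distinct admissible words at each level. The branching sizes $r_k$ and the growth rate of the $n_k^{(i)}$ would be calibrated so that the total number of distinct length-$n$ subwords grows like $n^{\gamma}$; the complexity analysis would follow the same template as the linear case but with $R$ replaced by a larger index set encoding the richer local configurations.

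Transitivity can be enforced by the same mechanism as in Proposition~\ref{linearex}: by reserving some branching freedom at each level, one ensures that the set of return times to each cylinder realizes every sufficiently large residue class modulo $2^j$, which yields transitivity of $(X, \sigma^{2^j})$ for every $j$ and hence transitivity of the full system.

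The main technical obstacle I anticipate is the precise calibration of the $r_k$ and $n_k^{(i)}$ to produce a complexity growing exactly like $n^{\gamma}$ for an arbitrary prescribed real $\gamma > 1$. Unlike the linear case, Cassaigne's bound on first differences of $c_X$ fails in the polynomial regime, so no discrete intermediate-value argument is available to fill in intermediate rates; the construction must instead directly realize the rate $n^{\gamma}$ for some chosen $\gamma \in (\alpha,\beta)$. Once such a construction is in place the proof concludes immediately, since only a single $\gamma$ in the open interval is required.
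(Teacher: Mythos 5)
Your opening reduction is sound: since $(\alpha,\beta)$ is open, it is enough to exhibit a transitive subshift on $\{0,1\}$ with $c_X(n)\asymp n^\gamma$ for a single $\gamma\in(\alpha,\beta)$ (and, as you eventually note, a rational $\gamma$ suffices). However, what follows is a plan rather than a proof, and you yourself name the unresolved step: calibrating the branching parameters $r_k$ and the gaps $n_k^{(i)}$ so that the resulting subshift has complexity growing like $n^\gamma$. In the linear case of Proposition~\ref{linearex} this calibration is made possible by the explicit formula $c_X(n)=n+1+|R\cap[1,n)|$ from~\cite{OP}, and you do not produce an analogue in the polynomial regime; you only describe what shape such an analogue would need to take. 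As it stands, the construction in the core of the argument does not exist, so there is a genuine gap.

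The paper sidesteps the entire construction by citing Theorem~A of~\cite{koskas}: for every rational $p/q>1$ there is a Toeplitz subshift $X$ and constants $c_1,c_2>0$ with $c_1 n^{p/q}<c_X(n)<c_2 n^{p/q}$ for all $n$. Toeplitz subshifts are minimal, hence transitive, so one simply picks a rational $p/q\in(\alpha,\beta)$ and takes $(k_n)$ to be any tail of $\mathbb{N}$ past the point where $c_1 n^{p/q}>n^\alpha$ and $c_2 n^{p/q}<n^\beta$. Note also that this delivers transitivity for free, whereas your sketch has to separately argue transitivity via the residue-class mechanism. If you want a self-contained construction in the spirit of Proposition~\ref{linearex}, that is a reasonable ambition, but you would need to actually establish a complexity formula for your proposed family; until then, invoking Koskas is both shorter and complete.
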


\begin{proof}
This is a consequence of Theorem A from \cite{koskas}, which states that for every rational $p/q > 1$, there exists
a Toeplitz subshift $X$ and constants $c_1, c_2 > 0$ so that
\begin{equation}\label{polyexeqn}
c_1 n^{p/q} < c_{X}(n) < c_2 n^{p/q}
\end{equation}
for all $n$. To derive Theorem~\ref{polyex}, just take, for any $1 < \alpha < \beta$, some $p/q \in (\alpha, \beta)$,
and then (\ref{polyexeqn}) implies that the associated subshift $X$ must satisfy $k_n^{\alpha} < c_{X}(k_n) < k_n^{\beta}$ for sufficiently large $n$.
\end{proof}

\begin{theorem}\label{expex}
For every $1 < \alpha < \beta$, there exists a transitive subshift $X$ with alphabet $\{0,1\}$ and sequence $(k_n)$ where
$e^{k_n^{\alpha}} < c_{X}(k_n) < e^{k_n^{\beta}}$ for all $n$.
\end{theorem}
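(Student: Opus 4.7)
The plan is to mimic the strategy used in Propositions~\ref{linearex} and \ref{polyex}: construct an explicit binary transitive subshift whose complexity function enters the target window $[e^{n^\alpha}, e^{n^\beta}]$ infinitely often. Fix $\gamma$ with $\alpha < \gamma < \beta$. The aim is to build a transitive binary subshift $X$ together with a rapidly increasing sequence $(k_n)$ so that $c_X(k_n) \asymp e^{k_n^\gamma}$ for each $n$; the desired bounds then follow for all sufficiently large $n$.

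The construction proceeds by choosing a fast-growing sequence $(N_k)$ and, for each $k$, a finite binary word $d_k$ of length $p_k = \lceil e^{N_k^\gamma} \rceil$ whose $N_k$-letter windows (in $d_k \cdot d_k$) are pairwise distinct. Such a word corresponds to a simple cycle of length $p_k$ in the binary de Bruijn graph of order $N_k$ and exists as long as $p_k \leq 2^{N_k}$, which holds for large $N_k$ in the subexponential regime where such a complexity target is achievable. I would then assemble a single biinfinite $x$ by splicing successive blocks of the form $d_k$ together, separated by long padding strings of a fixed letter, in an order that ensures every $d_k$ and every finite concatenation $d_{j_1} d_{j_2} \cdots d_{j_r}$ appears as a subword of $x$ (possible by a standard diagonal enumeration). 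The orbit closure $X := \overline{\mathcal{O}(x)}$ is then transitive, and setting $k_n := N_n$, the lower bound $c_X(N_n) \geq p_n > e^{N_n^\alpha}$ is immediate, since the $p_n$ distinct $N_n$-windows of $d_n^\infty$ all lie in $L(X)$.

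The main obstacle will be verifying the upper bound $c_X(N_n) < e^{N_n^\beta}$. The $N_n$-subwords of $X$ split into three types: (i) subwords contained in some $d_j$ with $j \leq n$ (contributing at most $\sum_{j \leq n} p_j$, which is $\ll e^{N_n^\beta}$ if the $p_j$'s grow quickly enough); (ii) subwords crossing a padding region (easily controlled, since such crossings must contain a long run of the padding letter, limiting their number polynomially); and (iii) subwords contained in some $d_j$ with $j > n$, which is the delicate case. For (iii), one needs to choose the $d_j$'s at larger scales so that their $N_n$-subword counts do not blow up — for instance, by arranging hierarchically that each $d_j$ for $j > n$ is itself built from concatenations of earlier $d_i$-blocks and padding, so that its length-$N_n$ subwords essentially duplicate those already appearing at scale $N_n$. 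This nested de Bruijn-style construction is the technical heart of the argument, and taking $(N_k)$ to grow sufficiently rapidly (e.g., $N_{k+1}$ at least a tower of exponentials in $N_k$) should allow one to conclude that the total count at scale $N_n$ remains bounded by $e^{N_n^\beta}$, completing the proof.
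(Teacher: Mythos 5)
Your approach is genuinely different from the paper's, which simply cites Theorem~3 of \cite{Cass03}: for any sufficiently regular $\phi$ with $\phi(t)/\log t \to \infty$ and $\phi'$ decreasing and bounded above by a negative power of $t$, there is a minimal (hence transitive) subshift with $\log c_X(n)/\phi(n) \to 1$; taking $\phi(t) = t^{(\alpha+\beta)/2}$ then gives the claim immediately. Your proposal instead attempts an explicit de~Bruijn-style block construction, which, if completed, would essentially be reproving a special case of Cassaigne's result from scratch rather than using it.

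There is a genuine gap exactly where you flag one. You place two requirements on $d_j$ that pull against each other. For the lower bound at scale $N_j$ you need $d_j$ to be a partial de~Bruijn cycle of length $p_j = \lceil e^{N_j^{\gamma}}\rceil$ with all cyclic $N_j$-windows distinct. For the upper bound at an earlier scale $N_n$ (with $n < j$) you want $d_j$ to be assembled from smaller $d_i$-blocks and padding so that its $N_n$-subwords are already accounted for. A generic simple cycle of length $p_j$ in the order-$N_j$ de~Bruijn graph will, since $p_j$ is astronomically larger than $2^{N_n}$, see essentially all $2^{N_n}$ binary words of length $N_n$ among its windows, which destroys the upper bound. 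Conversely, once you force $d_j$ to be a concatenation of shorter $d_i$'s and padding, the de~Bruijn property at scale $N_j$ is no longer automatic, and you would need a separate count showing that such concatenations still produce on the order of $e^{N_j^{\gamma}}$ distinct $N_j$-windows; this is precisely the delicate bookkeeping of a Grillenberger-type block construction, and nothing in the proposal supplies it. Your assertion that choosing $(N_k)$ to grow fast enough ``should allow one to conclude'' replaces the argument rather than giving it. (The same nesting structure is also needed for your case (ii): without it, the collection of distinct length-$m$ suffixes/prefixes of the $d_j$'s can be exponential in $m$, so the boundary-crossing count is not obviously polynomial.) As written, you have correctly identified the technical heart of such a construction, but not resolved it.
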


\begin{proof}
This is a consequence of Theorem 3 from \cite{Cass03}, which proves that for any $\phi: \mathbb{R}^+ \rightarrow \mathbb{R}^+$ which is differentiable (except possibly at $0$), satisfies $\frac{\phi(t)}{\log t} \rightarrow \infty$, and has $\phi'(t)$ decreasing and bounded from above by $t^{-\beta}$ for some positive $\beta$, there exists a minimal subshift $X$ with $\log c_X(n)/\phi(t) \rightarrow 1$. For any $\alpha, \beta$ as in the theorem, clearly $\phi(t)= t^{(\alpha+\beta)/2}$ satisfies the above conditions, and
then $X$ guaranteed by \cite{Cass03} has the desired properties.

\end{proof}

Using these examples, we can give some surprising properties of generic subshifts in $\Tcalprimebar$.

\begin{proposition}\label{bigcplx}
For a generic subshift $X$ in $\Tcalprimebar$, each of the following holds:
\begin{enumerate}
\item For all $\gamma > 1$, there is a subsequence $k_n$ so that $\frac{c_{X}(k_n)}{k_n} \rightarrow \gamma$
\item For all $\gamma \geq 1$, there is a subsequence $k_n$ so that $\frac{\log c_{X}(k_n)}{\log k_n} \rightarrow \gamma$
\item For all $0 < \gamma < 1$, there is a subsequence $k_n$ so that $\frac{\log \log c_{X}(k_n)}{\log k_n} \rightarrow \gamma$.
\end{enumerate}
\end{proposition}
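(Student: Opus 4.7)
The plan is to treat all three parts of Proposition~\ref{bigcplx} uniformly via Theorem~\ref{mainfg}. For each, I would build a countable family of pairs $(f, g)$ of increasing functions satisfying $f \prec g$ such that the targeted complexity rate is wedged between $f$ and $g$, apply Theorem~\ref{mainfg} to each pair to obtain a residual set $\mathbf{S}_{f,g}$, and take the countable intersection (which is residual in $\Tcalprimebar$ by Baire). The family would be indexed by rationals $r$ dense in the target range and integers $k$ controlling the window width, so that any $X$ in the intersection satisfies $f_{r,k}(n_j) < c_X(n_j) < g_{r,k}(n_j)$ along some subsequence $(n_j)$ for every admissible $(r,k)$. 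A standard diagonal approximation then upgrades this from rational $r$ to arbitrary real $\gamma$ in the target range.

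For part (1), I would take $f_{r,k}(n) = (r - 1/k)n$ and $g_{r,k}(n) = (r + 1/k)n$, indexed by rational $r > 1$ and integer $k$ with $r - 1/k > 1$; the verification $f_{r,k} \prec g_{r,k}$ is a two-line linear calculation, and Proposition~\ref{linearex} supplies the required infinite transitive subshift whose complexity lies between $f_{r,k}$ and $g_{r,k}$ along a subsequence. Part (2) runs identically with $f_{r,k}(n) = n^{r - 1/k}$ and $g_{r,k}(n) = n^{r + 1/k}$, invoking Proposition~\ref{polyex}; the endpoint case $\gamma = 1$ requires no separate treatment because any subsequence from part (1) on which $c_X(k_n)/k_n$ converges to a positive constant automatically forces $\log c_X(k_n)/\log k_n \to 1$.

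Part (3) is where I expect the main technical friction. The natural choice is $f_{r,k}(n) = \exp(n^{r - 1/k})$ and $g_{r,k}(n) = \exp(n^{r + 1/k})$ for rational $r \in (0,1)$ and integer $k$ with $r \pm 1/k \in (0,1)$; the check $f_{r,k} \prec g_{r,k}$ follows from $n^{r + 1/k}$ dominating $t^{c} n^{r - 1/k}$ for large $n$, so the factors of $t$ in the definition of $\prec$ get absorbed inside the outer exponential. The real obstacle is producing the required subshift, because Theorem~\ref{expex} as stated supplies exponents only in the range $(1, \infty)$, whereas (3) needs exponents in $(0, 1)$. To close this gap I would apply Theorem~3 of~\cite{Cass03} directly with $\phi(t) = t^r$ for rational $r \in (0,1)$; such $\phi$ is differentiable, has $\phi(t)/\log t \to \infty$ (since $r > 0$), has $\phi'(t) = r t^{r - 1}$ decreasing (since $r < 1$), and satisfies $\phi'(t) \le t^{-\beta}$ for any $\beta < 1 - r$. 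This yields a minimal (hence transitive) subshift with $\log c_X(n)/n^r \to 1$, which sits in the strip $[f_{r,k}, g_{r,k}]$ for all large $n$, completing the input to Theorem~\ref{mainfg}. Assembling the three countable intersections into a single residual subset of $\Tcalprimebar$ yields a subshift satisfying (1), (2), and (3) simultaneously.
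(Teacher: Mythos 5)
Your approach is the same as the paper's: for each target rate you wedge it between a countable family of $\prec$-ordered functions, invoke Theorem~\ref{mainfg} for each pair, intersect, and finish with a diagonal argument over a dense countable set of rationals. The algebraic choices differ slightly (you use $(r \pm 1/k)n$ etc. where the paper uses $\gamma n$ and $(\gamma + k^{-1})n$), but these are equivalent.

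Two remarks. First, your observation about Theorem~\ref{expex} is correct and worth flagging: as written in the paper that theorem asserts, for $1 < \alpha < \beta$, the existence of a $\{0,1\}$-subshift with $c_X(k_n) > e^{k_n^\alpha}$, which is impossible since $c_X(n) \leq 2^n$. The proof given there takes $\phi(t) = t^{(\alpha+\beta)/2}$ and needs $\phi'$ decreasing, which forces $(\alpha+\beta)/2 < 1$; so the theorem's hypothesis is a typo for $0 < \alpha < \beta < 1$. Your workaround — invoking Cassaigne's Theorem~3 directly with $\phi(t) = t^r$ for rational $r \in (0,1)$ — is exactly what the corrected version of Theorem~\ref{expex} would supply, and your verification of the hypotheses of Cassaigne's theorem is right. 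Second, a small caution: the hypothesis of Theorem~\ref{mainfg} requires $f(k_n) \prec c_X(k_n) \prec g(k_n)$, not merely $f(k_n) \leq c_X(k_n) \leq g(k_n)$, so when you invoke Proposition~\ref{linearex} (resp.\ \ref{polyex}, Cassaigne) you must choose the witness subshift's complexity window to lie strictly inside $(f_{r,k}, g_{r,k})$ with a $\prec$-gap on each side. The paper does this by interpolating with $\frac{2\gamma+\delta}{3}$ and $\frac{\gamma+2\delta}{3}$; in your notation you would apply Proposition~\ref{linearex} with endpoints, say, $r - 1/(2k)$ and $r + 1/(2k)$ inside the window $(r-1/k, r+1/k)$. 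This is a routine adjustment but needs to be stated.
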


Informally, this theorem states that a generic subshift in $\Tcalprimebar$ has complexity function for which there are subsequences where it behaves like any possible linear function $\gamma n$, like any possible polynomial function $n^{\gamma}$, and like any possible `stretched exponential' function $e^{n^{\gamma}}$ (within the restriction of zero entropy, which we know to be generic by Theorem~\ref{zeroT}).

\begin{proof}
The proofs for all three items in the list are similar; we begin with the first. It's easily checked by definition that for any $1 < \gamma < \delta$, $\gamma n \prec \frac{2\gamma + \delta}{3} n \prec \frac{\gamma + 2\delta}{3} n \prec \delta n$. By Proposition~\ref{linearex} (with $\alpha = \frac{2\gamma + \delta}{3}$ and $\beta = \frac{\gamma + 2\delta}{3}$), there exists a subshift $X$ with alphabet $\{0,1\}$ and sequence $(k_n)$ so that $c_{X}(k_n) \in [\alpha k_n, \beta k_n]$ for all $n$, implying that
\[
\gamma k_n \prec c_{X}(k_n) \prec \delta k_n.
\]

Therefore, by Theorem~\ref{mainfg},
$\mathbf{S}_{\gamma n, \delta n}$ is generic in $\Tcalprimebar$. Now, fix any $\alpha > 0$ and any $k$; then
$\mathbf{S}_{\gamma n, (\gamma + k^{-1})n}$ is generic in $\Tcalprimebar$, meaning that $\bigcap_k \mathbf{S}_{\gamma n, (\gamma + k^{-1})n}$ is generic in $\Tcalprimebar$.
But by definition, for all $X \in \bigcap_k \mathbf{S}_{\gamma n, (\gamma + k^{-1})n}$ and for all $k$, there exists $n_k$ so that
$\gamma n_k < c_{X}(n_k) < (\gamma + k^{-1})n_k$, implying that $\frac{c_{X}(k_n)}{k_n} \rightarrow \gamma$.

So, for each $\gamma > 1$, the set of subshifts for which there exists $(k^{(\gamma)}_n)$ with $\frac{c_{X}(k^{(\gamma)}_n)}{k^{(\gamma)}_n} \rightarrow \gamma$ is residual in $\Tcalprimebar$. Since the intersection of countably many residual sets is residual, a generic subshift in $\Tcalprimebar$ in fact has such sequences for all rational $\gamma > 1$. But this implies the existence of such sequences for all $\gamma > 1$ by a diagonal argument.

The other two statements are proved similarly 
by using Propositions~\ref{polyex} and \ref{expex}.
\end{proof}

Near minimal complexity, even more can be said. 

\begin{lemma}\label{hlem}
For any unbounded increasing $h: \mathbb{N} \rightarrow \mathbb{R}^+$, $n \prec n + h(n)$. (Here, $n$ refers to the function $f(n) = n$). 
\end{lemma}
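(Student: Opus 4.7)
The plan is to simply unwind the definition of $\prec$ and verify the two required inequalities, both of which reduce to the fact that $h$ tends to infinity. Fix $s, t \in \mathbb{N}$. The two conditions that must hold for large enough $n$ are
\begin{equation*}
t f(n+s) < g(tn) \quad \text{and} \quad f(tn) < t g(n-s),
\end{equation*}
where $f(n) = n$ and $g(n) = n + h(n)$. Substituting, the first inequality becomes $t(n+s) < tn + h(tn)$, which simplifies to $h(tn) > ts$, and the second becomes $tn < t(n-s) + t h(n-s)$, which simplifies to $h(n-s) > s$.

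Both inequalities are straightforward consequences of the hypothesis that $h$ is unbounded (and increasing, so that passing to arguments $tn$ or $n-s$ does not cause issues). Specifically, since $h(m) \to \infty$ as $m \to \infty$, there exists $N_1$ such that $h(m) > ts$ for all $m > N_1$; then $h(tn) > ts$ whenever $tn > N_1$, i.e., whenever $n > N_1/t$. Similarly, there exists $N_2$ so that $h(m) > s$ whenever $m > N_2$, and hence $h(n-s) > s$ for $n > N_2 + s$. Taking $N = \max\{N_1/t, N_2 + s\}$ yields both inequalities simultaneously for $n > N$.

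I do not anticipate any serious obstacle; the only (very minor) point of care is to make sure we choose $N$ large enough that $n - s$ is in the domain of $h$ and that both bounds on $h$ kick in. Since $s$ and $t$ are fixed integers and $h$ is unbounded, this is automatic, and the entire proof amounts to a direct verification of the definition.
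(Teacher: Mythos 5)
Your proof is correct and is essentially identical to the paper's: both reduce the two defining inequalities to $h(tn) > ts$ and $h(n-s) > s$ and then invoke the fact that $h$ is unbounded and increasing. You just spell out the choice of $N$ slightly more explicitly.
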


\begin{proof}
If we define $f(n) = n$ and $g(n) = n + h(n)$, for any $s,t$,
$tf(n + s) = tn + ts$ and $g(tn) = tn + h(tn)$, and clearly for large enough $n$, $h(tn) > ts$, implying
that $tf(n+s) < g(tn)$.
Similarly, $f(tn) = tn$ and $tg(n-s) = tn - ts + th(n-s)$, and clearly for large enough $n$, $h(n-s) > s$, implying that $f(tn) < tg(n-s)$.
The claim now follows from the definition of $\prec$.
\end{proof}

The following corollary is nearly immediate.

\begin{corollary}\label{sosmall}
For any unbounded increasing $h: \mathbb{N} \rightarrow \mathbb{R}^+$, $\mathbf{S}_{n, n+h(n)}$ is residual in $\Tcalprimebar$.
\end{corollary}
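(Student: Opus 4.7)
The plan is to follow the template used for the other density results in this section (cf.\ the proof of Theorem~\ref{mainfg}), but avoiding one of the two $\prec$-comparisons because the lower bound $f(n) = n$ is trivially satisfied by any infinite subshift via the Morse--Hedlund theorem. The $G_{\delta}$ part of the statement is immediate from Theorem~\ref{cplxthm} applied with $f(n) = n$ and $g(n) = n + h(n)$, so it remains to establish density of $\mathbf{S}_{n,n+h(n)}$ in $\Tcalprimebar$.

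To establish density, I would fix an arbitrary nonempty cylinder $C = [X,n]$ in $\Tcalprimebar$ and invoke Theorem~\ref{letword} to produce an $\ell \geq n$ and an injective map $\tau \colon \{0,1\} \to \mathcal{A}_C^{\ell}$ with the unique decipherability property, so that $\tau^*(Y) \in C \cap \Tcalprime$ for every infinite transitive $Y \subset \{0,1\}^{\mathbb{Z}}$. I would then take $Y$ to be any Sturmian subshift, so $Y$ is minimal and transitive with $c_Y(k) = k + 1$ for every $k \geq 1$.

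Combining Lemma~\ref{cplxbd} with $c_Y(k) = k+1$ yields, for every $m \geq 3\ell$,
\[
\ell\bigl(\lfloor m/\ell \rfloor - 1\bigr) \leq c_{\tau^*(Y)}(m) \leq \ell\bigl(\lceil m/\ell \rceil + 3\bigr) \leq m + 4\ell.
\]
Because $\tau^*(Y) \in \Tcalprime$ is an infinite subshift, Morse--Hedlund gives $c_{\tau^*(Y)}(m) \geq m + 1$ unconditionally. Since $h$ is increasing and unbounded, there exists $N$ with $h(m) \geq 4\ell$ for all $m \geq N$, and for every $m \geq \max(N, 3\ell)$ we obtain
\[
m \;\leq\; m + 1 \;\leq\; c_{\tau^*(Y)}(m) \;\leq\; m + 4\ell \;\leq\; m + h(m).
\]
Thus $\tau^*(Y) \in \mathbf{S}_{n,n+h(n)} \cap C$, and since $C$ was arbitrary, $\mathbf{S}_{n,n+h(n)}$ is dense (and hence residual, combined with the $G_{\delta}$ property).

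The only subtle point I anticipate is that one cannot literally apply Theorem~\ref{mainfg} with $X$ Sturmian, because the lower comparison $f(k) = k \prec c_X(k) = k+1$ fails in the sense of the $\prec$ ordering. The key observation that unblocks this is that once $\tau^*(Y)$ is forced to be an infinite subshift, the inequality $f(m) = m < c_{\tau^*(Y)}(m)$ holds automatically by Morse--Hedlund, so only the $\prec$-type upper comparison (which is exactly Lemma~\ref{hlem}, realized on the nose via the $+4\ell$ slack absorbed into $h(m)$) is needed. This is why the corollary is ``nearly immediate'' despite the Sturmian bound lying at the Morse--Hedlund threshold.
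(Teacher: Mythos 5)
Your proof is correct, and it takes a mildly different route from the paper's. The paper keeps Theorem~\ref{mainfg} as a black box by choosing an artificial lower bound $f(n) = 0.5n$ (so that $0.5n \prec n+1 \prec n+h(n)$ via Lemma~\ref{hlem} and the Sturmian satisfies the $\prec$ hypothesis), and then observes that $\mathbf{S}_{0.5n,\,n+h(n)} = \mathbf{S}_{n,\,n+h(n)}$ as sets: finite subshifts fail the lower bound since their complexity is bounded, and every infinite subshift has $c_X(n) > n$ for all $n$ by Morse--Hedlund. You instead bypass Theorem~\ref{mainfg} and rerun its density mechanism in the Sturmian special case directly, going through Theorem~\ref{letword} and Lemma~\ref{cplxbd} and using Morse--Hedlund at the level of $\tau^*(Y)$ itself. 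Both proofs hinge on the same two ingredients (a Sturmian witness and Morse--Hedlund), but the paper's proof is shorter since it offloads the cylinder manipulation entirely to Theorem~\ref{mainfg}, whereas yours is more self-contained and makes transparent why the $\prec$ failure at the lower end is harmless. One small simplification available to you that the paper's structure exploits implicitly: since the Sturmian alphabet is already $\{0,1\}$, you correctly skip the $\rho$-recoding step that Theorem~\ref{mainfg} needs to handle arbitrary alphabets.
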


\begin{proof}
There is a subtlety here; the subshifts that we wish to use for the hypothesis of Theorem~\ref{mainfg} are Sturmians, where $c_{X}(n) = n+1$ for all $n$, and it is not the case that $n \prec n+1$. However, for any $h$ as in the statement, we just apply Theorem~\ref{mainfg} with $f = 0.5n$ and $g = n + h(n)$. Clearly $0.5n \prec n+1 \prec n + h(n)$ (using Lemma~\ref{hlem}), and so $\mathbf{S}_{0.5n, n+h(n)}$ is generic in $\Tcalprimebar$.

We now just note that no periodic (finite) subshift can be in $\mathbf{S}_{0.5n, n+h(n)}$ by the Morse-Hedlund theorem, and that all infinite subshifts have $c_{X}(n) > n$ for all $n$. Therefore, $\mathbf{S}_{0.5n, n+h(n)} = \mathbf{S}_{n, n+h(n)}$, and the proof is complete.
\end{proof}

We note that Corollary~\ref{sosmall} cannot be improved, i.e. $\mathbf{S}_{n, n+C}$ is not residual for any constant $C$. This is because for any $X \in \mathbf{S}_{n, n+C}$, there is a subsequence along which $n \leq c_{X}(n) \leq n + C$, which implies that in fact $c_{X}(n) = n + D$ for all large enough $n$ (note that $X$ is not periodic, so by the Morse-Hedlund theorem, $c_{X}(n+1) - c_{X}(n) \geq 1$ for all $n$). However, this implies that (for instance) $\mathbf{S}_{1.1n, 2n}$ and $\mathbf{S}_{n, n+C}$ are disjoint, and so since the former is residual in $\Tcalprimebar$ by Theorem~\ref{bigcplx}, the latter cannot be.

One more corollary here is worth noting.

\begin{corollary}\label{1rs}
The collection $\mathbf{U}_{1RS}$ of subshifts for which there are infinitely many $n$ with $c_{X}(n+1) = c_{X}(n) + 1$ is residual in $\Tcalprimebar$.
\end{corollary}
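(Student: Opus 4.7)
The plan is to deduce this corollary from Corollary~\ref{sosmall} together with the Morse--Hedlund theorem. First I would observe that $\mathbf{U}_{1RS}$ is itself a $G_\delta$: it can be written as
\[
\bigcap_{N \in \mathbb{N}} \bigcup_{n > N} \{X \in \Scal \mid c_{X}(n+1) = c_{X}(n) + 1\},
\]
and each set in the inner union is open (indeed a union of cylinder sets $[X,n+1]$), since $c_X(n)$ and $c_X(n+1)$ depend only on $L_{n+1}(X)$. Strictly speaking this is not needed if one produces a residual subset of $\mathbf{U}_{1RS}$, but it is a convenient side remark.

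The key step is to exhibit a residual set contained in $\mathbf{U}_{1RS}$. Choose any unbounded increasing $h \colon \mathbb{N} \to \mathbb{R}^+$ with $h(n)/n \to 0$, for instance $h(n) = \lfloor \sqrt{n} \rfloor$. Corollary~\ref{sosmall} then gives that $\mathbf{S}_{n,\,n+h(n)}$ is residual in $\Tcalprimebar$, and by Lemma~\ref{TGdelta} the set $\Tcalprime$ of infinite transitive subshifts is a dense $G_\delta$ in $\Tcalprimebar$, so their intersection $A := \mathbf{S}_{n,\,n+h(n)} \cap \Tcalprime$ is residual in $\Tcalprimebar$.

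It then suffices to verify $A \subseteq \mathbf{U}_{1RS}$. Fix $X \in A$. Since $X$ is infinite, the Morse--Hedlund theorem gives $c_{X}(m+1) \geq c_{X}(m) + 1$ for every $m \geq 1$. Suppose for contradiction that $c_{X}(n+1) = c_{X}(n) + 1$ held for only finitely many $n$, all at most some $M$. Then $c_{X}(m+1) - c_{X}(m) \geq 2$ for every $m > M$, and telescoping yields $c_{X}(n) \geq 2n - C$ for all $n > M$, where $C := 2M - c_{X}(M)$ is a constant depending only on $X$. On the other hand, by definition of $A$ there are infinitely many $n_k$ with $c_{X}(n_k) \leq n_k + h(n_k)$, giving $\sqrt{n_k} \geq h(n_k) \geq n_k - C$ for all large $k$, a contradiction. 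Hence $X \in \mathbf{U}_{1RS}$, completing the proof. No substantial obstacle is foreseen; the only subtlety is that elements of $\Tcalprimebar \setminus \Tcalprime$ might be finite, which is why one intersects with $\Tcalprime$ before invoking Morse--Hedlund.
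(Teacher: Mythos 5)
Your proof is correct and follows essentially the same strategy as the paper: produce a residual set in $\Tcalprimebar$ whose complexity growth is incompatible with $c_X(n+1)\geq c_X(n)+2$ holding eventually, and conclude via Morse--Hedlund. The only difference is cosmetic --- the paper cites $\mathbf{S}_{1.1n,\,1.2n}$ (residual by Proposition~\ref{bigcplx}) while you cite $\mathbf{S}_{n,\,n+\sqrt{n}}$ (residual by Corollary~\ref{sosmall}); both are downstream of Theorem~\ref{mainfg} and both lead to the identical contradiction with the telescoped bound $c_X(n)\geq 2n-C$. One small remark: the intersection with $\Tcalprime$ is actually unnecessary, since by Morse--Hedlund a finite subshift has eventually bounded complexity, hence fails $c_X(n)\geq n$ for large $n$ and so cannot lie in $\mathbf{S}_{n,\,n+h(n)}$ to begin with --- this is exactly the observation made in the proof of Corollary~\ref{sosmall}. (Your constant $C$ in the telescoping is off by an immaterial additive amount, but that does not affect the argument.)
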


\begin{proof}
We claim that $\mathbf{S}_{1.1n, 1.2n} \subset \mathbf{U}_{1RS}$, and then this follows immediately from Theorem~\ref{bigcplx}. To see this, choose any
$X \in \mathbf{U}_{1RS}^c$. By definition, there exists $N$ so that $c_{X}(n+1) \geq c_{X}(n) + 2$ for $n > N$, which implies that
$c_{X}(n) \geq 2n - N$ for all $n$. This clearly implies $X \notin \mathbf{S}_{1.1n, 1.2n}$, completing the proof.
\end{proof}

In fact we can use Corollary~\ref{1rs} to derive a useful substitutional structure for generic subshifts in $\Tcalprimebar$, for which we need a definition.

\begin{definition}
A subshift $X$ has \textit{alphabet rank $k$} if there exist alphabets $\A_n$ (for $n \geq 0$) and substitutions
$\rho_n: \A_n \rightarrow \A_{n-1}^*$ (for $n \geq 1$) so that $\liminf |\A_n| = k$ and every word in $L(X)$ is a subword of $(\rho_1 \circ \cdots \circ \rho_n)(a)$ for some $n$ and $a \in \A_n$. The sequence $(\rho_n)$ is \textit{right proper} if all $\rho_n(a)$ ($a \in \A_n$) have the same terminal letter for all $n$.
\end{definition}

\begin{corollary}\label{ark2}
A generic subshift in $\Tcalprimebar$ has alphabet rank two for a right proper sequence $(\rho_n)$.
\end{corollary}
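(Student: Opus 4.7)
The plan is to derive the conclusion directly from Corollary~\ref{1rs} combined with Theorem~\ref{minimal}. First, I would observe that a generic $X \in \Tcalprimebar$ is simultaneously minimal and satisfies $c_X(n+1) = c_X(n) + 1$ for infinitely many $n$, since the intersection of two residual sets is residual. Fix such $X$ and such a level $n$. By Corollary~\ref{RScor}, $L_n(X)$ contains exactly one right-special word $u_n$, and $u_n$ has exactly two right extensions.

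Next, I would analyze the Rauzy graph $G_{X,n+1}$, which has $c_X(n)$ vertices and $c_X(n)+1$ edges, and in which every vertex has in- and out-degree at least one by essentialness. A counting argument then forces every vertex other than $u_n$ and one further vertex $v_n$ to have in- and out-degree exactly one; the vertex $u_n$ has out-degree two and $v_n$ has in-degree two (whether or not $v_n = u_n$ does not affect the argument). I would use this rigid structure to conclude that exactly two return words to $u_n$ occur in $X$: each of the two outgoing edges at $u_n$ determines a unique first-return path, since no intermediate vertex offers a branching choice.

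I would then build the $S$-adic representation by iteration. From the infinitely many such levels, extract an increasing subsequence $n_1 < n_2 < \cdots$ and nested words $w_k$ containing $u_{n_k}$, arranged so that each $w_{k+1}$ decomposes as a concatenation of return words to $w_k$ and so that the set of return words to $w_k$ has exactly two elements at every chosen level (achievable after passing to a further subsequence, using minimality). Take $\A_k$ to be this return-word set and define $\rho_k \colon \A_k \to \A_{k-1}^{*}$ by the induced expansion of each level-$k$ return word into level-$(k-1)$ ones. Then $|\A_k| = 2$ for all $k$, and every word in $L(X)$ appears as a subword of a sufficiently deep composition $(\rho_1 \circ \cdots \circ \rho_k)(a)$, again by minimality. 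To achieve right-properness, I would replace each return word $r \in \A_k$ by the augmented word $rw_k$, so that every $\rho_k(a)$ ends with the last letter of $w_k$.

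The main obstacle is the simultaneous nesting requirement: one must arrange $(n_k, w_k)$ so that level-$k$ return words decompose into level-$(k-1)$ return words and so that the return-word alphabet remains of size two after the augmentation. This bookkeeping, while standard in the $S$-adic literature, requires careful pruning of both the levels $n_k$ and the chosen words $w_k$. Once it is done, the coverage condition and right-properness follow directly from the construction.
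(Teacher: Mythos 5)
Your approach is essentially the same as the paper's: reduce via Theorem~\ref{minimal} and Corollary~\ref{1rs} to infinite minimal subshifts with $c_X(n+1)-c_X(n)=1$ infinitely often, observe that at each such $n$ there is a unique right-special word with exactly two right extensions and hence exactly two return words, and iterate to produce a binary $S$-adic presentation. The Rauzy-graph degree count you give is a clean equivalent of the paper's argument via Corollary~\ref{RScor}.

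However, the augmentation $r \mapsto r w_k$ you propose to secure right-properness is both incorrect and unnecessary. It is incorrect because concatenations of $r_1 w_k, r_2 w_k$ insert extra copies of $w_k$ between consecutive return words, so the resulting substitutive system is not $X$ (the return words already tile $X$ exactly; you cannot pad them). It is unnecessary because right-properness is automatic: if $w_k r$ ends with $w_k$ (the defining property of a return word $r$), then $r$ itself ends with the last letter of $w_k$, so every $\rho_k(a)$ already shares a common terminal letter. The paper observes exactly this. Beyond that, the paper fills in the bookkeeping you defer: it shows $|u_n|,|v_n|\to\infty$ along $n\in S$ (via a compactness argument ruling out periodic points), and shows that $w_{n_1}$ being a suffix of $w_{n_2}$ forces one of $u_{n_1},v_{n_1}$ to be a suffix of $w_{n_2}$, which gives the nesting $u_{n_2},v_{n_2}\in\{u_{n_1},v_{n_1}\}^*$ needed to define $\rho_2$.
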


\begin{proof}
Due to Theorem~\ref{minimal} and Corollary~\ref{1rs}, it suffices to find a right proper sequence $(\rho_n)$ inducing $X$ for every infinite minimal subshift $X$ where $c_{X}(n+1) - c_{X}(n) = 1$. Suppose that $X$ is such a subshift, and define the infinite set
$S = \{n \mid c_{X}(n+1) - c_{X}(n) = 1\}$.

Choose any $n \in S$. Then by Corollary~\ref{RScor}, there is a unique right-special word $w_n \in L_{n}(X)$ and exactly two letters $a_n \neq b_n$ which can follow $w_n$. Since every word in $L_n(X)$ except $w_n$ forces the following letter, we can repeatedly extend $w_n a_n$ to the right as long as its terminal $n$ letters force the next letter, which will continue until the final $n$ letters are exactly $w_n$. Put another way, there exists $u_n$ beginning with $a_n$ where $w_n u_n$ ends with $w_n$, $w_n u_n$ contains no occurrences of $w_n$ except at the beginning and end, and any occurrence of $w_n a_n$ in any $x \in X$ is a prefix of $w_n u_n$. 
Define $w_n v_n$ similarly using $w_n b_n$. Assume without loss of generality that $|u_n| \leq |v_n|$. Then by the above, every $x \in X$ is some biinfinite concatenation of the words $u_n, v_n$ (simply mark locations of $w_n$ within $x$, and then every end of a $w_n$ is an end of an $u_n$ or $v_n$). In addition, every occurrence of $w_n$ in such an $x$ must share its last letter with the last letter of some concatenated $u_n$ or $v_n$, which means that the decomposition of any $x$ into a concatenation of $u_n$ and $v_n$ is unique.

We make the following claim: for every $k$, there exists $N$ so that for every $n \in S$ with $n > N$, the associated words
$u_n, v_n$ have lengths greater than $k$. Assume for a contradiction that this is not the case, i.e. that there exists $k$ so that for infinitely many $n \in S$, $|u_n| \leq k$. Recall that $w_n u_n$ ends with $w_n$ by definition, so in particular, $w_n(i) = w_n(i+|u_n|)$ for all $i$ with $1 \leq i, i+|u_n| \leq |w_n|$. But then passing to a subsequence with $|u_n|$ constant and taking a limit of the associated subsequence of $(w_n)$ yields a periodic point, a contradiction to $X$ being infinite and minimal. Therefore, the original claim holds.

Now, we inductively describe the sequence $(\rho_n)$. First, choose any $n_1 \in S$, and define
$\rho_1$ sending $0$ to $u_{n_1}$ and $1$ to $v_{n_1}$. Note that $u_{n_1}$ and $v_{n_1}$ both have final letter equal to the final letter of $w_{n_1}$, so $\rho_1(0)$ and $\rho_1(1)$ have the same final letter. Now, choose $n_2 \in S$ so that $n_2$ and $|u_{n_2}|$ are greater than $|v_{n_1}|$.
Since $w_{n_2} u_{n_2}$ and $w_{n_2} v_{n_2}$ have $w_{n_2}$ as a suffix, we know that $u_{n_2}$, $v_{n_2}$, and $w_2$ share a common suffix of length $\min(|u_{n_2}|, n_2) \geq |v_{n_1}|$. 
Finally, we note that $w_1$ is a suffix of $w_2$ (the suffix of a right-special word is right-special), so either $u_{n_1}$ or $v_{n_1}$ is a suffix of $w_2$.

Therefore, both $u_{n_2}$ and $v_{n_2}$ have a common suffix $c \in \{u_{n_1}, v_{n_1}\}$. Since all $x \in X$ are concatenations of
$u_{n_2}$ and $v_{n_2}$, we know that $cu_{n_2}$ and $cv_{n_2}$ are in $L(X)$. This means that $u_{n_2}$ and $v_{n_2}$ are concatenations of $a_{n_1}$ and $b_{n_1}$. We can therefore define $\rho_2: \{0,1\} \rightarrow \{0,1\}^*$ so that
$u_{n_2} = (\rho_1 \circ \rho_2)(0)$ and $v_{n_2} = (\rho_1 \circ \rho_2)(1)$. Since $u_{n_2}$ and $v_{n_2}$ have common suffix $c$,
$\rho_2(0)$ and $\rho_2(1)$ share the same final letter.

Continue in this way to define a sequence $(\rho_k)$ so that for all $k$,
$u_{n_k} = (\rho_1 \circ \rho_2 \circ \cdots \circ \rho_k)(0)$ and $\rho_k(0)$ and $\rho_k(1)$ share the same final letter. Since $X$ is minimal and $u_{n_k} = (\rho_1 \circ \rho_2 \circ \cdots \circ \rho_k)(0)$ are words in $L(X)$ of increasing length, every word in $L(X)$ is a subword of some $(\rho_1 \circ \rho_2 \circ \cdots \circ \rho_k)(0)$. Finally, since $\A_n = \{0,1\}$ for all $n > 0$,
$\liminf |\A_n| = 2$.

\end{proof}

An immediate corollary is that these subshifts have the minimal possible topological rank among nontrivial subshifts.

\begin{corollary}\label{rk2}
A generic subshift in $\Tcalprimebar$ has topological rank two.
\end{corollary}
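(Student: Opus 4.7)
The plan is to combine Corollary~\ref{ark2} with two standard facts about topological rank for minimal Cantor systems. First, a minimal Cantor system has topological rank one if and only if it is an odometer. Since every generic subshift in $\Tcalprimebar$ is infinite and minimal (Theorem~\ref{minimal}), and since subshifts are expansive while odometers are equicontinuous, no subshift can be conjugate to an odometer; in particular every such subshift has topological rank at least two. This gives the lower bound.

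For the upper bound, I will use the fact that a right proper $S$-adic presentation with a uniform bound $d$ on the alphabet sizes produces a Bratteli-Vershik presentation with at most $d$ vertices per level. More precisely, given the right proper sequence $(\rho_n)$ with $\liminf|\A_n|=2$ provided by Corollary~\ref{ark2}, one passes to a subsequence of levels at which $|\A_n|=2$ and telescopes the intermediate substitutions. The resulting sequence of right proper substitutions between two-letter alphabets is, by the standard construction (see, e.g., the discussion in~\cite{DDMP2021}), realized as the reading along infinite paths of a properly ordered Bratteli diagram with two vertices at each level. This Bratteli-Vershik system is topologically conjugate to $(X,\sigma_X)$, and hence the topological rank of $X$ is at most two.

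Putting the two bounds together, a generic subshift in $\Tcalprimebar$ has topological rank exactly two. The proof is essentially a one-line deduction from Corollary~\ref{ark2} together with these two facts, so the only subtle point is the right-properness condition: it is precisely what allows us to telescope the $S$-adic sequence to a two-vertex Bratteli-Vershik presentation without enlarging the alphabet, and this is why Corollary~\ref{ark2} was stated with the right proper refinement rather than just asserting alphabet rank two.
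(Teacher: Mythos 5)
Your overall strategy matches the paper's: combine Corollary~\ref{ark2} with the correspondence between $S$-adic presentations and Bratteli--Vershik presentations, plus the fact that a subshift cannot have rank one. The lower bound (expansive $\Rightarrow$ not an odometer $\Rightarrow$ rank $\geq 2$) is correct and is essentially what the paper observes in its introduction.

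The gap is in the upper bound, in the sentence ``This Bratteli--Vershik system is topologically conjugate to $(X,\sigma_X)$.'' Right-properness lets you define a reading map from paths of a two-vertex-per-level Bratteli diagram to $\A^{\mathbb{Z}}$, and that map will be continuous, equivariant, and surjective onto $X$. But for it to be a \emph{conjugacy} rather than merely a factor map you need that the directive sequence is \emph{recognizable} (equivalently, you need to be able to uniquely de-substitute points of $X$ at each level). That is the substantive hypothesis, and it is not automatic; your appeal to ``the standard construction'' buries it. It is also not quite right that right-properness is ``precisely what allows us to telescope\ldots to a two-vertex presentation'' --- properness ensures the reading map is defined, not that it is injective. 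The paper's proof of Corollary~\ref{rk2} addresses this head-on: it cites Corollary~2.5 of \cite{durandleroy} (which delivers topological rank two from alphabet rank two with a right proper sequence of \emph{injective} substitutions) and then fills the injectivity hypothesis by invoking Theorem~3.1 of \cite{recog}, which gives recognizability for morphisms on a two-letter alphabet applied to an aperiodic subshift; aperiodicity here comes from $X$ being infinite and minimal. Your proof needs an equivalent step --- either verify that the $\rho_n$ produced in Corollary~\ref{ark2} are injective/recognizable directly, or cite the recognizability theorem for binary alphabets --- before the claimed conjugacy, and hence the rank bound, is justified.
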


\begin{proof}
By Corollary 2.5 of \cite{durandleroy}, any minimal subshift $X$ with alphabet rank two for a right proper sequence $(\rho_n)$ has topological rank two. (Corollary 2.5 requires the extra assumption that each $\rho_n$ acts injectively, but this is implied by
Theorem 3.1 of \cite{recog} since each $\rho_n$ acts on an alphabet of size $2$ and $X$ contains no periodic points by minimality.)
\end{proof}

Corollary~\ref{rk2} immediately implies that the automorphism group of a generic subshift in $\Tcalprimebar$ is generated by the shift.

\begin{corollary}\label{autgrp}
A generic subshift in $\Tcalprimebar$ has automorphism group consisting of only powers of the shift.
\end{corollary}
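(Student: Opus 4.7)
The plan is to show that any $X$ which is simultaneously a regular Toeplitz subshift factoring onto the universal odometer and has topological rank two must satisfy $\aut(X,\sigma)=\langle\sigma\rangle$. Since the intersection of residual sets is residual, combining this implication with Theorem~\ref{univ} and Corollary~\ref{rk2} will give the corollary.

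Fix such an $X$ and let $\pi\colon X\to Z$ be the factor map onto the universal odometer $(Z,+1)$. The first step is to set up descent to $Z$. Because $X$ is Toeplitz, $(Z,+1)$ is the maximal equicontinuous factor of $(X,\sigma)$ and $\pi$ is almost 1-1, so a dense $G_\delta$ set of points in $X$ have singleton fibers under $\pi$. Any $\phi\in\aut(X,\sigma)$ descends through $\pi$ to a self-homeomorphism of $Z$ commuting with $+1$, and such a map is necessarily translation by some element $a_\phi\in Z$. This produces a group homomorphism
\[
\Psi\colon\aut(X,\sigma)\to Z,\qquad \phi\mapsto a_\phi,
\]
with $\Psi(\sigma)=1\in\mathbb{Z}\subset Z$. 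Any $\phi$ in the kernel of $\Psi$ preserves every fiber of $\pi$, and in particular is the identity on the dense singleton-fiber set; by continuity $\phi=\mathrm{id}_X$, so $\Psi$ is injective.

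The next step is to invoke the automorphism-group bound of Donoso--Durand--Maass--Petite: minimal subshifts of non-superlinear word complexity (in particular, those of finite topological rank) have $\aut(X,\sigma)/\langle\sigma\rangle$ finite. Let $N=[\Psi(\aut(X,\sigma)):\mathbb{Z}]<\infty$. Then $N\cdot\Psi(\aut(X,\sigma))\subseteq\mathbb{Z}$, so $\Psi(\aut(X,\sigma))\subseteq\tfrac{1}{N}\mathbb{Z}\subseteq\mathbb{Q}$. However, the universal odometer $Z$ is the profinite completion of $\mathbb{Z}$, and a reduced fraction $k/N$ with $N>1$ cannot lie in $Z$ (projecting to $\mathbb{Z}/N\mathbb{Z}$ would force $N\mid k$, a contradiction); hence $\tfrac{1}{N}\mathbb{Z}\cap Z=\mathbb{Z}$. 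This forces $\Psi(\aut(X,\sigma))=\mathbb{Z}$, and combined with injectivity of $\Psi$ yields $\aut(X,\sigma)=\langle\sigma\rangle$.

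The main obstacle is the appeal to the Donoso--Durand--Maass--Petite finiteness of $[\aut(X,\sigma):\langle\sigma\rangle]$ from finite topological rank; given this, the remainder of the argument is the essentially elementary combination of the almost 1-1 Toeplitz structure (which injects $\aut(X,\sigma)$ into $Z$) with the arithmetic fact $\mathbb{Q}\cap Z=\mathbb{Z}$. A minor subtlety is verifying injectivity of the descent map $\Psi$, but this follows cleanly from the density of the singleton-fiber set together with continuity of $\phi$.
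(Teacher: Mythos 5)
Your argument reaches the right conclusion, but by a genuinely different route than the paper. The paper's proof is a one-liner: it cites \cite[Thm 3.1]{DDMP1} together with \cite[Sec. 7]{DDMP2021} as implying directly that minimality plus topological rank two forces $\aut(X,\sigma)=\langle\sigma\rangle$, so the result follows immediately from Theorem~\ref{minimal} and Corollary~\ref{rk2}. You instead extract only the weaker consequence from the finite-rank theory, namely that $\aut(X,\sigma)/\langle\sigma\rangle$ is \emph{finite}, and upgrade from finite index to index one by exploiting the Toeplitz structure from Theorem~\ref{univ}: the factor map onto $\hat{\mathbb{Z}}$ yields an injective homomorphism $\Psi\colon\aut(X,\sigma)\to\hat{\mathbb{Z}}$, and the arithmetic fact that an element $a\in\hat{\mathbb{Z}}$ with $Na\in\mathbb{Z}$ must itself lie in $\mathbb{Z}$ then pins down $\Psi(\aut(X,\sigma))=\mathbb{Z}$. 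What your route buys is that it needs only the finiteness of $\aut/\langle\sigma\rangle$ (which is the cleanest thing to cite from DDMP), at the cost of invoking the additional generic hypothesis from Theorem~\ref{univ}; the paper's route is shorter because it leans on a sharper black box. Two points worth spelling out in your writeup: (i) the factor onto the universal odometer produced by Theorem~\ref{univ} is actually the maximal equicontinuous factor map -- this holds because the MEF of a Toeplitz subshift is an odometer, and the only odometer that factors onto $\hat{\mathbb{Z}}$ is $\hat{\mathbb{Z}}$ itself; (ii) the descent of $\phi$ through $\pi$ relies precisely on $\pi$ being the MEF, so that $\phi$ preserves the regionally proximal relation and hence the fibers of $\pi$ -- this is not automatic for an arbitrary factor map, so it should be stated rather than only implied.
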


\begin{proof}

\cite[Thm 3.1]{DDMP1} and \cite[Sec. 7]{DDMP2021} imply that topological rank two and minimality imply that the automorphism group is generated by the shift, so this is an immediate consequence of Theorem~\ref{minimal} and Corollary~\ref{rk2}.

\end{proof}

\subsection{Orbit equivalence, dimension groups, and mapping class groups for generic subshifts in $\Tcalprimebar$}
We now move on to characterizing dimension groups for generic subshifts in $\Tcalprimebar$, which will in turn yield results about orbit equivalence and mapping class groups.


\subsubsection{Orbit equivalence in $\Tcalprimebar$}

The goal of this subsection is to prove the following theorem.

\begin{theorem}\label{thm:dimgrpQ}
\sloppy The set of minimal subshifts $(X,\sigma)$ whose dimension group $(\coinv{\sigma},\coinv{\sigma}^{+},[1])$ is isomorphic (as ordered unital groups) to $(\mathbb{Q},\mathbb{Q}_{+},1)$ is residual in $\Tcalprimebar$.
\end{theorem}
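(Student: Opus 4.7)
The plan is to establish separately that $\mathbf{D} \subset \Tcalprimebar$, the set of subshifts with dimension group isomorphic to $(\mathbb{Q}, \mathbb{Q}_+, 1)$, is dense and $G_\delta$. Since $\mathbf{D}$ sits inside the residual class of regular Toeplitz subshifts factoring onto the universal odometer $\Omega$ (by Theorems~\ref{toegen} and \ref{univ}), which are also uniquely ergodic (Corollary~\ref{cor:zemuegeneric}), I will restrict attention to this class. For such a subshift $(X,\sigma)$, the factor map onto $\Omega$ induces an injection $\mathbb{Q}=\coinv{\sigma_\Omega}\hookrightarrow \coinv{\sigma}$, while integration against the unique invariant measure $\mu_X$ realizes $\coinv{\sigma}/\textrm{Inf}(\coinv{\sigma})$ as a subgroup of $\mathbb{R}$. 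Hence in this class, $\coinv{\sigma}\cong\mathbb{Q}$ if and only if (a) every clopen set has rational $\mu_X$-measure and (b) the infinitesimal subgroup is trivial.

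For density, I would apply the substitution construction of Theorem~\ref{letword}. First I would exhibit a concrete regular Toeplitz subshift $Y\subset\{0,1\}^{\mathbb{Z}}$ with dimension group $(\mathbb{Q},\mathbb{Q}_+,1)$; a natural candidate is an odometer-based Toeplitz whose maximal equicontinuous factor is the universal odometer and whose Bratteli-Vershik presentation directly realizes $\mathbb{Q}$ (for example, a Toeplitz construction based on the factorial period sequence with density-one filling at each stage). Given any nonempty cylinder $C$ in $\Tcalprimebar$, Theorem~\ref{letword} furnishes an injective $\tau\colon\{0,1\}\to \A_C^{\ell}$ with $\tau^*(Y)\in C$, and the unique decipherability guaranteed there exhibits $\tau^*(Y)$ as a Kakutani-Rokhlin tower of height $\ell$ over (a conjugate copy of) $Y$. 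Such finite tower extensions preserve the dimension group of a uniquely ergodic minimal system up to unital order isomorphism, so $\coinv{\sigma_{\tau^*(Y)}}\cong\mathbb{Q}$, and density follows.

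For the $G_\delta$ property, condition (a) is tractable: using continuity of $X\mapsto \mu_X$ on uniquely ergodic subshifts and expressing $\mu_X([w])\in\mathbb{Q}$ as $\bigcap_N\bigcup_{p/q\in\mathbb{Q}}\{X\colon |\mu_X([w])-p/q|<1/N\}$, the set satisfying (a) is a countable intersection over $w\in L(X)$ of $G_\delta$ sets. Condition (b) is more delicate; I would exploit the generic topological rank two (Corollary~\ref{rk2}) together with the right-proper substitutional presentation from Corollary~\ref{ark2} to realize $\coinv{\sigma}$ as an inductive limit of $\mathbb{Z}^2$-type groups along $2\times 2$ incidence matrices computable from finite substitutional data. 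Triviality of the infinitesimal subgroup then becomes a limit condition on ratios of these matrix entries, which can in turn be expressed via a countable intersection of open conditions on finite Rauzy graphs.

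The main obstacle will be the $G_\delta$ verification of (b). While the rank-two structure makes the dimension group computable from finite data at each level, the substitutional data witnessing rank two is not canonical, and a priori the infinitesimal subgroup is not determined by any fixed-window language information. I anticipate handling this by showing that, within the residual subclass where (a) holds and $X$ factors onto $\Omega$, the infinitesimal group is controlled by the rate at which cylinder measures $\mu_X([w])$ approach their rational limits as $|w|\to\infty$, yielding the required open approximation conditions whose intersection captures (b).
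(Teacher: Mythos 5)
Your overall strategy is the same as the paper's---restrict to uniquely ergodic minimal subshifts with rational clopen measures, trivial infinitesimals, and image of the state equal to $\mathbb{Q}$---but your execution of the $G_\delta$ step has a fundamental flaw, and a second step is left as an acknowledged gap.

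The central problem is your treatment of condition (a). You propose to express $\{X : \mu_X([w]) \in \mathbb{Q}\}$ as $\bigcap_N \bigcup_{p/q \in \mathbb{Q}}\{X : |\mu_X([w]) - p/q| < 1/N\}$, but this set is all of $\mathbf{UE}$: for \emph{every} real number $\alpha$ and every $N$ there is some rational $p/q$ with $|\alpha - p/q| < 1/N$. Rationality is not a $G_\delta$ condition on $\mathbb{R}$---$\mathbb{Q}$ is $F_\sigma$ but not $G_\delta$---so the preimage under the continuous map $X \mapsto \mu_X([w])$ is also only $F_\sigma$. There is no way to write the set of subshifts with rational cylinder measures as a countable intersection of open sets, and this route to residuality is blocked. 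Separately, your handling of condition (b) (trivial infinitesimals) is explicitly deferred ("I anticipate handling this by showing\ldots"), and the proposed idea---reading off infinitesimals from the rate of convergence of $\mu_X([w])$---is not carried out and is not obviously correct, since the infinitesimal subgroup is defined intrinsically via coboundaries, not via convergence rates of frequencies.

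The paper sidesteps both difficulties with a structurally different argument: rather than showing the target set is $G_\delta$, it shows it \emph{contains} a residual set. The key is Theorem~\ref{letwordcyl}, which upgrades the density statement of Theorem~\ref{letword} to a \emph{subcylinder} statement: every cylinder $C$ contains an entire subcylinder $D$ each of whose members is of the form $\tau^{*}(Y)$. For every uniquely ergodic subshift in $D$, a direct count of occurrences shows that $m$-letter cylinder frequencies are rational \emph{and} that the discrepancies $D(w,v)$ are uniformly bounded (balanced for factors). This gives $\mathbf{UE} \cap D \subset \mathbf{UERB}_m$, so $\mathbf{UERB}_m$ contains $\mathbf{UE}$ intersected with an open dense set, hence is residual; intersecting over $m$ gives residuality of $\mathbf{UERB}$. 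Balance for factors then yields triviality of infinitesimals via Gottschalk--Hedlund (Theorem~\ref{thm:trivinfgeneric}), and Theorem~\ref{univ} ensures the state's image is all of $\mathbb{Q}$. Your approach would need this subcylinder-level control (and the balance condition, which you never invoke) to close the $G_\delta$ gap.

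One smaller issue: you assert that a constant-height tower extension preserves the unital ordered dimension group. In general the order unit changes (it becomes $\ell$ times the old one in the induced system); the conclusion holds here only because $(\mathbb{Q},\mathbb{Q}_+,\ell) \cong (\mathbb{Q},\mathbb{Q}_+,1)$ by divisibility of $\mathbb{Q}$. The step can be salvaged, but as stated it overclaims.
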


Recall systems $(X,T)$ and $(Y,S)$ are orbit equivalent if there is a homeomorphism $\phi \colon X \to Y$ such that $\phi$ takes $T$-orbits onto $S$-orbits; in other words, for all $x \in X$, $\{\phi(T^{n}(x))\}_{n \in \mathbb{Z}} = \{S^{n}(\phi(x))\}_{n \in \mathbb{Z}}$. If two infinite minimal systems $(X,T)$ and $(Y,S)$ are orbit equivalent then there are well-defined maps $m \colon X \to \mathbb{Z}, n \colon Y \to \mathbb{Z}$ such that $\phi T(x) = S^{n(x)}\phi(x)$ and $\phi T^{m(x)}(x) = S \phi(x)$ for all $x \in X$, and if both $m,n$ have at most one point of discontinuity then we say $(X,T)$ and $(Y,S)$ are strong orbit equivalent. For more background on these notions we refer the reader to~\cite{GPS1995}.

In~\cite{GPS1995} it is proved that two Cantor minimal systems are strong orbit equivalent if and only if their associated ordered unital dimension groups are isomorphic (as unital ordered groups). Since the dimension group of the universal odometer is
$(\mathbb{Q},\mathbb{Q}_{+},1)$, Theorem~\ref{thm:dimgrpQ} then implies the following.

\begin{corollary}\label{cor:onesoeclass}
The strong orbit equivalence class of the universal odometer is residual in $\Tcalprimebar$.
\end{corollary}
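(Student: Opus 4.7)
The plan is to deduce Corollary~\ref{cor:onesoeclass} almost directly from Theorem~\ref{thm:dimgrpQ} combined with the Giordano-Putnam-Skau characterization of strong orbit equivalence by dimension groups. First, I would note that the universal odometer has dimension group isomorphic as a unital ordered group to $(\mathbb{Q}, \mathbb{Q}_+, 1)$: viewing the universal odometer as the inverse limit of the cyclic groups $\mathbb{Z}/n!\mathbb{Z}$, the corresponding direct limit $\mathbb{Z} \xrightarrow{\cdot 2} \mathbb{Z} \xrightarrow{\cdot 3} \mathbb{Z} \xrightarrow{\cdot 4} \cdots$ of ordered groups with distinguished unit $1 \in \mathbb{Z}$ yields $(\mathbb{Q}, \mathbb{Q}_+, 1)$.

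Next, by Theorem~\ref{thm:dimgrpQ} the collection $\mathcal{D} \subset \Tcalprimebar$ of minimal subshifts whose dimension group is isomorphic to $(\mathbb{Q},\mathbb{Q}_+,1)$ is residual. Each $X \in \mathcal{D}$ is an infinite minimal subshift (infinite because the dimension group of a finite minimal system is isomorphic to $\mathbb{Z}$ rather than $\mathbb{Q}$), so $(X,\sigma_X)$ is a Cantor minimal system and the hypotheses of the Giordano-Putnam-Skau theorem are met. By the result from~\cite{GPS1995} recalled immediately before the statement, two Cantor minimal systems are strong orbit equivalent if and only if their dimension groups are isomorphic as unital ordered groups, so every $X \in \mathcal{D}$ is strong orbit equivalent to the universal odometer. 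Hence $\mathcal{D}$ is contained in the strong orbit equivalence class of the universal odometer within $\Tcalprimebar$, and since any superset of a residual set is residual, this strong orbit equivalence class is itself residual.

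There is essentially no substantial obstacle once Theorem~\ref{thm:dimgrpQ} is in hand; Corollary~\ref{cor:onesoeclass} is a formal consequence of that theorem together with the cited classification. The only minor point requiring comment is the identification of the dimension group of the universal odometer with $(\mathbb{Q}, \mathbb{Q}_+, 1)$, which is standard, and the observation that elements of $\mathcal{D}$ are genuine Cantor minimal systems (so that \cite{GPS1995} applies), which is automatic from the form of the dimension group.
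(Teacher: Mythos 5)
Your proof is correct and takes essentially the same route as the paper: both deduce the corollary from Theorem~\ref{thm:dimgrpQ} together with the Giordano--Putnam--Skau classification of strong orbit equivalence by unital ordered dimension groups, using the standard identification of the universal odometer's dimension group with $(\mathbb{Q},\mathbb{Q}_+,1)$. The extra details you supply (the direct-limit computation and the observation that elements of $\mathcal{D}$ are Cantor minimal systems) are sound but not needed beyond what the paper already assumes.
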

Note that the universal odometer is not expansive and so does not actually belong to $\Scal$.

Before beginning, we briefly outline the proof of Theorem~\ref{thm:dimgrpQ}. The main step is to show that the set of uniquely ergodic subshifts in $\Tcalprimebar$ whose invariant measure takes only rational values on clopen subsets is generic. This implies that for a generic subshift, the state on the dimension group induced by the unique invariant measure has its image contained in the rationals. For a uniquely ergodic system, the kernel of the unique state is precisely the subgroup of infinitesimals. Thus to finish the proof, we show that generically, the image of the state map is all of $\mathbb{Q}$, and that the subgroup of infinitesimals is trivial; Theorem~\ref{thm:dimgrpQ} then follows.

Throughout this section, we will use the following convention: if $X$ is a uniquely ergodic subshift, we will denote by $\mu_{X}$ its unique invariant probability measure.
We begin by proving a somewhat technical condition for a generic class of subshifts in $\Tcalprimebar$.

\begin{definition}
For any uniquely ergodic subshift $X$ and any $v,w \in L(X)$, the
\textit{discrepancy of $w$ in $v$} is $D(w,v) := |v|_w - |v| \mu_X([w])$, where $|v|_w$ denotes the number of occurrences of $w$ in $v$.
\end{definition}


\begin{definition}\label{baldef}
For any uniquely ergodic subshift $X$ and $w \in L(X)$, we say \textit{$X$ is balanced for $w$} if there exists a constant $C_w$ so that for all $v \in L(X)$, $|D(w,v)| < C_w$. We say that \textit{$X$ is balanced for factors} (see \cite{berthebalanced}) if for all $w \in L(X)$, $X$ is balanced for $w$.
\end{definition}

We recall that $\mathbf{UE}$ denotes the set of uniquely ergodic subshifts. Define $\mathbf{UERB}$ to be the set of subshifts $X \in \mathbf{UE}$ such that $\mu_X([w])$ is rational for all words $w \in L(X)$ and $X$ is balanced for factors. 

\begin{theorem}\label{thm:UERBgeneric}
The set $\mathbf{UERB}$ is residual in $\Tcalprimebar$.
\end{theorem}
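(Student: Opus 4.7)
The plan is to show that the complement of $\mathbf{UERB}$ is meager in $\Tcalprimebar$ by decomposing the property word-by-word. I would write $\mathbf{UERB} = \bigcap_{w} \mathbf{BRat}(w)$, where
\[
\mathbf{BRat}(w) = \bigcup_{p \in \mathbb{Z},\, q, C \in \mathbb{N}} A(w, p, q, C), \quad A(w, p, q, C) = \{ X \in \Tcalprimebar : |q|v|_w - p|v|| < C \text{ for all } v \in L(X)\}.
\]
Each $A(w, p, q, C)$ is closed (its defining condition restricts to a clopen condition on $L_n(X)$ for each $n$), and a bound of this form automatically forces unique ergodicity together with $\mu_X([w]) = p/q \in \mathbb{Q}$. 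Hence it suffices to show that each $\mathbf{BRat}(w)$ has dense interior, which I will accomplish by producing, inside every nonempty cylinder $C_0$ in $\Tcalprimebar$, a sub-cylinder $C' \subset C_0$ lying inside a single $A(w, p, q, C)$.

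To build $C'$, I would sharpen the letter-to-word substitution of Theorem~\ref{letword}. Given $C_0 = [X_0, n]$, I would modify the substitution $\tau \colon \{0,1\} \to \A_{C_0}^{\ell}$ so that, beyond the guarantees of Theorem~\ref{letword}, the following hold: (i) $\tau(0)$ and $\tau(1)$ contain the same number $k$ of occurrences of $w$, and (ii) they share a common prefix $P$ and common suffix $S$, each of length $|w|-1$. Both can be arranged by modifying the numbers of traversals of the cycles $K$ and $K^{\prime}$ appearing in the proof of Theorem~\ref{letword} to balance $w$-counts, and by absorbing the length-$(|w|-1)$ boundary matching into the distinguished vertex $w_n$; after possibly pre-composing $\tau$ with an iterate of a $\{0,1\}$-substitution, we may also assume $\ell > |w|$.

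With such a $\tau$ in hand, unique decipherability gives that for \emph{every} $Y \subset \{0,1\}^{\mathbb{Z}}$ and every $v \in L(\tau^{*}(Y))$ of length at least $3\ell$, the count $|v|_w$ equals $\tfrac{k + \beta}{\ell}|v|$ up to an error bounded by a constant depending only on $\tau$ and $w$, where $\beta$ is the number of $w$-occurrences in the junction block $SP$ (which is the same at every $\tau$-boundary by (ii)). Consequently, $\tau^{*}(Y) \in A(w, k + \beta, \ell, C)$ for a uniform $C$. By Theorem~\ref{letwordcyl}, the subshifts $\tau^{*}(Y)$ for $Y \subset \{0,1\}^{\mathbb{Z}}$ fill a sub-cylinder $C' \subset C_0$, so $C' \subset A(w, k+\beta, \ell, C) \subset \mathbf{BRat}(w)$. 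This gives dense interior and hence residuality of each $\mathbf{BRat}(w)$, and intersecting over all $w$ produces residuality of $\mathbf{UERB}$.

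The main obstacle is the modification in the second paragraph: engineering $\tau$ with properties (i) and (ii) while preserving everything guaranteed by Theorem~\ref{letword}---injectivity, unique decipherability, the unique-ergodic-preserving structure of $\tau^{*}$, and the fact that $\tau^{*}(Y) \in C_0$ for $Y$ transitive. This is essentially a careful bookkeeping argument at the level of Rauzy graphs (selecting cycle-traversal counts and the distinguished vertex so that prefixes/suffixes and $w$-counts match), rather than a conceptually new step, but it must yield a \emph{uniform} bound $C$ valid across all $Y$, which is the only nontrivial point in the reduction from Theorem~\ref{letword} to balance of $w$-counts.
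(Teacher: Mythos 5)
Your decomposition $\mathbf{UERB} = \bigcap_w \mathbf{BRat}(w)$ is a legitimate per-word variant of the paper's per-length decomposition $\mathbf{UERB} = \bigcap_m \mathbf{UERB}_m$, and the argument that each $A(w,p,q,C)$ is closed (so $\mathbf{BRat}(w)$ with dense interior is residual) is correct. One slip: a single bound $|q|v|_w - p|v|| < C$ does \emph{not} force unique ergodicity, since it only pins down the frequency of the one word $w$; you need the intersection over all $w$ to control all cylinder measures. This is a minor point that doesn't derail the argument, since what you actually use is the equality $\mathbf{UERB} = \bigcap_w \mathbf{BRat}(w)$.

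The real issue is that you've framed the ``engineering'' of conditions (i) and (ii) as the main obstacle, when in fact the $\tau$ from Theorem~\ref{letword} already carries the needed counting property with no modification --- provided one decomposes by word length. Recall that $\tau(0)$ and $\tau(1)$ arise from the cycle paths $KKK'K'K'$ and $KK'KK'K'$ in $G_{X,m'+1}$, both starting and ending at the distinguished vertex $w_{n}$. These two paths are permutations of the same multiset $\{K,K,K',K',K'\}$, and hence visit every vertex and every edge of $G_{X,m'+1}$ the same number of times. Consequently, writing $u = \tau(0)w_n$ and $u' = \tau(1)w_n$, one gets $|u|_v = |u'|_v$ exactly for every $m'$-letter word $v \neq w_n$, and, because consecutive blocks in $\tau(t)w_n$ overlap precisely in the $m'$-letter word $w_n$, an $m'$-letter word $v \neq w_n$ appearing in $\tau(t)w_n$ lies entirely inside one $u$-block. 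This yields $|\tau(t)w_n|_v = |t|N_v$ with no boundary correction at all, and the uniform bound on the discrepancy $D(v,q)$ falls out. The paper's decomposition by length $m$ exploits this immediately: for every $m$, pass to a subcylinder of depth $m' \geq m$, and the $\tau$ of Theorem~\ref{letword} handles all $m'$-letter words at once, hence all words of length at most $m'$ too.

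If you insist on the per-word decomposition, condition (ii) is subtler than your sketch admits. The natural common suffix of $\tau(0), \tau(1)$ has length $|K'|$ (they both end by traversing $K'$), which is determined by the cylinder and may well be smaller than $|w|-1$; and the common prefix has length $m'$ from $w_n$. You would need to justify that $K'$ can be replaced by a longer cycle (e.g.\ a power of $K'$) without damaging unique decipherability or the requirement that $\tau^*(Y)$ land in the given cylinder. This is doable but not ``absorbing boundary matching into the distinguished vertex.'' Moreover, even after the prefix/suffix matching, condition (i) (exact equality of $w$-counts in $\tau(0)$ and $\tau(1)$) is not automatic and is also not needed: bounded discrepancy suffices, and indeed for general $|w| < m'$ the counts $|\tau(0)|_w$ and $|\tau(1)|_w$ typically differ by an $O(m')$ boundary term. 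The cleaner route is the paper's: count occurrences of $m'$-letter words via Rauzy-graph vertex counting, which gives exact equality, and deduce bounded discrepancy for shorter words by summing over extensions.
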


\begin{proof}

For all $m$, we define the set $\mathbf{UERB}_{m}$ of subshifts $X$ such that $X$ is uniquely ergodic, $\mu_{X}([w])$ is rational for all $m$-letter words $w$, and $X$ is balanced for all $m$-letter words. We will prove that for every $m$, $\mathbf{UERB}_m$ is the intersection of an open dense set with $\mathbf{UE}$; since $\mathbf{UERB} = \bigcap \mathbf{UERB}_m$ and since $\mathbf{UE}$ is generic in $\Tcalprimebar$ by Corollary~\ref{cor:zemuegeneric}, this completes the proof.

Choose any cylinder $C$ in $\Tcalprimebar$ and any $m \in \mathbb{N}$. Clearly there exists $m' \geq m$ so that $C$ has a subcylinder
$C'$ (possibly equal to $C$) of the form $[X,m']$ for some $X$. Define $\tau$ as in Theorem~\ref{letword}. By Theorem~\ref{letwordcyl}, there is a subcylinder $D$ so that
$D \subset \tau^{*}(\{0,1\}^{\mathbb{Z}}) \subset C' \subset C$. We claim that $D \cap \mathbf{UE} \subset \mathbf{UERB}_{m'} \subset \mathbf{UERB}_m$.

To see this, we recall that in the definition of $\tau$, $\tau(0), \tau(1) \in \A^\ell$ are defined via the words corresponding to paths $KKK'K'K'$ and $KK'KK'K'$ for cycles $K$, $K'$ in $G_{X,m'+1}$ which begin and end at the same vertex $w$. Define $u,u'$ to be the
$(\ell+m')$-letter words corresponding to these paths; then $\tau(0), \tau(1)$ are obtained by removing the $m'$-letter suffix $w$ from $u,u'$ respectively.

We now choose any $X \in UE \cap D$ and any $v \in L_{m'}(X)$ with $v \neq w$. Clearly 
$KKK'K'K'$ and $KK'KK'K'$ visit $v$ the same number of times, so $|u|_v = |u'|_v$; denote their common value by $N_v$. Since $u, u'$ begin and end with $w$, for any $t \in \{0,1\}^k$ we have $|\tau(t)w|_v = |t| N_v$.

By definition, since $X \in D$ we have $X = \tau^{*}(Y)$ for some subshift $Y$ on $\{0,1\}$. So, for any $x \in \tau(Y) \subset X$ and any $n \in\mathbb{N}$, $x([-n\ell,n\ell+m'))$ is of the form $\tau(t_n)w$ for some word $t_n \in \{0,1\}^{2n}$, and hence
\[
\frac{|x([-n\ell, n\ell+m'))|_v}{2n\ell+m'} = \frac{2n N_v}{2n \ell + m'} \stackrel{n \to \infty}\longrightarrow \frac{N_v}{\ell}.
\]
By the pointwise ergodic theorem, $\mu_X([v]) = N_v / \ell \in \mathbb{Q}$, and $\mu_X([w]) = 1 - \sum_{v \ne w} \mu_X([v]) \in
\mathbb{Q}$, so indeed the measures of all $X$-cylinder sets of $m^{\prime}$-letter words are rational.

We note that any word $q \in L(\tau^{*}(X))$ can be written as $q = p \tau(t) s$ for some $u \in L(X)$, $p$ a proper suffix of some $\tau(a)$, and $s$ a proper prefix of some $\tau(b)$ (for $a, b \in \{0,1\}$). In particular, $|p|, |s| < \ell$, so $|q| \in [|t|\ell, (|t|+2)\ell)$. Again, choose any $v \in L_{m'}(X)$ with $v \neq w$. The number of occurrences of $v$ in $\tau(t) w$ is exactly $|t| N_v$, so
\[
|t| N_v - m' \leq |q|_v \leq |t| N_v + 2\ell.
\]
Since $\mu_X([v]) = N_v/\ell$ and $|q| \in [|t|\ell, (|t|+2)\ell)$, $D(v,q) = |q|_v - \frac{|q|N_v}{\ell}$ satisfies
\[
-2N_v - m' < D(v,q) \leq 2\ell.
\]
Since this interval is independent of $q$, $X$ is balanced for $v$. Finally, we note that $|q|_w = |q| - m' + 1 - \sum_v |q|_v$ and
$\mu_X([w]) = 1 - \sum_v \mu_X([v])$. Therefore,
\[
D(w, q) = |q|_w - |q|\mu_X([w]) = |q| - m' + 1 - \sum_v |q|_v - |q| + \sum_v |q| \mu_X([v]) \]
\[ = -m' + 1 - \sum_v D(v,q),
\]
and since all $D(v,q)$ have bounds independent of $q$, $D(w,q)$ does as well, implying that $X$ is balanced for $w$ and so for all $m'$-letter words.

Combining all of this yields that $X \in \mathbf{UERB}_{m'}$, and since $X \in \mathbf{UE} \cap D$ was arbitrary, we have $\mathbf{UE} \cap D \subset \mathbf{UERB}_{m'} \subset \mathbf{UERB}_m$. Since $D$ is a subcylinder of $C$ and $C$ was arbitrary, this completes the proof that $\mathbf{UERB}_m$ is the intersection of $\mathbf{UE}$ with an open dense set, and therefore the entire proof.

\end{proof}

For a minimal Cantor system $(X,T)$, we define the infinitesimal subgroup $\infin \subset \mathcal{G}_{T}$ by
$$\infin = \{[f] \in \mathcal{G}_{T} \mid \int f \, d\mu = 0 \textnormal{ for all } T-\textnormal{invariant Borel probability measures } \mu\}.$$

Any $T$-invariant Borel probability measure $\mu$ induces a state, i.e. an order-preserving homomorphism $\tau_{\mu} \colon (\coinv{T},\coinv{T}^{+},[1]) \to (\mathbb{R}, \mathbb{R}_{+},1)$ taking $[1]$ to 1, defined by $\tau_{\mu}([f]) = \int_{X} f\, d\mu$.

If $(X,T)$ is a uniquely ergodic minimal Cantor system with $T$-invariant probability measure $\mu_X$ then there is an exact sequence
$$0 \to \infin \longrightarrow \mathcal{G}_{T} \stackrel{\tau_{\mu_X}}\longrightarrow \mathbb{R} \to 0.$$

It is straightforward to check that the image of $\tau_{\mu_X}$ in $\mathbb{R}$ is the subgroup generated by $\{\mu_X(W) \mid W \textnormal{ is clopen in } X\}$.

\begin{theorem}\label{thm:genericQimage}
The set of uniquely ergodic subshifts for which $\textnormal{Image}(\tau_{\mu_X}) = \mathbb{Q}$ is residual in $\Tcalprimebar$.
\end{theorem}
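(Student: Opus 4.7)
The plan is to combine two already-established residual conditions. The first, from Theorem~\ref{thm:UERBgeneric}, gives that on the residual set $\mathbf{UERB}$ every subshift is uniquely ergodic with all $X$-cylinder measures in $\mathbb{Q}$; since every element of $C(X,\mathbb{Z})$ is a finite integer combination of indicators of $X$-cylinders, this forces $\textnormal{Image}(\tau_{\mu_X}) \subseteq \mathbb{Q}$. The second, from Theorem~\ref{univ}, gives that a generic $X \in \Tcalprimebar$ factors onto the universal odometer.

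Assuming $X \in \Tcalprimebar$ factors onto the universal odometer via some continuous, shift-commuting surjection $\pi \colon X \to \widehat{\mathbb{Z}}$, then for every $n \in \mathbb{N}$ pulling back a residue-class clopen of $\widehat{\mathbb{Z}}$ produces a clopen set $E_n \subset X$ with $X = \bigsqcup_{i=0}^{n-1}\sigma^i E_n$. If in addition $X$ is uniquely ergodic, shift-invariance of $\mu_X$ yields $1 = \mu_X(X) = n\mu_X(E_n)$, so $\mu_X(E_n) = 1/n$. Since $\chi_{E_n} \in C(X,\mathbb{Z})$, this gives $1/n = \tau_{\mu_X}([\chi_{E_n}]) \in \textnormal{Image}(\tau_{\mu_X})$.

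Intersecting the two residual sets from Theorems~\ref{thm:UERBgeneric} and \ref{univ} produces a residual subset of $\Tcalprimebar$ on which $\textnormal{Image}(\tau_{\mu_X})$ is a subgroup of $\mathbb{Q}$ containing $1/n$ for every positive integer $n$; any such subgroup of $\mathbb{Q}$ is all of $\mathbb{Q}$, completing the proof. The argument is essentially an assembly of earlier results, so there is no substantial obstacle; the only minor point to verify is the elementary fact that a subgroup of $\mathbb{Q}$ containing $1/n$ for every $n \in \mathbb{N}$ equals $\mathbb{Q}$.
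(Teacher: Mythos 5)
Your proposal is correct. Where the paper's proof invokes~\cite[Prop.~11]{CDP2016} to assert that the image of $\tau_{\mu_X}$ contains the group of continuous eigenvalues, and then concludes from Theorem~\ref{univ} that this group contains $\mathbb{Q}$, you instead re-derive the relevant special case directly: from a factor map $\pi\colon X\to\widehat{\mathbb{Z}}$, pulling back the residue class mod $n$ yields a clopen $E_n$ with $X=\bigsqcup_{i=0}^{n-1}\sigma^iE_n$, and shift-invariance of the (unique) measure forces $\mu_X(E_n)=1/n$, placing $1/n$ in $\textnormal{Image}(\tau_{\mu_X})$. In fact, this is even more aligned with the paper than it first appears, since the proof of Theorem~\ref{univ} proceeds precisely by showing each set $P_n$ of subshifts with such a clopen partition is residual; your argument essentially reads that partition off directly rather than passing through the abstract eigenvalue fact. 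Your route is more self-contained (no external citation needed) at the cost of redoing a small computation; the paper's route is shorter because it cites a more general statement. The closing observation that a subgroup of $\mathbb{Q}$ containing every $1/n$ is all of $\mathbb{Q}$ is elementary and correct. Both proofs combine Theorems~\ref{thm:UERBgeneric} and~\ref{univ} in the same way, so the overall structure is the same.
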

\begin{proof}
By Theorem~\ref{thm:UERBgeneric}, the set of uniquely ergodic subshifts $X$ for which the image of $\tau_{\mu_{X}}$ is contained in $\mathbb{Q}$ is generic in $\Tcalprimebar$. To obtain the equality, first note that if $(X,T)$ is any uniquely ergodic minimal Cantor system, then the image of $\tau_{\mu_X}$ contains the (additive) group of continuous eigenvalues of $(X,T)$ (see~\cite[Prop. 11]{CDP2016} for a proof of this). It follows from Theorem~\ref{univ} that the set of subshifts whose group of continuous eigenvalues contain $\mathbb{Q}$ is generic in $\Tcalprimebar$, so this completes the proof.
\end{proof}

\begin{theorem}\label{thm:trivinfgeneric}
The set of minimal subshifts whose infinitesimal subgroup is trivial is residual in $\Tcalprimebar$.
\end{theorem}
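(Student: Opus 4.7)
The plan is to combine Theorems~\ref{minimal} and~\ref{thm:UERBgeneric}: the intersection $\mathbf{M} \cap \mathbf{UERB}$ is residual in $\Tcalprimebar$, and I will show that every $X$ in this intersection has trivial infinitesimal subgroup, which suffices since any superset of a residual set is residual. The key mechanism is that the balanced-for-factors condition converts vanishing of $\int f \, d\mu_X$ into uniform boundedness of the ergodic sums $S_N f$.

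Given $[f] \in \infin$ with $f \in C(X,\mathbb{Z})$, I would first use zero-dimensionality of $X$ and continuity of $f$ to represent $f$, up to a coboundary (and hence without changing its class in $\coinv{\sigma}$), as $f = \sum_{w \in L_n(X)} c_w \chi_{[w]}$ for some $n$. Setting $v = x[0,N+n-2]$, a direct count gives
\begin{equation*}
S_N f(x) = \sum_{k=0}^{N-1} f(\sigma^k x) = \sum_{w \in L_n(X)} c_w |v|_w = |v| \int f\, d\mu_X + \sum_{w \in L_n(X)} c_w D(w,v).
\end{equation*}
Unique ergodicity combined with $[f] \in \infin$ forces the first term to vanish, while the balanced-for-factors hypothesis bounds the second by $\sum_w |c_w| C_w$, uniformly in $N$ and $x$. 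Hence $\|S_N f\|_\infty$ is uniformly bounded.

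Next, I would invoke the Gottschalk--Hedlund theorem for the minimal system $(X,\sigma)$: uniform boundedness of $(S_N f)_N$ yields a continuous transfer function $g \in C(X,\mathbb{R})$ with $f = g - g\circ \sigma$. To upgrade $g$ to integer values (which is needed to conclude $[f] = 0$ in $\coinv{\sigma}$), I would fix $x_0 \in X$, normalize $g$ by an additive real constant so that $g(x_0) = 0$, and observe that $g(\sigma^k x_0) = S_k f(x_0) \in \mathbb{Z}$ for every $k$, with values in a bounded (hence finite) subset of $\mathbb{Z}$. Continuity of $g$ and density of the orbit of $x_0$ (from minimality) then force $g(X)$ to lie in the closure of this finite integer set, so $g \in C(X,\mathbb{Z})$ and $[f] = 0$. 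The main obstacle is precisely this final upgrade step, since Gottschalk--Hedlund produces only a real-valued continuous transfer function; the resolution is the observation that a bounded integer-valued function defined on a dense orbit takes only finitely many values, and these persist under continuous extension.
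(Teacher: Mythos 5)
Your proof is correct and follows essentially the same route as the paper's: intersect $\mathbf{UERB}$ with the minimal subshifts, write $f$ as a cylinder combination, convert the vanishing integral into uniform boundedness of $S_N f$ via the balance constants, and apply Gottschalk--Hedlund. The one place you are more careful than the paper is the final upgrade from a real-valued transfer function to an integer-valued one; the paper simply invokes Gottschalk--Hedlund and concludes $f$ is a coboundary without spelling out that argument, so your added step is a legitimate and useful detail rather than a divergence.
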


\begin{proof}
We claim that any minimal subshift in $\mathbf{UERB}$ has no nontrivial infinitesimals; together with Theorem~\ref{thm:UERBgeneric}, this will complete the proof. Consider any $X \in \mathbf{UERB}$ with unique invariant measure $\mu_{X}$. Since $X$ is balanced for factors, for each $w \in L_n(X)$, there exists $C_w$ so that for every $v \in L(X)$, $|D(w,v)| \leq C_w$.

Choose any function $f \in C(X, \mathbb{Z})$ with $\int f \ d\mu_{X} = 0$. Clearly there exists $n$ so that $f$ can be written as $\sum_{w \in L_n(X)} \alpha_w \chi_{[w]}$ where $\sum \alpha_w \mu_{X}([w]) = 0$. For any $x \in X$ and $N$, denote
$v = x([0,N+n))$. Then,
\begin{equation}\label{infeqn1}
\sum_{i=0}^{N-1} f(\sigma^i x) = \sum_{w \in L_n(X)} \alpha_w |v|_w.
\end{equation}
Therefore,
\begin{equation}\label{infeqn2}
\left| \sum_{w \in L_n(X)} \alpha_w |v|_w - \sum_{w \in L_n(X)} \alpha_w |v|\mu_{X}([w]) \right| \leq \sum_{w \in L_n(X)} |\alpha_w D(w,v)| \leq \sum_w C_w |\alpha_w|.
\end{equation}
Finally, since $\int f \ d\mu_{X} = 0$,
\begin{equation}\label{infeqn3}
\sum_{w \in L_n(X)} \alpha_w |v|\mu_{X}([w]) = |v| \int f \ d\mu_{X} = 0.
\end{equation}
Combining (\ref{infeqn1})-(\ref{infeqn3}) yields
\[
\left|\sum_{i=0}^{N-1} f(\sigma^i x)\right| \leq \sum_w C_w |\alpha_w|.
\]
Since the right-hand side is independent of $x$ and $N$, by Gottschalk-Hedlund (\cite{GottschalkHedlund}), $f$ is a coboundary. Since $f \in C(X,\mathbb{Z})$ was arbitrary with integral $0$, the proof is complete.
\end{proof}

We can now prove Theorem~\ref{thm:dimgrpQ}.

\begin{proof}[Proof of Theorem~\ref{thm:dimgrpQ}]
By Corollary~\ref{cor:zemuegeneric}, the set of uniquely ergodic minimal subshifts is residual in $\Tcalprimebar$. Theorem~\ref{thm:trivinfgeneric} implies the set of such systems which have trivial infinitesimal subgroup is residual in $\Tcalprimebar$ as well. Then the intersection of these two residual sets is residual, and for any system in this intersection, the triple $(\coinv{\sigma},\coinv{\sigma}^{+},[1])$ is isomorphic (as a unital ordered group) to its image under $\tau_{\mu_X}$, which by Theorem~\ref{thm:genericQimage} is generically $(\mathbb{Q},\mathbb{Q}_{+},1)$.
\end{proof}

\subsubsection{Mapping class groups in $\Tcalprimebar$}
We finish this section with an analysis of the mapping class group $\mathcal{M}(\sigma_X)$ of a generic subshift $X$ in $\Tcalprimebar$. Given two systems $(X,T)$ and $(Y,S)$, recall that a flow equivalence is an orientation preserving homeomorphism between their suspensions $\phi \colon \Sigma_{T}X \to \Sigma_{S}Y$. If a flow equivalence exists between two systems, then their mapping class groups are isomorphic (see~\cite{SchmiedingYang2021} for details).

By Corollary~\ref{autgrp}, the automorphism group of a generic subshift in $\Tcalprimebar$ is as small as possible, i.e. is generated by the shift map. We'll prove that the analogous result holds for mapping class groups: namely, that the mapping class group of generic subshift in $\Tcalprimebar$ is trivial.

\begin{remark}
There is no subshift $(X,\sigma)$ whose flow equivalence class is generic in $\Tcalprimebar$ since, as a consequence of the Parry-Sullivan Theorem (see~\cite[Sec. 4]{BCE2017}), the flow equivalence class within $\Scal$ of a subshift is always countable.
\end{remark}


\begin{theorem}\label{thm:mcgfortransitivesubshifts}
The collection of subshifts whose mapping class group is trivial is residual in $\Tcalprimebar$.
\end{theorem}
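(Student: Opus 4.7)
The plan is to combine the structure theory for mapping class groups of minimal subshifts from~\cite{SchmiedingYang2021,BoyleChuysurichayMCG} with the dimension group calculation of Theorem~\ref{thm:dimgrpQ} and the automorphism calculation of Corollary~\ref{autgrp}. For a minimal subshift $(X,\sigma_X)$ there is a natural homomorphism from $\mathcal{M}(\sigma_X)$ into the group $\textnormal{Aut}^{+}(\coinv{\sigma_X})$ of positive order automorphisms of the dimension group, and the kernel of this homomorphism relates tightly to $\aut(X,\sigma_X)/\langle\sigma_X\rangle$. The strategy is to show that both the kernel and the image of this homomorphism are trivial on a residual subset of $\Tcalprimebar$.

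First I would intersect the three relevant residual sets in $\Tcalprimebar$: the set where $\aut(X,\sigma_X)/\langle\sigma_X\rangle$ is trivial (Corollary~\ref{autgrp}), the set where $(\coinv{\sigma_X},\coinv{\sigma_X}^{+},[1]) \cong (\mathbb{Q},\mathbb{Q}_{+},1)$ (Theorem~\ref{thm:dimgrpQ}), and the set of regular Toeplitz uniquely ergodic subshifts factoring onto the universal odometer (Theorem~\ref{univ} and Corollary~\ref{cor:zemuegeneric}). By the Baire Category Theorem this intersection is residual in $\Tcalprimebar$, so it suffices to prove $\mathcal{M}(\sigma_X) = 1$ for every $X$ in the intersection, since then $\{X \in \Tcalprimebar \mid \mathcal{M}(\sigma_X) = 1\}$ will be a superset of a residual set and hence residual.

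Second, I would analyze the induced action on the dimension group. The positive order automorphisms of $(\mathbb{Q},\mathbb{Q}_{+})$ form the group $\mathbb{Q}_{>0}$ acting by multiplication. For a uniquely ergodic $X$ the unique invariant state $\tau_{\mu_X}$ must be preserved up to a positive scalar by any element of $\mathcal{M}(\sigma_X)$, and the rescaling factor corresponds to the ratio of suspension lengths induced by the underlying flow equivalence. Since $\tau_{\mu_X}([1])=1$ is preserved under flow equivalence (a mapping class cannot change the total measure of the base) and the image of $\tau_{\mu_X}$ is all of $\mathbb{Q}$ (Theorem~\ref{thm:genericQimage}), the only compatible scaling factor is $1$, so the image of $\mathcal{M}(\sigma_X)$ in $\mathbb{Q}_{>0}$ is trivial. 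Combined with triviality of the kernel, which reduces to triviality of $\aut(X,\sigma_X)/\langle\sigma_X\rangle$ coming from Corollary~\ref{autgrp}, this yields $\mathcal{M}(\sigma_X) = 1$.

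The main obstacle is making the action of $\mathcal{M}(\sigma_X)$ on $\coinv{\sigma_X}$ and the associated exact sequence fully rigorous, and in particular precisely identifying which positive automorphisms of the dimension group arise from self-flow-equivalences of the suspension. An alternative, possibly cleaner route would be to use the right-proper alphabet rank two presentation from Corollary~\ref{ark2} together with flow equivalence rigidity results for finite topological rank Bratteli--Vershik systems to reduce the computation of $\mathcal{M}(\sigma_X)$ directly to the triviality of $\aut(X,\sigma_X)/\langle\sigma_X\rangle$ already in hand.
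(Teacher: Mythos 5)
The overall skeleton — intersect residual sets and then compute $\mathcal{M}(\sigma_X)$ using a kernel/image decomposition against the dimension group — is the right general shape, but there is a genuine gap in the step controlling the image. You assert that ``$\tau_{\mu_X}([1])=1$ is preserved under flow equivalence (a mapping class cannot change the total measure of the base).'' This is false as a general statement: a self-flow-equivalence of the suspension can and does rescale the invariant measure by a nontrivial positive factor, and this is precisely what the homomorphism $R_{\mu_X} \colon \mathcal{M}(\sigma_X) \to \mathbb{R}^{*}_{>0}$ (introduced in the paper for the $\TTcalprimebar$ case) measures. Also note that knowing the image of $\tau_{\mu_X}$ is all of $\mathbb{Q}$ does not force the scaling to be $1$, since $\lambda\mathbb{Q}=\mathbb{Q}$ for every $\lambda\in\mathbb{Q}_{>0}$; so the inference ``image is $\mathbb{Q}$, therefore scaling is $1$'' is a non sequitur. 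Ruling out a nontrivial rescaling requires an actual argument, and that is exactly where the content of the theorem lies.

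The paper's proof handles this via the dichotomy in~\cite[Cor.\ 4.23]{SchmiedingYang2021}: for a uniquely ergodic minimal subshift with trivial infinitesimal subgroup, either $(X,\sigma_X)$ is flow equivalent to a primitive substitution subshift, or $\mathcal{M}(\sigma_X)\cong\aut(X,\sigma_X)/\langle\sigma_X\rangle$. The genuinely generic inputs used are Theorem~\ref{thm:dimgrpQ}, Theorem~\ref{thm:trivinfgeneric}, and Corollary~\ref{autgrp}; the Toeplitz property you bring in is not needed. The remaining work is to exclude the substitution branch, and here the group structure of $\coinv{\sigma_X}\cong\mathbb{Q}$ does the job: flow equivalence preserves the coinvariants as an abstract abelian group, and a primitive substitution subshift has $\coinv{\sigma_X}$ isomorphic to a stationary direct limit $\mathbb{Z}^r \to \mathbb{Z}^r \to \cdots$, which embeds in $\mathbb{Z}[1/\det(A')]^r$ and hence cannot be $\mathbb{Q}$. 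That is the argument your proof is missing: you must affirmatively rule out the self-induced/substitution case rather than assert that the scaling is automatically trivial. Your suggested alternative via ``flow equivalence rigidity for finite topological rank Bratteli--Vershik systems'' is not a concrete cited tool and would itself need to be justified.
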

\begin{proof}
Let $\mathbf{C}$ denote set of uniquely ergodic minimal subshifts $(X,\sigma_{X})$ in $\Tcalprimebar$ whose dimension group is isomorphic (as a group) to $\mathbb{Q}$. By Theorem~\ref{thm:dimgrpQ} this set is generic in $\Tcalprimebar$, so it suffices to show that any subshift $(X,\sigma_{X})$ in $\mathbf{C}$ has trivial mapping class group.
In~\cite[Cor. 4.23]{SchmiedingYang2021} it is shown that if $(X,\sigma_{X})$ is a uniquely ergodic minimal subshift such that $\textnormal{Inf}(\coinv{\sigma_{X}}) = \{\textnormal{id}\}$, then either $(X,\sigma_{X})$ is flow equivalent to a subshift arising from a primitive substitution, or $\mathcal{M}(\sigma_{X})$ is isomorphic to $\textnormal{Aut}(X,\sigma_{X}) / \langle \sigma_{X} \rangle$. By Theorem~\ref{thm:trivinfgeneric} we know that the set of subshifts having trivial infinitesimal subgroup is generic in $\Tcalprimebar$. Moreover, by Corollary~\ref{autgrp} the set of subshifts $(X,\sigma_{X})$ for which $\aut(X,\sigma_{X}) / \langle \sigma_{X} \rangle$ is trivial is also generic in $\Tcalprimebar$, so it suffices to show that any subshift $(X,\sigma_{X})$ in the class $\mathbf{C}$ is not flow equivalent to a substitution.

If two systems $(X,T)$ and $(Y,S)$ are flow equivalent then there is an isomorphism between their coinvariant groups; that is, $\coinv{T}$ is isomorphic (as an abelian group) to $\coinv{S}$. Thus given Theorem~\ref{thm:dimgrpQ}, it is enough to show that if $(X,\sigma_{X})$ is a subshift coming from a primitive aperiodic substitution, then $\coinv{\sigma_{X}}$ is not isomorphic to $\mathbb{Q}$. One can see this for example using Bratteli diagrams: by~\cite[Prop. 20]{DHS1999}, if $(X,\sigma_{X})$ is a subshift associated to a substitution then $(X,\sigma_{X})$ is conjugate to the Vershik map on some stationary Bratteli diagram\footnote{Alternatively one could note that $\coinv{\sigma_{X}}$ is isomorphic to the Cech cohomology group $\check{H}^{1}(\Sigma_{\sigma_{X}}X,\mathbb{Z})$ which, in the case $(X,\sigma_{X})$ comes from a primitive substitution, can be computed using theory from tiling spaces; see~\cite{BargeDiamond}.}. In particular, if $(X,\sigma_{X})$ is a subshift defined by a primitive substitution, then there exists an $r \times r$ integral matrix $A$ such that $\coinv{\sigma_{X}}$ is isomorphic to the direct limit of the stationary system $\mathbb{Z}^{r} \stackrel{x \mapsto Ax}\longrightarrow \mathbb{Z}^{r} \stackrel{x \mapsto Ax}\longrightarrow \mathbb{Z}^{r} \longrightarrow \cdots$. This direct limit group is isomorphic to the direct limit of a stationary system $\mathbb{Z}^{r} \stackrel{x \mapsto A^{\prime}x}\longrightarrow \mathbb{Z}^{r} \stackrel{x \mapsto A^{\prime}x}\longrightarrow \mathbb{Z}^{r} \longrightarrow \cdots$ where $A^{\prime}$ is a nonsingular integral matrix. But $A^{\prime}$ is invertible over $\mathbb{Z}[\frac{1}{\textnormal{det}(A^{\prime})}]$, so the latter direct limit is isomorphic to a subgroup of $\mathbb{Z}[\frac{1}{\textnormal{det}(A^{\prime})}]^{r}$, and such a subgroup can not be isomorphic to $\mathbb{Q}$.
\end{proof}



\section{The space $\TTcalprime$ of infinite totally transitive systems}\label{tottrans}
Continuing with our study of genericity in more dynamically interesting subspaces, we consider in this last section the space of totally transitive subshifts contained in $\Scal$. As was done previously, we need to remove all isolated points in $\Scal$, but here that is particularly simple; the only subshifts in $\TTcal \cap \nmc$ are systems consisting of a single point (and hence defined by a single constant sequence). We then make the following definition.


\begin{definition}
Denote by $\TTcal'$ the subspace of infinite totally transitive subshifts in $\Scal$.
\end{definition}
It turns out that each $X_n$ in the sequence used to show that $\Tcalprime$ was not closed was in fact also totally transitive, so the same sequence of systems shows that $\TTcal'$ is not closed in $\Scal$ either.

For every $k$, a proof virtually identical to that of Lemma~\ref{TGdelta} shows that the set of subshifts $(X, \sigma)$ for which $(X, \sigma^k)$ is transitive is a $G_{\delta}$. Therefore, the space $\TTcalprime$ is a $G_{\delta}$ in $\Scal$, and hence also in $\TTcalprimebar$. Thus again, any results on genericity in $\TTcalprime$ may be determined in $\TTcalprimebar$, and we phrase all our results here in terms of $\TTcalprimebar$.


\begin{lemma}
$\TTcalprimebar$ is a perfect subset of $S$.
\end{lemma}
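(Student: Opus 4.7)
The closedness of $\TTcalprimebar$ is immediate from its definition, so the real content is that no point of $\TTcalprimebar$ is isolated. Since $\TTcalprime$ is dense in $\TTcalprimebar$ and cylinders are clopen, I would fix any $X \in \TTcalprime$ and any cylinder $[X,n]$, and try to produce $Y \in [X,n] \cap \TTcalprimebar$ with $Y \neq X$. For this I would reuse the construction in the proof of Lemma~\ref{perfect2}: since $X \in \TTcalprime$ is transitive, infinite, and not NMC, $G_{X,n}$ contains a simple cycle $K$ with an incoming edge $f$ and outgoing edge $g$; letting $P$ be the simple subpath of $K$ from the terminal vertex of $f$ to the initial vertex of $g$, define $Y$ to be the subshift of all biinfinite paths in $G_{X,n}$ that avoid $fPg$. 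Lemma~\ref{perfect2}'s argument already gives $Y \neq X$, $Y \in [X,n]$, $Y$ infinite, and $Y$ transitive.

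The only new task is to show $Y$ is in fact totally transitive, which will give $Y \in \TTcalprime \subset \TTcalprimebar$. Fix $k \geq 1$ and $u, v \in L(Y)$; I need to produce a word in $L(Y)$ beginning with $u$ and containing $v$ at a position divisible by $k$. First I would extend $u$ and $v$ to $\tilde u, \tilde v \in L(Y)$, with $u$ a prefix of $\tilde u$, $v$ a suffix of $\tilde v$, with padding on each side greater than $|fPg|$, and with $|\tilde v| - |v|$ chosen to be a multiple of $k$; this is possible because $Y$ is infinite and transitive. Then I apply total transitivity of $X$ (for the power $\sigma^k$) to obtain $w \in L(X)$ beginning with $\tilde u$ and containing $\tilde v$ at a position that is a multiple of $k$, truncated to end at the end of $\tilde v$. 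Because $|\tilde v| - |v|$ is also a multiple of $k$, the suffix $v$ of $\tilde v$ in $w$ sits at a position divisible by $k$.

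If $w$ contains no occurrence of $fPg$, then $w \in L(Y)$ and we are done. Otherwise, the padding forces each occurrence of $fPg$ in $w$ to begin strictly before the position of $v$ (each such occurrence either lies in the gap between $\tilde u$ and $\tilde v$ or spans one of the inner boundaries, and truncation rules out occurrences past $\tilde v$). I then replace each occurrence of $fPg$ by $fPK^{s_i}g$ for positive integers $s_i$: the result is still a path in $G_{X,n}$, and $fPK^{s_i}g$ itself contains no $fPg$ as a subword, because $f$ is neither in $P$ nor $K$ (its initial vertex lies outside $K$), so $f$ appears only at position $0$ of $fPK^{s_i}g$. Each replacement shifts $v$'s position to the right by $s_i|K|$, for a total shift of $|K|\sum_i s_i$. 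Choosing $s_i \geq 1$ with $\sum_i s_i \equiv 0 \pmod{k/\gcd(k,|K|)}$ keeps $v$'s new position divisible by $k$; this is always possible when at least one replacement is present, by adjusting a single $s_i$. The resulting word lies in $L(Y)$ and witnesses transitivity of $(Y,\sigma^k)$. Since $k,u,v$ were arbitrary, $Y$ is totally transitive.

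The main obstacle is the arithmetic constraint: the substitutions shift positions by multiples of $|K|$ rather than $k$, so they could in principle break the ``position divisible by $k$'' requirement. This is overcome by two devices, first, the padding on $\tilde v$ is chosen with length divisible by $k$ so that locating $\tilde v$ at a multiple of $k$ yields $v$ at a multiple of $k$; and second, the presence of at least one replacement lets us tune a single $s_i$ to force the total shift $|K|\sum_i s_i$ into the residue class $0 \pmod k$. The boundary cases (occurrences of $fPg$ overlapping the right end of $\tilde u$ or the left end of $\tilde v$) are handled uniformly with the interior case since all such occurrences begin strictly before $v$'s position and thus contribute to the same total shift.
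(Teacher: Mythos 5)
Your overall strategy is sound and your construction of $Y$ is identical to the paper's, but your verification that $Y$ is totally transitive takes a genuinely different route. The paper exploits the fact that $Y$ is a shift of finite type: since $G_{X,n}$ is primitive (because $X$ is totally transitive), one can locate cycles $K',K''$ through $fPg$ of relatively prime length, replace $fPg$ by $fPK^ig$ in each to produce cycles in $Y$'s defining graph of relatively prime length, and conclude $Y$ is a mixing SFT, hence totally transitive. You instead verify the definition of total transitivity directly, using total transitivity of $X$ to place $u$ and $v$ at positions with the right residue modulo $k$ and then tuning the exponents $s_i$ in the $fPK^{s_i}g$-replacements so that the cumulative shift $|K|\sum s_i$ preserves the residue. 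Both routes work; the paper's is shorter because it leans on the structural fact that a transitive SFT with periodic orbits of coprime periods is mixing, while yours avoids needing Krieger/SFT machinery but requires the careful position bookkeeping you describe.

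One point that deserves a little more care in your write-up: after the substitution you assert the resulting finite word $w'$ lies in $L(Y)$, but membership in $L(Y)$ requires that $w'$ extends to a \emph{biinfinite} path avoiding $fPg$, and that is not automatic for a finite $fPg$-free path. The cleanest fix, which matches what the paper actually does in Theorem~\ref{perfect}, is to perform the replacement not on $w$ itself but on a biinfinite path $\gamma$ obtained by concatenating a backward $Y$-extension of $\tilde u$, then $w$, then a forward $Y$-extension of $\tilde v$. Because $\tilde u,\tilde v \in L(Y)$ and your padding exceeds $|fPg|$, every occurrence of $fPg$ in $\gamma$ is confined to the interior of $w$, strictly after $u$ and with its insertion point strictly before $v$ (for the latter you need, and do have, that padding exceeds $|P|$, not just that $q<|w|-|v|$). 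Replacing all such occurrences then yields a biinfinite path $\gamma' \in Y$ directly, containing $u$ at position $0$ and $v$ at a position divisible by $k$, which is exactly what transitivity of $(Y,\sigma^k)$ requires. With that adjustment the argument goes through, and the rest of your reduction (perfectness follows once each cylinder around a point of $\TTcalprime$ is shown to contain two elements, plus density of $\TTcalprime$ for the remaining points) is correct.
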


\begin{proof}
By definition, $\TTcalprimebar$ is closed, so we need only show that it has no isolated points.

The proof of this is again similar to that of Theorem~\ref{perfect}. We claim that if the shift $X$ in that proof is assumed totally transitive, then the shift $Y$ constructed to share an arbitrary cylinder $[X,n]$ is also totally transitive. Recall that $Y$ was constructed by forbidding a single path $fPg$, but allowing all $fPK^n g$ for a fixed cycle $K$, and that it was shown in Lemma~\ref{perfect2} that $Y$ is a transitive shift of finite type. Since $[X,n]$ contains a totally transitive infinite subshift, its Rauzy graph $G_{X,n}$ is primitive and nontrivial, so we can find cycles $K', K''$ which both contain $fPg$ and have relatively prime lengths. We can assume without loss of generality that $K', K''$ end with $fPg$.\\ 

Suppose the numbers of occurrences of $fPg$ in $K', K''$ are $m', m'' > 0$ respectively. Then, for every $i > 0$, $G_{X,n}$ contains cycles $K'_i, K''_i$ obtained by replacing each $fPg$ in $K', K''$ respectively by $fPK^i g$. Then $|K'_i| = |K'| + im'|K|$ and $|K''_i| = |K''| + im''|K|$. Since $|K'|$ and $|K''|$ were relatively prime, for large enough $j$, $|K'_{jm''}|$ and $|K''_{jm'}|$ are also relatively prime (since they come from adding the same large integer $jm'm''|K|$ to both $|K'|$ and $|K''|$.)

Therefore, we have two cycles $L = K'_{jm''}$ and $L' = K''_{jm'}$ of relatively prime length in $G_{X,n}$ which do not contain $fPg$ and which end with $g$. Since $L$ and $L'$ end with $g$, each yields a biinfinite path (under repeated traversal) which does not contain $fPg$ and so corresponds to a periodic point of $Y$. Then $Y$ is a transitive shift of finite type with periodic points of relatively prime least periods, and so it is mixing and thereby totally transitive.

So, again all nonempty cylinders in $\TTcalprimebar$ have at least two subshifts, so no subshift in $\TTcalprimebar$ is isolated.
\end{proof}



One useful observation is that the topologically mixing subshifts are dense in $\TTcalprimebar$.

\begin{lemma}\label{ttmix}
For any nonempty cylinder $C$ in $\TTcalprimebar$, the subshift $S(C) \in C$ is an infinite mixing shift of finite type.
\end{lemma}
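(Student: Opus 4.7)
My plan is to choose a convenient base point for the cylinder and then read off the mixing property from elementary graph-theoretic properties of its Rauzy graph.

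First, since $\TTcalprime$ is dense in $\TTcalprimebar$, any nonempty cylinder $C$ in $\TTcalprimebar$ meets $\TTcalprime$, so without loss of generality I would write $C = [X,n]$ with $X \in \TTcalprime$, i.e.\ with $X$ infinite and totally transitive. By definition $S(C) = S(G_{X,n})$ is the set of biinfinite sequences corresponding to biinfinite walks on the Rauzy graph $G_{X,n}$, which is a shift of finite type; also $X \subseteq S(C)$ by Lemma~\ref{rauzy}, so $S(C)$ is infinite.

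The remaining content is to show $S(C)$ is topologically mixing. Since topological mixing for an edge shift on a finite directed graph is equivalent to the graph being irreducible with period $1$ (i.e.\ primitive), it suffices to prove both of these for $G_{X,n}$. Irreducibility is immediate from transitivity of $X$: a transitive point $x \in X$ induces a biinfinite walk in $G_{X,n}$ visiting every vertex and every edge, and from that one can extract a directed path between any prescribed pair of vertices.

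The main step is to show that the period $d$ of $G_{X,n}$ equals $1$, and this is where I would use total transitivity. Suppose for contradiction that $d \ge 2$. Then the vertex set of $G_{X,n}$ admits a partition $V = V_0 \sqcup V_1 \sqcup \cdots \sqcup V_{d-1}$ so that every directed edge goes from some $V_i$ to $V_{i+1 \bmod d}$; consequently, for any $x \in X$, the vertex $x_i \cdots x_{i+n-2}$ of $G_{X,n}$ traversed at time $i$ lies in $V_{i+c \bmod d}$ for some fixed $c = c(x)$. Consider the clopen set
\[
U \;=\; \bigcup_{w \in V_0} [w] \;\subseteq\; X.
\]
By construction $\sigma_X^{d}(U) \subseteq U$, and $U$ is nonempty and a proper subset of $X$ because the vertex set has more than one equivalence class. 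Hence $\sigma_X^{d}$ preserves the nontrivial clopen decomposition $U, \sigma_X(U), \ldots, \sigma_X^{d-1}(U)$ of $X$, which forbids $(X,\sigma_X^d)$ from being transitive. This contradicts total transitivity of $X$, so $d = 1$ and $G_{X,n}$ is primitive.

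The hardest step is the period computation, though it is essentially a direct translation between the graph-theoretic period and transitivity of $\sigma_X^d$; all other steps are definitional or follow from standard facts about edge shifts on irreducible primitive graphs. Once primitivity of $G_{X,n}$ is in hand, $S(C) = S(G_{X,n})$ is a topologically mixing shift of finite type, completing the proof.
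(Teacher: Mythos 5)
Your proof is correct and takes essentially the same route as the paper: reduce to a base point $X \in \TTcalprime$, show that $G_{X,n}$ is irreducible and aperiodic (hence primitive), and conclude that $S(C)$ is a mixing SFT since it is (conjugate to) the edge shift of a primitive graph. You spell out the aperiodicity step (a period-$d$ partition of vertices would yield a proper clopen set invariant under $\sigma_X^d$, contradicting total transitivity) which the paper leaves implicit, but this is filling in detail rather than a different argument.
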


\begin{proof}
This is fairly clear; by definition, $C$ must be equal to the intersection of $[X, n]$ with $\TTcalprimebar$ for some subshift $X \in \TTcalprimebar$. Since this cylinder must intersect $\TTcalprime$, we may assume that $X$ itself is infinite and totally transitive. The Rauzy graph $G_{X,n}$ must then be nontrivial, irreducible, and aperiodic, therefore primitive. Then, by definition, $G_{X,n}$ is the graph defining the $n$th higher block presentation (see~\cite[Sec. 1.4]{LM}) of $S(C)$, so this presentation is infinite and mixing. Since the higher block presentation is topologically conjugate to $S(C)$, it follows $S(C)$ must be infinite and mixing as well.
\end{proof}

Our main tool for proving that various sets/properties are dense in $\TTcalprimebar$ is the following theorem, which plays the role that Theorem~\ref{letword} did in $\Tcalbar$.  The result closely mimics that of~\cite[Thm. 6.4]{HochmanGeneric}, and
Hochman alludes to (but does not prove) the version here.

\begin{theorem}\label{conjdense}
For any subshift $X \in \TTcalprimebar$ which has zero entropy and no periodic points, the conjugacy class of $X$ is dense in $\TTcalprimebar$.
\end{theorem}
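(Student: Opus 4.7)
Fix a nonempty cylinder $C = [Y, n] \cap \TTcalprimebar$; the task is to produce $Z \in C$ topologically conjugate to $X$. By Lemma~\ref{ttmix}, $S(C)$ is an infinite mixing shift of finite type, and in particular $h(S(C)) > 0$. Since $h(X) = 0 < h(S(C))$ and $X$ has no periodic points (so the Krieger periodic-orbit obstruction $|P_k(X)| \le |P_k(S(C))|$ is trivial for every $k$), Krieger's Embedding Theorem provides a topological embedding $X \hookrightarrow S(C)$. However, a bare application of Krieger gives a conjugate copy of $X$ inside $S(C)$ whose length-$n$ language can be a proper subset of $L_n(Y)$, so this copy need not belong to $[Y, n]$.

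To remedy this I use a "filler" refinement of Krieger's theorem: given a mixing SFT $T$ and a finite collection $\mathcal{F} \subset L(T)$, if $X$ satisfies the Krieger hypotheses for $T$, then there is an embedding $\iota: X \hookrightarrow T$ whose image contains every $w \in \mathcal{F}$ as a subword. This follows from the standard Krieger construction combined with the marker lemma for aperiodic subshifts: since $X$ has no periodic points one may choose a sparse clopen marker set in $X$ with arbitrarily long first-return times, and between consecutive markers Krieger's coding inserts long words from $L(T)$ which, by mixing, can be arranged to include any prescribed finite family $\mathcal{F}$. Applying this with $T = S(C)$ and $\mathcal{F} = L_n(Y)$ yields $\iota$ with $L_n(Y) \subset L_n(\iota(X))$; combined with $L_n(\iota(X)) \subset L_n(S(C)) = L_n(Y)$, this gives $L_n(Z) = L_n(Y)$ for $Z := \iota(X)$, so $Z \in [Y, n]$.

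It remains to verify $Z \in \TTcalprimebar$: this requires, for each $m \ge n$, some infinite totally transitive subshift $T_m$ with $L_m(T_m) = L_m(Z)$. The filler construction can be further strengthened so that, for every $m$, the Rauzy graph $G_{Z, m}$ is primitive; using mixing of $S(C)$ at each level, the fillers are chosen so that $\iota(X)$ traverses every edge of $G_{S(C), m}$ and contains cycles of coprime lengths. Once $G_{Z, m}$ is primitive, the $m$-step SFT cover $T_m = \{z : \text{every length-}m \text{ subword of } z \text{ lies in } L_m(Z)\}$ is an infinite mixing SFT with $L_m(T_m) = L_m(Z)$, so $T_m \in \TTcalprime \cap [Z, m]$. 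Since $m$ was arbitrary, this establishes $Z \in \TTcalprimebar$ and hence $Z \in C$.

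The principal obstacle is organizing a single embedding $\iota$ that simultaneously lands in $[Y, n]$ and yields primitive Rauzy graphs $G_{Z, m}$ at every level. This is handled by a bookkeeping argument on a sequence of increasingly sparse marker sets in $X$: aperiodicity of $X$ lets the intervals between markers be made arbitrarily long, leaving room to embed both the finitely many prescribed words from $L_n(Y)$ and, level by level, enough edge coverage in $L(S(C))$ to secure primitivity of each higher Rauzy graph of $Z$.
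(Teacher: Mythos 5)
Your proposal identifies the right high-level strategy (embed $X$ into $S(C)$ while controlling the $n$-letter language), but its central step rests on a ``filler refinement'' of Krieger's embedding theorem that is not a known result and that you only sketch. Claiming that the marker-based coding in Krieger's proof can be rearranged so that the image contains a prescribed finite family $\mathcal{F} \subset L(T)$ --- and, further, so that every higher Rauzy graph $G_{Z,m}$ of the image comes out primitive --- is a genuine modification of a difficult theorem. The informal paragraphs about sparse markers and level-by-level bookkeeping do not prove either claim, so the argument has a gap at precisely the point where it departs from the standard statement of Krieger.

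The paper avoids the issue by moving the burden from the embedding to the target. Rather than steering Krieger's image to contain all of $L_n(Y)$, it first builds a mixing SFT $Z' \subset S(C)$ with the stronger property that \emph{every point} of $Z'$ contains all of $L_n(Y)$. Concretely, by primitivity of $G_{Y,n}$ one chooses two closed walks $K, K'$ of coprime lengths, each traversing every edge of $G_{Y,n}$ and based at a common vertex $v$; the sofic shift $Z$ of biinfinite concatenations of $K$ and $K'$ then has the automatic feature that each of its points sees every $n$-letter word of $Y$, and a synchronizing word cuts $Z$ down to a mixing SFT $Z' \subset Z$. Applying Krieger's theorem \emph{as a black box} to embed $X$ into $Z'$, the image sits inside $Z$, hence has $n$-letter language exactly $L_n(Y)$ regardless of which embedding Krieger produces. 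Since the image is conjugate to $X$, membership in $\TTcalprimebar$ comes for free, so your additional primitivity-of-all-$G_{Z,m}$ work is unnecessary. The paper's construction is therefore both simpler and fully rigorous where your proposal relies on an unproved strengthening of Krieger's theorem.
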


\begin{proof}
Choose any such $X$ and any nonempty cylinder $C$ in $\TTcalprimebar$. By Lemma~\ref{ttmix}, there is an infinite mixing shift of finite type $Y = S(C)$ and $n$ so that
$C = [Y, n]$. We note that Krieger's embedding theorem would immediately allow us to construct an embedding of $X$ into $Y$, which yields a subshift of $Y$ conjugate to $X$. However, this shift might not contain all words in $L_n(Y)$ in its language, and so might not be in $C$. To obviate this issue, we will construct a mixing shift of finite type subsystem of $Y$ where every point contains all words in $L_n(Y)$.

By primitivity of $G_{Y,n}$, there exist cycles $K, K'$ with relatively prime lengths which each contain all edges of $G_{Y,n}$. Without loss of generality, we can assume that $K, K'$ start and end at the same vertex $v$. Define a labeled directed graph $\mathcal{G}$ consisting of copies of the cycles $K$, $K'$ which share the vertex $v$ and no others, and where each edge is labeled by the initial letter of the word in $L_{n}(Y)$ it corresponds to. Define a subshift $Z$ consisting of all labels of biinfinite walks on $\mathcal{G}$; by definition, $Z$ is irreducible and sofic (see \cite{LM} for a definition of sofic subshift). It is clear by definition that $Z$ consists of all sequences in $Y$ corresponding to biinfinite concatenations (in any order) of $K, K'$, so $Z$ has periodic points of relatively prime periods and is therefore mixing.

Since $K, K'$ each contained all edges of $G_{Y,n}$, every $z \in Z$ contains all words in $L_n(Y)$ as subwords. Moreover, since $Z$ is mixing sofic, it has a synchronizing word $w$ and there exist words $u, v$ with lengths differing by $1$ so that $wuw, wvw \in L(Z)$. Then, the set of all configurations of concatenations of $wu, wv$ (in any order) is a subshift $Z'$ of $Z$ (this uses the synchronizing property of $w$; again, see \cite{LM} for a definition), and it's easy to see that it's a shift of finite type. It is also mixing, since it contains periodic sequences $(wu)^{\infty}$ and $(wv)^{\infty}$ with periods differing by $1$ (and therefore relatively prime). 

By Krieger's embedding theorem~\cite[Cor. 10.1.9]{LM}, there is an embedding from $X$ to $Z'$ (as a mixing shift of finite type, $Z'$ automatically has positive entropy, so $h(Z') > h(X)$). Now, $Z'$ contains a shift conjugate to $X$, which is in $C$ by Lemma~\ref{rauzy}, completing the proof.
\end{proof}

This theorem is quite powerful, showing that any set defined by a conjugacy-invariant dynamical property which is possessed by some zero entropy aperiodic totally transitive subshift is dense in $\TTcalprimebar$. In particular, any such property/set that we have already shown to be a $G_{\delta}$ is automatically generic, so we have the following.

\begin{theorem}\label{ttgeneric}
The set of zero entropy, minimal, uniquely ergodic subshifts with topological rank two is residual in $\TTcalprimebar$.
\end{theorem}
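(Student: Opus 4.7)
The strategy is to decompose the theorem into four separate genericity claims and establish each by showing it defines (or contains) a $G_{\delta}$-dense subset of $\TTcalprimebar$; their intersection is then residual by the Baire Category Theorem. The main density tool is Theorem~\ref{conjdense}: the conjugacy class of any zero entropy, aperiodic subshift in $\TTcalprimebar$ is dense. Since zero entropy, minimality, unique ergodicity, and topological rank two are all conjugacy invariants, exhibiting appropriate examples in $\TTcalprime$ (combined with Theorem~\ref{conjdense}) will yield the density of each.

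For the $G_{\delta}$ verifications: zero entropy is closed in $\Scal$ (hence $G_{\delta}$) by the argument recalled in Theorem~\ref{zeroT}; minimality is $G_{\delta}$ via the language-level characterization in Theorem~\ref{minimal}, which depends only on finite-alphabet data and carries over verbatim to $\TTcalprimebar$; and unique ergodicity is $G_{\delta}$ by the standard characterization as uniform convergence of Birkhoff averages of cylinder indicators, which reduces to a countable intersection of open conditions on subword frequencies within $L_n(X)$.

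For topological rank two, I would follow the template of Corollary~\ref{rk2} in the transitive setting. First observe that the set of subshifts with $c_X(n+1) - c_X(n) = 1$ for infinitely many $n$ is $G_{\delta}$: each event $\{X : c_X(n+1) - c_X(n) = 1\}$ is clopen (determined by $L_{n+1}(X)$), so the condition holds $G_\delta$-ly as $\bigcap_N \bigcup_{n > N}\{X : c_X(n+1)-c_X(n)=1\}$. Intersecting with the minimal subshifts and applying the argument of Corollary~\ref{ark2} yields an alphabet rank two right-proper directive sequence, and then \cite[Cor. 2.5]{durandleroy} upgrades this to topological rank two (rank one being excluded, as it would force $X$ to be an odometer and hence not expansive). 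So this $G_{\delta}$ subset of $\TTcalprimebar$ lies inside the rank-two set, and it remains to show it is dense.

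The main obstacle is establishing density of the "$1$-RS infinitely often" condition in $\TTcalprimebar$. By Theorem~\ref{conjdense}, it suffices to produce a single zero entropy, aperiodic subshift $X_0 \in \TTcalprime$ which is minimal and satisfies $c_{X_0}(n+1) - c_{X_0}(n) = 1$ for infinitely many $n$; note that the classical Chac\'on subshift does not qualify, since its complexity is $c(n) = 2n-1$ so $c(n+1)-c(n)=2$ always. One natural route is to appeal to the near-linear complexity result Corollary~\ref{sosmall2} (which is to be proved elsewhere in the same section): for any unbounded increasing $h$ with $h(n) = o(n)$, generically $c_X(n) < n + h(n)$ infinitely often, which by a standard Morse-Hedlund-type comparison forces $c_X(n+1)-c_X(n) = 1$ infinitely often. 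Alternatively, one can construct an explicit totally transitive $S$-adic subshift on two symbols engineered to be topologically weakly mixing yet to retain slope-one complexity increments along a subsequence, and then invoke Theorem~\ref{conjdense} to conclude.
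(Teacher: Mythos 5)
Your proposal is correct and takes essentially the same approach as the paper: each property is a $G_\delta$ condition (or is implied by a $G_\delta$ condition, in the case of topological rank two via $1$-RS infinitely often plus minimality), and density follows from the conjugacy-class density of Theorem~\ref{conjdense}. The paper condenses everything into a single parenthetical by observing that Sturmian subshifts --- which are totally transitive, zero entropy, aperiodic, minimal, uniquely ergodic, and satisfy $c_X(n)=n+1$ for all $n$ (hence $c_X(n+1)-c_X(n)=1$ for every $n$, yielding topological rank two by the right-proper $S$-adic argument of Corollary~\ref{ark2}) --- simultaneously witness density for all four properties at once, whereas you route the rank-two density through Corollary~\ref{sosmall2} or an ad hoc construction when a Sturmian subshift already does the job directly.
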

(Sturmian subshifts are examples showing that these properties satisfy the hypothesis of Theorem~\ref{conjdense}.)

We emphasize that Toeplitz subshifts are no longer generic in $\TTcalprimebar$ (as was the case for $\Tcalbar$), since no Toeplitz subshift in $\Scalprime$ is totally transitive.

Unlike the transitive case, we can show that topologically mixing is generic in $\TTcalprimebar$.

\begin{theorem}\label{ttmixgen}
The set $\mathbf{TM}$ of topologically mixing subshifts is residual in $\TTcalprimebar$.
\end{theorem}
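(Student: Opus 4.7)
The plan is to prove density of $\mathbf{TM}$ in $\TTcalprimebar$ and then to exhibit $\mathbf{TM}$ as containing a dense $G_\delta$; together these yield residuality.

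First I would establish density via Lemma~\ref{ttmix}: every nonempty cylinder $C \subseteq \TTcalprimebar$ contains the infinite topologically mixing shift of finite type $S(C) \in C$, so $\mathbf{TM}$ meets every nonempty open subset of $\TTcalprimebar$.

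Next I would reduce the $G_\delta$ statement to a cofiniteness condition about return sets. Specifically, for any topologically transitive subshift $X$, topological mixing is equivalent to the assertion that $G_X(v,v) := \{n \geq 0 : [v] \cap \sigma^{-n}[v] \neq \varnothing\}$ is cofinite in $\mathbb{N}$ for every $v \in L(X)$. The forward direction is immediate. For the reverse, given $v, w \in L(X)$, transitivity supplies $u$ with $vuw \in L(X)$; cofiniteness of $G_X(vuw, vuw)$ then transfers to cofiniteness of $G_X(v,w)$, since any two occurrences of $vuw$ at distance $n$ in a single point of $X$ simultaneously exhibit $v$ at position $0$ and $w$ at position $n + |v| + |u|$, yielding $n + |u| \in G_X(v,w)$. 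Using this characterization, $\mathbf{TM}$ restricted to the transitive subshifts is a countable intersection $\bigcap_{v} W_v$, where each $W_v = \bigcup_N \{X : v \notin L(X) \text{ or } [N,\infty) \subseteq G_X(v,v)\}$ is an $F_\sigma$ set.

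The hard part will be to show each $W_v^c$ is meager in $\TTcalprimebar$; countability of the words $v$ will then supply the required dense $G_\delta$ subset of $\mathbf{TM}$. To accomplish this, I would combine the density of mixing SFTs from Lemma~\ref{ttmix} with the techniques appearing in the proof of Theorem~\ref{conjdense}: starting from any cylinder $C$, a Krieger-type embedding argument produces a minimal topologically mixing reference subshift inside $S(C)$, and iteratively refining sub-cylinders to pin down longer and longer initial segments of $G_X(v,v)$ in agreement with the mixing model gives a dense $G_\delta$ inside $W_v$. The subtle point is to execute this refinement in a way compatible across all words $v$ simultaneously, which will require exploiting the substitutional structure that the proof of Theorem~\ref{conjdense} makes available throughout $\TTcalprimebar$.
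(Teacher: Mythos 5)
Your density argument, using Lemma~\ref{ttmix} to observe that the infinite mixing shift of finite type $S(C)$ lies in every nonempty cylinder $C$, is correct and is in fact more elementary than the paper's route (the paper instead invokes Theorem~\ref{conjdense} together with the existence of zero-entropy mixing subshifts). That part is fine.

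The gap is in the residuality argument, and it is genuine. Your characterization of mixing via cofiniteness of return sets $G_X(v,v)$ is correct for transitive systems, but it has the wrong quantifier structure for this topology: ``$\exists N\, \forall n \geq N$'' over clopen conditions yields $F_\sigma$ sets $W_v$, as you observe, and a countable intersection of $F_\sigma$'s is $F_{\sigma\delta}$, not $G_\delta$. You therefore fall back on wanting each $W_v$ to be comeager, to be established via ``iteratively refining sub-cylinders to pin down longer and longer initial segments of $G_X(v,v)$''. This plan does not work as described. A cylinder $[X,m]$ constrains only $L_m(Y)$ for its members $Y$, hence determines $G_Y(v,v)$ only on the bounded range $n \leq m - |v|$. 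It places no constraint at all on $G_Y(v,v) \cap (m - |v|, \infty)$, so no sequence of cylinders can force the tail of the return set to be cofinite; the closed sets $\{Y : [N,\infty) \subseteq G_Y(v,v)\}$ generically have empty interior relative to any cylinder that still contains non-mixing subshifts (which every cylinder does --- e.g.\ periodic orbits). Likewise, a Krieger embedding producing a single good reference subshift inside $S(C)$ is irrelevant for comeagerness: one needs to control a comeager collection of subshifts, not exhibit one.

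What the paper does differently is choose a characterization of mixing whose quantifier order matches $G_\delta$ directly: for all $n$ there exists a \emph{single} $k$ such that every pair $v, w \in L_n(X)$ admits a length-$k$ connecting word $u$ with $vuw \in L_{2n+k}(X)$. For fixed $n, k$ this is a clopen condition (it depends only on $L_{2n+k}(X)$), so $\mathbf{TM} = \bigcap_n \bigcup_k (\text{clopen})$ is literally $G_\delta$ in $\Scal$, and density finishes the job. If you want to salvage your approach, you would need to replace the per-word cofiniteness criterion with a uniform finitary criterion of this ``$\forall n\, \exists k$ (clopen)'' form; without that reformulation, the residuality half of the proof remains open.
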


\begin{proof}
Density is an immediate corollary of Theorem~\ref{conjdense} (\cite{HK} gives examples of topologically mixing subshifts with zero entropy), so we need only show that the mixing subshifts form a $G_{\delta}$ in $\Scal$ (and thereby $\TTcalprimebar$). To see this, we note that for any finite $\A \subset \mathbb{Z}$, a subshift $X \in \Scal[\A]$ is topologically mixing if and only if, for all $n$, there exists $k$ so that, for all $v,w \in L_n(X)$, there exists $u \in L_k(X)$ for which $vuw \in L_{2n+k}(X)$. An equivalent statement (since languages are factorial) is: for all $n$, there exists $k$ so that, for all $v, w \in \A^n$ which are subwords of some words in $L_{2n+k}(X)$, there exists $u \in \A^k$ for which $vuw \in L_{2n+k}(X)$.

For any $\A,n,k$, denote by $M(\A,n,k)$ the set of all $S \subset \A^{2n+k}$ with this property. Then, we can write
\[
\mathbf{TM} = \bigcap_{n \in \mathbb{N}} \bigcup_{\substack{k \in \mathbb{N},\\ \A \subset \mathbb{Z}, |\A| < \infty}} \{X \in \Scal \mid L_{2n+k}(X) \in M(\A,n,k)\} \]
\[ =
\bigcap_{n \in \mathbb{N}} \bigcup_{\substack{k \in \mathbb{N},\\ \A \subset \mathbb{Z}, |\A| < \infty}} \bigcup_{\substack{X \in \Scal \\L_{2n+k}(X) \in M(\A,n,k)}} [X,2n+k],
\]
which is clearly a $G_{\delta}$, completing the proof. 
\end{proof}

\subsection{Measure-theoretic properties for generic subshifts in $\TTcalprimebar$}
Theorem~\ref{ttgeneric} shows that generic subshifts in $\TTcalprimebar$ are uniquely ergodic, and we can prove that properties of the unique measure for a generic subshift mimic those in the simplex of invariant measures. There is one subtlety; since the space of (shift-invariant) measures on $\mathbb{Z}^{\mathbb{Z}}$ is not complete in the weak topology (and the issue persists even for measures supported on finite-alphabet full shifts), we must restrict to a fixed ambient finite-alphabet full shift in our statements. 
We first need a fact about genericity in the measure-theoretic setting.

\begin{lemma}\label{ZWM}
For any finite $\A \subset \mathbb{Z}$ with associated space $\mathcal{M}(\A)$ of shift-invariant probability measures on $\A^{\mathbb{Z}}$ endowed with the weak topology, the set of weak mixing measures with zero entropy is residual in $\mathcal{M}(\A)$.
\end{lemma}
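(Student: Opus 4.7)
The plan is to prove separately that (a) the set of zero entropy measures and (b) the set of weakly mixing measures are each residual in $\mathcal{M}(\A)$, and then to invoke the Baire category theorem on their intersection (the ambient simplex $\mathcal{M}(\A)$ is compact, convex and metrizable, hence Polish). Both facts are classical consequences of Sigmund's genericity theory for systems with the specification property~\cite{Sigmund1971}, and the full shift on $\A^{\mathbb{Z}}$ trivially has specification.

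For (a), the entropy map $\mu \mapsto h_{\mu}(\sigma)$ is upper semicontinuous on $\mathcal{M}(\A)$ because $\sigma$ is expansive on $\A^{\mathbb{Z}}$. Hence each set $\{\mu : h_{\mu}(\sigma) < 1/k\}$ is open, exhibiting the zero entropy measures as a $G_{\delta}$. Density is then immediate from Sigmund's theorem that periodic orbit measures -- all of which have zero entropy -- are dense in $\mathcal{M}(\A)$ for any system with the specification property.

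For (b), I would sidestep the asymptotic nature of weak mixing by using the classical reformulation that $\mu$ is weakly mixing if and only if $\mu \times \mu$ is ergodic under the diagonal shift $\sigma \times \sigma$ on $\A^{\mathbb{Z}} \times \A^{\mathbb{Z}}$. The product map $\Phi : \mu \mapsto \mu \times \mu$, viewed as a map from $\mathcal{M}(\A)$ into the Choquet simplex of $(\sigma\times\sigma)$-invariant probability measures on $\A^{\mathbb{Z}} \times \A^{\mathbb{Z}}$, is continuous in the weak topology (reduce to tensor test functions $f(x)g(y)$ via Stone--Weierstrass). Since ergodic measures are precisely the extreme points of that simplex, and the set of extreme points of a compact convex metrizable set is always a $G_{\delta}$, the preimage $\mathrm{WM} = \Phi^{-1}(\mathrm{Erg})$ is a $G_{\delta}$ in $\mathcal{M}(\A)$. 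Density of $\mathrm{WM}$ again comes from~\cite{Sigmund1971}: for specification systems, weakly mixing measures are residual (not merely dense). Combining (a) and (b) by Baire completes the proof. The only genuinely delicate point is the $G_{\delta}$ formulation of weak mixing, and the $\Phi^{-1}(\mathrm{Erg})$ detour resolves it cleanly by deferring to the standard extreme-point description of ergodicity.
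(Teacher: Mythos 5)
Your proof is correct and takes a genuinely different route for the weak-mixing half. The zero-entropy half is essentially identical to the paper's: density of finitely-supported measures from Sigmund, upper semicontinuity of entropy from expansivity, intersect. For weak mixing, however, the paper simply cites Parthasarathy's results (together with a remark about transferring them from $\mathbb{R}^{\mathbb{Z}}$ to finite alphabets), whereas you give a self-contained $G_{\delta}$ argument via the map $\mu \mapsto \mu \times \mu$ into the simplex of $(\sigma\times\sigma)$-invariant measures, using that weak mixing of $\mu$ is equivalent to ergodicity of $\mu\times\mu$, that $\Phi$ is weak-$*$ continuous, and that extreme points of a compact metrizable convex set form a $G_{\delta}$. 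This is a nice, purely structural derivation that avoids the bookkeeping of chasing the precise statements in Parthasarathy's papers; what it costs you is that you still need to know density of the weakly mixing measures, for which you invoke Sigmund. (One small redundancy: if you are willing to cite Sigmund's result that weakly mixing measures are \emph{residual} for specification systems, then the $\Phi^{-1}(\mathrm{Erg})$ detour is not strictly needed; conversely, if you only want to borrow density, the $\Phi$ argument does earn its keep. Either way the logic closes.) Both routes are sound; yours is more self-contained on the $G_{\delta}$ side, while the paper's is shorter but leans more heavily on the literature.
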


\begin{proof}
The fact that weak mixing measures are generic in $\mathcal{M}(\A)$ comes from \cite{partha1} and \cite{partha2}; though the proof in
\cite{partha2} is stated for measures on $\mathbb{R}^\mathbb{Z}$, it rests on results from \cite{partha1} which were proved for a finite alphabet.

To see that zero entropy measures are generic, we first note that \cite{Sigmund1971} implies that the set of measures supported on a finite subshift (i.e. a finite union of periodic orbits) is dense in $\mathcal{M}(\A)$. Finally, since the shift on $\A^\mathbb{Z}$ is expansive, the entropy map $\mu \mapsto h(\mu)$ is upper semi-continuous, meaning that for all $n$, the set of shift-invariant measures on $\A^\mathbb{Z}$ with measure-theoretic entropy less than $n^{-1}$ is open. Taking the intersection shows that the zero entropy measures form a $G_{\delta}$ set, and since all measures with finite support are zero entropy, they are dense as well, completing the proof.

\end{proof}


We can now state our main measure-theoretic result. Note that our proof is similar to that in~\cite{HochmanGeneric}, but the result does not directly imply ours due to the differences in settings.

\begin{theorem}\label{measure}
For any finite $\A \subset \mathbb{Z}$ with $|\A| > 1$ and any nonempty set $G$ of ergodic shift-invariant probability measures which is closed under measure-theoretic isomorphisms on $\A^{\mathbb{Z}}$ and which is a dense $G_{\delta}$ in the space $\mathcal{M}(\A)$ of invariant measures on $\A^{\mathbb{Z}}$, a generic subshift in $\Scal[\A] \cap \TTcalprimebar$ is uniquely ergodic with unique measure in $G$.
\end{theorem}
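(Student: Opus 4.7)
The plan is to show that
\[
\mathcal{R} := \{ X \in \Scal[\A] \cap \TTcalprimebar : X \text{ is uniquely ergodic with } \mu_X \in G\}
\]
is a dense $G_\delta$ in $\Scal[\A] \cap \TTcalprimebar$. First observe that $\Scal[\A]$ is clopen in $\Scal$ (if $X_n \to X$ in Hausdorff metric then $L_1(X_n) = L_1(X)$ eventually), so $\Scal[\A] \cap \TTcalprimebar$ is a Polish space, and every cylinder in it is simultaneously open in $\TTcalprimebar$. This is what will allow me to import density from Theorem~\ref{conjdense}.

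For the $G_\delta$ part, I would first verify that the assignment $X \mapsto \mu_X$ is continuous from the set $\mathcal{U}$ of uniquely ergodic subshifts contained in $\Scal[\A]$ (with Hausdorff topology) into $\mathcal{M}(\A)$ (with the weak-$*$ topology). If $X_n \to X$ in $\mathcal{U}$, weak-$*$ compactness of $\mathcal{M}(\A)$ reduces the claim to showing that every weak limit $\nu$ of $(\mu_{X_n})$ equals $\mu_X$: any closed set $F \subset \A^\mathbb{Z}$ disjoint from the compact set $X$ is disjoint from $X_n$ for all large $n$ (by Hausdorff convergence), so Portmanteau gives $\nu(F) \le \limsup \mu_{X_n}(F) = 0$, meaning $\nu$ is supported on $X$; since $\nu$ is also shift-invariant, unique ergodicity forces $\nu = \mu_X$. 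Because $\mathcal{U}$ is residual in $\Scal[\A] \cap \TTcalprimebar$ by Theorem~\ref{ttgeneric}, and $G$ is $G_\delta$ in $\mathcal{M}(\A)$ by hypothesis, the set $\mathcal{R}$ is the intersection of a residual $G_\delta$ with the preimage of a $G_\delta$ under a continuous map, hence is $G_\delta$.

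For density, fix a non-empty cylinder $C$ in $\Scal[\A] \cap \TTcalprimebar$. By Lemma~\ref{ZWM}, the ergodic, weakly mixing, zero-entropy measures form a dense $G_\delta$ in $\mathcal{M}(\A)$, so intersecting with the dense $G_\delta$ set $G$ and applying the Baire category theorem yields a measure $\mu \in G$ which is ergodic, weakly mixing, and of zero entropy. I would then invoke the weak-mixing-preserving strengthening of the Jewett--Krieger theorem due to Weiss to realize $\mu$ as measure-theoretically isomorphic to the unique invariant measure $\mu_{X_0}$ of a strictly ergodic, topologically weakly mixing subshift $X_0$ over some finite alphabet. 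Being topologically weakly mixing and minimal, $X_0$ is totally minimal (it has no rational topological eigenvalues), hence totally transitive; it is also infinite, zero-entropy, and aperiodic, so $X_0 \in \TTcalprime$ and satisfies the hypotheses of Theorem~\ref{conjdense}. The construction in that theorem's proof, via Krieger's embedding theorem applied to the infinite mixing SFT $S(C) \subseteq \A^{\mathbb{Z}}$, produces a subshift $X' \in C$ topologically conjugate to $X_0$. Then $X'$ is uniquely ergodic, and the topological conjugacy is a measure-theoretic isomorphism between $(\mu_{X_0})$ and $(\mu_{X'})$; since $\mu_{X_0} \cong \mu \in G$ and $G$ is closed under measure-theoretic isomorphism on $\A^{\mathbb{Z}}$, we obtain $\mu_{X'} \in G$, so $X' \in \mathcal{R} \cap C$.

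The main obstacle will be the application of Weiss' refinement of the Jewett--Krieger theorem, which is what lets us realize a generic ergodic, weak-mixing, zero-entropy measure as the unique measure of a \emph{topologically} weakly mixing (and hence totally transitive) subshift, so that the constructed model actually lands in the class $\TTcalprime$ where Theorem~\ref{conjdense} applies; everything else is standard Baire-category and Krieger-embedding bookkeeping.
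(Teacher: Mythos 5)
Your proposal follows the same overall strategy as the paper's proof: establish continuity of $X \mapsto \mu_X$ on the uniquely ergodic subshifts so that the preimage of $G$ is a relative $G_\delta$, then get density from Lemma~\ref{ZWM}, a Jewett--Krieger realization, and Theorem~\ref{conjdense}. Two comments.

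First, the Portmanteau step in your continuity argument is stated with the inequality reversed: for closed $F$, Portmanteau gives $\limsup_n \mu_{X_n}(F) \le \nu(F)$, which is vacuous here. To repair it, take an open set $U \supset F$ with $\overline{U}$ disjoint from the compact set $X$ (possible since $\A^{\mathbb{Z}}$ is a compact metric space); by Hausdorff convergence $X_n \cap U = \varnothing$ eventually, so $\mu_{X_n}(U) = 0$ eventually, and the open-set Portmanteau inequality $\nu(U) \le \liminf_n \mu_{X_n}(U) = 0$ gives $\nu(F)=0$. (The paper instead proves continuity by a word-frequency estimate; both routes work once yours is patched.) You should also be a little more careful when asserting $\mathcal{R}$ is $G_\delta$: what is actually used is that $\mathbf{UE}$ is itself $G_\delta$ in $\Scal$, not merely residual, so that $\mathcal{R} = \mathbf{UE}(\A) \cap f^{-1}(G)$ inherits the $G_\delta$ property.

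Second, and more substantively: the step you flag as the ``main obstacle'' is not an obstacle at all, and the paper does not need Weiss' refinement of Jewett--Krieger. The plain Jewett--Krieger theorem (realizing $\mu$ as the unique measure of a strictly ergodic subshift $X \subset \A^{\mathbb{Z}}$, which is possible since $h(\mu)=0 < \log|\A|$) already suffices. For a strictly ergodic system, any continuous eigenfunction has constant modulus by minimality and therefore normalizes to an $L^2(\mu_X)$ eigenfunction; hence every nontrivial topological eigenvalue of $X$ is a nontrivial measure-theoretic eigenvalue of $\mu_X \cong \mu$. Since $\mu$ is weakly mixing, $X$ has no nontrivial continuous eigenvalues, in particular no clopen decomposition $X = \bigsqcup_{j<d}\sigma^j E$ with $d>1$, so $(X,\sigma^k)$ is minimal for every $k$ and $X$ is totally transitive, landing in $\TTcalprime$ automatically. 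This is exactly the observation the paper uses, and it dissolves the concern you raise at the end.
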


\begin{proof}
Fix any $\A, G$ as in the theorem. If $\mathbf{UE}(\A)$ is the set of uniquely ergodic subshifts contained in $\A^{\mathbb{Z}}$ and
$\mathcal{E}(\A)$ is the set of ergodic shift-invariant probability measures on
$\A^\mathbb{Z}$, then we claim that the function $f: \mathbf{UE}(\A) \rightarrow \mathcal{E}(\A)$ defined by $X \mapsto \mu_X$ is continuous.

To see this, consider any $X \in \mathbf{UE}(\A)$ and any $k, \epsilon > 0$. By definition of unique ergodicity, for every $u \in L_k(X)$, there exists $N_u$ so that for every $M > N_u$, every $v \in L_M(X)$ contains between $M(\mu_X([u]) - \epsilon/2)$ and $M(\mu_X([u]) + \epsilon/2)$ occurrences of $u$. Then, if we define $N = \max_u N_u$ and take any $Y \in \mathbf{UE}(\A) \cap [X, N]$, then for every $u \in L_k(X)$, every word in $L_N(Y) = L_N(X)$ contains between $N(\mu_X([u]) - \epsilon/2)$ and $N(\mu_X([u]) + \epsilon/2)$ occurrences of $u$. If $N$ was assumed large enough, this implies that for each such $u$ and every $y \in Y$,
the frequency of occurrences of $u$ in $y$ is between $\mu_X([u]) - \epsilon$ and $\mu_X([u]) + \epsilon$. If we define $\nu = f(Y)$, then
by the ergodic theorem, $|\mu_X([u]) - \nu([u])| < \epsilon$ for all $u \in L_k(X)$. Since this is true for $f(Y)$ for all $Y \in \mathbf{UE}(\A) \cap [X, N]$ and since $k, \epsilon$ were arbitrary, we've proven the desired continuity.

Therefore, $f^{-1}(G)$ is a $G_{\delta}$ subset of $\mathbf{UE}(\A)$ in the induced topology, i.e. the intersection of $\mathbf{UE}(\A)$ with a $G_\delta$ set in $\Scal$. We now wish to show that $f^{-1}(G)$ is dense in $\Scal[\A] \cap \TTcalprimebar$.

Since $G$ is a dense $G_\delta$ and the set of weak mixing measures with zero entropy is generic in $\mathcal{M}(\A)$ by Lemma~\ref{ZWM},
$G$ contains a weak mixing measure $\mu$ with zero entropy. By the Jewett-Krieger Theorem, there is a uniquely ergodic subshift
$X \in \Scal[\A]$ whose unique ergodic measure $\mu_X$ is measure-theoretically isomorphic to $\mu$, so $\mu_X \in G$. Since $\mu$ was  weak mixing with zero entropy, $\mu_X$ is also, and so $X$ has zero (topological) entropy, no periodic points, and is totally transitive. So, by Theorem~\ref{conjdense}, the conjugacy class $C(X)$ of $X$ is dense in $\TTcalprimebar$. Therefore, $C(X) \cap \Scal[\A]$ is dense in $\Scal[\A] \cap \TTcalprimebar$. Since $f(X) = \mu_X \in G$ and every topological conjugacy between uniquely ergodic subshifts induces a measure-theoretic isomorphism, $f(C(X) \cap \Scal[\A]) \subset G$, and hence $f^{-1}(G)$ is dense in $\Scal[\A] \cap \TTcalprimebar$.

Therefore, $f^{-1}(G)$ is the intersection of $\mathbf{UE}(\A)$ with a dense $G_\delta$ set in $\TTcalprimebar$. Finally, we note that
$\mathbf{UE}$ is generic in $\TTcalprimebar$ by Theorem~\ref{ttgeneric}, $\mathbf{UE}(\A) = \mathbf{UE} \cap \Scal[\A]$, and $\Scal[\A]$ is clopen in $\Scal$.
This implies that $f^{-1}(G)$ is generic in $\Scal[\A] \cap \TTcalprimebar$, completing the proof.

\end{proof}



\begin{theorem}\label{fullWM}
The set $\mathbf{WM}$ of uniquely ergodic subshifts with weak mixing unique measure is residual in $\TTcalprimebar$.
\end{theorem}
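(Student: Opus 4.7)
The plan is to derive Theorem~\ref{fullWM} from Theorem~\ref{measure} applied one finite alphabet at a time, then combine the resulting residuality statements via Baire category. First I would decompose $\TTcalprimebar = \bigcup_{\A}(\Scal[\A] \cap \TTcalprimebar)$ as a countable union over finite alphabets $\A \subset \mathbb{Z}$ with $|\A| \geq 2$; the case $|\A|=1$ contributes nothing, since the only subshift on a singleton alphabet is an isolated constant sequence which cannot be approximated by infinite totally transitive subshifts. By Lemma~\ref{SFTclopen} applied to the full shift on $\A$, each piece $\Scal[\A] \cap \TTcalprimebar$ is clopen in $\TTcalprimebar$.

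Next, for each such $\A$ I would take $G_\A \subset \mathcal{M}(\A)$ to be the set of weakly mixing shift-invariant probability measures on $\A^{\mathbb{Z}}$. Density of $G_\A$ in $\mathcal{M}(\A)$ is part of Lemma~\ref{ZWM}, and $G_\A$ is closed under measure-theoretic isomorphism because weak mixing is an isomorphism invariant. Theorem~\ref{measure} then yields that
\[
\mathbf{WM}_\A := \{X \in \Scal[\A] \cap \TTcalprimebar \mid X \text{ is uniquely ergodic with } \mu_X \in G_\A\}
\]
is residual in $\Scal[\A] \cap \TTcalprimebar$, and this set is evidently contained in $\mathbf{WM}$.

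Finally, I would patch the pieces together: since each $\Scal[\A] \cap \TTcalprimebar$ is open in $\TTcalprimebar$, any set meager within a piece is meager in $\TTcalprimebar$. The complement of $\mathbf{WM}$ in $\TTcalprimebar$ is the countable union over $\A$ of the complements of the respective $\mathbf{WM}_\A$ in $\Scal[\A] \cap \TTcalprimebar$, each of which is meager, so this complement is meager and $\mathbf{WM}$ is residual.

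The only step not essentially immediate from machinery already in the paper is verifying that $G_\A$ is a $G_\delta$ in $\mathcal{M}(\A)$, which is a hypothesis of Theorem~\ref{measure}. I would handle this by recalling the standard equivalence that a $\sigma$-invariant $\mu$ is weakly mixing if and only if $\mu \times \mu$ is $(\sigma \times \sigma)$-ergodic, combined with the fact that ergodic measures form a $G_\delta$ subset of the simplex of invariant measures (they are the extreme points of a compact convex metrizable simplex), together with continuity of the map $\mu \mapsto \mu \times \mu$ from $\mathcal{M}(\A)$ into the space of $(\sigma \times \sigma)$-invariant measures on $\A^{\mathbb{Z}} \times \A^{\mathbb{Z}}$.
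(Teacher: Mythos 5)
Your proposal is correct and follows essentially the same route as the paper: partition $\TTcalprimebar$ into clopen pieces $\Scal[\A] \cap \TTcalprimebar$ over finite alphabets $\A$ with $|\A| \geq 2$, apply Theorem~\ref{measure} with $G$ the weakly mixing measures on each piece, and reassemble by Baire category. The only additional content you supply is the explicit verification that the weakly mixing measures form a $G_\delta$ (via the $\mu \times \mu$-ergodicity criterion); the paper delegates this to Lemma~\ref{ZWM} and the references cited there.
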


\begin{proof}
For any alphabet $\A$ with $|\A| > 1$, the set $G(\A)$ of weakly mixing measures on $\A^{\mathbb{Z}}$ satisfies the hypotheses of Theorem~\ref{measure} (by Lemma~\ref{ZWM}), so by Theorem~\ref{measure}, $\mathbf{WM} \cap \Scal[\A]$ is residual in
$\TTcalprimebar \cap \Scal[\A]$. (We need not consider singleton $\A$ since the only such subshifts are isolated in $\Scal$ and so not
in $\TTcalprimebar$.) But then by unioning over all finite $\A \subset \mathbb{Z}$, we see that
\[
\mathbf{WM} = \bigcup_{\A} \mathbf{WM} \cap \Scal[\A] \textrm{ is residual in } \bigcup_{\A} \TTcalprimebar \cap \Scal[\A] = \TTcalprimebar.
\]
\end{proof}

We now show that the existence of a rigidity sequence (a sequence $\{n_k\}$ is a rigidity sequence for a shift-invariant measure
$\mu$ if $\mu(A \triangle \sigma^{n_k} A) \rightarrow 0$ for all measurable $A$) is also generic. We are not aware of a reference for this property being generic in the space of shift-invariant measures on a finite alphabet, so we give a self-contained proof of this fact.

\begin{theorem}\label{rigid}
The set of uniquely ergodic subshifts in $\Scal$ with a rigidity sequence is residual in
$\TTcalprimebar$. 
\end{theorem}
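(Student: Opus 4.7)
The plan is to apply Theorem~\ref{measure} with $G$ equal to the set of ergodic rigid measures on $\A^{\mathbb{Z}}$, and then to take a union over finite alphabets exactly as in the end of the proof of Theorem~\ref{fullWM}. Concretely, for each finite $\A \subset \mathbb{Z}$ with $|\A| > 1$, I will set
$$G_\A := \{\mu \in \mathcal{M}(\A) : \mu \text{ is ergodic and admits a rigidity sequence}\}$$
and verify the three hypotheses of Theorem~\ref{measure}: $G_\A$ consists of ergodic measures (by construction), is closed under measure-theoretic isomorphism (both ergodicity and rigidity are measure-theoretic invariants, since $\mu(A \triangle \sigma^{n_k} A) = \nu(\phi(A) \triangle T^{n_k}\phi(A))$ under any isomorphism $\phi$), and is a dense $G_\delta$ in $\mathcal{M}(\A)$.

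The bulk of the work is verifying the $G_\delta$ property. Since the ergodic measures already form a $G_\delta$, it suffices to show that the set $R(\A)$ of measures admitting a rigidity sequence is a $G_\delta$. I will fix an enumeration $\{[w_i]\}_{i \in \mathbb{N}}$ of the cylinder sets of $\A^{\mathbb{Z}}$ and use a standard monotone class argument to reduce rigidity on all Borel sets to the statement that $\mu([w_i] \triangle \sigma^{n_k}[w_i]) \to 0$ for every $i$ along some sequence $n_k \to \infty$. A diagonal reordering rephrases this as: for every $k, J, N \in \mathbb{N}$, there exists $n > N$ with $\mu([w_i] \triangle \sigma^n[w_i]) < 1/k$ for all $i \leq J$. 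Since each $[w_i] \triangle \sigma^n[w_i]$ is clopen and the map $\mu \mapsto \mu(E)$ is weakly continuous for clopen $E$, each such condition cuts out an open subset of $\mathcal{M}(\A)$, yielding an explicit $G_\delta$ description of $R(\A)$.

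For density, I will observe that every periodic measure lies in $G_\A$: it is ergodic, and multiples of its period form a rigidity sequence since $\sigma^p$ is the identity on its support. Periodic measures are dense in $\mathcal{M}(\A)$ by Sigmund~\cite{Sigmund1971}, so $G_\A$ is a dense $G_\delta$. Theorem~\ref{measure} then gives that generic subshifts in $\Scal[\A] \cap \TTcalprimebar$ are uniquely ergodic with unique measure in $G_\A$, and in particular with a rigidity sequence. Unioning over all finite $\A \subset \mathbb{Z}$ with $|\A| > 1$ (which exhausts $\TTcalprimebar$, since singleton-alphabet subshifts consist of a single fixed point and hence do not lie in $\TTcalprimebar$) concludes the proof, exactly as at the end of the proof of Theorem~\ref{fullWM}. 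I do not anticipate any substantial obstacle: the monotone class argument is routine, and all remaining ingredients have already been assembled.
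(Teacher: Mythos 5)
Your proposal is correct, but it takes a genuinely different route from the paper's own proof. The paper proves Theorem~\ref{rigid} directly: it shows that $\mathbf{R}$ is conjugacy-invariant and witnessed by a Sturmian subshift, so density follows from Theorem~\ref{conjdense}, and it then gives a bespoke combinatorial characterization of rigidity (in terms of the proportion of positions $i$ in long words where the $n$-letter word at $i$ matches the one at $i+M$) to exhibit $\mathbf{R}$ as a $G_\delta$ purely in terms of finite language data. You instead lift the problem to the space of invariant measures and invoke Theorem~\ref{measure}, which requires you to establish that ergodic rigid measures form a dense $G_\delta$ in $\mathcal{M}(\A)$; you do this cleanly (weak continuity of $\mu \mapsto \mu(E)$ for clopen $E$ gives the $G_\delta$, and periodic-orbit measures give density). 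Interestingly, the paper's remark immediately before Theorem~\ref{rigid} explicitly says the authors were ``not aware of a reference'' for genericity of rigidity in $\mathcal{M}(\A)$, which is precisely why they chose the direct argument --- so your proof supplies exactly the missing ingredient they sidestepped. Your approach is more modular and uniform with Theorem~\ref{fullWM}, and it yields as a byproduct the statement that rigid measures are residual in $\mathcal{M}(\A)$; the paper's approach is more self-contained and avoids the functional-analytic $L^2$-density/monotone-class step you invoke to pass from cylinders to Borel sets. Both are valid, and yours could be tightened slightly by phrasing the cylinder-to-Borel reduction via the Koopman operator ($\|\chi_A - U^{n_k}\chi_A\|_{L^2}^2 = \mu(A\triangle\sigma^{n_k}A)$, then a $3\varepsilon$-argument on the dense span of cylinder indicators) rather than ``monotone class,'' since the class of sets satisfying $\mu(A\triangle\sigma^{n_k}A)\to 0$ is not obviously a monotone class under countable unions without such a uniformity estimate.
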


\begin{proof}
Define $\mathbf{R}$ to be the set of uniquely ergodic subshifts in $\Scal$ with a rigidity sequence. Clearly $\mathbf{R}$ is conjugacy-invariant, and there exists a totally transitive uniquely ergodic subshift with zero entropy, no periodic points, and a rigidity sequence (e.g. any Sturmian shift), so by Theorem~\ref{conjdense}, $\mathbf{R}$ is dense in $\TTcalprimebar$. 
It remains to show that $\mathbf{R}$ is a $G_{\delta}$ in $\Scal$ (and therefore in $\TTcalprimebar$ for the induced topology).

We first claim that a uniquely ergodic subshift $X$ has a rigidity sequence if and only if the following holds: for all $n$ and $\epsilon$, there exist $M, N$ so that for every word $v \in L_N(X)$, the proportion of locations $i$ where the same $n$-letter word appears at locations $i$ and $i+M$ is greater than $1 - \epsilon$.

For the forward direction, first note that if $X \in \mathbf{UE} \cap \Scal[\A]$ and $(n_k)$ is a rigidity sequence for $\mu_X$, then for any $n, \epsilon$, there exists $k$ so that for each $u \in \A^n$, $\mu_X([u] \triangle \sigma^{n_k} [u]) < \frac{\epsilon}{2|\A|^n}$. Define $M = n_k$. Then $\mu_X \left( \bigcup_{u \in \A^n} ([u] \cap \sigma^M [u]) \right) > 1 - |\A|^n \frac{\epsilon}{2|\A|^n} = 1 - \epsilon/2$. Then, by unique ergodicity, there exists $N$ so that every $v \in L_N(X)$ has proportion at least $1 - \epsilon$ of locations in $\bigcup_{u \in \A^n} ([u] \cap \sigma^M [u])$.

For the reverse direction, assume that for all $n, \epsilon$, there exist $M, N$ as described. For each $k$, define $\epsilon = \frac{1}{k|\A|^k}$, define $M, N$ associated to $k, \epsilon$, and take $n_k = M$. Then every word $v \in L_N(X)$ has proportion at least $1 - \epsilon$ of locations $i$ where $i$ and $i+n_k$ contain the same $n$-letter word. For every $x \in X$, $x$ is a concatenation of words in $L_N(X)$, and so the proportion of locations in $x$ with the same property is at least $1 - \epsilon$. Then by the ergodic theorem,
$\mu_X \left( \bigcup_{u \in \A^n} ([u] \cap \sigma^{n_k} [u]) \right) > 1 - \epsilon$. This implies that for every $u \in \A^n$,
$\mu_X([u] \cap \sigma^{n_k} [u]) > \mu_X([u]) - \epsilon = \mu_X([u]) - \frac{1}{k|\A|^k}$. Now, define $P(u, k')$ to be the set of all $u' \in \A^{k'}$ with $u$ as a prefix. Then for any $k' > k$,
\[
\mu_X([u] \cap \sigma^{n_{k'}} [u]) \geq \sum_{u' \in P(u, k')} \mu_X([u'] \cap \sigma^{n_{k'}} [u']) \]
\[ > \sum_{u' \in P(u, k')}
\mu_X([u']) - \frac{1}{k'|\A|^{k'}} = \mu_X([u]) - \frac{1}{k' |\A|^k},
\]
where the last equation uses the definition of $P(u, k')$ and the fact that $|P(u, k')| = |\A|^{k'-k}$. So,
$\displaystyle \mu_X([u] \cap \sigma^{n_k} [u]) \rightarrow \mu_X([u])$ as $k \rightarrow \infty$. Since $u$ was arbitrary and since cylinder sets generate the Borel $\sigma$-algebra, $(n_k)$ is a rigidity sequence.

Finally, for any $k, \epsilon, N, M$ and finite $\A \subset \mathbb{Z}$, we define $R(\A, n, \epsilon, M, N)$ to be the set of subsets $S \subset \A^N$ where for each
$v \in S$, the proportion of locations $i$ where the same $n$-letter word appears at locations $i$ and $i+M$ is greater than $1 - \epsilon$. Then by the above argument, we can write
\begin{multline*}
\mathbf{R} = \bigcap_{m,n \in \mathbb{N}} \bigcup_{\substack{M, N \in \mathbb{N}\\ \A \subset \mathbb{Z}, |\A| < \infty}} \{X \in
\mathbf{UE} \mid L_{N}(X) \in R(\A, n, m^{-1}, M, N)\} = \\
\mathbf{UE} \cap \bigcap_{m,n \in \mathbb{N}} \bigcup_{\substack{M, N \in \mathbb{N}\\ \A \subset \mathbb{Z}, |\A| < \infty}} \bigcup_{\substack{X \in \Scal \\L_N(X) \in R(\A, n, m^{-1}, M, N)}} [X,N].
\end{multline*}
Since $\mathbf{UE}$ was already known to be a $G_{\delta}$, this shows that $\mathbf{R}$ is a $G_{\delta}$, completing the proof.
\end{proof}

\subsection{Complexity for generic subshifts in $\TTcalprimebar$}

Much as in the transitive case, we will be able to show that in some sense all achievable complexity growth rates are realized along subsequences for a generic subshift in $\TTcalprimebar$. The proofs will be simpler, as instead of the somewhat complicated Theorem~\ref{letword}, we can just use Theorem~\ref{conjdense} (along with Theorem~\ref{cplxthm}).

\begin{definition}
For two increasing functions $f,g: \mathbb{N} \rightarrow \mathbb{R}^+$, we say that $f <_s g$ if for every $t \in \mathbb{N}$, there exists a constant $N$ so that $f(n + t) < g(n)$ for all $n > N$. 
\end{definition}

We note that this a weaker notion of inequality than $\prec$ defined in Section~\ref{cplxt}. The following is an easy consequence of the Curtis-Hedlund-Lyndon theorem.

\begin{lemma}\label{cplxbd2}
If $X$ and $Y$ are conjugate subshifts, then there exists $t \in \mathbb{N}$ so that $c_{X}(n-t) < c_{Y}(n) < c_{X}(n+t)$ for all $n$.
\end{lemma}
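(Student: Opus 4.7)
The plan is to invoke the Curtis-Hedlund-Lyndon theorem to represent the conjugacy and its inverse as sliding block codes, and then translate the finite radius of each code into a direct comparison of the word complexity functions.

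First I would fix a topological conjugacy $\phi \colon X \to Y$. By the Curtis-Hedlund-Lyndon theorem, there exist nonnegative integers $r,s$ and a function $\Phi \colon L_{r+s+1}(X) \to \mathcal{A}_Y$ such that $\phi(x)_i = \Phi(x_{i-r}\ldots x_{i+s})$ for every $x \in X$ and $i \in \mathbb{Z}$. Applied block-by-block, this yields a well-defined surjection $L_{n+r+s}(X) \twoheadrightarrow L_n(Y)$ for every $n$: any $w \in L_n(Y)$ can be realized as $\phi(x)_0 \ldots \phi(x)_{n-1}$, and this word depends only on $x_{-r}\ldots x_{n-1+s}$. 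Consequently $c_Y(n) \leq c_X(n+r+s)$ for all $n$. Symmetrically, applying Curtis-Hedlund-Lyndon to $\phi^{-1}$ produces nonnegative integers $r',s'$ with $c_X(n) \leq c_Y(n+r'+s')$.

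Setting $t_0 = \max(r+s,\,r'+s')$, the two bounds combine to give $c_X(n - t_0) \leq c_Y(n) \leq c_X(n + t_0)$ whenever $n \geq t_0$. To upgrade to the strict inequalities in the statement, I would use the Morse-Hedlund theorem: as long as $X$ (and hence $Y$) is infinite, $c_X$ is strictly increasing, so enlarging $t_0$ by $1$ turns both bounds into strict inequalities. For $n < t_0 + 1$ the left inequality involves $c_X$ evaluated at a nonpositive argument, which is conventionally $0$ (or $1$) and so is automatic, while the right inequality is handled by choosing $t$ also large enough that $c_Y(n) < c_X(t+1)$ for all $n \leq t+1$, using again that $c_X$ is unbounded. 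Taking $t$ to be the maximum of these finitely many requirements gives the desired uniform bound.

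No step here is a serious obstacle; the proof is essentially a bookkeeping exercise once Curtis-Hedlund-Lyndon is applied in both directions. The only mild care needed is in the passage from $\leq$ to $<$, which is handled uniformly by Morse-Hedlund in the infinite case (the only case relevant to the sections that invoke this lemma, since every subshift under consideration in $\overline{\TTcalprime}$ is infinite).
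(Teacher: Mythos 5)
Your proposal matches the approach the paper implicitly endorses (the paper gives no proof and simply states the lemma ``is an easy consequence of the Curtis--Hedlund--Lyndon theorem''), and the core argument is correct: two applications of CHL give $c_Y(n) \le c_X(n+t_0)$ and $c_X(n) \le c_Y(n+t_0)$ for all $n$ with $t_0 = \max(r+s,\,r'+s')$, and Morse--Hedlund then upgrades $\le$ to $<$ in the infinite case, which is the only case the paper invokes.

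One remark on the cleanup for small $n$: the fix you describe for the right inequality (``choosing $t$ also large enough that $c_Y(n) < c_X(t+1)$ for all $n \le t+1$'') is circular, since enlarging $t$ enlarges the range of $n$ to be controlled, and because $c_Y(t+1)$ can itself be comparable to $c_X(t+1+t_0) \ge c_X(t+1)$ there is no guarantee this process terminates. Fortunately it is also unnecessary. The inequality $c_Y(n) \le c_X(n+t_0)$ already holds for \emph{every} $n \ge 1$, so with $t = t_0 + 1$ strict monotonicity of $c_X$ gives $c_Y(n) \le c_X(n+t_0) < c_X(n+t)$ uniformly in $n \ge 1$ with no further case split. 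The only part that genuinely needs the low-$n$ argument is the left inequality, and there your observation is fine: for $n \le t$ one has $n - t \le 0$, so $c_X(n-t) \le 1 < 2 \le c_Y(n)$ since $Y$ is infinite. If you state the lemma exactly as the paper does (without an infiniteness hypothesis), note that the strict inequalities do fail for finite $X$, where $c_X$ is eventually constant; so the infiniteness caveat you flagged is indeed load-bearing, not cosmetic.
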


Recall for functions $f,g \colon \mathbb{N} \rightarrow \mathbb{R}^+$ we let $\mathbf{S}_{f,g}$ denote the collection of subshifts $X$ for which $f(n) \leq c_{X}(n) \leq g(n)$ for infinitely many $n$.

\begin{theorem}\label{mainfg2}
Suppose $f,g: \mathbb{N} \rightarrow \mathbb{N}$ are increasing and $f <_s g$. Suppose further that there exists a totally transitive zero entropy subshift $X$ without periodic points and a sequence
$(k_n)$ where $f(k_n) <_s c_{k_n}(X) <_s g(k_n)$. Then the set $\mathbf{S}_{f,g}$ defined in Theorem~\ref{cplxthm} is residual in $\TTcalprimebar$.
\end{theorem}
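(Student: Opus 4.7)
The plan is to combine Theorem~\ref{cplxthm} (which already shows $\mathbf{S}_{f,g}$ is a $G_\delta$ in $\Scal$, and hence in $\TTcalprimebar$) with a density argument, so the only real content is proving density. The strategy mirrors that of Theorem~\ref{mainfg}, but replacing the letter-to-word substitution machinery of Theorem~\ref{letword} by the considerably stronger Theorem~\ref{conjdense}, which is available here because we are in $\TTcalprimebar$ rather than $\Tcalprimebar$.

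Fix an arbitrary nonempty cylinder $C$ in $\TTcalprimebar$. The hypothesized subshift $X$ is totally transitive with zero entropy and no periodic points, and in particular is infinite, so Theorem~\ref{conjdense} applies and the conjugacy class of $X$ is dense in $\TTcalprimebar$; pick $Y \in C$ conjugate to $X$. Since topological conjugacy preserves infinite total transitivity, we also get $Y \in \TTcalprime$. I would then invoke Lemma~\ref{cplxbd2} to obtain a constant $t$ with $c_X(n-t) \le c_Y(n) \le c_X(n+t)$ for all sufficiently large $n$.

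The substantive step is to locate indices $m_n$ along which $Y$ enters the complexity window $[f(m_n), g(m_n)]$. The natural choice is $m_n = k_n + t$. The upper half of Lemma~\ref{cplxbd2} gives $c_Y(m_n) \le c_X(k_n + 2t)$, and the hypothesis $c_X(k_n) <_s g(k_n)$ (applied with shift parameter $2t$) together with monotonicity of $g$ then yields $c_Y(m_n) < g(k_n) \le g(m_n)$ for large $n$. Symmetrically, $c_Y(m_n) \ge c_X(k_n)$ by the lower half of Lemma~\ref{cplxbd2}, and the hypothesis $f(k_n) <_s c_X(k_n)$ (applied with shift parameter $t$) gives $f(m_n) = f(k_n + t) < c_X(k_n)$ for large $n$. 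Chaining these yields $f(m_n) \le c_Y(m_n) \le g(m_n)$ for infinitely many $n$, so $Y \in \mathbf{S}_{f,g} \cap C$.

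The main obstacle, such as there is one, is bookkeeping: ensuring that the shift parameters quantified over in the two $<_s$ inequalities absorb both the conjugacy window $t$ from Lemma~\ref{cplxbd2} and any additional slack introduced when passing between evaluation points $k_n$ and $m_n$. Because $<_s$ is quantified over arbitrary integer shifts, this is painless; notably, the separate hypothesis $f <_s g$ plays no role in the argument (it is automatically implied by $f <_s c_X <_s g$ on the subsequence together with the monotonicity used above, up to the usual liberties in interpreting $<_s$ on subsequences). Density of $\mathbf{S}_{f,g}$ in $\TTcalprimebar$ then follows, completing the proof.
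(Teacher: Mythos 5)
Your proof is correct and follows essentially the same route as the paper's: combine Theorem~\ref{cplxthm} for the $G_\delta$ part with Theorem~\ref{conjdense} and Lemma~\ref{cplxbd2} for density. The only difference is cosmetic bookkeeping (you evaluate at $m_n = k_n + t$ rather than at $k_n$ and shift the slack onto the other side, which if anything reads a bit more cleanly than the paper's use of $f(k_n) < c_X(k_n - t)$), and your observation that the hypothesis $f <_s g$ is not actually invoked matches the paper's proof, which also never uses it.
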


\begin{proof}
That $\mathbf{S}_{f,g}$ is a $G_{\delta}$ is implied by Theorem~\ref{cplxthm}, so we need only show that it is dense.

We claim that this follows from Theorem~\ref{conjdense} and Lemma~\ref{cplxbd2}. Indeed, choose a subshift $X$ and sequence $(k_n)$ where
$f(k_n) <_s c_{X}(k_n) <_s g(k_n)$, and any cylinder $C$ in $\TTcalprimebar$. By Theorem~\ref{conjdense}, there exists
$Y \in C$ conjugate to $X$, and by Lemma~\ref{cplxbd2}, there exists $t \in \mathbb{N}$ so that for all $n$,
\begin{equation}\label{fgcplx0b}
c_{X}(k_n - t) \leq c_{Y}(k_n) \leq c_{X}(k_n + t).
\end{equation}
We note that by the definition of $<_s$, for large enough $n$,
\begin{equation}\label{fgcplx1b}
c_{X}(k_n + t) < g(k_n).
\end{equation}
Similarly, for large enough $n$,
\begin{equation}\label{fgcplx2b}
f(k_n) < c_{X}(k_n - t).
\end{equation}
Combining (\ref{fgcplx0b})-(\ref{fgcplx2b}) yields
\[
f(k_n) < c_{Y}(k_n) < g(k_n)
\]
for sufficiently large $n$, implying that $Y \in \mathbf{S}_{f,g}$ and completing the proof.
\end{proof}

We do not have a proof of a version of Theorem~\ref{bigcplx} for totally transitive subshifts, since most examples of subshifts with complexity in certain regimes are Toeplitz or come from block concatenation constructions and are therefore not necessarily totally transitive. We strongly suspect that such a theorem does hold, but do not have a proof.





The following, however, is proven exactly as in the transitive case (since Sturmian subshifts are totally transitive).

\begin{corollary}\label{sosmall2}
For any unbounded increasing $h: \mathbb{N} \rightarrow \mathbb{R}^+$, $\mathbf{S}_{n, n+h(n)}$ is residual in $\TTcalprimebar$.
\end{corollary}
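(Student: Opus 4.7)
The plan is to mimic the proof of Corollary~\ref{sosmall} almost line for line, replacing Theorem~\ref{mainfg} with Theorem~\ref{mainfg2}. The only ingredient that requires checking is that there is a witness subshift $X$ satisfying the hypotheses of Theorem~\ref{mainfg2}, namely: totally transitive, zero entropy, no periodic points, and complexity wedged between $n/2$ and $n+h(n)$ along a subsequence in the $<_{s}$ sense. A Sturmian subshift will do. Sturmians are minimal with no periodic points, have zero topological entropy (linear complexity), and every power $(X,\sigma^{k})$ is minimal because a Sturmian is an almost 1-1 extension of an irrational rotation, whose powers are all minimal; in particular a Sturmian is totally transitive. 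Moreover $c_{X}(n)=n+1$ for all $n$.

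With this witness in hand, I would apply Theorem~\ref{mainfg2} with $f(n)=n/2$, $g(n)=n+h(n)$, the Sturmian $X$, and the sequence $k_{n}=n$. The three required $<_{s}$-type inequalities are all immediate from elementary algebra and the unboundedness of $h$:
\begin{itemize}
\item $f \prec_{s} g$: for every $t$, $\tfrac{n+t}{2}<n+h(n)$ once $n$ is large.
\item $f(k_{n}) <_{s} c_{X}(k_{n})$: for every $t$, $\tfrac{n+t}{2}<n+1$ once $n$ is large.
\item $c_{X}(k_{n}) <_{s} g(k_{n})$: for every $t$, $(n+t)+1<n+h(n)$ once $n$ is large, because $h$ is unbounded.
\end{itemize}
Theorem~\ref{mainfg2} then yields that $\mathbf{S}_{n/2,\,n+h(n)}$ is residual in $\TTcalprimebar$.

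Finally, I would upgrade the lower bound from $n/2$ to $n$ exactly as in Corollary~\ref{sosmall}. If $X\in \mathbf{S}_{n/2,\,n+h(n)}$ then $c_{X}(n)\geq n/2$ infinitely often, which by the Morse--Hedlund theorem rules out periodic $X$; hence $X$ is aperiodic, so $c_{X}(n)\geq n+1>n$ for every $n$. Therefore $\mathbf{S}_{n/2,\,n+h(n)}=\mathbf{S}_{n,\,n+h(n)}$, which is thus residual in $\TTcalprimebar$. There is no real obstacle here; the only non-routine point is the verification that Sturmian subshifts are totally transitive, which is standard, and the whole argument is essentially a direct transfer of the transitive-case proof through the machinery of Theorem~\ref{mainfg2} and Theorem~\ref{conjdense}.
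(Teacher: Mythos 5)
Your proof is correct and matches the paper's approach: the paper simply notes that Corollary~\ref{sosmall2} is ``proven exactly as in the transitive case (since Sturmian subshifts are totally transitive),'' which is precisely the transfer you carry out by substituting Theorem~\ref{mainfg2} for Theorem~\ref{mainfg}, using a Sturmian as witness, and then upgrading $n/2$ to $n$ via Morse--Hedlund. (Minor notational slip: you wrote $f \prec_s g$ in one spot where the paper's symbol for the totally transitive order is $<_s$, but your verifications are of the right inequalities.)
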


Just as for Corollary~\ref{sosmall}, this result cannot be improved: as before, $\mathbf{S}_{n, n+C}$ and $\mathbf{S}_{1.1n, 2n}$ are disjoint, and the Chacon shift is an example of a zero entropy aperiodic totally transitive shift in $\mathbf{S}_{1.1n, 2n}$. So, by Theorem~\ref{mainfg2}, $\mathbf{S}_{1.1n, 2n}$ is generic in $\TTcalprimebar$, so $\mathbf{S}_{n, n+C}$ cannot be.

Corollary~\ref{sosmall2} has an interesting consequence. Ferenczi in \cite{ferenczi95} asked whether some Chacon-type examples he constructed were the subshifts of minimal complexity which support weak mixing measures. In the sense of complexity along a subsequence, our results show that the only restriction is that $c_{X}(n) - n \rightarrow \infty$.

\begin{corollary}\label{WMlowcplxex}
For any unbounded increasing $f: \mathbb{N} \rightarrow \mathbb{R}^+$, the set of subshifts
$X$ which are uniquely ergodic with weak mixing invariant measure and for which there exist infinitely many $n$ with $c_{X}(n) < n + f(n)$ is residual in $\TTcalprimebar$.
\end{corollary}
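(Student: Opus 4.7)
The plan is to realize this corollary as the intersection of two residual classes that have already been established in this section. From Theorem~\ref{fullWM}, the collection $\mathbf{WM}$ of uniquely ergodic subshifts whose unique invariant measure is weak mixing is residual in $\TTcalprimebar$. From Corollary~\ref{sosmall2}, for any unbounded increasing $h \colon \mathbb{N} \to \mathbb{R}^+$, the set $\mathbf{S}_{n,n+h(n)}$ (consisting of those $X$ with $n \le c_X(n) \le n + h(n)$ for infinitely many $n$) is residual in $\TTcalprimebar$. The goal is simply to combine these two facts.

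To handle the strict inequality $c_X(n) < n + f(n)$ in the statement, I would first replace $f$ by the unbounded increasing function $h(n) := f(n)/2$, and apply Corollary~\ref{sosmall2} to this $h$. Every $X \in \mathbf{S}_{n, n + h(n)}$ then satisfies
\[
c_X(n) \;\le\; n + \tfrac{f(n)}{2} \;<\; n + f(n)
\]
for infinitely many $n$, so $\mathbf{S}_{n, n + h(n)}$ is contained in the set of subshifts described in the corollary. The desired collection therefore contains the intersection $\mathbf{WM} \cap \mathbf{S}_{n, n+h(n)}$.

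To conclude, note that $\TTcalprimebar$ is closed in $\Scal$, which is itself a Polish space by the discussion in Section~\ref{defs}; hence $\TTcalprimebar$ is a Polish, and in particular Baire, space. It follows that the intersection of two residual subsets of $\TTcalprimebar$ is again residual, so $\mathbf{WM} \cap \mathbf{S}_{n, n+h(n)}$ is residual in $\TTcalprimebar$, and every subshift in this intersection enjoys all the properties demanded by the statement. There is no genuine obstacle in this argument: the entire content is pushed into Theorem~\ref{fullWM} and Corollary~\ref{sosmall2}, and the only minor bookkeeping is the trivial substitution $f \mapsto f/2$ to pass from a non-strict to a strict complexity inequality.
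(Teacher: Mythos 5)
Your proof is correct and matches the paper's approach exactly: the paper also proves this by intersecting the residual sets from Theorem~\ref{fullWM} and Corollary~\ref{sosmall2}. Your substitution $f \mapsto f/2$ to convert the weak inequality in the definition of $\mathbf{S}_{n,n+h(n)}$ into the strict inequality of the statement is a small bookkeeping step the paper leaves implicit, but it is the right way to close that gap.
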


\begin{proof}
This is an immediate corollary of Theorem~\ref{fullWM} and Corollary~\ref{sosmall2}.
\end{proof}


Finally, the following is proved exactly as in the transitive case.

\begin{corollary}\label{1rsTT}
The collection $\mathbf{U}_{1RS}$ of subshifts for which there are infinitely many $n$ with $c_{X}(n+1) = c_{X}(n) + 1$ is residual in $\TTcalprimebar$.
\end{corollary}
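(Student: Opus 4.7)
The plan is to mimic the argument of Corollary~\ref{1rs}: exhibit a residual subclass of $\TTcalprimebar$ sitting inside $\mathbf{U}_{1RS}$. In the transitive setting this used $\mathbf{S}_{1.1n,\,1.2n}$, whose genericity came from Theorem~\ref{bigcplx}. The paper already notes that the totally transitive analog of Theorem~\ref{bigcplx} is not available; in its place I would invoke Corollary~\ref{sosmall2}, which provides residual subclasses whose complexity dips arbitrarily close to the minimal allowed slope along a subsequence.

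Fix any unbounded increasing $h : \mathbb{N} \to \mathbb{R}^+$ with $h(n) = o(n)$; concretely one may take $h(n) = \lfloor \sqrt{n} \rfloor$. By Corollary~\ref{sosmall2}, the set $\mathbf{S}_{n,\, n + h(n)}$ is residual in $\TTcalprimebar$, so it suffices to establish the containment $\mathbf{S}_{n,\, n + h(n)} \subset \mathbf{U}_{1RS}$.

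For this, let $X \in \mathbf{S}_{n,\, n + h(n)}$. The condition $c_X(n) \geq n$ for infinitely many $n$ forces $c_X$ to be unbounded, hence $X$ is aperiodic, and so by Morse--Hedlund $c_X(n+1) \geq c_X(n) + 1$ for every $n$. Suppose now for contradiction that $X \notin \mathbf{U}_{1RS}$. Then only finitely many $n$ satisfy $c_X(n+1) = c_X(n) + 1$, so for all sufficiently large $n$ we have the strict inequality $c_X(n+1) \geq c_X(n) + 2$; telescoping yields $c_X(n) \geq 2n - C$ for some constant $C$ and all large $n$. Combined with $c_X(n) \leq n + h(n)$ holding infinitely often, this forces $n \leq h(n) + C$ infinitely often, contradicting $h(n) = o(n)$.

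I do not anticipate any real obstacle: all the dynamical content is already packaged in Corollary~\ref{sosmall2} (and thus in Theorem~\ref{mainfg2} together with the Sturmian examples). The only point requiring a brief note is that elements of $\mathbf{S}_{n,\,n+h(n)}$ are automatically aperiodic, which is why the Morse--Hedlund minimum-jump estimate applies and gives the clean dichotomy between $\mathbf{U}_{1RS}$ and its complement.
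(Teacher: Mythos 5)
Your proof is correct, and it is almost certainly what the authors had in mind. The paper dispatches this corollary with the phrase ``the following is proved exactly as in the transitive case,'' but the literal transitive-case proof (Corollary~\ref{1rs}) rests on the inclusion $\mathbf{S}_{1.1n,\,1.2n}\subset\mathbf{U}_{1RS}$ together with Theorem~\ref{bigcplx}, and the paper explicitly states that the totally transitive analogue of Theorem~\ref{bigcplx} is unavailable. You correctly identify Corollary~\ref{sosmall2} as the right substitute (it rests on Sturmian subshifts, which are totally transitive and so admissible in Theorem~\ref{mainfg2}); the containment $\mathbf{S}_{n,\,n+h(n)}\subset\mathbf{U}_{1RS}$ for sublinear $h$ is then a routine rerun of the Corollary~\ref{1rs} telescoping argument, with $c_X(n)\geq 2n-C$ outgrowing $n+h(n)$ just as it outgrows $1.2n$. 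Your write-up is in one respect slightly more careful than the paper's, in that you spell out why the Morse--Hedlund jump estimate applies. One terminological quibble: what unboundedness of $c_X$ delivers is that $X$ is infinite, not that $X$ is aperiodic (an infinite subshift may well have periodic points); the fact you actually use, $c_X(n+1)>c_X(n)$ for all $n$, follows from infinitude alone, so this does not affect the argument.
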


This yields the following corollaries, again exactly as in the transitive case.

\begin{corollary}\label{TTadic}
A generic subshift in $\TTcalprimebar$ has alphabet rank two for a right proper sequence of substitutions, and therefore has topological rank two.
\end{corollary}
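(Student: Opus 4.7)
The plan is to follow the strategy from Corollaries~\ref{ark2} and \ref{rk2} essentially verbatim, since the construction there only used minimality and the existence of infinitely many $n$ with $c_X(n+1) - c_X(n) = 1$, both of which we now have generically in $\TTcalprimebar$.

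First I would observe that by Theorem~\ref{ttgeneric}, a generic subshift $X$ in $\TTcalprimebar$ is minimal and infinite, and by Corollary~\ref{1rsTT}, the set of subshifts with $c_X(n+1) - c_X(n) = 1$ for infinitely many $n$ is residual in $\TTcalprimebar$. Since the intersection of residual sets in a Baire space is residual, it suffices to prove the conclusion for every infinite minimal subshift $X$ satisfying the one-right-special condition infinitely often.

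Next I would construct the right proper sequence $(\rho_n)$ of substitutions on a two-letter alphabet exactly as in the proof of Corollary~\ref{ark2}: for each $n$ in the infinite set $S$ where the complexity jumps by one, there is a unique right-special word $w_n \in L_n(X)$, giving a decomposition of every $x \in X$ as a uniquely determined biinfinite concatenation of two words $u_n, v_n$ (both ending in $w_n$). Choosing an increasing sequence $n_k \in S$ so that $|u_{n_k}|$ grows fast enough compared to $|v_{n_{k-1}}|$ yields substitutions $\rho_k \colon \{0,1\} \to \{0,1\}^{*}$ with $\rho_k(0)$ and $\rho_k(1)$ sharing the same terminal letter and with $(\rho_1 \circ \cdots \circ \rho_k)(0) = u_{n_k}$, so that by minimality every word of $L(X)$ is a subword of some $(\rho_1 \circ \cdots \circ \rho_k)(0)$. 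This gives alphabet rank two for a right proper sequence.

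Finally, to obtain topological rank two, I would invoke Corollary 2.5 of \cite{durandleroy} (which upgrades alphabet rank to topological rank under right properness) together with the recognizability result Theorem 3.1 of \cite{recog} (applicable since each $\rho_k$ acts on a two-letter alphabet and $X$ has no periodic points by minimality and infiniteness), exactly as done in Corollary~\ref{rk2}. There is no real obstacle here: the entire argument already went through in the transitive setting, and the only inputs needed -- genericity of minimality and of $c_X(n+1) - c_X(n) = 1$ infinitely often -- have been independently established for $\TTcalprimebar$ via Theorem~\ref{ttgeneric} and Corollary~\ref{1rsTT}.
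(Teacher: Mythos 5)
Your proposal is correct and matches the paper's intended argument exactly: the paper states this corollary is ``proved exactly as in the transitive case,'' and your proof supplies precisely those details, noting that the proofs of Corollaries~\ref{ark2} and \ref{rk2} only use minimality (supplied by Theorem~\ref{ttgeneric}) and infinitely many $n$ with $c_X(n+1)-c_X(n)=1$ (supplied by Corollary~\ref{1rsTT}), and then running the same substitution construction and citations.
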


\subsection{Dimension groups, infinitesimals, and orbit equivalence in $\TTcalprimebar$}

The biggest difference in the (generically unique) invariant measure for subshifts in $\TTcalprimebar$ versus $\Tcalprimebar$ is that it is no longer the case that measures of clopen sets are generically in $\mathbb{Q}$.

\begin{theorem}\label{killonenum}
For every $n \in \mathbb{N}$ and $x \in (0,1)$, the set $\mathbf{M}(n,x)$ of uniquely ergodic subshifts $X$ for which there is $S \subset L_n(X)$ with $\mu_X\left( \bigcup_{w \in S} [w]\right) = x$ is nowhere dense in $\TTcalprimebar$.
\end{theorem}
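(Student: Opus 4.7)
The plan is to show $\mathbf{M}(n,x)$ is nowhere dense in $\TTcalprimebar$ by proving that every nonempty cylinder $C = [X_0, m] \cap \TTcalprimebar$ contains a nonempty open subset disjoint from $\mathbf{M}(n,x)$. The strategy combines Theorem~\ref{conjdense} with a concrete Sturmian example whose cylinder measures are strongly constrained.

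First I would choose an irrational $\alpha \in (0,1)$ such that $x \notin \mathbb{Z} + \mathbb{Z}\alpha$. Since the set $\{(x - p)/q : p \in \mathbb{Z},\ q \in \mathbb{Z}\setminus\{0\}\}$ is countable, such $\alpha$ exists in abundance. Let $W = S_\alpha$ be the Sturmian subshift associated to rotation by $\alpha$. Then $W$ is an infinite, minimal, uniquely ergodic, zero entropy, aperiodic subshift, and it is totally transitive because $R_{k\alpha}$ on $\mathbb{T}$ is minimal for all $k \geq 1$. In particular $W \in \TTcalprime$ and satisfies the hypotheses of Theorem~\ref{conjdense}, so its topological conjugacy class is dense in $\TTcalprimebar$. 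Hence there exists $Y \in C$ topologically conjugate to $W$ via some homeomorphism $\phi \colon W \to Y$; note $Y$ is uniquely ergodic with $\mu_Y = \phi_*\mu_W$.

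The next step is the measure-theoretic restriction. Every cylinder set $[u]$ in $W$ corresponds (up to a countable set) to the arc $\bigcap_{i=0}^{|u|-1} R_\alpha^{-i}(J_{u_i}) \subset \mathbb{T}$, whose endpoints lie in $\mathbb{Z}\alpha + \mathbb{Z} \pmod 1$; consequently $\mu_W([u]) \in \mathbb{Z} + \mathbb{Z}\alpha$. Since $\mathbb{Z} + \mathbb{Z}\alpha$ is an additive subgroup of $\mathbb{R}$ and every clopen subset of $W$ is a finite union of cylinder sets, $\mu_W(E) \in \mathbb{Z} + \mathbb{Z}\alpha$ for every clopen $E \subset W$. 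Therefore, for each $w \in L_n(Y)$,
\[
\mu_Y([w]) \;=\; \mu_W(\phi^{-1}([w])) \;\in\; \mathbb{Z} + \mathbb{Z}\alpha,
\]
and hence $\mu_Y\bigl(\bigcup_{w \in S}[w]\bigr) = \sum_{w \in S}\mu_Y([w]) \in \mathbb{Z} + \mathbb{Z}\alpha$ for every $S \subset L_n(Y)$. By the choice of $\alpha$, none of these finitely many values equal $x$.

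Finally, I would upgrade this single $Y$ to an open neighborhood using the continuity of $Z \mapsto \mu_Z$ on $\mathbf{UE}$ established in the proof of Theorem~\ref{measure}. Setting $\delta = \min\{|\mu_Y(\bigcup_{w \in S}[w]) - x| : S \subset L_n(Y)\} > 0$, there is $N \ge m$ such that every $Z \in [Y,N] \cap \mathbf{UE}$ has $L_n(Z) = L_n(Y)$ and satisfies $|\mu_Z([w]) - \mu_Y([w])| < \delta/(2|L_n(Y)|)$ for every $w \in L_n(Y)$. Then $|\mu_Z(\bigcup_{w \in S}[w]) - \mu_Y(\bigcup_{w \in S}[w])| < \delta/2$, so in particular $\mu_Z(\bigcup_{w \in S}[w]) \neq x$ for every such $S$. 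Since $\mathbf{M}(n,x) \subset \mathbf{UE}$, this gives $[Y,N] \cap \mathbf{M}(n,x) = \varnothing$, and $[Y,N] \cap \TTcalprimebar$ is a nonempty open subset of $C$ (containing $Y$) disjoint from $\mathbf{M}(n,x)$, which proves the theorem.

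The delicate step is the measure restriction: the argument crucially uses that $W$ can be chosen with all clopen measures in the countable group $\mathbb{Z}+\mathbb{Z}\alpha$, and that this constraint survives under topological conjugacy. The remaining pieces --- density of the conjugacy class via Theorem~\ref{conjdense} and continuity of the measure assignment from the proof of Theorem~\ref{measure} --- are already available from the machinery developed in this section.
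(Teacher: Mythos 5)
Your proof is correct, but it takes a genuinely different route from the paper's. You deduce the result from the density of conjugacy classes (Theorem~\ref{conjdense}) and the continuity of the measure assignment $Z \mapsto \mu_Z$ on $\mathbf{UE}$ (extracted from the proof of Theorem~\ref{measure}): you choose an irrational $\alpha$ with $x \notin \mathbb{Z}+\mathbb{Z}\alpha$, place a conjugate copy $Y$ of the corresponding Sturmian in the given cylinder $C$, observe that all $\mu_Y$-measures of clopen sets lie in $\mathbb{Z}+\mathbb{Z}\alpha$ and hence miss $x$ by a definite margin $\delta$, and then shrink to a subcylinder $[Y,N]$ of $C$ where no uniquely ergodic subshift can hit $x$ within $\delta/2$. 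The paper instead works directly with the Rauzy graph: it picks two cycles $K, K'$ covering all edges of $G_{X,n}$ with coprime lengths $L, L'$ such that $Lx \notin \mathbb{N}$, chooses $M$ large enough that every value $i/L$ is at distance $> M^{-1}$ from $x$, and builds a mixing sofic shift $Z$ from concatenations of $K'$ and $K^M$; a convexity estimate then pins the frequency of any $S \subset L_n(X)$ for every invariant measure on $Z$ into an interval $(i/L - M^{-1}, i/L + M^{-1})$ avoiding $x$, and a mixing SFT subcylinder $[Z',m] \subset C$ avoiding $\mathbf{M}(n,x)$ is extracted as in Theorem~\ref{conjdense}. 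Your approach is softer and reuses more machinery, while the paper's is self-contained at the level of the graph combinatorics and produces an explicit subcylinder rather than just a neighborhood. Two small points worth making explicit in your write-up: you should take $N \geq \max(m,n)$ so that both $[Y,N] \subset [X_0,m]$ and $L_n(Z) = L_n(Y)$ hold, and you are implicitly using the (true and standard, but unproven here) fact that Sturmian cylinder frequencies lie in $\mathbb{Z}+\mathbb{Z}\alpha$; a sentence citing the three-distance theorem would close that gap cleanly.
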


\begin{proof}
Consider any nonempty cylinder $C$ in $\TTcalprimebar$, $n \in \mathbb{N}$, and $x \in (0,1)$. We will show that 
$\mathbf{M}(n,x)$ is not dense in $C$.

There exists $N$ and infinite totally transitive $X$ so that $C = [X,N]$. We will assume without loss of generality that
$N = n$ (by setting each equal to the larger). Completing the proof in this case completes the general proof, since
$[X, N] \subset [X, n]$ if $n > N$, and since $\mathbf{M}(n,x) \subset \mathbf{M}(N, x)$ if $n < N$. Since $X$ is totally transitive, $G_{X,n}$ is nontrivial and primitive.

Therefore, we can find cycles $K, K'$ in $G_{X,n}$ which contain all edges of $G_{X,n}$, and which have lengths $L, L'$ which are relatively prime, where $L' < L$, and where $Lx \notin \mathbb{N}$. Then for any $S \subset \A^n$, the proportion of visits to edges corresponding to $S$ along $K$ is of the form $\frac{i}{L}$ for some $i$. Choose $M$ relatively prime to $L'$ so that $|x - \frac{i}{L}| > M^{-1}$ for every $i \in \mathbb{N}$.

Define $K'' = K^M$ (where $K^{M}$ denotes an $M$-fold traversal of $K$), and define the subshift $Z$ consisting of all sequences in $Y$ corresponding to biinfinite concatenations (in any order) of $K', K''$ where $K'$ does not appear consecutively. Since the lengths $L^{\prime}, ML$ of $K^{\prime}, K^{\prime \prime}$ are relatively prime, just as in the proof of Theorem~\ref{conjdense}, $Z$ is a mixing sofic shift. Choose any $S \subset L_n(X)$, and denote the proportions of visits of edges corresponding to $S$ along $K^{\prime}$ and $K^{\prime \prime}$ by $t$ and $\frac{i}{L}$ respectively.

Then for any measure $\mu$ on $Z$ and $\mu$-generic $z \in Z$, the limiting proportion of visits to words in $S$ along $z$ is between
$\frac{i}{L}$ and $\frac{t + (Mi/L)}{M+1}$. This implies that it is in the interval $(\frac{i}{L} - M^{-1}, \frac{i}{L} + M^{-1})$, and so is not equal to $x$. Therefore, any uniquely ergodic subshift contained in $Z$ is in $\mathbf{M}(n,x)^c$.

By definition, every $z \in Z$ contains all words in $L_n(X)$ as subwords. Exactly as shown in the proof of Theorem~\ref{conjdense}, $Z$ contains a mixing shift of finite type $Z'$. Define $m \geq n$ so that $Z'$ is defined by a set of $m$-letter forbidden words; note that then $Z' = S([Z',m])$. Then the cylinder $[Z', m]$ is contained in $[Z, n]$, which is in turn contained in $C$, and any uniquely ergodic subshift in $[Z', m]$ is a subset of $Z'$, so also in $\mathbf{M}(n,x)^c$. Thus $C$ contains a subcylinder disjoint from $\mathbf{M}(n,x)$, completing the proof.

\end{proof}

Let $\mathbf{UEI}$ denote the set of uniquely ergodic subshifts for which the only rational measures of clopen sets are $0$ and $1$.

\begin{corollary}\label{killQ}
The set $\mathbf{UEI}$ is residual in $\TTcalprimebar$.
\end{corollary}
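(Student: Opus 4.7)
The plan is to express $\mathbf{UEI}^{c}$ (within $\mathbf{UE}$) as a countable union of nowhere dense sets of the form $\mathbf{M}(n,x)$, and then invoke Theorem~\ref{killonenum} and the genericity of unique ergodicity (Theorem~\ref{ttgeneric}).

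The first step is the following elementary observation: if $X$ is any subshift, then every clopen subset $U \subset X$ can be written, for some $n \in \mathbb{N}$, as a shift of a finite disjoint union of $X$-cylinder sets of $n$-letter words. Indeed, for each $x \in U$ there is a central cylinder $\{y \in X : y_{-k}\ldots y_{k} = x_{-k}\ldots x_{k}\} \subset U$ by openness, and compactness of $U$ lets us take a uniform $k$; then $U = \sigma^{k}V$ where $V$ is a union of $X$-cylinder sets $[w]$ for certain $w \in L_{2k+1}(X)$. Using shift-invariance of $\mu_{X}$, it follows that for any $X \in \mathbf{UE}$ and any clopen $U \subset X$, the value $\mu_{X}(U)$ is of the form $\mu_{X}\!\left(\bigcup_{w \in S}[w]\right)$ for some $n \in \mathbb{N}$ and some $S \subset L_{n}(X)$.

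Given this, a subshift $X \in \mathbf{UE}$ fails to lie in $\mathbf{UEI}$ precisely when there is a clopen set with rational measure strictly between $0$ and $1$, which by the preceding paragraph is equivalent to membership in $\mathbf{M}(n,x)$ for some $n \in \mathbb{N}$ and some $x \in \mathbb{Q} \cap (0,1)$. Thus
\[
\mathbf{UE} \setminus \mathbf{UEI} \;=\; \bigcup_{n \in \mathbb{N}} \bigcup_{x \in \mathbb{Q} \cap (0,1)} \mathbf{M}(n,x).
\]
This is a countable union, and each set in the union is nowhere dense in $\TTcalprimebar$ by Theorem~\ref{killonenum}, so $\mathbf{UE} \setminus \mathbf{UEI}$ is meager in $\TTcalprimebar$. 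Since $\mathbf{UE}$ is residual in $\TTcalprimebar$ by Theorem~\ref{ttgeneric}, the set $\mathbf{UEI} = \mathbf{UE} \setminus (\mathbf{UE} \setminus \mathbf{UEI})$ contains a residual set, hence is residual.

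The only non-routine step is the first, verifying that the measure of every clopen set is captured by one of the sets $\mathbf{M}(n,x)$; this is really a standard compactness argument about clopen subsets of zero-dimensional spaces together with shift-invariance. The rest is an immediate application of Theorem~\ref{killonenum} and the Baire category theorem.
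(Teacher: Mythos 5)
Your argument is correct and is essentially the paper's proof: the paper likewise observes that $\mathbf{UEI}$ contains $\mathbf{UE}\cap\left(\bigcup_{n\in\mathbb{N},\,q\in\mathbb{Q}\cap(0,1)}\mathbf{M}(n,q)\right)^{c}$ and appeals to Theorem~\ref{killonenum}, Theorem~\ref{ttgeneric}, and the Baire category theorem. The only difference is that you explicitly spell out the compactness-plus-shift-invariance argument showing that every clopen set in a subshift has the same measure as some union of $n$-cylinders $\bigcup_{w\in S}[w]$, which the paper leaves implicit.
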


\begin{proof}
We simply note that any $X$ in $\left(\bigcup_{q \in \mathbb{Q}, n \in \mathbb{N}} \mathbf{M}(n,q)\right)^c$ has the desired property, and by Baire category this set is residual in $\TTcalprimebar$.
\end{proof}

We can now show that the dimension group has rank two for generic subshifts in $\TTcalprimebar$.


\begin{proposition}\label{dimrk2}
The set of uniquely ergodic minimal subshifts whose dimension group is rank two and contains no nontrivial infinitesimals is residual in $\TTcalprimebar$.
\end{proposition}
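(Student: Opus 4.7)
\textbf{Proof plan for Proposition~\ref{dimrk2}.} The plan is to combine the topological rank bound from Theorem~\ref{ttgeneric} with the measure-theoretic restriction from Corollary~\ref{killQ} via the exact sequence
\begin{equation*}
0 \longrightarrow \textnormal{Inf}(\coinv{\sigma_X}) \longrightarrow \coinv{\sigma_X} \stackrel{\tau_{\mu_X}}\longrightarrow \textnormal{Image}(\tau_{\mu_X}) \longrightarrow 0,
\end{equation*}
valid for any uniquely ergodic minimal Cantor system. By Theorem~\ref{ttgeneric} and Corollary~\ref{killQ}, the properties of being uniquely ergodic, minimal, of topological rank two, and of having only $0$ and $1$ as rational clopen measures are each residual in $\TTcalprimebar$; their intersection is therefore residual, and a generic $X$ enjoys all four properties simultaneously.

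To bound $\textnormal{rank}(\coinv{\sigma_X})$ from above, I would invoke the standard consequence of the definition of topological rank: a minimal Cantor system of topological rank $d$ admits a Bratteli--Vershik presentation with at most $d$ vertices per level, and the associated dimension group is the inductive limit of systems of rank at most $d$ (see~\cite{DDMP2021}), so has rank at most $d$. Applied with $d=2$ this yields $\textnormal{rank}(\coinv{\sigma_X}) \leq 2$.

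For the lower bound on the rank of $\textnormal{Image}(\tau_{\mu_X})$, note that $1 = \tau_{\mu_X}([1])$ always lies in the image. Since $X$ is infinite and minimal, its alphabet has size at least two, and unique ergodicity together with minimality guarantees that each cylinder $[a]$ for $a \in L_1(X)$ has measure strictly between $0$ and $1$. Corollary~\ref{killQ} then forces at least one such $\mu_X([a])$ to be irrational, so $\textnormal{Image}(\tau_{\mu_X})$ contains both $1$ and an irrational number and therefore has rank at least two.

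Combining the two bounds gives $\textnormal{rank}(\coinv{\sigma_X}) = \textnormal{rank}(\textnormal{Image}(\tau_{\mu_X})) = 2$, and by additivity of rank in the exact sequence $\textnormal{Inf}(\coinv{\sigma_X})$ has rank zero; being a subgroup of the simple (hence torsion-free) dimension group $\coinv{\sigma_X}$ \cite{GPS1995}, it must therefore be trivial. The main obstacle is the clean invocation of the principle that topological rank $d$ bounds dimension-group rank by $d$; every other step just assembles residual sets already produced in the paper.
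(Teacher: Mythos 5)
Your proposal is correct and takes essentially the same route as the paper's own proof: intersect the residual sets from Theorem~\ref{ttgeneric} and Corollary~\ref{killQ}, bound the rank of $\coinv{\sigma_X}$ above by the topological rank, bound the rank of the image of the state $\tau_{\mu_X}$ below by $2$ using that $1$ and some irrational both lie in the image, and conclude via the exact sequence (plus torsion-freeness) that the infinitesimals are trivial. The only cosmetic differences are that you spell out why some letter-cylinder has measure in $(0,1)$ (the paper leaves this implicit) and that the paper cites \cite{GPS1995} rather than \cite{DDMP2021} for the rank bound coming from a Bratteli--Vershik presentation; the substance is identical.
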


\begin{proof}
We already know by Theorem~\ref{ttgeneric} that the set of uniquely ergodic minimal subshifts with topological rank two is residual and that the set $\mathbf{UEI}$ of uniquely ergodic subshifts where all clopen sets have measure $0$, $1$, or irrational is residual. Then the set of subshifts in $\mathbf{UEI}$ which have topological rank two is also residual. We will show that any $X \in \mathbf{UEI}$ which has topological rank two has a rank two dimension group and no nontrivial infinitesimals.

It is well-known (see e.g.~\cite{GPS1995}) that topological rank is an upper bound for the rank of the dimension group, and $X$ has topological rank two by assumption, so $\mathcal{G}_{\sigma_{X}}$ has rank at most two. Since $X \in \mathbf{UEI}$, the range of the state
$\tau_{\mu_X}$ contains both $1$ and some irrational, so the image of $\tau_{\mu_X}$ has rank at least two, implying that the rank of $\mathcal{G}_{\sigma_{X}}$ is exactly two. Since $\mathcal{G}_{\sigma_{X}}$ is torsion-free, this implies there are no nontrivial infinitesimals.
\end{proof}

Recall from Corollary~\ref{cor:onesoeclass} that in $\Tcalprimebar$ there is a single strong orbit equivalence class which is generic. In stark contrast, we now show that in $\TTcalprimebar$, no orbit equivalence class is generic. We will make use of the following fact: if two uniquely ergodic minimal Cantor systems $(X_{1},\sigma_{X_{1}})$ and $(X_{2},\sigma_{X_{2}})$ with invariant probability measures $\mu_{X_1}, \mu_{X_2}$ are orbit equivalent then the sets $\{\mu_{X_1}(E) \mid E \subset X_{1} \textnormal{ is clopen}\}$ and $\{\mu_{X_2}(F) \mid F \subset X_{2} \textnormal{ F is clopen}\}$ are equal (in fact, by~\cite[Cor. 1]{GPS1995} this condition is also sufficient in the uniquely ergodic minimal case).
\begin{corollary}\label{cor:nogenoeclass}
For any uniquely ergodic minimal subshift $X \in \TTcalprimebar$, the orbit equivalence class of $X$ in $\TTcalprimebar$ is meager. Consequently, no orbit equivalence class is generic in $\TTcalprimebar$.
\end{corollary}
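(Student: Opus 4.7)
The plan is to combine Theorem~\ref{killonenum} with the stated characterization of orbit equivalence via clopen measure values. Fix a uniquely ergodic minimal subshift $X \in \TTcalprimebar$ with unique invariant measure $\mu_X$, and set
\[
A \;=\; \{\mu_X(E) \;:\; E \subset X \text{ is clopen}\} \;\subset\; [0,1].
\]
Since $X$ is an infinite minimal subshift it is a Cantor set, so there exist nontrivial clopen subsets and hence $A \cap (0,1) \neq \varnothing$; fix any $x \in A \cap (0,1)$. By the fact recalled just before the statement, every $Y \in \TTcalprimebar$ in the orbit equivalence class of $X$ is itself a uniquely ergodic minimal Cantor system (orbit equivalence preserves minimality and, for minimal Cantor systems, the cardinality of the simplex of invariant measures), and satisfies $\{\mu_Y(F) : F \subset Y \text{ clopen}\} = A$.

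The next step is to translate the existence of a clopen set of measure $x$ in $Y$ into membership of $Y$ in one of the sets $\mathbf{M}(n,x)$. Every clopen subset $F$ of a subshift $Y$ is compact, hence a finite union of basic clopen sets $\sigma^{-i}[u]$, each of which is itself a finite union of $Y$-cylinder sets $[w]$ for words $w$ of some common length $n$. Thus if $\mu_Y(F) = x$, then $F = \bigcup_{w \in S}[w]$ for some $S \subset L_n(Y)$, i.e. $Y \in \mathbf{M}(n,x)$. Consequently, the orbit equivalence class of $X$ in $\TTcalprimebar$ is contained in $\bigcup_{n \in \mathbb{N}} \mathbf{M}(n,x)$, which is a countable union of nowhere dense sets by Theorem~\ref{killonenum}, and therefore meager. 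This gives the first assertion.

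For the second assertion, suppose toward a contradiction that some orbit equivalence class $\mathcal{C}$ were residual in $\TTcalprimebar$. By Theorem~\ref{ttgeneric} the set $\mathbf{UEM}$ of uniquely ergodic minimal subshifts is residual in $\TTcalprimebar$, and $\TTcalprimebar$ is a Baire space (being closed in the Polish space $\Scal$), so $\mathcal{C} \cap \mathbf{UEM}$ is residual and in particular nonempty. Pick any $X \in \mathcal{C} \cap \mathbf{UEM}$; since orbit equivalence is an equivalence relation, $\mathcal{C}$ coincides with the orbit equivalence class of $X$ in $\TTcalprimebar$, which is meager by the first part. A nonempty meager set in a Baire space cannot be residual, giving the desired contradiction.

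The main technical point to verify carefully is the reduction from arbitrary clopen subsets of $Y$ to cylinder-based clopen sets $\bigcup_{w \in S}[w]$ with $S \subset L_n(Y)$, which is what lets us invoke Theorem~\ref{killonenum} directly; everything else is a Baire category bookkeeping argument built on the already-established genericity of uniquely ergodic minimal subshifts and on Theorem~\ref{killonenum}. No new dynamical construction is needed.
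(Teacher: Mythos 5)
Your argument is correct and follows essentially the same route as the paper's: fix a uniquely ergodic minimal $X \in \TTcalprimebar$, pick $\alpha = \mu_X(E) \in (0,1)$ for a nontrivial clopen $E$, use the orbit-equivalence invariance of clopen measure values (the fact recalled just before the corollary) to embed the orbit equivalence class of $X$ in $\bigcup_{n} \mathbf{M}(n,\alpha)$, and conclude meagerness from Theorem~\ref{killonenum}; the second assertion then follows from Theorem~\ref{ttgeneric}, exactly as you say. One small inaccuracy in the step you rightly flag as the delicate point: a basic clopen set $\sigma^{-i}[u]$ is a finite union of $Y$-cylinder sets $[w]$ only when $i \geq 0$, so for a clopen $F$ determined by coordinates in a window $[-N,N]$ you should first pass to $\sigma^{-N}F$, which by shift-invariance of $\mu_Y$ has the same measure and is of the form $\bigcup_{w \in S}[w]$ with $S \subset L_{2N+1}(Y)$, before concluding $Y \in \mathbf{M}(2N+1,\alpha)$. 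The paper's proof leaves this same step implicit when passing from the existence of a clopen $D$ with $\mu_Y(D)=\alpha$ to $Y \in \bigcup_n \mathbf{M}(n,\alpha)$.
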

\begin{proof}
\sloppy Let $(X,\sigma_{X})$ be a uniquely ergodic minimal subshift in $\TTcalprimebar$, define any nonempty clopen $C \subsetneq X$, and define $\alpha = \mu_X(C)$ (clearly $\alpha \notin \{0,1\}$). Suppose $(Y,\sigma_{Y})$ is orbit equivalent to $(X,\sigma_{X})$ via an orbit equivalence $\phi \colon Y \to X$; then $(Y, \sigma_Y)$ is also uniquely ergodic. By the fact above, there exists clopen $D$ so that $\mu_Y(D) = \alpha$. Therefore, $Y \in \bigcup_{n \in \mathbb{N}} \mathbf{M}(n,\alpha)$. We have shown that the orbit equivalence class of $(X,\sigma_{X})$ is contained in the set $\bigcup_{n \in \mathbb{N}} \mathbf{M}(n,\alpha)$, which is meager by Theorem~\ref{killonenum}.


The second statement follows from the fact that uniquely ergodic minimal subshifts are generic in $\TTcalprimebar$ by Theorem~\ref{ttgeneric}.
\end{proof}

Finally, we will show that a generic subshift in $\TTcalprimebar$ is (uniquely ergodic and) not balanced for any letter in the sense of Definition~\ref{baldef}. This is an interesting contrast to $\Tcalprimebar$; there we first proved that a generic subshift was balanced for factors and used that to show that a generic subshift there has no infinitesimals. Here, we show that even though a typical subshift still has no infinitesimals, it is not even balanced for letters.

\begin{proposition}\label{ballet}
The set $\mathbf{NBL}$ of uniquely ergodic subshifts which are not balanced for any letter is residual in $\TTcalprimebar$.
\end{proposition}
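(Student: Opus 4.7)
The strategy is to exhibit $\mathbf{NBL}$ as a superset of an intersection of already-established residual subclasses of $\TTcalprimebar$. Specifically, I will show that every infinite, uniquely ergodic, minimal subshift $X \in \TTcalprimebar$ whose unique invariant measure $\mu_X$ is weakly mixing must fail to be balanced for every letter $a \in \A_X$. This class is residual by combining Theorem~\ref{ttgeneric} (uniquely ergodic and minimal), Theorem~\ref{fullWM} (weakly mixing unique measure), and the fact that $\TTcalprime$ is a dense $G_{\delta}$ in $\TTcalprimebar$ (ensuring that $X$ is infinite, so $|\A_X|\geq 2$).

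Fix such an $X$ together with a letter $a \in \A_X$, and suppose toward a contradiction that $X$ is balanced for $a$. Set $f = \chi_{[a]} - \mu_X([a]) \in C(X,\mathbb{R})$. The identity $D(a, x[0,n))=\sum_{i=0}^{n-1}f(\sigma^{i}x)$ identifies balancedness for $a$ with uniform boundedness of the forward Birkhoff sums of $f$ over all $x\in X$ and $n\geq 0$. Since $X$ is minimal, the Gottschalk-Hedlund theorem (\cite{GottschalkHedlund}) then produces a continuous function $g \colon X \to \mathbb{R}$ with $f = g\circ \sigma - g$. The key trick now is to exponentiate: define $h = e^{2\pi i g}\colon X \to S^{1}$. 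Because $\chi_{[a]}(x)\in\mathbb{Z}$, we have $e^{2\pi i\chi_{[a]}(x)}=1$, so
\[
h(\sigma x) = e^{2\pi i g(x)}\cdot e^{2\pi i f(x)} = e^{2\pi i g(x)} \cdot e^{-2\pi i \mu_X([a])} = e^{-2\pi i \mu_X([a])} h(x).
\]
Thus $h$ is a continuous, everywhere nonzero eigenfunction of $(X,\sigma)$ with eigenvalue $\lambda = e^{-2\pi i \mu_X([a])}$.

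Since $X$ is infinite and minimal with $|\A_X|\geq 2$, we have $\mu_X([a])\in(0,1)$, and hence $\lambda\neq 1$. The function $h$ is bounded and continuous, so lies in $L^{2}(\mu_X)$ and remains an eigenfunction in the measurable sense; this contradicts weak mixing of $\mu_X$, completing the contradiction and showing $X\in\mathbf{NBL}$. No separate $G_{\delta}$ verification is needed, as $\mathbf{NBL}$ contains an intersection of sets already shown to be residual. The main (mild) obstacle is simply being careful about the translation between Definition~\ref{baldef} and the Gottschalk-Hedlund hypothesis; once that is in place, the exponentiation trick and the weak mixing contradiction are immediate.
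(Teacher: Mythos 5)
Your proof is correct, and it takes a genuinely different route from the paper's. The paper establishes residuality the ``direct'' way: it shows $\mathbf{NBL}$ is a $G_\delta$ (by encoding ``not balanced for any letter'' as a countable intersection of conditions on finite languages) and shows density by explicitly constructing, inside every cylinder $C$ of $\TTcalprimebar$, a subshift of the form $\tau^{*}(Y)$ with $Y$ a Chacon-type unbalanced subshift on $\{0,1\}$ and $\tau$ the substitution $0 \mapsto wu$, $1 \mapsto wv$ coming from the mixing-SFT machinery of Theorem~\ref{conjdense}. Your argument instead bypasses both steps: you place the already-known residual class (infinite, minimal, uniquely ergodic, with weakly mixing unique measure) inside $\mathbf{NBL}$, using the classical ``balanced $\Rightarrow$ coboundary $\Rightarrow$ continuous eigenfunction'' chain via Gottschalk-Hedlund and the exponentiation trick, and contradicting weak mixing because $\mu_X([a]) \in (0,1)$ forces the eigenvalue $e^{-2\pi i\mu_X([a])}$ to be nontrivial. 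Your approach is shorter and more conceptual, leaning on Theorems~\ref{ttgeneric} and~\ref{fullWM}; the paper's is more self-contained (it does not route through the weak-mixing result) and produces explicit dense examples. Interestingly, the paper's closing remark to this proposition already hints at something close to your strategy (deriving ``not balanced'' from mixing plus a result of~\cite{BCBDLPP}), while noting that the direct proof gives the stronger per-letter statement; your argument shows that the per-letter strengthening also drops out of weak mixing with no extra effort. One very minor point: your explicit ``infinite'' hypothesis via $\TTcalprime$ being a dense $G_\delta$ is harmless but redundant, since a minimal subshift whose unique measure is weakly mixing cannot be a periodic orbit.
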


\begin{proof}
We first show that $\mathbf{NBL}$ is dense. To see this, recall that in the proof of Theorem~\ref{conjdense}, it was shown that any nonempty cylinder $C = [X, n]$ in $\TTcalprimebar$ contains a (mixing sofic) subshift $Z$ containing all possible concatenations of words $wu$ and $wv$ for some words $w, u, v$ where $u$ and $v$ have lengths differing by $1$, and where $wu$ and $wv$ each contain all letters of the alphabet $\A$ of $C$ (since they contained all words in $L_n(X)$). Consider the words $s = (wu)^{|wv|}$ and $t = (wv)^{|wu|}$ and any letter $a \in \A$. If $s$ and $t$ contained the same number of occurrences of $a$, then that number would be a multiple of $|wu|$ and $|wv|$; since they are relatively prime, this number of occurrences would be a multiple of $|wu| |wv|$. But $s$ and $t$ are of length $|wu||wv|$ and $|wv||wu|$ and contain both $a$ and other letters, and so this is not possible. Therefore, $s$ and $t$ contain different numbers of occurrences of $a$ for every $a \in \A$. Finally, consider any uniquely ergodic subshift $Y$ on $\{0,1\}$ which is not balanced for $0$ and $1$ (for instance, the Chacon substitution) and define $\tau$ sending $0$ to $wu$ and $1$ to $wv$. It's easily checked that $\tau^*(Y)$ is uniquely ergodic and not balanced for any letter, and $\tau^*(Y)$ is in $C$ since it's a subset of $Z \in C$
and since every word in $L_n(X)$ occurs in $wu$ and $wv$, which are subwords of all points of $\tau^*(Y)$. Since
$C$ was an arbitrary nonempty cylinder in $\TTcalprimebar$, $\mathbf{NBL}$ is dense in $\TTcalprimebar$.

Finally, we show that $\mathbf{NBL}$ is a $G_{\delta}$ in $\Scal$ (and therefore also in $\TTcalprimebar$). To see this, we note that for any finite $\A \subset \mathbb{Z}$, a uniquely ergodic subshift $X \in \Scal[\A]$ is not balanced for any letter if and only if, for all $n$, there exists $k$ so that for all $a \in \A$, there are words $v,w$ of the same length which are subwords of some
words in $L_k(X)$ where the number of $a$ in $v$ and $w$ differs by at least $n$.

For any $\A,n,k$, denote by $U(\A,n,k)$ the set of all $S \subset \A^{k}$ with this property. Then, we can write
\[
\mathbf{NBL} = \bigcap_{n \in \mathbb{N}} \bigcup_{\substack{k \in \mathbb{N},\\ \A \subset \mathbb{Z}, |\A| < \infty}} \{X \in \mathbf{UE} \mid L_{k}(X) \in U(\A,n,k)\} \]
\[ =
\mathbf{UE} \cap \bigcap_{n \in \mathbb{N}} \bigcup_{\substack{k \in \mathbb{N},\\ \A \subset \mathbb{Z}, |\A| < \infty}} \bigcup_{\substack{X \in \Scal \\L_{k}(X) \in U(\A,n,k)}} [X,k].
\]
Since $\mathbf{UE}$ is known to be a $G_{\delta}$, this completes the proof.
\end{proof}

\begin{remark}
That a generic subshift in $\TTcalprimebar$ is not balanced may be deduced from Theorem~\ref{ttmixgen} together with Proposition 5.4 of~\cite{BCBDLPP}. Proposition~\ref{ballet} proves something stronger however, namely that a generic subshift in $\TTcalprimebar$ is not balanced for any letter.
\end{remark}

\subsection{Automorphism groups and mapping class groups in $\TTcalprimebar$}

Analogous to $\Tcalbar$, the automorphism group of a generic subshift in $\TTcalprimebar$ is generated by the shift map.

\begin{corollary}\label{cor:ttbartrivauto}
The set of subshifts whose automorphism group is generated by the shift is residual in $\TTcalprimebar$.
\end{corollary}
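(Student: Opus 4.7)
The plan is to mirror exactly the argument used for Corollary~\ref{autgrp} in the transitive setting, since all of the key ingredients are already available in $\TTcalprimebar$. Namely, I will combine the genericity of minimality and topological rank two (both of which we have established in $\TTcalprimebar$) with the known structural result that any minimal Cantor system of topological rank two has automorphism group generated by its shift/generator.

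First, I would invoke Theorem~\ref{ttgeneric}, which states that the set of zero entropy, minimal, uniquely ergodic subshifts with topological rank two is residual in $\TTcalprimebar$; alternatively one may use Corollary~\ref{TTadic} together with Theorem~\ref{ttgeneric} to extract both minimality and topological rank two on a residual set. Since a countable intersection of residual sets is residual (by the Baire Category Theorem, using that $\TTcalprimebar$ is a closed subset of the Polish space $\Scal$ and hence itself Polish), it suffices to show that every minimal subshift in $\TTcalprimebar$ of topological rank two has automorphism group equal to $\langle \sigma \rangle$.

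For this last step I would appeal directly to the same references cited in the proof of Corollary~\ref{autgrp}, namely \cite[Thm 3.1]{DDMP1} together with \cite[Sec. 7]{DDMP2021}, which show that any minimal Cantor system of topological rank two has automorphism group generated by the underlying homeomorphism. Applied to a minimal subshift $X$ of topological rank two in $\TTcalprimebar$, this yields $\aut(X,\sigma) = \langle \sigma \rangle$, completing the proof.

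I do not anticipate any genuine obstacle here: the entire content of the corollary is packaged in Theorem~\ref{ttgeneric} (or Corollary~\ref{TTadic}) and the cited structural theorem, and the argument is a one-line corollary exactly parallel to Corollary~\ref{autgrp}. The only minor care required is to confirm that the cited results apply in the totally transitive setting, but this is immediate since they apply to arbitrary minimal Cantor systems of topological rank two, a class that contains all minimal subshifts of topological rank two regardless of whether they are merely transitive or totally transitive.
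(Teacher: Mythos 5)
Your proposal is correct and follows exactly the same route as the paper: invoke Theorem~\ref{ttgeneric} for the residual set of minimal, topological rank two subshifts, and then apply \cite[Thm 3.1]{DDMP1} and \cite[Sec. 7]{DDMP2021} to conclude that the automorphism group is generated by the shift.
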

\begin{proof}
By Theorem~\ref{ttgeneric}, a generic subshift in $\TTcalprimebar$ is infinite, minimal and has topological rank two, so again by~\cite[Thm 3.1]{DDMP1} and \cite[Sec. 7]{DDMP2021}, its automorphism group is generated by the shift map.
\end{proof}

We now turn to mapping class groups again. Recall the affine group $\textnormal{Aff}(\mathbb{Q})$ is the group of affine transformations of $\mathbb{Q}$ of the form $x \mapsto ax+b$, and is isomorphic to the subgroup of matrices in $GL_{2}(\mathbb{Q})$ of the form $\begin{pmatrix} s & 0 \\ r & 1 \end{pmatrix}$. Our goal in what remains is to show the following.

\begin{theorem}\label{thm:genericmcg}
The set of subshifts whose mapping class group $\mathcal{M}(\sigma_{X})$ is isomorphic to a subgroup of the affine group $\textnormal{Aff}(\mathbb{Q})$ is residual in $\TTcalprimebar$.
\end{theorem}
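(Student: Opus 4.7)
The plan is to combine the genericity results already established with the structural dichotomy for mapping class groups from~\cite{SchmiedingYang2021}. Let $\mathbf{G} \subset \TTcalprimebar$ denote the intersection of the residual sets from Theorem~\ref{ttgeneric}, Proposition~\ref{dimrk2}, and Corollary~\ref{cor:ttbartrivauto}. Then $\mathbf{G}$ is residual in $\TTcalprimebar$ and consists of uniquely ergodic minimal subshifts $(X,\sigma_X)$ whose dimension group $\coinv{\sigma_X}$ has rank two with trivial infinitesimals and for which $\aut(X,\sigma_X) = \langle \sigma_X \rangle$. It suffices to show that $\mathcal{M}(\sigma_X)$ embeds in $\textnormal{Aff}(\mathbb{Q})$ for every $X \in \mathbf{G}$.

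Fix such an $X$ and apply~\cite[Cor. 4.23]{SchmiedingYang2021}. In the first case of the dichotomy, $\mathcal{M}(\sigma_X) \cong \aut(X,\sigma_X)/\langle \sigma_X \rangle$, which for $X \in \mathbf{G}$ is the trivial group and hence trivially embeds. In the second case, $(X,\sigma_X)$ is flow equivalent to a subshift $(Y,\sigma_Y)$ arising from a primitive substitution; since the mapping class group is a flow equivalence invariant, it is enough to handle $\mathcal{M}(\sigma_Y)$. Moreover $\coinv{\sigma_Y} \cong \coinv{\sigma_X}$ as ordered groups, so it also has rank two with trivial infinitesimals, and $\aut(Y,\sigma_Y)/\langle \sigma_Y \rangle$ is still trivial.

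Next I would study the natural action of $\mathcal{M}(\sigma_Y)$ on $\coinv{\sigma_Y}$ by order-preserving group automorphisms (not required to preserve the order unit), which for a substitution subshift is nontrivial because the substitution itself induces an element of $\mathcal{M}(\sigma_Y)$ that scales the dimension group by the Perron eigenvalue of the substitution matrix. The assertion to extract from the proof of~\cite[Cor. 4.23]{SchmiedingYang2021} is that in the substitution case the kernel of the resulting homomorphism $\mathcal{M}(\sigma_Y) \to \aut_+(\coinv{\sigma_Y},\coinv{\sigma_Y}^+)$ coincides with $\aut(Y,\sigma_Y)/\langle \sigma_Y \rangle$. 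Since this kernel is trivial for our $Y$, this gives an embedding $\mathcal{M}(\sigma_Y) \hookrightarrow \aut_+(\coinv{\sigma_Y},\coinv{\sigma_Y}^+)$.

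To conclude I would compute $\aut_+(\coinv{\sigma_Y})$. The state map identifies $\coinv{\sigma_Y}$ with $\mathbb{Z} + \mathbb{Z}\alpha \subset \mathbb{R}$ for some irrational $\alpha$. A density argument, using that half-planes in $\mathbb{Z}^2$ of irrational slope are determined by their slope, shows that every order-preserving automorphism is multiplication by some positive real $s$ with $s(\mathbb{Z}+\mathbb{Z}\alpha) = \mathbb{Z}+\mathbb{Z}\alpha$; such $s$ must lie in $\mathbb{Q}(\alpha)$, and their existence beyond $s=1$ forces $\alpha$ to be a quadratic irrational. The group of such $s$'s is then the group of positive units of an order in the real quadratic field $\mathbb{Q}(\alpha)$, which by Dirichlet's unit theorem is free abelian of rank one. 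Thus $\mathcal{M}(\sigma_X)$ is either trivial or infinite cyclic, and both embed into the multiplicative part $\mathbb{Q}^*$ of $\textnormal{Aff}(\mathbb{Q}) \cong \mathbb{Q}^* \ltimes \mathbb{Q}$. The main obstacle is the kernel identification in the third paragraph: the dichotomy in~\cite[Cor. 4.23]{SchmiedingYang2021} is phrased as an either/or, so extracting the needed exact sequence in the substitution subcase requires revisiting their argument to verify that in the substitution case the extra mapping classes not coming from $\aut/\langle\sigma\rangle$ are detected entirely by their scaling action on the dimension group.
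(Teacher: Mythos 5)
Your overall strategy diverges from the paper's, and it has a genuine gap that you yourself flag.

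The paper avoids the substitution case entirely by enlarging the generic set: besides minimality, unique ergodicity, rank-two dimension group with trivial infinitesimals, and trivial $\aut(X,\sigma_X)/\langle\sigma_X\rangle$, it also imposes the (generic, by Theorem~\ref{mainfg2}) condition that $X$ does \emph{not} have linear complexity. It then invokes~\cite[Thm. 4.8]{SchmiedingYang2021} --- which holds for every minimal subshift, with no substitution dichotomy --- to get a short exact sequence $1 \to K \to \mathcal{M}(\sigma_X) \to \textnormal{image}(\Phi_X) \to 1$ with $K \leq \aut(X,\sigma_X)/\langle\sigma_X\rangle = \{\textrm{id}\}$, hence $\mathcal{M}(\sigma_X) \cong \textnormal{image}(\Phi_X)$. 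The not-linear-complexity condition is then used to kill the scaling homomorphism $R_{\mu_X}$: if $R_{\mu_X}$ were nontrivial, $(X,\sigma_X)$ would be self-induced by~\cite[Prop. 4.22]{SchmiedingYang2021}, hence by~\cite{DOP2018} conjugate to a primitive substitution subshift, which has linear complexity --- contradiction. Triviality of $R_{\mu_X}$ plus no infinitesimals forces $\Phi_X(f)([\boldsymbol{1}])=[\boldsymbol{1}]$, so in the rank-two basis every $A_{[f]}$ is $\begin{pmatrix} s & 0 \\ r & 1 \end{pmatrix}$, landing in $H_{\textnormal{aff}} \cong \textnormal{Aff}(\mathbb{Q})$.

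The concrete gap in your version: Corollary~4.23 of~\cite{SchmiedingYang2021} is an either/or statement and does not on its own give the kernel identification you need in the substitution branch; you acknowledge this. But the fix is already available --- Theorem~4.8 gives the exact sequence unconditionally for minimal subshifts, so you should cite it and drop the dichotomy altogether. Once you do that, your route would also have to dispense with passing to the flow-equivalent substitution subshift $Y$: triviality of $\aut(Y,\sigma_Y)/\langle\sigma_Y\rangle$ is not a flow-equivalence invariant and cannot be deduced from triviality of $\aut(X,\sigma_X)/\langle\sigma_X\rangle$, so the kernel $K_Y$ in the exact sequence for $Y$ is not controlled. Applying Theorem~4.8 directly to $X$ sidesteps both problems.

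Even after that fix, there remains an unjustified step: your computation of $\textnormal{Aut}_+(\coinv{\sigma_X})$ as positive units in a real quadratic order requires knowing that $\Phi_X$ lands in the \emph{order-preserving} automorphisms. The paper deliberately states that $\Phi_X$ takes values only in the abelian-group automorphisms of $\coinv{\sigma_X}$, and never relies on order preservation. (Indeed, if the image were simultaneously order-preserving and unital, unique ergodicity and triviality of infinitesimals would force it to be the identity, giving trivial $\mathcal{M}(\sigma_X)$ --- a stronger statement the authors do not claim.) You would need to supply the argument that orientation-preserving self-homeomorphisms of the suspension induce positivity-preserving maps on the coinvariants, which is plausible but not asserted in the cited source. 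The "not linear complexity" device in the paper's proof is precisely what replaces this: it controls the image of $\Phi_X$ by a dynamical argument (no self-induced/substitution models) rather than by the unit-group computation.
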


Before beginning the proof, we briefly describe some tools from~\cite{SchmiedingYang2021} we will make use of. Recall for a subshift $(X,\sigma_{X})$ the group $\coinv{\sigma_{X}}$ of coinvariants (defined in Section 2). There is a coinvariants representation for $\mathcal{M}(\sigma_{X})$ in the form of a homomorphism
$$\Phi_{X} \colon \mathcal{M}(\sigma_{X}) \to \textnormal{Aut}(\coinv{\sigma_{X}})$$
where $\textnormal{Aut}(\coinv{\sigma_{X}})$ denotes the group of automorphisms of the abelian group $\coinv{\sigma_{X}}$. Note that automorphisms in the image of $\Phi_{X}$ need not preserve the order unit $[1] \in \coinv{\sigma_{X}}$. For a uniquely ergodic subshift $(X,\sigma_{X})$ there is a homomorphism
\begin{equation*}
\begin{gathered}
R_{\mu_{X}} \colon \mathcal{M}(\sigma_{X}) \to \mathbb{R}^{*}_{>0}\\
R_{\mu_{X}} \colon [f] \mapsto \tau_{\mu_{X}}(\Phi_{X}(f)([\boldsymbol{1}])).
\end{gathered}
\end{equation*}
where $\mathbb{R}^{*}_{>0}$ is the group of positive real numbers under multiplication.

\begin{proof}
Consider the set $\mathbf{K}$ of subshifts in $\TTcalprimebar$ which are minimal, uniquely ergodic, have dimension group rank two with trivial infinitesimal subgroup, do not have linear complexity, and whose automorphism group is generated by the shift map. By Corollary~\ref{cor:ttbartrivauto}, Theorem~\ref{mainfg2}, Theorem~\ref{ttgeneric} and Proposition~\ref{dimrk2}, the set $\mathbf{K}$ is residual in $\TTcalprimebar$, so it suffices to show that any subshift in $\mathbf{K}$ has mapping class group isomorphic to a subgroup of $\textnormal{Aff}(\mathbb{Q})$.

Let $(X,\sigma_{X})$ be such a subshift. Since $(X,\sigma_{X})$ is minimal, it follows from~\cite[Thm. 4.8]{SchmiedingYang2021} that there is a short exact sequence
\begin{equation*}
1 \to K \to \mathcal{M}(\sigma_{X}) \to \textnormal{image} (\Phi_{X}) \to 1
\end{equation*}
where $K$ is a subgroup of $\aut(X,\sigma_{X}) / \langle \sigma_{X} \rangle$. Since $\aut(X,\sigma_{X}) / \langle \sigma_{X} \rangle$ is trivial by assumption, this implies $\mathcal{M}(\sigma_{X})$ is isomorphic to the subgroup $\textnormal{image} (\Phi_{X}) \subset \textnormal{Aut}(\mathcal{G}_{\sigma_{X}})$. By assumption, the group $\mathcal{G}_{\sigma_{X}}$ is rank two. Let $V$ be the rational vector space $\mathcal{G}_{\sigma} \otimes \mathbb{Q}$, and let $v_{2}$ denote the vector $\boldsymbol{1} \otimes 1$ in $V$. Since $\mathcal{G}_{\sigma_{X}}$ is rank two, we may find $g \in \mathcal{G}_{\sigma_{X}}$ such that, letting $v_{1} = g \otimes 1 \in V$, the set $\{v_{1},v_{2}\}$ forms a basis for $V$. For any $[f] \in \mathcal{M}(\sigma_{X})$, we have that $\Phi_{X}([f])$ is an automorphism of $\mathcal{G}_{\sigma_{X}}$ and extends to an automorphism of $V$, which we may represent using the basis $\{v_{1},v_{2}\}$ by a rational matrix $A_{[f]}$; since $\Phi_{X}$ is injective, we may thus identify $\mathcal{M}(\sigma_{X})$ with a subgroup of $GL_{2}(\mathbb{Q})$. We will show this subgroup is isomorphic to a subgroup of $\textnormal{Aff}(\mathbb{Q})$.



First we claim that the map $R_{\mu_X} \colon \mathcal{M}(\sigma_{X}) \to \mathbb{R}^{*}_{>0}$ is trivial, i.e. $R_{\mu_X}([f]) = 1$ for all $[f] \in \mathcal{M}(\sigma_{X})$. In~~\cite[Prop. 4.22]{SchmiedingYang2021} it is shown that if $R_{\mu_X}([f]) \ne 1$ for some $[f] \in \mathcal{M}(\sigma_{X})$, then $(X,\sigma_{X})$ is a self-induced system (see~\cite{DOP2018} for the definition of a self-induced system).
Since $(X,\sigma_{X})$ is expansive, by~\cite[Thm. 14]{DOP2018} this would imply $(X,\sigma_{X})$ is topologically conjugate to a primitive substitution subshift. However, a primitive substitution subshift has linear complexity~\cite[Thm. 2.3]{BertheDelecroix2014}. Since $(X,\sigma_{X})$ by assumption does not have linear complexity and linear complexity is preserved by topological conjugacy, altogether it follows that $R_{\mu_X}$ is the trivial map.\\
\indent Let $H_{\textnormal{aff}}$ denote the subgroup of matrices in $GL_{2}(\mathbb{Q})$ of the form $\begin{pmatrix} s & 0 \\ r & 1 \end{pmatrix}$ where $r,s \in \mathbb{Q}$. We will show that for any $[f] \in \mathcal{M}(\sigma_{X})$, the matrix $A_{[f]}$ lies in $H_{\textnormal{aff}}$; since the group $H_{\textnormal{aff}}$ is isomorphic to $\textnormal{Aff}(\mathbb{Q})$, this completes the proof.\\
\indent Fix now some element $[f] \in \mathcal{M}(\sigma_{X})$. Since $R_{\mu_{X}}$ is the trivial map, we have that $\tau_{\mu_X}(\Phi_{X}(f)([\boldsymbol{1}])) = 1$, or equivalently, that $\Phi_{X}(f)([\boldsymbol{1}]) - [\boldsymbol{1}]$ is an infinitesimal. By assumption, $X$ has no nontrivial infinitesimals, so this implies $\Phi_{X}(f)([\boldsymbol{1}]) = [\boldsymbol{1}] \in \mathcal{G}_{\sigma_{X}}$. Note that in the basis $\{v_{1},v_{2}\}$, in vector notation $\begin{pmatrix} 0 \\ 1 \end{pmatrix}$ corresponds to $v_{2} = \boldsymbol{1} \otimes 1$. Since $\Phi_{X}(f)([\boldsymbol{1}]) = [\boldsymbol{1}]$ , we have that
$$A_{[f]}\begin{pmatrix} 0 \\ 1 \end{pmatrix} = \begin{pmatrix} 0 \\ 1 \end{pmatrix}.$$
This implies $A_{[f]} = \begin{pmatrix} s & 0 \\ r & 1 \end{pmatrix}$ for some $r,s \in \mathbb{Q}$, so $A_{[f]} \in H_{\textnormal{Aff}}$ as desired.
\end{proof}


We note that since the affine group $\textnormal{Aff}(\mathbb{Q})$ is metabelian, Theorem~\ref{thm:genericmcg} in particular implies the mapping class group of a generic subshift in $\TTcalprimebar$ is metabelian.

\bibliographystyle{plain}
\bibliography{GenericBib}

\end{document}